	\theoremstyle{nonumberplain}
	\newtheorem{proof}{Proof}
\newtheorem{thm}{Theorem}
\newtheorem{cor}[thm]{Corollary}
\newtheorem{prop}[thm]{Proposition}
\newtheorem{lem}[thm]{Lemma}
{
	\theoremheaderfont{\bfseries}
	\theorembodyfont{\normalfont}
	\newtheorem{defn}[thm]{Definition}

	\newtheorem{remark}[thm]{Remark}
}
\numberwithin{thm}{section}
\numberwithin{equation}{section}
\newcommand{\xdbheadrightarrow}[2][]{%
	\ext@arrow 0099\xdbheadfill@{#1}{#2}}%
\newcommand{\xdbheadfill@}{%
	\arrowfill@\relbar\relbar{\mathrel{\vphantom{\rightarrow}\smash{\twoheadrightarrow}}}}
\DeclareRobustCommand\longtwoheadrightarrow
\newcommand{\id}{\mathrm{Id}}
\newcommand{\qp}{\mathbb{Q}_p}
\newcommand{\gal}{\mathrm{Gal}}
\newcommand{\modet}{\mathrm{Mod}^{\mathrm{\acute{e}t,proj}}}
\newcommand{\modetfp}{\mathrm{Mod}^{\mathrm{\acute{e}t,fp}}}
\newcommand{\repfr}{\mathrm{Rep}^{\mathrm{free}}}
\newcommand{\rep}{\mathrm{Rep}}
\newcommand{\zokgen}{Z_{\mathcal{O}_K}^{\mathrm{gen}}}
\title{Multivariable $(\varphi_q,\mathcal{O}_K^{\times})$-modules associated to $p$-adic representations of $\gal(\overline{K}/K)$}
\author{Changjiang DU}
\begin{document}	
	\maketitle
	
	\begin{abstract}
		Let $K$ be an unramified extension of $\qp$, and $E$ a finite extension of $K$ with ring of integers $\mathcal{O}_E$.
		We associate to every finite type continuous $\mathcal{O}_E$-representation $\rho$ of $\gal(\overline{K}/K)$ an étale $(\varphi_q,\mathcal{O}_K^{\times})$-module $D_{A_{\mathrm{mv},E}}^{(0)}(\rho)$ over $A_{\mathrm{mv},E}$, where $A_{\mathrm{mv},E}$ is the $p$-adic completion of a completed localization of the Iwasawa algebra $\mathcal{O}_E[\negthinspace[\mathcal{O}_K]\negthinspace]$. Furthermore, we prove that the functor $D_{A_{\mathrm{mv},E}}^{(0)}$ is fully faithful and exact. This functor is a $p$-adic analogue of $D_A^{(0)}$ in the recent work of Breuil, Herzig, Hu, Morra and Schraen.
	\end{abstract}
	
	\tableofcontents
	
	\section{Introduction}
	
	Let $p$ be a prime number and let $K$ be a finite unramified extension of $\qp$. Fontaine constructed the category of cyclotomic $(\varphi,\Gamma)$-modules (\cite{Fontaine1990}) and established an equivalence of categories between this category and the category of $p$-adic representations of $\gal(\overline{K}/K)$. The category of cyclotomic $(\varphi,\Gamma)$-modules has many remarkable properties and a broad range of applications. For instance, it plays an important role in Colmez's construction of the local Langlands correspondence for $\mathrm{GL}_2(\qp)$ (\cite{Colmez2010}). 
	
	Let $\mathbb{F}_q$ be the residue field of $K$ and let $\mathbb{F}$ be a finite extension of $\mathbb{F}_q$, where $q=p^f$. In a recent paper \cite{breuil2023multivariable},  Breuil-Herzig-Hu-Morra-Schraen constructed a fully faithful exact functor $D_A^{(i)}$ (for $0\leq i\leq f-1$) from the category of finite dimensional continuous $\mathbb{F}$-representations of $\gal(\overline{K}/K)$ to the category of multivariable étale $(\varphi_q,\mathcal{O}_K^{\times})$-modules over $A$, where $\mathcal{O}_K$ is the ring of integers of $K$ and $A$ is a completed localization of the Iwasawa algebra $\mathbb{F}[\negthinspace[\mathcal{O}_K]\negthinspace]$. Furthermore, the functor $D_A^{\otimes}\coloneq \otimes_{i=0}^{f-1}D_A^{(i)}$ is related to $\mathbb{F}$-representations of $\mathrm{GL}_2(K)$ coming from the cohomology of Shimura curves (see also \cite{wang2024lubintatemultivariablevarphimathcaloktimesmodulesdimension} for further developments).
	
	Let us briefly recall the construction of $D_A^{(i)}$ in \cite{breuil2023multivariable}. First we define the ring $A$. Let $E$ be a finite extension of $K$ with ring of integers $\mathcal{O}_E$, uniformizer $\varpi$ and residue field $\mathbb{F}$. Let $\sigma_{0},\dots,\sigma_{f-1}$ be all the embeddings $K\hookrightarrow E$, we can write the Iwasawa algebra $\mathbb{F}[\negthinspace[\mathcal{O}_K]\negthinspace]$ as $\mathbb{F}[\negthinspace[T_{\sigma_{0}},\dots,T_{\sigma_{f-1}}]\negthinspace]$ for $T_{\sigma_{i}}\coloneq \sum_{\lambda\in\mathbb{F}_q^{\times}}\sigma_i(\lambda)^{-1}[\lambda]$, where $[\lambda]\in\mathcal{O}_K^{\times}$ is the Teichmüller lift of $\lambda$. The ring $A$ is the $(T_{\sigma_{0}},\dots,T_{\sigma_{f-1}})$-adic completion of $\mathbb{F}[\negthinspace[\mathcal{O}_K]\negthinspace]\left[T_{\sigma_{0}}^{-1},\dots,T_{\sigma_{f-1}}^{-1}\right]$. 
	The multiplication by $p$ and $\mathcal{O}_K^{\times}$ on $\mathcal{O}_K$ induces an endomorphism $\varphi$ and an $\mathcal{O}_K^{\times}$-action on $A$, and we let $\varphi_q\coloneq \varphi^f$. This gives a continuous $(\varphi_q,\mathcal{O}_K^{\times})$-action on $A$ (for the definition of a continuous $(\varphi,\Gamma)$-action, see Definition \ref{continuous (phi,Gamma)-action}).

	We fix an embedding $\sigma_{0}:\mathbb{F}_q\hookrightarrow \mathbb{F}$, and we let $\mathbb{F}(\negthinspace(T_{\mathrm{LT}})\negthinspace)$ be the fraction field of $\mathbb{F}[\negthinspace[T_{\mathrm{LT}}]\negthinspace]$. The Lubin-Tate actions of $p$ and $\mathcal{O}_K^{\times}$ give an $\mathbb{F}$-linear $(\varphi_q,\mathcal{O}_K^{\times})$-action on $\mathbb{F}(\negthinspace(T_{\mathrm{LT}})\negthinspace)$, see (\ref{phi_q,O_k^* action ass. to Lubin-Tate formal groups}). Let $\overline{\rho}$ be a continuous finite dimensional $\mathbb{F}$-representation of $\gal(\overline{K}/K)$, by the theory of Lubin-Tate $(\varphi,\Gamma)$-modules, we can associate to $\overline{\rho}$ an étale $(\varphi_q,\mathcal{O}_K^{\times})$-module $D_{\mathrm{LT},\sigma_{0}}(\overline{\rho})$ over $\mathbb{F}(\negthinspace(T_{\mathrm{LT}})\negthinspace)$ (see for example \cite{kisin2009galois} and \cite{schneider2017galois}).
	
	In order to define $D_A^{(i)}$, we need to go from the Lubin-Tate variable $T_{\mathrm{LT}}$ to the variables $T_{\sigma_{i}}\in A$. For this, we need the perfectoid $\mathbb{F}$-algebra $A_{\infty}'\coloneq\mathbb{F}\left(\negthinspace\left(T_{\mathrm{LT},0}^{1/p^{\infty}}\right)\negthinspace\right)\left\langle\left(\frac{T_{\mathrm{LT},i}}{T_{\mathrm{LT},0}^{p^{i}}}\right)^{\pm 1/p^{\infty}}: 1\leq i\leq f-1\right\rangle$ as an intermediate ring. The ring $A_{\infty}'$ is endowed with an $\mathbb{F}$-linear continuous $(\varphi_q,(\mathcal{O}_K^{\times})^f)$-action (\ref{phi_q,O_K*-action on A_{infty}'}), along with a map $\mathrm{pr}_i: \mathbb{F}(\negthinspace(T_{\mathrm{LT}})\negthinspace)\to A_{\infty}', T_{\mathrm{LT}}\to T_{\mathrm{LT},i}$ which commutes with the $(\varphi_q,(\mathcal{O}_K^{\times})^f)$-actions. Here $(\mathcal{O}_K^{\times})^f$ acts on $\mathbb{F}(\negthinspace(T_{\mathrm{LT}})\negthinspace)$ via the $i$-th projection $(\mathcal{O}_K^{\times})^f\to\mathcal{O}_K^{\times}$. 
	Thus $A_{\infty}'\otimes_{\mathrm{pr}_i,\mathbb{F}(\negthinspace(T_{\mathrm{LT}})\negthinspace)}D_{\mathrm{LT},\sigma_{0}}(\overline{\rho})$ is an étale $(\varphi_q,(\mathcal{O}_K^{\times})^{f})$-module over $A_{\infty}'$.
	
	Now, let $A_{\infty}$ be the completed perfection of $A$, which is a perfectoid $\mathbb{F}$-algebra. There is a ring map $m: A_{\infty}\to A_{\infty}'$ coming from the theory of \cite{fargues2020simple}, see \cite[$\S$2.4]{breuil2023multivariable}, and the map $m$ commutes with the continuous $(\varphi_q,(\mathcal{O}_K^{\times})^{f})$-actions, where $(\mathcal{O}_K^{\times})^{f}$ acts on $A_{\infty}$ via the product map $\Pi: (\mathcal{O}_K^{\times})^{f}\to \mathcal{O}_K^{\times},(a_0,\dots,a_{f-1})\mapsto \prod_{i=0}^{f-1}a_i$. In fact, the corresponding morphism $m:\mathrm{Spa}(A_{\infty}',(A_{\infty}')^{\circ})\to \mathrm{Spa}(A_{\infty},A_{\infty}^{\circ})$ of affinoid perfectoid spaces is a pro-étale $\Delta_1$-torsor, where $\Delta_1\coloneq\ker \Pi$, $(A_{\infty}')^{\circ}$ and $A_{\infty}^{\circ}$ are subrings of $A_{\infty}'$ and $A_{\infty}$ respectively, consisting of power-bounded elements, see \cite[Proposition 2.4.4]{breuil2023multivariable}. In particular, $\left(A_{\infty}'\right)^{\Delta_1}=A_{\infty}$. As a consequence,  $\left(A_{\infty}'\otimes_{\mathrm{pr}_i,\mathbb{F}(\negthinspace(T_{\mathrm{LT}})\negthinspace)}D_{\mathrm{LT},\sigma_{0}}(\overline{\rho})\right)^{\Delta_1}$ is an étale $(\varphi_q,\mathcal{O}_K^{\times})$-module over $A_{\infty}$ (\cite[Theorem 2.5.1]{breuil2023multivariable}). 
	
	Finally, by \cite[Theorem 2.6.4]{breuil2023multivariable}, $\left(A_{\infty}'\otimes_{\mathrm{pr}_i,\mathbb{F}(\negthinspace(T_{\mathrm{LT}})\negthinspace)}D_{\mathrm{LT},\sigma_{0}}(\overline{\rho})\right)^{\Delta_1}$ canonically descends to an étale $(\varphi_q,\mathcal{O}_K^{\times})$-module $D_A^{(i)}(\overline{\rho})$ over $A$. Besides, by investigating the relation between $D_A^{(i)}$ and Fontaine's cyclotomic $(\varphi,\Gamma)$-modules, the fully faithfulness of $D_A^{(i)}$ follows.

	In this paper, for $0\leq i\leq f-1$, we construct a fully faithful exact functor $D_{A_{\mathrm{mv},E}}^{(i)}$ which is the $p$-adic analogue of $D_A^{(i)}$. We write $\mathcal{O}_E[\negthinspace[\mathcal{O}_K]\negthinspace]$ as $\mathcal{O}_E[\negthinspace[T_{\sigma_{0}},\dots,T_{\sigma_{f-1}}]\negthinspace]$ for $T_{\sigma_{i}}\coloneq \sum_{\lambda\in\mathbb{F}_q^{\times}}\sigma_i([\lambda])^{-1}[\lambda]$ where $[\lambda]\in\mathcal{O}_K^{\times}$ is the Teichmüller lift of $\lambda$, then we define $A_{\mathrm{mv},E}$ as the $p$-adic completion of the $(T_{\sigma_{0}},\dots,T_{\sigma_{f-1}})$-adic completion of $\mathcal{O}_E[\negthinspace[\mathcal{O}_K]\negthinspace]\left[\frac{1}{T_{\sigma_{i}}}:0\leq i\leq f-1\right]$ (``mv''  stands for ``multivariable''), see (\ref{defn of A_{mv,K}}) for the precise definition. The multiplication by $p$ and $\mathcal{O}_K^{\times}$ on $\mathcal{O}_K$ induces an endomorphism $\varphi$ and an $\mathcal{O}_K^{\times}$-action on $A_{\mathrm{mv},E}$. Let $\varphi_q\coloneq \varphi^f$. 
	The functor $D_{A_{\mathrm{mv},E}}^{(i)}$ sends finite type continuous $\mathcal{O}_E$-representations of $\gal(\overline{K}/K)$ to finitely presented multivariable étale $(\varphi_q,\mathcal{O}_K^{\times})$-modules over $A_{\mathrm{mv},E}$.
	
	Let $A_{\mathrm{LT},E,\sigma_{0}}$ denote the $p$-adic completion of $\mathcal{O}_E\otimes_{\sigma_{0},\mathcal{O}_K}\mathcal{O}_K[\negthinspace[T_{\mathrm{LT}}]\negthinspace][\frac{1}{T_{\mathrm{LT}}}]$. This is a discrete valuation ring with a continuous $(\varphi_q,\mathcal{O}_K^{\times})$-action given by the Lubin-Tate actions of $p$ and $\mathcal{O}_K^{\times}$, see (\ref{phi_q,O_k^* action ass. to Lubin-Tate formal groups}). The theory of Lubin-Tate $(\varphi,\Gamma)$-modules associates to every finite type continuous $\mathcal{O}_E$-representation $\rho$ of $\gal(\overline{K}/K)$ a finite type étale $(\varphi_q,\mathcal{O}_K^{\times})$-module $D_{\mathrm{LT},\sigma_{0}}(\rho)$ over $A_{\mathrm{LT},E,\sigma_{0}}$, and the functor $D_{\mathrm{LT},\sigma_{0}}$ is an equivalence of categories. Starting with $A_{\mathrm{LT},E,\sigma_{0}}$, we now explain how to construct the functor $D_{A_{\mathrm{mv},E}}^{(i)}$. Motivated by the construction of $D_A^{(i)}$, we consider the following diagram of $\mathcal{O}_E$-algebras:
	\begin{equation}\label{diagram of topological rings}
		\begin{tikzcd}
			A_{\mathrm{mv},E}\arrow[r,hook]& W_E(A_{\infty})\arrow[r,hook,"m"]&W_E(A_{\infty}')\\
			& & A_{\mathrm{LT},E,\sigma_{0}}\arrow[u,hook,"\mathrm{pr}_i"]
		\end{tikzcd}
	\end{equation}
	where $W_E(A_{\infty})\coloneq \mathcal{O}_E\otimes_{W(\mathbb{F})}W(A_{\infty})$, $W_E(A_{\infty}')\coloneq \mathcal{O}_E\otimes_{W(\mathbb{F})}W(A_{\infty}')$, and $m: W_E(A_{\infty})\to W_E(A_{\infty}')$ is the lift of $m: A_{\infty}\to A_{\infty}'$. By the construction of Witt vectors, the $\mathbb{F}$-linear $(\varphi_q,\mathcal{O}_K^{\times})$-action on $A_{\infty}$ (resp. the $\mathbb{F}$-linear $(\varphi_q,(\mathcal{O}_K^{\times})^f)$-action on $A_{\infty}'$) uniquely lifts to an $\mathcal{O}_E$-linear $(\varphi_q,\mathcal{O}_K^{\times})$-action on $W_E(A_{\infty})$ (resp. an $\mathcal{O}_E$-linear $(\varphi_q,(\mathcal{O}_K^{\times})^f)$-action on $W_E(A_{\infty}')$).
	
	The first challenge is to define the $(\varphi_q,(\mathcal{O}_K^{\times})^{f})$-equivariant inclusion $\mathrm{pr}_i:A_{\mathrm{LT},E,\sigma_{0}}\hookrightarrow W_E(A_{\infty}')$, where $(\mathcal{O}_K^{\times})^{f}$ acts on $A_{\mathrm{LT},E,\sigma_{0}}$ via the $i$-th projection $(\mathcal{O}_K^{\times})^{f}\to \mathcal{O}_K^{\times}$. A natural candidate is the map $h: A_{\mathrm{LT},E,\sigma_{0}}\hookrightarrow W_E(A_{\infty}')$ sending $T_{\mathrm{LT}}$ to the Teichmüller lift $[T_{\mathrm{LT},i}]$. However, $h$ does not commute with $\varphi_q$: $\varphi_q(h(T_{\mathrm{LT}}))=\varphi_q([T_{\mathrm{LT},i}])=[T_{\mathrm{LT},i}^q]$, while $h(\varphi_q(T_{\mathrm{LT}}))=h(p_{\mathrm{LT}}(T_{\mathrm{LT}}))=p_{\mathrm{LT}}([T_{\mathrm{LT},i}])$, where $p_{\mathrm{LT}}(T)\in \mathcal{O}_K[\negthinspace[T]\negthinspace]$ is a fixed Frobenius power series associated to the uniformizer $p\in\mathcal{O}_K$. Hence $h$ is not the desired map. The correct way to define $\mathrm{pr}_i:A_{\mathrm{LT},E,\sigma_{0}}\hookrightarrow W_E(A_{\infty}')$ is to use the ring $\varinjlim_{\varphi_q}A_{\mathrm{LT},E,\sigma_{0}}$, the colimit of $A_{\mathrm{LT},E,\sigma_{0}}$ indexed by $\mathbb{Z}_{\geq 0}$. We endow $\varinjlim_{\varphi_q}A_{\mathrm{LT},E,\sigma_{0}}$ with a metric topology such that the topological completion of $\varinjlim_{\varphi_q}A_{\mathrm{LT},E,\sigma_{0}}$ is $W_E\left(\mathbb{F}(\negthinspace(T_{\mathrm{LT}}^{1/p^{\infty}})\negthinspace)\right)$. Then $\mathrm{pr}_i$ is defined as the composition:
	\[A_{\mathrm{LT},E,\sigma_{0}}\hookrightarrow \varinjlim_{\varphi_q}A_{\mathrm{LT},E,\sigma_{0}} \hookrightarrow W_E\left(\mathbb{F}(\negthinspace(T_{\mathrm{LT}}^{1/p^{\infty}})\negthinspace)\right) \xrightarrow{[T_{\mathrm{LT}}]\mapsto [T_{\mathrm{LT},i}]}W_E(A_{\infty}').\]
	
	The same approach applies to define the $(\varphi_q,\mathcal{O}_K^{\times})$-equivariant injection $A_{\mathrm{mv},E}\hookrightarrow W_E(A_{\infty})$ as the composition $A_{\mathrm{mv},E}\hookrightarrow\varinjlim_{\varphi_q}A_{\mathrm{mv},E}\hookrightarrow W_E(A_{\infty})$, where $\varinjlim_{\varphi_q}A_{\mathrm{mv},E}$ is the colimit of $A_{\mathrm{mv},E}$ indexed by $\mathbb{Z}_{\geq 0}$. With an appropriate metric topology, the topological completion of $\varinjlim_{\varphi_q}A_{\mathrm{mv},E}$ is isomorphic to $W_E(A_{\infty})$. For details, see Section \ref{section 2}.

	There are two key results related to (\ref{diagram of topological rings}), both of which play an essential role in the construction of the functor $D_{A_{\mathrm{mv},E}}^{(i)}$:
	
	\begin{thm}\label{main_thm}
		\begin{enumerate}[label=(\alph*), ref=\ref{main_thm}.(\alph*)]
			\item 
			The functor $W_E(A_{\infty})\otimes_{A_{\mathrm{mv},E}}-$ sending finitely presented étale $(\varphi_q,\mathcal{O}_K^{\times})$-modules over $A_{\mathrm{mv},E}$ to finitely presented étale $(\varphi_q,\mathcal{O}_K^{\times})$-modules over $W_E(A_{\infty})$
			is an equivalence of abelian categories. See Corollary \ref{descent of f.p. phi_q,O_K*-modules}.\label{thm 0a}
			\item 
			The functor $W_E(A_{\infty}')\otimes_{m,W_E(A_{\infty})}-$ sending finitely presented étale $(\varphi_q,\mathcal{O}_K^{\times})$-modules over $W_E(A_{\infty})$ to finitely presented étale $(\varphi_q,(\mathcal{O}_K^{\times})^f)$-modules over $W_E(A_{\infty}')$
			is an equivalence of abelian categories, and a quasi-inverse is given by $(-)^{\Delta_1}$. 
			See Theorem \ref{descent of f.p. from W_E(A{infty}') to W_E(A{infty})}.\label{thm 0b}
		\end{enumerate}
	\end{thm}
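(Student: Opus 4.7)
The plan is to reduce both parts of Theorem \ref{main_thm} to their characteristic-$p$ analogues established in \cite{breuil2023multivariable}, and then lift the equivalences through the $\varpi$-adic filtration using the finite-presentation hypothesis. The key point is that modulo $\varpi$, the diagram (\ref{diagram of topological rings}) recovers the characteristic-$p$ diagram of \emph{loc.\ cit.}: writing $\mathbb{F}=\mathcal{O}_E/\varpi$, one has $A_{\mathrm{mv},E}/\varpi \cong A$; and since $\mathbb{F} = W(\mathbb{F})/p$ while $A_{\infty}$, $A_{\infty}'$ are perfect of characteristic $p$, one obtains $W_E(A_{\infty})/\varpi \cong A_{\infty}$ and $W_E(A_{\infty}')/\varpi \cong A_{\infty}'$, all compatibly with the $(\varphi_q,\mathcal{O}_K^{\times})$-structures and with the transition maps in (\ref{diagram of topological rings}).

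For part (a), I would first verify that $A_{\mathrm{mv},E}\to W_E(A_{\infty})$ is $\varpi$-completely faithfully flat: since mod $\varpi$ it becomes the faithfully flat map $A\to A_{\infty}$ of \cite{breuil2023multivariable}, standard $\varpi$-adic flatness criteria for $\varpi$-complete rings promote this to $\varpi$-complete faithful flatness, so the base-change functor is exact. The equivalence itself I would prove by d\'evissage on $\varpi$. The base case, modulo $\varpi$, is the characteristic-$p$ statement \cite[Theorem 2.6.4]{breuil2023multivariable}. For the inductive step from mod $\varpi^n$ to mod $\varpi^{n+1}$, I would apply the five-lemma to the short exact sequences $0\to \varpi^n M/\varpi^{n+1}M\to M/\varpi^{n+1}M\to M/\varpi^n M\to 0$, observing that the graded piece is an $A$-module for which the characteristic-$p$ equivalence applies. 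Since a finitely presented module over a $\varpi$-complete ring is the limit of its mod-$\varpi^n$ reductions, passing to the $\varpi$-adic limit yields the full equivalence.

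For part (b), the same inductive scheme applies, with $\Delta_1$-torsor descent playing the role that Frobenius descent played in (a). Modulo $\varpi$, the equivalence between étale $(\varphi_q,\mathcal{O}_K^{\times})$-modules over $A_{\infty}$ and étale $(\varphi_q,(\mathcal{O}_K^{\times})^f)$-modules over $A_{\infty}'$, with quasi-inverse $(-)^{\Delta_1}$, is exactly \cite[Theorem 2.5.1]{breuil2023multivariable}. To lift this to the $W_E(-)$-setting, one must check that $\Delta_1$-invariants commute with reduction modulo $\varpi^n$ on finitely presented étale objects, equivalently that the continuous cohomology $H^1_{\mathrm{cts}}(\Delta_1,N)$ vanishes for such $N$. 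This reduces via the $\varpi$-adic filtration to its mod-$\varpi$ analogue, which follows from the pro-étale $\Delta_1$-torsor structure of \cite[Proposition 2.4.4]{breuil2023multivariable}, together with a Mittag-Leffler argument ensuring that the inverse limit of the vanishing $H^1$'s along $\{N/\varpi^n\}$ is again zero.

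The main technical obstacle lies not in the algebraic d\'evissage but in the interplay of three topologies: the $\varpi$-adic topology on $\mathcal{O}_E$, the $(T_{\sigma_i})$-adic topology on $A_{\mathrm{mv},E}$, and the Witt-vector topology on $W_E(A_{\infty})$ and $W_E(A_{\infty}')$. One must confirm that the injection $A_{\mathrm{mv},E}\hookrightarrow W_E(A_{\infty})$ built via the completion of $\varinjlim_{\varphi_q} A_{\mathrm{mv},E}$ is strictly compatible with the continuous $(\varphi_q,\mathcal{O}_K^{\times})$-actions and that the metric completion of $\varinjlim_{\varphi_q}$ really coincides with $W_E(A_{\infty})$, and similarly for $\mathrm{pr}_i$; in particular, the $\varpi$-complete faithful flatness of both horizontal maps in (\ref{diagram of topological rings}) must be secured in this hybrid topological framework. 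Once these identifications are settled in Section \ref{section 2}, the proof proceeds along the lines sketched above.
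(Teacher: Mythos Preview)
Your sketch for part~(b) is close to the paper's argument: the paper also derives the vanishing of $H^1_{\mathrm{cont}}(\Delta_1,-)$ from the pro-\'etale $\Delta_1$-torsor structure (Theorem~\ref{vanishing of continuous cohomology of A_{infty}'}, identifying continuous and \v{C}ech cohomology), lifts it to $W_E(A_\infty')$ by successive approximation (Corollary~\ref{vanishing of continuous cohomology of W_E(A_{infty}')}), and runs a d\'evissage on the $\varpi$-filtration (Lemma~\ref{A{infty} to A{infty}'}, Theorem~\ref{descent of f.p. from W_E(A{infty}') to W_E(A{infty})}). What makes this work is that the candidate quasi-inverse $(-)^{\Delta_1}$ is available from the outset, so the five-lemma has a map to apply to.

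For part~(a) there is a genuine gap. In your inductive step for essential surjectivity the five-lemma can only confirm that a \emph{given} comparison map $W_E(A_\infty)/\varpi^{n+1}\otimes M_{n+1}\to M_\infty/\varpi^{n+1}$ is an isomorphism; it does not produce the object $M_{n+1}$ over $A_{\mathrm{mv},E}/\varpi^{n+1}$. Knowing that the graded piece $\varpi^nM_\infty/\varpi^{n+1}M_\infty$ and the quotient $M_\infty/\varpi^n$ each descend tells you nothing about whether the extension class in $\mathrm{Ext}^1$ over $W_E(A_\infty)/\varpi^{n+1}$ comes from $A_{\mathrm{mv},E}/\varpi^{n+1}$, and unlike in part~(b) there is no functor playing the role of $(-)^{\Delta_1}$ to manufacture a candidate. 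To close this you would need to show that base change induces a bijection on the relevant $\mathrm{Ext}^1$ groups, but that is essentially the equivalence mod $\varpi^{n+1}$ you are trying to prove. The paper avoids this circularity entirely: rather than d\'evissage, it proves the mod-$p^n$ equivalence for finite projective \'etale $\varphi_q$-modules \emph{directly} for every $n$ via Katz's method (Propositions~\ref{Katz's method: descend using phi_q}--\ref{mod p^n descent using Katz}), using that $\pi_1^{\mathrm{\acute{e}t}}$ of $\mathrm{Spec}(A_{\mathrm{mv},K}/p^n)$, $\mathrm{Spec}(A_q)$ and $\mathrm{Spec}(A_{q,\infty})$ all coincide. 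Only after passing to the limit (Theorem~\ref{descent of phi_q-module}) does it extend to the finitely presented case, and this requires a separate construction---the ``$\mathrm{Dec}$'' functor built from Kedlaya--Liu's Lemma~\ref{existence of equivariant surjection of phi_q-modules}---which your proposal does not address either.
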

	
	Here we briefly explain the proof of Theorem \ref{main_thm}. 
	
	For Theorem \ref{thm 0a}, we first apply variants of a result of Katz (Proposition \ref{Katz's method: descend using phi_q}, Proposition \ref{perfect case of Katz's result}), along with a limit argument, to show that every finite projective étale $\varphi_q$-module over $W_E(A_{\infty})$ descends to a finite projective étale $\varphi_q$-module over $A_{\mathrm{mv},E}$. For a general finitely presented étale $\varphi_q$-module $M_{\infty}$ over $W_E(A_{\infty})$, a standard lemma (Lemma \ref{existence of equivariant surjection of phi_q-modules}) ensures the existence of a $\varphi_q$-equivariant surjection $f: F_{\infty}\twoheadrightarrow M_{\infty}$, where $F_{\infty}$ is a finite projective étale $\varphi_q$-module over $W_E(A_{\infty})$ that descends to a finite projective étale $\varphi_q$-module $F$ over $A_{\mathrm{mv},E}$. Since the map $A_{\mathrm{mv},E}\hookrightarrow W_E(A_{\infty})$ is faithfully flat (Proposition \ref{A_{mv,E} to W_E(A_{infty}) is faithfully flat}), we may view $F$ as an $A_{\mathrm{mv},E}$-submodule of $F_{\infty}$, then we can check that $f(F)\subset M_{\infty}$ is a well-defined finitely presented étale $\varphi_q$-module over $A_{\mathrm{mv},E}$. Finally, incorporating the $\mathcal{O}_K^{\times}$-action completes the proof of Theorem \ref{thm 0a}.
	
	For Theorem \ref{thm 0b}, we first show that the continuous cohomology group $H^{1}_{\mathrm{cont}}(\Delta_1,A_{\infty}')$ vanishes (Theorem \ref{vanishing of continuous cohomology of A_{infty}'}). This follows from the fact that the morphism of affinoid perfectoid spaces induced by $A_{\infty}\hookrightarrow A_{\infty}'$ is a pro-étale $\Delta_1$-torsor (\cite[Proposition 2.4.4]{breuil2023multivariable}). Using this vanishing result, we prove that Theorem \ref{thm 0b} is true for finite projective modules. Finally, since the structure of finitely presented étale $\varphi_q$-modules over $W_E(A_{\infty}')$ is clear (Proposition \ref{description of finitely presented etale phi_q-module over W_E(A_{infty}')}), we apply a dévissage argument to complete the proof of Theorem \ref{thm 0b}.
	
	With (\ref{diagram of topological rings}) at hand, the construction of the functor $D_{A_{\mathrm{mv},E}}^{(i)}$ is easy to describe. For a finite type $\mathcal{O}_E$-representation $\rho$ of $\gal(\overline{K}/K)$, there is a finite type étale $(\varphi_q,\mathcal{O}_K^{\times})$-module $D_{\mathrm{LT},\sigma_{0}}(\rho)$ over $A_{\mathrm{LT},E,\sigma_{0}}$ associated to $\rho$. Then $W_E(A_{\infty}')\otimes_{\mathrm{pr}_i,A_{\mathrm{LT},E,\sigma_{0}}}D_{\mathrm{LT},\sigma_{0}}(\rho)$ is a finitely presented étale $(\varphi_q,(\mathcal{O}_K^{\times})^f)$-module over $W_E(A_{\infty}')$. By Theorem \ref{thm 0b}, $\left(W_E(A_{\infty}')\otimes_{\mathrm{pr}_i,A_{\mathrm{LT},E,\sigma_{0}}}D_{\mathrm{LT},\sigma_{0}}(\rho)\right)^{\Delta_1}$ is a finitely presented étale $(\varphi_q,\mathcal{O}_K^{\times})$-module over $W_E(A_{\infty})$, which canonically descends to a finitely presented étale $(\varphi_q,\mathcal{O}_K^{\times})$-module $D_{A_{\mathrm{mv},E}}^{(i)}(\rho)$ over $A_{\mathrm{mv},E}$ by Theorem \ref{thm 0a}. 
	
	Here are the main properties of the functor $D_{A_{\mathrm{mv},E}}^{(i)}$:
	\begin{thm}\label{thm in intro}
		Let $\rho$ be a finite type $\mathcal{O}_E$-representation of $\gal(\overline{K}/K)$, $0\leq i\leq f-1$.
		\begin{enumerate}
			\item [(a)]
			There is a functorial isomorphism of finitely presented étale $(\varphi_q,\mathcal{O}_K^{\times})$-modules over $A_{\mathrm{mv},E}$:
			\begin{align}
				\phi_i: A_{\mathrm{mv},E}\otimes_{\varphi,A_{\mathrm{mv},E}}D_{A_{\mathrm{mv},E}}^{(i)}(\rho)\xrightarrow{\sim} D_{A_{\mathrm{mv},E}}^{(i+1)}(\rho).
			\end{align}
			Moreover, the composition $\phi_{f-1}\circ \phi_{f-2}\circ\cdots\circ \phi_0: A_{\mathrm{mv},E}\otimes_{\varphi_q,A_{\mathrm{mv},E}}D_{A_{\mathrm{mv},E}}^{(0)}(\rho)\to D_{A_{\mathrm{mv},E}}^{(0)}(\rho)$ is the linearization map $\mathrm{id}\otimes{\varphi_q}$, see Lemma \ref{ralation of different f.p. D_{A_E}^i}.
			\item [(b)]
			There exists an embedding $\sigma_d\in\{\sigma_{0},\dots,\sigma_{f-1}\}$ such that there is a canonical isomorphism of finitely presented étale $(\varphi_q,\mathbb{Z}_p^{\times})$-modules over $A_{\mathrm{cycl},E}$:
			\begin{align}
				A_{\mathrm{cycl},E}\otimes_{\mathrm{tr},A_{\mathrm{mv},E}}D_{A_{\mathrm{mv},E}}^{(0)}(\rho)\cong D_{\mathrm{cycl},\sigma_d}(\rho)
			\end{align}
			where $A_{\mathrm{cycl},E}$ is the $p$-adic completion of $\mathcal{O}_E[\negthinspace[\mathbb{Z}_p]\negthinspace]\left[\frac{1}{T_{\mathrm{cycl}}}\right]$ endowed with a continuous $(\varphi_q,\mathbb{Z}_p^{\times})$-action, $T_{\mathrm{cycl}}\coloneq [1]-1\in \mathcal{O}_E[\negthinspace[\mathbb{Z}_p]\negthinspace]$, $\mathrm{tr}:A_{\mathrm{mv},E}\to A_{\mathrm{cycl},E}$ is induced by $\mathrm{tr}: \mathcal{O}_K\to\mathbb{Z}_p$, and $D_{\mathrm{cycl},\sigma_d}(\rho)$ is the cyclotomic $(\varphi_q,\mathbb{Z}_p^{\times})$-module over $A_{\mathrm{cycl},E}$ using the embedding $\sigma_{d}: \mathcal{O}_K\hookrightarrow \mathcal{O}_E$, see Theorem \ref{relation to classical cycltomic (phi,Gamma)-modules}.
			\item [(c)]
			The functor $D_{A_{\mathrm{mv},E}}^{(i)}$ from the category of finite type continuous $\mathcal{O}_E$-representations of $\gal(\overline{K}/K)$ to the category of finitely presented étale $(\varphi_q,\mathcal{O}_K^{\times})$-module over $A_{\mathrm{mv},E}$ is fully faithful and exact. Moreover, if $\rho$ is a finite free $\mathcal{O}_E$-representations of $\gal(\overline{K}/K)$ of rank $r$, then $D_{A_{\mathrm{mv},E}}^{(i)}(\rho)$ is a finite free étale $(\varphi_q,\mathcal{O}_K^{\times})$-module over $A_{\mathrm{mv},E}$ of rank $r$, see Theorem \ref{f.p. D_{A_{mv,E}}^{(i)} is fully faithful}.
		\end{enumerate}
	\end{thm}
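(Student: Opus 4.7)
The plan is to treat (a), (b), (c) sequentially, relying throughout on the construction of $D_{A_{\mathrm{mv},E}}^{(i)}(\rho)$ via the diagram (\ref{diagram of topological rings}) together with Theorem \ref{main_thm}.

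For (a), I would first unwind the construction to record the identification
\[W_E(A_{\infty}')\otimes_{m,W_E(A_{\infty})}\bigl(W_E(A_{\infty})\otimes_{A_{\mathrm{mv},E}}D_{A_{\mathrm{mv},E}}^{(i)}(\rho)\bigr)\cong W_E(A_{\infty}')\otimes_{\mathrm{pr}_i,A_{\mathrm{LT},E,\sigma_0}}D_{\mathrm{LT},\sigma_0}(\rho).\]
The key input is the compatibility $\varphi\circ\mathrm{pr}_i=\mathrm{pr}_{i+1}$ for $0\le i\le f-2$ together with $\varphi\circ\mathrm{pr}_{f-1}=\mathrm{pr}_0\circ\varphi_q$, which is built into the definition of $\mathrm{pr}_i$ through the $\varinjlim_{\varphi_q}$ construction described in the introduction and which reflects the cyclic shift of Lubin--Tate variables induced by $\varphi$ on $A_{\infty}'$. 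The tautological isomorphism
\[W_E(A_{\infty}')\otimes_{\varphi,W_E(A_{\infty}')}\bigl(W_E(A_{\infty}')\otimes_{\mathrm{pr}_i}D_{\mathrm{LT},\sigma_0}(\rho)\bigr)\xrightarrow{\sim}W_E(A_{\infty}')\otimes_{\mathrm{pr}_{i+1}}D_{\mathrm{LT},\sigma_0}(\rho)\]
is then $(\varphi_q,(\mathcal{O}_K^{\times})^f)$-equivariant and descends through the two equivalences of Theorem \ref{main_thm} to $\phi_i$. The composite $\phi_{f-1}\circ\cdots\circ\phi_0=\mathrm{id}\otimes\varphi_q$ follows from $\varphi^f=\varphi_q$ combined with the fact that $\varphi_q$ is the structural Frobenius of $D_{\mathrm{LT},\sigma_0}(\rho)$.

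For (b), I would identify $\sigma_d$ as the distinguished embedding arising from the compatibility between the cyclotomic tower of $\mathbb{Q}_p$ and the Lubin--Tate tower of $K$ attached to $\sigma_0$. The trace $\mathrm{tr}\colon\mathcal{O}_K\to\mathbb{Z}_p$ induces a $(\varphi_q,\mathbb{Z}_p^{\times})$-equivariant ring map $A_{\mathrm{mv},E}\to A_{\mathrm{cycl},E}$; applying $W_E$ fits this into a diagram analogous to (\ref{diagram of topological rings}) in which $A_{\mathrm{LT},E,\sigma_0}$ factors through the cyclotomic period ring associated with $\sigma_d$. Chasing $D_{\mathrm{LT},\sigma_0}(\rho)$ through the resulting commuting diagram produces the canonical isomorphism $A_{\mathrm{cycl},E}\otimes_{\mathrm{tr}}D_{A_{\mathrm{mv},E}}^{(0)}(\rho)\cong D_{\mathrm{cycl},\sigma_d}(\rho)$, since both sides are obtained by the same base-change procedure applied to the same Lubin--Tate input; compatibility with the $(\varphi_q,\mathbb{Z}_p^\times)$-action is verified on generators.

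For (c), exactness and the rank statement follow from the presentation of $D_{A_{\mathrm{mv},E}}^{(i)}$ as a composite of the exact Lubin--Tate equivalence $D_{\mathrm{LT},\sigma_0}$, base change along the flat map $\mathrm{pr}_i$, and the quasi-inverses of the two equivalences in Theorem \ref{main_thm}; each step is exact and preserves the rank of a finite free module. For fully faithfulness I would combine (b) with Fontaine's cyclotomic equivalence: any $f\colon D_{A_{\mathrm{mv},E}}^{(0)}(\rho_1)\to D_{A_{\mathrm{mv},E}}^{(0)}(\rho_2)$ base-changes along $\mathrm{tr}$ to a morphism of cyclotomic $(\varphi_q,\mathbb{Z}_p^{\times})$-modules, which comes from a unique $g\in\mathrm{Hom}_{\gal(\overline{K}/K)}(\rho_1,\rho_2)$, giving faithfulness. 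The main obstacle is the remaining identity $D_{A_{\mathrm{mv},E}}^{(0)}(g)=f$, equivalently fullness of the natural map $\mathrm{Hom}_{\gal(\overline{K}/K)}(\rho_1,\rho_2)\to\mathrm{Hom}^{(\varphi_q,\mathcal{O}_K^{\times})}_{A_{\mathrm{mv},E}}(D_{A_{\mathrm{mv},E}}^{(0)}(\rho_1),D_{A_{\mathrm{mv},E}}^{(0)}(\rho_2))$. I would deduce this by comparing Hom spaces along the chain $A_{\mathrm{LT},E,\sigma_0}\hookrightarrow W_E(A_{\infty})\hookrightarrow W_E(A_{\infty}')$: the Lubin--Tate equivalence identifies Hom over $A_{\mathrm{LT},E,\sigma_0}$ with $\mathrm{Hom}_{\gal(\overline{K}/K)}$, and a descent argument using the $\Delta_1$-invariance $W_E(A_{\infty}')^{\Delta_1}=W_E(A_{\infty})$ (Theorem \ref{thm 0b}) together with Theorem \ref{thm 0a} recovers the Hom space over $A_{\mathrm{mv},E}$.
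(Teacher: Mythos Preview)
Your sketch for (a) is essentially the paper's argument (Lemma \ref{ralation of different f.p. D_{A_E}^i}): verify $\varphi\circ\mathrm{pr}_i=\mathrm{pr}_{i+1}$ (and $\varphi\circ\mathrm{pr}_{f-1}=\mathrm{pr}_0\circ\varphi_q$), build $\phi_i$ over $W_E(A_{\infty}')$, then descend via Theorem \ref{main_thm}.

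For (b) your outline is too vague at the decisive step. The embedding $\sigma_d$ is \emph{not} determined by a formal compatibility of diagrams; the paper needs a genuine theorem to pin it down. Concretely, the inclusion $K(\mu_{p^{\infty}})\subset K_{\infty}$ yields a $(\varphi_q,\mathbb{Z}_p^{\times})$-equivariant map $\iota\colon\mathbb{F}(\!(T_{\mathrm{cycl}}^{1/p^{\infty}})\!)\hookrightarrow\mathbb{F}(\!(T_{\mathrm{LT}}^{1/p^{\infty}})\!)$, and one must identify the induced endomorphism of $\mathbb{F}(\!(T_{\mathrm{cycl}}^{1/p^{\infty}})\!)$. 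The paper invokes Berger's classification of such $\mathbb{Z}_p^{\times}$-equivariant endomorphisms (\cite[Theorem 3.1]{berger2022decompletion}) to write it as $T_{\mathrm{cycl}}\mapsto(1+T_{\mathrm{cycl}})^{\gamma}-1$ for some $\gamma=p^{mf+d}a\in\mathbb{Q}_p^{\times}$; the integer $d$ is then read off from $v_p(\gamma)\bmod f$. Without this input, your ``chasing through the diagram'' does not produce a specific $\sigma_d$, nor the comparison with $D_{\mathrm{cycl},\sigma_d}$.

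For (c), exactness and the rank claim are fine, and faithfulness is immediate from faithful flatness of $\mathrm{pr}_i$. The gap is in your argument for \emph{fullness}. You propose to recover $\mathrm{Hom}$ over $A_{\mathrm{LT},E,\sigma_0}$ from $\mathrm{Hom}$ over $W_E(A_{\infty}')$ by ``descent along the chain''. But the equivalences in Theorem \ref{main_thm} identify $(\varphi_q,\mathcal{O}_K^{\times})$-equivariant maps over $A_{\mathrm{mv},E}$ with $(\varphi_q,(\mathcal{O}_K^{\times})^f)$-equivariant maps over $W_E(A_{\infty}')$, whereas maps coming from $A_{\mathrm{LT},E,\sigma_0}$ via $\mathrm{pr}_i$ are only visibly $(\varphi_q,\mathcal{O}_K^{\times})$-equivariant for the single $i$-th factor. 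There is no a priori reason a $(\varphi_q,(\mathcal{O}_K^{\times})^f)$-equivariant map of $W_E(A_{\infty}')$-modules descends to $A_{\mathrm{LT},E,\sigma_0}$; this is exactly the content of fullness you are trying to prove. The paper avoids this circularity by a different route: from (b) one has a commutative triangle
\[
\mathrm{Hom}_{\gal(\overline{K}/K)}(\rho_1,\rho_2)\xrightarrow{D_{A_{\mathrm{mv},E}}^{(0)}}\mathrm{Hom}_{A_{\mathrm{mv},E}}\xrightarrow{A_{\mathrm{cycl},E}\otimes_{\mathrm{tr}}-}\mathrm{Hom}_{A_{\mathrm{cycl},E}},
\]
with the diagonal an isomorphism (Fontaine). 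It therefore suffices to show the second arrow is \emph{injective}. If $h$ lies in its kernel, then $h(D^{(0)}(\rho_1))\subseteq\mathfrak{p}\cdot D^{(0)}(\rho_2)$ with $\mathfrak{p}=\ker(\mathrm{tr})$; \'etaleness and $\varphi_q$-equivariance force $h(D^{(0)}(\rho_1))\subseteq\varphi_q^n(\mathfrak{p})\cdot D^{(0)}(\rho_2)$ for all $n$, and a Krull intersection argument modulo $\varpi$ (using that $A$ is a Noetherian domain and $D^{(0)}(\rho_2)/\varpi$ is free) shows this intersection is $\varpi D^{(0)}(\rho_2)$. Iterating gives $h=0$. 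This ``$\varphi_q$-shrinking plus Krull intersection'' step is the real content missing from your sketch.
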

	
	Inspired by \cite{breuil2023multivariable}, for a finite type $\mathcal{O}_E$-representation $\rho$ of $\gal(\overline{K}/K)$, we define $D_{A_{\mathrm{mv},E}}^{\otimes}(\rho)\coloneq \bigotimes_{i=0}^{f-1}D_{A_{\mathrm{mv},E}}^{(i)}(\rho)$ which is a finitely presented étale $(\varphi,\mathcal{O}_K^{\times})$-module over $A_{\mathrm{mv},E}$. We expect the functor $D_{A_{\mathrm{mv},E}}^{\otimes}$ to be related to $p$-adic representations of $\mathrm{GL}_2(K)$.
	
	Let $B_{\mathrm{mv},E}\coloneq A_{\mathrm{mv},E}[1/p]$, we can also associate to every finite dimensional $E$-representation $\rho$ of $\gal(\overline{K}/K)$ a finite free étale $(\varphi_q,\mathcal{O}_K^{\times})$-module over $B_{\mathrm{mv},E}$: we choose an $\mathcal{O}_E$-lattice $\rho_0$ of $\rho$ and define $D_{B_{\mathrm{mv},E}}^{(i)}(\rho)\coloneq D_{A_{\mathrm{mv},E}}^{(i)}(\rho_0)[1/p]$. It is straightforward to verify that $D_{B_{\mathrm{mv},E}}^{(i)}(\rho)$ is well-defined. Moreover, it follows from Theorem \ref{thm in intro} that $D_{B_{\mathrm{mv},E}}^{(i)}$ is also exact, fully faithful and preserves the rank of representations. Finally, we define $D_{B_{\mathrm{mv},E}}^{\otimes}(\rho)\coloneq \bigotimes_{i=0}^{f-1}D_{B_{\mathrm{mv},E}}^{(i)}(\rho)$ which is a finite type étale $(\varphi,\mathcal{O}_K^{\times})$-module over $B_{\mathrm{mv},E}$. \\
	
	We now describe the content of each section. 
	
	In Section \ref{section 1}, we introduce the multivariable topological ring $A_{\mathrm{mv},E}$. This ring is a $p$-adically complete Noetherian domain, equipped with a continuous faithfully flat finite type endomorphism $\varphi_q$ and a continuous $\mathcal{O}_K^{\times}$-action. 
	
	In Section \ref{section 2}, we define the $(\varphi_q,\mathcal{O}_K^{\times})$-equivariant injection $A_{\mathrm{mv},E}\hookrightarrow W_E(A_{\infty})$. We first introduce the ring $\varinjlim_{\varphi_q}A_{\mathrm{mv},E}$, defined as the colimit of $A_{\mathrm{mv},E}$ indexed by $\mathbb{Z}_{\ge 0}$ with transition maps given by $\varphi_q$. This ring is endowed with a metric topology, and the continuous $(\varphi_q,\mathcal{O}_K^{\times})$-action on $A_{\mathrm{mv},E}$ uniquely extends to $\varinjlim_{\varphi_q}A_{\mathrm{mv},E}$. We use the theory of strict $p$-rings to prove that the topological completion of $\varinjlim_{\varphi_q}A_{\mathrm{mv},E}$ is $(\varphi_q,\mathcal{O}_K^{\times})$-equivariantly isomorphic to $W_E(A_{\infty})$, then we construct the desired injection as the composition
	\[A_{\mathrm{mv},E}\hookrightarrow\varinjlim_{\varphi_q}A_{\mathrm{mv},E}\hookrightarrow W_E(A_{\infty}).\]
	A similar construction applies to $A_{\mathrm{LT},E,\sigma_{0}}$, which yields a $(\varphi_q,\mathcal{O}_K^{\times})$-equivariant injection $A_{\mathrm{LT},E,\sigma_{0}}\hookrightarrow W_E\left(\mathbb{F}(\negthinspace(T_{\mathrm{LT}}^{1/p^{\infty}})\negthinspace)\right)$. Then we define the map $\mathrm{pr}_i$ as the composition
	\[\mathrm{pr}_i:A_{\mathrm{LT},E,\sigma_{0}}\hookrightarrow \varinjlim_{\varphi_q}A_{\mathrm{LT},E,\sigma_{0}}\hookrightarrow W_E\left(\mathbb{F}(\negthinspace(T_{\mathrm{LT}}^{1/p^{\infty}})\negthinspace)\right)\xrightarrow{[T_{\mathrm{LT}}]\mapsto [T_{\mathrm{LT},i}]}W_E(A_{\infty}').\]
	
	In Section \ref{section 3}, we prove Theorem \ref{thm 0b}. First, we show that every finite projective étale $\varphi_q$-module over $W_E(A_{\infty})/\varpi^n$ ($n\geq 1$) descends to a finite projective étale $\varphi_q$-module over $A_{\mathrm{mv},E}/\varpi^n$. Passing to the projective limit, we deduce that every finite free étale $\varphi_q$-module over $W_E(A_{\infty})$ descends to a finite projective étale $\varphi_q$-module over $A_{\mathrm{mv},E}$.
	Using a standard lemma in algebraic $K$-theory (Lemma \ref{existence of equivariant surjection of phi_q-modules}), we extend this result from finite free modules to finitely presented étale $\varphi_q$-modules. Finally, we take the $\mathcal{O}_K^{\times}$-action into consideration.
	
	In Section \ref{section 4}, we prove Theorem \ref{thm 0a}. We begin with a vanishing result for the continuous cohomology group $H^1_{\mathrm{cont}}(\Delta_1,A_{\infty}')$, which implies that the functor $(-)^{\Delta_1}$ sends finite free étale $(\varphi_q,(\mathcal{O}_K^{\times})^f)$-modules over $W_E(A_{\infty}')/\varpi^n$ to finite free étale $(\varphi_q,\mathcal{O}_K^{\times})$-modules over $W_E(A_{\infty})/\varpi^n$ for any $n\geq 1$. Passing to the projective limit, we prove that $(-)^{\Delta_1}$ is a functor sending finite free étale $(\varphi_q,(\mathcal{O}_K^{\times})^f)$-modules over $W_E(A_{\infty}')$ to finite free étale $(\varphi_q,(\mathcal{O}_K^{\times})^f)$-modules over $W_E(A_{\infty})$, with a quasi-inverse given by $W_E(A_{\infty}')\otimes_{m,W_E(A_{\infty})}-$. Using a structure theorem for finitely presented étale $\varphi_q$-modules over $W_E(A_{\infty}')$ (Proposition \ref{description of finitely presented etale phi_q-module over W_E(A_{infty}')}) and a dévissage argument, we conclude the proof.
	
	In Section \ref{section 5}, we recall some results on cyclotomic $(\varphi_q,\mathbb{Z}_p^{\times})$-modules and Lubin-Tate $(\varphi_q,\mathcal{O}_K^{\times})$-modules associated to finite type $\mathcal{O}_E$-representations of $\gal(\overline{K}/K)$. 
	
	In Section \ref{section 6}, we define the functor $D_{A_{\mathrm{mv},E}}^{(i)}$ for $0\leq i\leq f-1$ and show its connection to the cyclotomic $(\varphi_q,\mathbb{Z}_p^{\times})$-modules. As a consequence, we deduce the fully faithfulness of $D_{A_{\mathrm{mv},E}}^{(i)}$. If $K=\qp$, the functor $D_{A_{\mathrm{mv},E}}^{(i)}$ coincides with the classical cyclotomic $(\varphi_q,\mathbb{Z}_p^{\times})$-modules. However, for $K\neq \qp$, $D_{A_{\mathrm{mv},E}}^{(i)}$ is not essentially surjective. Additionally, we define the functors  $D_{A_{\mathrm{mv},E}}^{\otimes}$, $D_{B_{\mathrm{mv},E}}^{(i)}$ and $D_{B_{\mathrm{mv},E}}^{\otimes}$ and investigate some basic properties.
	
	We now give the main general notation (many have already been introduced, more specific notation will be introduced throughout).
	Let $p$ be a prime number. Let $K$ be a finite unramified extension of $\qp$ with ring of integers $\mathcal{O}_K$, uniformizer $p$ and residue field $\mathbb{F}_q$ where $q=p^f$, and let $E$ be a finite extension of $K$ with ring of integers $\mathcal{O}_E$, uniformizer $\varpi$ and residue field $\mathbb{F}$. Let $q'=q^{f'}\coloneq\#\mathbb{F}$. We fix an algebraic closure $\overline{K}$ of $K$, and let $\mathbb{C}_p$ be the $p$-adic completion of $\overline{K}$. We define $E_0\coloneq W(\mathbb{F})[1/p]$, which is the maximal unramified subextension of $E/K$, and we fix an embedding $E_0\hookrightarrow E$. We fix an embedding $\sigma_{0}: \mathcal{O}_K\hookrightarrow W(\mathbb{F})$, and for $1\leq i\leq f-1$, let $\sigma_i:\mathcal{O}_K\hookrightarrow W(\mathbb{F})$ be the unique embedding such that $\sigma_{i}(x)-\sigma_0(x)^{p^{i}}\in p W(\mathbb{F})$ for every $x\in\mathcal{O}_K$. We also denote by $\sigma_i$ the composition $K\xhookrightarrow{\sigma_i}E_0\hookrightarrow E$ for $0\leq i\leq f-1$. For a perfect $\mathbb{F}_p$-algebra $R$, we denote by $W(R)$ the ring of Witt vectors. For $r\in R$, we denote by $[r]\in W(R)$ the Teichmüller lift of $r$. If in addition the maximal algebraic extension of $\mathbb{F}_p$ contained in $R$ is $\mathbb{F}_q$ or $\mathbb{F}$, we define the ring of ramified Witt vectors $W_E(R)\coloneq \mathcal{O}_E\otimes_{\sigma_{0},\mathcal{O}_K}W(R)$ or $W_E(R)\coloneq \mathcal{O}_E\otimes_{W(\mathbb{F})}W(R)$. For a topological ring $R$, we denote by $R^{\circ}$ the set of power-bounded elements of $R$. 
	If $R$ is perfectoid, we denote its tilt by $R^{\flat}$ (\cite[Proposition 5.17]{scholze2012perfectoid}).
	
	\textbf{Acknowledgements}:
	We thank Christophe Breuil for suggesting this problem, for his helpful discussions, and for his careful reading of earlier drafts of this paper. His guidance has been invaluable throughout the course of this work. We also express our gratitude to Laurent Berger and Stefano Morra for their insightful comments and to Zhicheng Lyu for identifying typos.
	
	This work was supported by the Ecole Doctorale de Mathématiques Hadamard.
	
	\section{The multivariable topological ring $A_{\mathrm{mv},E}$}\label{section 1}
	
	In this section, we introduce the multivariable topological ring $A_{\mathrm{mv},E}$ endowed with a continuous endomorphism $\varphi_q$ and a continuous action by $\mathcal{O}_K^{\times}$ commuting with $\varphi_q$. The ring $A_{\mathrm{mv},E}$ is the $p$-adic analogue of the ring $A$ (\cite[$\S$2.2]{breuil2023multivariable}).
	
	Let $K$ be a finite unramified extension of $\qp$ with ring of integers $\mathcal{O}_K$, uniformizer $p$ and residue field $\mathbb{F}_q$ where $q=p^f$, and let $E$ be a finite extension of $K$ with ring of integers $\mathcal{O}_E$, uniformizer $\varpi$ and residue field $\mathbb{F}$. We define $E_0\coloneq W(\mathbb{F})[1/p]$, which is the maximal unramified subextension of $E/K$, and we fix an embedding $E_0\hookrightarrow E$. We fix an embedding $\sigma_{0}: K\hookrightarrow E_0$ once and for all, and for $1\leq i\leq f-1$, let $\sigma_i:K\hookrightarrow E_0$ be the unique embedding such that $\sigma_{i}(x)-\sigma_0(x)^{p^{i}}\in p W(\mathbb{F})$ for every $x\in\mathcal{O}_K$. We also denote by $\sigma_i$ the composition $K\xhookrightarrow{\sigma_i}E_0\hookrightarrow E$ for $0\leq i\leq f-1$. Let $N_0$ be the group of unipotent matrices of $\mathrm{GL}_2(\mathcal{O}_K)$, i.e. $N_0=\begin{pmatrix}
		1 & \mathcal{O}_K\\
		0& 1
	\end{pmatrix}$, and we consider the Iwasawa algebra $\mathcal{O}_K[\negthinspace[N_0]\negthinspace]$. If $K\neq \mathbb{Q}_2$, for $0\leq i\leq f-1$, we put
	\begin{align}\label{choice of variables}
		T_i\coloneq \sum_{\lambda\in\mathbb{F}_q^{\times}}[\lambda]^{-p^i}\begin{pmatrix}
			1 & [\lambda]\\
			0& 1
		\end{pmatrix}\in\mathcal{O}_K[\negthinspace[N_0]\negthinspace]
	\end{align}
	where $[\lambda]\in\mathcal{O}_K^{\times}$ is the Teichmüller lift of $\lambda\in \mathbb{F}_q^{\times}$, and for $K=\mathbb{Q}_2$ we put $T_0\coloneq\begin{pmatrix}
		1 & 1\\
		0& 1
	\end{pmatrix}-1$,
	then
	\[\mathcal{O}_K[\negthinspace[N_0]\negthinspace]=\mathcal{O}_K[\negthinspace[T_0,\dots,T_{f-1}]\negthinspace].\]
	The maximal ideal of $\mathcal{O}_K[\negthinspace[N_0]\negthinspace]$ is generated by $p,T_0,\dots,T_{f-1}$. We denote by $\mathfrak{p}_{N_0}$ the prime ideal of $\mathcal{O}_K[\negthinspace[N_0]\negthinspace]$ generated by $T_0,\dots,T_{f-1}$. Let $S$ be the multiplicative system generated by $\prod_{i=0}^{f-1}T_i$, on the ring $S^{-1}\mathcal{O}_K[\negthinspace[N_0]\negthinspace]$, we can define a valuation
	\[v_{N_0}\left(\frac{P}{(\prod_{i=0}^{f-1}T_{i})^k}\right)\coloneq  \mathrm{ord}_{\mathfrak{p}_{N_0}}(P)-kf\in\mathbb{Z}\cup\{+\infty\},\]
	where $P\in \mathcal{O}_K[\negthinspace[N_0]\negthinspace]=\mathcal{O}_K[\negthinspace[T_0,\dots,T_{f-1}]\negthinspace]$ and $\mathrm{ord}_{\mathfrak{p}_{N_0}}(P)=\sup\left\{m\in\mathbb{Z}_{\geq 0}: P\in \mathfrak{p}_{N_0}^m\right\}\in \mathbb{Z}_{\geq 0}\cup\{+\infty\}$.
	Let $\left(S^{-1}\mathcal{O}_K[\negthinspace[N_0]\negthinspace]\right)^{\wedge}$ be the completion of $S^{-1}\mathcal{O}_K[\negthinspace[N_0]\negthinspace]$ with respect to the valuation $v_{N_0}$.  
	We denote by $A_{\mathrm{mv},K}$ the $p$-adic completion of $\left(S^{-1}\mathcal{O}_K[\negthinspace[N_0]\negthinspace]\right)^{\wedge}$, i.e.
	\begin{align}\label{defn of A_{mv,K}}
		A_{\mathrm{mv},K}\coloneq \varprojlim_n \left(S^{-1}\mathcal{O}_K[\negthinspace[N_0]\negthinspace]\right)^{\wedge}/p^n.
	\end{align}
	Here, $\mathrm{mv}$ is the abbreviation for \textit{multivariable}. 
	Let $\left(\mathcal{O}_K[\negthinspace[T_0]\negthinspace]\left[\frac{1}{T_0}\right]\right)^{\wedge}$ denote the $p$-adic completion of $\mathcal{O}_K[\negthinspace[T_0]\negthinspace]\left[\frac{1}{T_0}\right]$, then any element $P$ of $A_{\mathrm{mv},K}$ can be uniquely written in the following form
	\begin{align}\label{expression of element of A_{mathrm{m.v.},E_0}}
		P=\sum_{\underline{n}\in\mathbb{Z}^{f-1}}f_{\underline{n}}(T_0)\prod_{i=1}^{f-1} \left(\frac{T_i}{T_0}\right)^{n_i}
	\end{align}
	with $f_{\underline{n}}(T_0)\in \left(\mathcal{O}_K[\negthinspace[T_0]\negthinspace]\left[\frac{1}{T_0}\right]\right)^{\wedge}$ such that for any $m\geq 1$, $f_{\underline{n}}(T_0)\in T_0^m\mathcal{O}_K[\negthinspace[T_0]\negthinspace]+p^m\left(\mathcal{O}_K[\negthinspace[T_0]\negthinspace]\left[\frac{1}{T_0}\right]\right)^{\wedge}$ for almost all $\underline{n}\in\mathbb{Z}^{f-1}$. 
	\begin{lem}\label{A_{mathrm{m.v.},K} is noetherian}
		The ring $A_{\mathrm{mv},K}$ is a Noetherian domain.
	\end{lem}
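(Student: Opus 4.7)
The plan is to reduce the problem to its mod-$p$ reduction: I expect $A_{\mathrm{mv},K}/p$ to be a classical Tate algebra over a complete discretely valued field, hence automatically a Noetherian integral domain, and then to lift both properties along $p$-adic completion using that $A_{\mathrm{mv},K}$ is $p$-torsion-free and $p$-adically separated.

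First I would introduce the change of variables $S_i = T_i/T_0$ for $1 \leq i \leq f-1$ and unpack the explicit description (\ref{expression of element of A_{mathrm{m.v.},E_0}}). Modulo $p$, using $(\mathcal{O}_K[\negthinspace[T_0]\negthinspace][T_0^{-1}])^{\wedge}/p \cong \mathbb{F}_q(\negthinspace(T_0)\negthinspace)$, the hybrid $(T_0,p)$-convergence condition collapses to the purely $T_0$-adic condition $v_{T_0}(\bar{f}_{\underline{n}}) \to +\infty$ as $|\underline{n}| \to \infty$, which yields
\begin{equation*}
A_{\mathrm{mv},K}/p \;\cong\; \mathbb{F}_q(\negthinspace(T_0)\negthinspace)\langle S_1^{\pm 1}, \ldots, S_{f-1}^{\pm 1}\rangle,
\end{equation*}
the Tate algebra of the closed torus of relative dimension $f-1$ over $\mathbb{F}_q(\negthinspace(T_0)\negthinspace)$. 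Since Tate algebras over complete non-archimedean fields are classically Noetherian integral domains (e.g.\ Bosch--G\"untzer--Remmert, \emph{Non-Archimedean Analysis}, \S5.2 and \S6.1), both conclusions hold at the mod-$p$ level.

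Next I would lift to $A_{\mathrm{mv},K}$. By construction $A_{\mathrm{mv},K}$ is $p$-adically complete and Hausdorff; since $\mathcal{O}_K[\negthinspace[N_0]\negthinspace]$ is free over $\mathcal{O}_K$, $p$-torsion-freeness is inherited successively through the localization at $S$, the $v_{N_0}$-adic completion, and the subsequent $p$-adic completion, so multiplication by $p$ is injective on $A_{\mathrm{mv},K}$. Therefore the associated graded for the $p$-adic filtration satisfies
\begin{equation*}
\mathrm{gr}_{(p)}(A_{\mathrm{mv},K}) \;\cong\; (A_{\mathrm{mv},K}/p)[t],
\end{equation*}
a polynomial ring over a Noetherian ring and thus Noetherian; Matsumura's criterion (\emph{Commutative Ring Theory}, Thm.~8.12) then yields that $A_{\mathrm{mv},K}$ is Noetherian. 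For the domain property, any nonzero $x \in A_{\mathrm{mv},K}$ factors uniquely as $p^a x'$ with $x' \notin p A_{\mathrm{mv},K}$ by $p$-adic separatedness; if $xy = 0$ with $x,y$ both nonzero, writing $y = p^b y'$ analogously and using $p$-torsion-freeness forces $x'y' = 0$, contradicting that $\bar{x'}\,\bar{y'}$ is a nonzero product in the domain $A_{\mathrm{mv},K}/p$.

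The main delicate step will be the identification of $A_{\mathrm{mv},K}/p$ with the Tate algebra: one has to verify cleanly that the mixed $(T_0,p)$-convergence condition in (\ref{expression of element of A_{mathrm{m.v.},E_0}}) degenerates exactly to the standard Tate-algebra condition modulo $p$, and that $(\mathcal{O}_K[\negthinspace[T_0]\negthinspace][T_0^{-1}])^{\wedge}/p$ really equals $\mathbb{F}_q(\negthinspace(T_0)\negthinspace)$. Once that identification is in hand, the remaining lift arguments are formal applications of standard principles for complete Hausdorff filtered rings.
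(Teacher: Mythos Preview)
Your approach is essentially the same as the paper's: identify $A_{\mathrm{mv},K}/p$ as a Noetherian integral domain (the paper cites this as the ring $A_q$ from \cite{breuil2023multivariable}, which is precisely your Tate algebra $\mathbb{F}_q(\negthinspace(T_0)\negthinspace)\langle S_1^{\pm 1},\dots,S_{f-1}^{\pm 1}\rangle$), then lift Noetherianness along $p$-adic completion and deduce the domain property from $p$-torsion-freeness plus the mod-$p$ reduction. The only cosmetic differences are that the paper invokes \cite[Tag 05GH]{stacks-project} for the Noetherian lift where you use Matsumura's associated-graded criterion, and the paper reads off $p$-torsion-freeness directly from the explicit expansion (\ref{expression of element of A_{mathrm{m.v.},E_0}}) rather than tracking it through successive completions; using the explicit description is slightly cleaner than your inheritance argument, since checking that multiplication by $p$ remains injective after the $v_{N_0}$-adic completion is not entirely formal (the function $v_{N_0}$ is not multiplicative when $f\geq 2$).
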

	
	\begin{proof}
		Since $\left(S^{-1}\mathcal{O}_K[\negthinspace[N_0]\negthinspace]\right)^{\wedge}/p\cong A_q$ is a Noetherian domain (\cite[Corollary 3.1.1.2]{breuil2023conjectures}), where $A_q$ is defined in \cite[$\S$2.2]{breuil2023multivariable}, by \cite[\href{https://stacks.math.columbia.edu/tag/05GH}{Tag 05GH}]{stacks-project}, $A_{\mathrm{mv},K}$ is a Noetherian ring. Using (\ref{expression of element of A_{mathrm{m.v.},E_0}}), we deduce that $p$ is not a zero-divisor of $A_{\mathrm{mv},K}$. Then, by reducing modulo $p$, one can easily deduce that $A_{\mathrm{mv},K}$ is a domain.
	\end{proof}
	
	\begin{defn}\label{continuous (phi,Gamma)-action}
		Let $\Gamma$ be a topological group, and let $R$ be a topological ring. A \textit{continuous} $(\varphi,\Gamma)$-\textit{action} on $R$ is a continuous endomorphism $\varphi$ of $R$ and a continuous action of $\Gamma$ on $R$ which commutes with $\varphi$. 
	\end{defn}
	
	We endow $\mathcal{O}_K[\negthinspace[N_0]\negthinspace]$ with the $(p,T_0,\dots,T_{f-1})$-adic topology, then $\mathcal{O}_K[\negthinspace[N_0]\negthinspace]$ is a complete Hausdorff adic ring. Since $N_0\cong\mathcal{O}_K$, the multiplication by $\mathcal{O}_K$ on itself induces a continuous $(\varphi,\mathcal{O}_K^{\times})$-action ($\varphi$ is given by the multiplication by $p$) which satisfies
	\begin{align}\label{phi,O_K^{*} action on A_{mv,E_0}}
		\begin{cases}
			\varphi(T_i)\equiv T_{i-1}^p \mod p, & \\
			a(T_i)\equiv [\overline{a}]^{p^i} T_i \mod p\mathcal{O}_K[\negthinspace[N_0]\negthinspace]+(T_0,\dots,T_{f-1})^p, & a\in\mathcal{O}_K^{\times}. 
		\end{cases}
	\end{align}
	Hence the continuous $(\varphi,\mathcal{O}_K^{\times})$-action on $\mathcal{O}_K[\negthinspace[N_0]\negthinspace]$ extends uniquely to a continuous $(\varphi,\mathcal{O}_K^{\times})$-action on $A_{\mathrm{mv},K}$. Define $A_{\mathrm{mv},E_0}\coloneq W(\mathbb{F})\otimes_{\sigma_0,\mathcal{O}_K}A_{\mathrm{mv},K}$, $T_{\sigma_i}\coloneq 1\otimes T_i$, the continuous $(\varphi,\mathcal{O}_K^{\times})$-action on $A_{\mathrm{mv},K}$ extends $W(\mathbb{F})$-linearly to a continuous $(\varphi,\mathcal{O}_K^{\times})$-action on $A_{\mathrm{mv},E_0}$. Let $\varphi_q\coloneq \varphi^f$, we have:
	\begin{align}\label{phi_q,O_K^{*} action on A_{mv,E_0}}
		\begin{cases}
			\varphi_q(T_{\sigma_{i}})\in T_{{\sigma_{i}}}^q + pW(\mathbb{F})[\negthinspace[N_0]\negthinspace], & \\
			a(T_{\sigma_{i}})\in  \sigma_{i}([\overline{a}])T_{\sigma_{i}} + pW(\mathbb{F})[\negthinspace[N_0]\negthinspace]+\mathfrak{p}_{N_0}^p, & a\in\mathcal{O}_K^{\times}. 
		\end{cases}
	\end{align}
	where $\mathfrak{p}_{N_0}=T_{\sigma_{0}}W(\mathbb{F})[\negthinspace[N_0]\negthinspace]+\cdots+T_{\sigma_{f-1}}W(\mathbb{F})[\negthinspace[N_0]\negthinspace]$ is a prime ideal of $W(\mathbb{F})[\negthinspace[N_0]\negthinspace]$.
	
	Recall that there is a Tate algebra $A\coloneq\mathbb{F}(\negthinspace(T_{\sigma_{0}})\negthinspace)\left\langle\left(\frac{T_{\sigma_i}}{T_{\sigma_{0}}}\right)^{\pm 1}: 1\leq i\leq f-1\right\rangle$, and there is an $\mathbb{F}$-linear continuous $(\varphi_q,\mathcal{O}_K^{\times})$-action on $A$ (see \cite[$\S$2.2 (18)]{breuil2023multivariable}, where the variable $Y_{\sigma_i}$ of \textit{loc. cit.} is denoted by $T_{\sigma_{i}}$ here) satisfying:
	\begin{align}\label{action of phi_q on A}
		\begin{cases}
			\varphi_q(T_{\sigma_{i}})= T_{\sigma_{i}}^q & \\
			a(T_{\sigma_{i}})\in  \sigma_{i}(\overline{a})T_{\sigma_{i}}+ \mathfrak{m}_{N_0}^p, & a\in\mathcal{O}_K^{\times}
		\end{cases}
	\end{align}
	where $\mathfrak{m}_{N_0}=T_{\sigma_{0}}\mathbb{F}[\negthinspace[N_0]\negthinspace]+\cdots+T_{\sigma_{f-1}}\mathbb{F}[\negthinspace[N_0]\negthinspace]$ is the maximal ideal of $\mathbb{F}[\negthinspace[N_0]\negthinspace]$. 
	The subset of $A$ consisting of power bounded elements is $A^{\circ}=\mathbb{F}[\negthinspace[T_{\sigma_{0}}]\negthinspace]\left\langle\left(\frac{T_{\sigma_i}}{T_{\sigma_{0}}}\right)^{\pm 1}: 1\leq i\leq f-1\right\rangle$, which is an open adic subring of $A$. Then $A_{\mathrm{mv},E_0}/p\to A, T_{\sigma_i} \mapsto T_{\sigma_i}$ is an isomorphism which commutes with $\varphi_q$ and the action of $\mathcal{O}_K^{\times}$.

	\begin{lem}\label{phi_q is flat}
		Let $A_{\mathrm{mv},E}\coloneq \mathcal{O}_E\otimes_{W(\mathbb{F})}A_{\mathrm{mv},E_0}$, and we still denote by $\varphi_q$ the endomorphism $\mathrm{id}\otimes\varphi_q$ of $A_{\mathrm{mv},E}$. Then $\varphi_q: A_{\mathrm{mv},E}\to A_{\mathrm{mv},E}$ is a finitely presented faithfully flat ring map.
	\end{lem}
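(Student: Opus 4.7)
The strategy is to prove the stronger claim that $A_{\mathrm{mv},E}$, viewed as a module over itself via $\varphi_q$, is finite free of rank $q^f$. First I would work modulo $\varpi$: since $p \in \varpi \mathcal{O}_E$, (\ref{phi_q,O_K^{*} action on A_{mv,E_0}}) gives $\varphi_q(T_{\sigma_i}) \equiv T_{\sigma_i}^q \pmod\varpi$, and the identification $A_{\mathrm{mv},E}/\varpi \cong A_{\mathrm{mv},E_0}/p \cong A$ makes $\varphi_q \bmod \varpi$ literally the $q$-th power map on the variables $T_{\sigma_i}$ of the Tate algebra $A$ described after (\ref{action of phi_q on A}). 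Consequently $\varphi_q(A) = \mathbb{F}((T_{\sigma_0}^q))\langle (T_{\sigma_i}/T_{\sigma_0})^{\pm q} : 1 \le i \le f-1\rangle$, and using that $\mathbb{F}((T))$ is free of rank $q$ over $\mathbb{F}((T^q))$ with basis $\{1, T, \ldots, T^{q-1}\}$ (and the analogous fact for each Tate variable $T_{\sigma_i}/T_{\sigma_0}$), a direct expansion computation shows that $A$ is free of rank $q^f$ over $\varphi_q(A)$ with basis $e_{\underline{n}} := \prod_{i=0}^{f-1} T_{\sigma_i}^{n_i}$ for $\underline{n} \in \{0,\dots,q-1\}^f$.

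Next I would lift this to $A_{\mathrm{mv},E}$ by considering the $A_{\mathrm{mv},E}$-linear map
\[ \Phi : A_{\mathrm{mv},E}^{\oplus q^f} \longrightarrow A_{\mathrm{mv},E}, \qquad (a_{\underline{n}})_{\underline{n}} \longmapsto \sum_{\underline{n}} \varphi_q(a_{\underline{n}})\, e_{\underline{n}}, \]
where the target carries the $A_{\mathrm{mv},E}$-module structure induced by $\varphi_q$. By the preceding paragraph, $\Phi \bmod \varpi$ is an isomorphism. Both sides are $\varpi$-adically complete (since $A_{\mathrm{mv},E}$ is $p$-adically complete by construction and $\varpi^e = p$ up to a unit) and $\varpi$-torsion free (combining the $p$-torsion-freeness of $A_{\mathrm{mv},K}$ established in the proof of Lemma~\ref{A_{mathrm{m.v.},K} is noetherian} with the fact that $A_{\mathrm{mv},K} \subset A_{\mathrm{mv},E_0} \subset A_{\mathrm{mv},E}$ are finite free extensions). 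Surjectivity of $\Phi$ then follows by the standard topological Nakayama argument: lift a given target element successively modulo $\varpi^k$ and take the limit in the $\varpi$-adically complete source. Injectivity follows because if $\Phi(a) = 0$ then $a \in \varpi A_{\mathrm{mv},E}^{\oplus q^f}$, so writing $a = \varpi a'$ we get $\Phi(a') = 0$ by $\varpi$-torsion-freeness of the target; iterating, $a \in \bigcap_k \varpi^k A_{\mathrm{mv},E}^{\oplus q^f} = 0$.

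Hence $A_{\mathrm{mv},E}$ is finite free of rank $q^f$ over itself via $\varphi_q$. Faithful flatness is immediate because a nonzero finite free module is faithfully flat, and finite presentation of the ring map follows from Noetherianity of $A_{\mathrm{mv},E}$ (inherited from Lemma~\ref{A_{mathrm{m.v.},K} is noetherian} via the finite-free tower above): express $A_{\mathrm{mv},E}$ as a quotient of a polynomial $\varphi_q(A_{\mathrm{mv},E})$-algebra by a kernel ideal, which is finitely generated by Noetherianity. The only genuinely technical point is the $\varpi$-adic lifting in the middle step, but the completeness and torsion-freeness hypotheses are built into the ring and present no essential obstacle.
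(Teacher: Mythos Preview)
Your proof is correct and in fact establishes the stronger statement that $A_{\mathrm{mv},E}$ is finite free of rank $q^f$ over itself via $\varphi_q$. The argument is sound: the explicit basis $\{e_{\underline{n}}\}$ works modulo $\varpi$ (after noting that your $e_{\underline{n}}$ differ from the ``natural'' basis $T_{\sigma_0}^{m_0}\prod_{i\ge 1}(T_{\sigma_i}/T_{\sigma_0})^{m_i}$ only by units in $\varphi_q(A)$, since $T_{\sigma_0}^q$ is a unit there), and the $\varpi$-adic lift goes through because both sides of $\Phi$ are $\varpi$-complete and $\varpi$-torsion-free, with $\varphi_q(\varpi)=\varpi$ ensuring the twisted and untwisted $\varpi$-filtrations coincide.

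The paper takes a different route. Rather than producing an explicit free basis, it argues flatness abstractly: modulo $\varpi$ one has $\varphi_q$ on $A=\mathbb{F}\otimes_{\mathbb{F}_q}A_q$, and since $A_q$ is regular, Kunz's theorem gives that the $q$-Frobenius on $A_q$ is flat, hence so is its base change $\varphi_q$ on $A$. Flatness is then lifted to $A_{\mathrm{mv},E}$ via the local flatness criterion of \cite[\href{https://stacks.math.columbia.edu/tag/0AGW}{Tag 0AGW}]{stacks-project}. Faithful flatness is obtained by checking surjectivity on closed points of $\mathrm{Spec}$, and finite presentation by exhibiting the same monomials you use (there only as generators, via Nakayama) together with Noetherianity. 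Your approach is more elementary in that it avoids both Kunz's theorem and the regularity input for $A_q$, and it yields the sharper finite-free conclusion; the paper's approach is more conceptual and would transfer more readily to settings where regularity is known but no explicit basis is visible.
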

	\begin{proof}
		Let $M= A_{\mathrm{mv},E}$ be an $ A_{\mathrm{mv},E}$-module, where the scalar multiplication is given by $(a,m)\mapsto \varphi_q(a)\cdot m$, $a\in A_{\mathrm{mv},E}$, $m\in M$. First we show that $M$ is a flat $A_{\mathrm{mv},E}$-module. The ring $A_{\mathrm{mv},E}$ is a $\varpi$-adically complete Noetherian domain by Lemma \ref{A_{mathrm{m.v.},K} is noetherian} where $\varpi$ is a uniformizer of $\mathcal{O}_E$. Since $ A_q$ is a regular domain (\cite[Corollary 3.1.1.2]{breuil2023conjectures}), by Kunz's theorem (\cite{kunz1969characterizations}), the endomorphism $\varphi_q: A_q\to A_q$ is flat. As $A_{\mathrm{mv},E}/\varpi\cong A=\mathbb{F}\otimes_{\sigma_{0},\mathbb{F}_q}A_q$ and $\varphi_q: A\to A$ is the base change of $\varphi_q: A_q\to A_q$, we see that $\varphi_q: A_{\mathrm{mv},E}/\varpi\to A_{\mathrm{mv},E}/\varpi$ is flat, i.e. $M/\varpi$ is a flat $A_{\mathrm{mv},E}/\varpi$-module. Moreover, applying the functor $M\otimes_{A_{\mathrm{mv},E}}-$ to the short exact sequence
		\[0\to A_{\mathrm{mv},E}\xrightarrow{\cdot\varpi} A_{\mathrm{mv},E}\to A_{\mathrm{mv},E}/\varpi\to 0,\]
		we get a long exact sequence
		\[\cdots\to\mathrm{Tor}_1^{A_{\mathrm{mv},E}}(M,A_{\mathrm{mv},E})\to \mathrm{Tor}_1^{A_{\mathrm{mv},E}}(M,A_{\mathrm{mv},E}/\varpi)\to M\xrightarrow{\cdot \varpi}M\to M/\varpi\to 0.\]
		Since $\mathrm{Tor}_1^{A_{\mathrm{mv},E}}(M,A_{\mathrm{mv},E})=0$ and $M\xrightarrow{\cdot \varpi}M$ is injective,  $\mathrm{Tor}_1^{A_{\mathrm{mv},E}}(M,A_{\mathrm{mv},E}/\varpi)=0$. Then applying \cite[\href{https://stacks.math.columbia.edu/tag/0AGW}{Tag 0AGW}]{stacks-project}, we deduce that $M$ is a flat $A_{\mathrm{mv},E}$-module, i.e. $\varphi_q: A_{\mathrm{mv},E}\to A_{\mathrm{mv},E}$ is flat. 
		
		To show that $\varphi_q: A_{\mathrm{mv},E}\to A_{\mathrm{mv},E}$ is faithfully flat, by \cite[\href{https://stacks.math.columbia.edu/tag/00HQ}{Tag 00HQ}]{stacks-project}, it suffices to show that every closed point of $\mathrm{Spec}A_{\mathrm{mv},E}$ lies in the image of the morphism
		$\varphi_q: \mathrm{Spec}A_{\mathrm{mv},E}\to \mathrm{Spec}A_{\mathrm{mv},E}$. Since $\varpi$ lies in the Jacobson radical of $A_{\mathrm{mv},E}$, and the Jacobson radical is the intersection of all maximal ideals, it suffices to show the surjectivity of the morphism
		\[\varphi_q: \mathrm{Spec}A_{\mathrm{mv},E}/\varpi\to \mathrm{Spec}A_{\mathrm{mv},E}/\varpi.\]
		As $A_{\mathrm{mv},E}/\varpi\cong A$, and $\varphi_q^{f'}: A\to A$ is the Frobenius map $x\mapsto x^{q'}$, where $\#\mathbb{F}=q'=q^{f'}$, we see that $\varphi_q^{f'}: \mathrm{Spec}A\to \mathrm{Spec}A$ is surjective. Therefore, $\varphi_q: A_{\mathrm{mv},E}\to A_{\mathrm{mv},E}$ is faithfully flat.
		
		It remains to show that $\varphi_q$ is finitely presented. 
		We put 
		\[S\coloneq \left\{\underline{n}=(n_0,\dots,n_{f-1})\in\mathbb{Z}^f: 0\leq n_0,\dots,n_{f-1}\leq q-1\right\},\]
		and $T_{\underline{n}}\coloneq T_{\sigma_{0}}^{n_0}\prod_{i=1}^{f-1}\left(\dfrac{T_{\sigma_{i}}}{T_{\sigma_{0}}}\right)^{n_i}$ for $\underline{n}= (n_0,\dots,n_{f-1})\in S$. By (\ref{action of phi_q on A}), we have
		\[A_{\mathrm{mv},E}=\sum_{\underline{n}\in S}\varphi_q(A_{\mathrm{mv},E})T_{\underline{n}}+\varpi A_{\mathrm{mv},E}.\]
		Since $\varpi$ is contained in the Jacobson radical of $A_{\mathrm{mv},E}$, by Nakayama's lemma, $A_{\mathrm{mv},E}=\sum\limits_{\underline{n}\in S}\varphi_q(A_{\mathrm{mv},E})T_{\underline{n}}$, hence $\varphi_q: A_{\mathrm{mv},E}\to A_{\mathrm{mv},E}$ is of finite type. Since $A_{\mathrm{mv},E}$ is a Noetherian domain by Lemma \ref{A_{mathrm{m.v.},K} is noetherian}, we deduce that $\varphi_q$ is finitely presented.
	\end{proof}

	For $n\geq 1$, $A_{\mathrm{mv},E_0}/p^n$ is the set of elements of the form
	\begin{align*}
		\sum_{\underline{n}\in\mathbb{Z}^{f-1}}f_{\underline{n}}(T_{\sigma_{0}})\prod_{i=1}^{f-1}\left(\frac{T_{\sigma_{i}}}{T_{\sigma_{0}}}\right)^{n_i}
	\end{align*}
	with $f_{\underline{n}}(T_{\sigma_{0}})\in (W(\mathbb{F})/p^n) [\negthinspace[T_{\sigma_{0}}]\negthinspace]\left[\frac{1}{T_{\sigma_{0}}}\right]$ such that for any $m\geq 0$, $f_{\underline{n}}(T_{\sigma_{0}})\in T_{\sigma_{0}}^m\cdot (W(\mathbb{F})/p^n)[\negthinspace[T_{\sigma_{0}}]\negthinspace]$ for almost all $\underline{n}$. Thus the map
	\begin{align*}
		\mathrm{ord}_{T_{\sigma_{0}}}: (W(\mathbb{F})/p^n) [\negthinspace[T_{\sigma_{0}}]\negthinspace]\left[\frac{1}{T_{\sigma_{0}}}\right]&\to \mathbb{Z}\cup \{+\infty\}\\
		f(T_{\sigma_{0}})&\mapsto \sup\left\{m\in\mathbb{Z}: f(T_{\sigma_{0}})\in T_{\sigma_{0}}^m (W(\mathbb{F})/p^n)[\negthinspace[T_{\sigma_{0}}]\negthinspace] \right\}
	\end{align*}
	induces a map
	\begin{align*}
		v_{n}:\quad A_{\mathrm{mv},E_0}\qquad \longtwoheadrightarrow\qquad A_{\mathrm{mv},E_0}/p^n \qquad & \longrightarrow \mathbb{Z}\cup \{+\infty\}\\
		\sum_{\underline{n}\in\mathbb{Z}^{f-1}}f_{\underline{n}}(T_{\sigma_{0}})\prod_{i=1}^{f-1}\left(\frac{T_{\sigma_{i}}}{T_{\sigma_{0}}}\right)^{n_i} & \longmapsto \inf_{\underline{n}\in\mathbb{Z}^{f-1}}\mathrm{ord}_{T_{\sigma_{0}}}\left(f_{\underline{n}}(T_{\sigma_{0}})\right).
	\end{align*}
	Let $P,Q$ be elements of $A_{\mathrm{mv},E_0}$, $m\in\mathbb{Z}$, $n\geq 1$, it is easy to check that $v_n$ satisfies the following properties:
	\begin{enumerate}
		\item[(i)] 
		$v_{n}(P)=+\infty$ if and only if $P\in p^nA_{\mathrm{mv},E_0}$,
		\item[(ii)] 
		$v_n(P+Q)\geq \min(v_n(P),v_n(Q))$,
		\item[(iii)] 
		$v_n(P\cdot Q)\geq v_n(P)+v_{n}(Q)$ (it is an inequality instead of an equality, because $W(\mathbb{F})/p^n$ is not an integral domain if $n\geq 2$),
		\item[(iv)] 
		$v_n(P)\geq v_{n+1}(P)$,
		\item[(v)] 
		$v_{n}(T_{\sigma_i}^mP)=m+v_n(P)$, $v_{n+m}(p^mP)=v_{n}(P)$.
	\end{enumerate}
	
	We endow the ring $A_{\mathrm{mv},E_0}$ with the initial topology with respect to $\{v_n: n\geq 1\}$, i.e. the coarsest topology on $A_{\mathrm{mv},E_0}$ such that the map $A_{\mathrm{mv},E_0}\to \mathbb{R}_{\geq 0}, P\mapsto p^{-v_n(P)}$ is continuous for every $n\geq 1$. In other words, let
	\[U_{m,n}\coloneq \{P\in A_{\mathrm{mv},E_0}: v_m(P)\geq n\},\quad m\geq 1, n\in\mathbb{Z},\]
	then $\{U_{m,n}:m\geq 1, n\in\mathbb{Z}\}$ is an open neighbourhood basis of $0\in A_{\mathrm{mv},E_0}$. 
	
	Since $\{U_{m,n}:m\geq 1, n\in\mathbb{Z}\}$ are abelian subgroups of $A_{\mathrm{mv},E_0}$, we see that $A_{\mathrm{mv},E_0}$ is an additive topological group, i.e. the map $A_{\mathrm{mv},E_0}\times A_{\mathrm{mv},E_0}\to A_{\mathrm{mv},E_0}$, $(P,Q)\mapsto P-Q$ is continuous. 
	
	For $n\in\mathbb{Z}$, we put $F_n\coloneq\left\{P\in  A_{\mathrm{mv},E_0}: v_m(P)\geq n, \forall\ m\geq 1\right\}$, then $U_{m,n}=p^m A_{\mathrm{mv},E_0}+F_n.$
	For $0\leq i\leq f-1$, by (\ref{phi_q,O_K^{*} action on A_{mv,E_0}}), we can write
	\begin{align}\label{varphi_q(T_{sigma_{i}})}
		\varphi_q(T_{\sigma_{i}})=T_{\sigma_{i}}^q+pQ_{i}
	\end{align}
	for some $Q_i\in W(\mathbb{F})[\negthinspace[N_0]\negthinspace]$.
	In $A_{\mathrm{mv},E_0}$, we have
	\begin{equation}\label{computation 1}
		\begin{aligned}
			\varphi_q\left(\frac{T_{\sigma_{i}}}{T_{\sigma_{0}}}\right) &= \dfrac{T_{\sigma_{i}}^q+pQ_i}{T_{\sigma_{0}}^q+pQ_0} = \dfrac{T_{\sigma_{i}}^q+pQ_i}{T_{\sigma_{0}}^q} \cdot\dfrac{1}{1+pT_{\sigma_{0}}^{-q}Q_0}\\
			&=\left(\left(\frac{T_{\sigma_{i}}}{T_{\sigma_{0}}}\right)^q+pT_{\sigma_{0}}^{-q}Q_i\right)\cdot\sum_{n=0}^{\infty}(-1)^np^nT_{\sigma_{0}}^{-qn}Q_0^n\\
			&=\left(\frac{T_{\sigma_{i}}}{T_{\sigma_{0}}}\right)^q+\sum_{n=1}^{\infty}(-1)^n\left(\frac{p}{T_{\sigma_{0}}^q}\right)^nQ_0^{n-1}\left(\left(\frac{T_{\sigma_{i}}}{T_{\sigma_{0}}}\right)^qQ_0-Q_i\right).
		\end{aligned}
	\end{equation}
	Let $S_{0,i,n}\coloneq (-1)^nQ_0^{n-1}\left(\left(\frac{T_{\sigma_{i}}}{T_{\sigma_{0}}}\right)^qQ_0-Q_i\right)$ for $n\geq 1$, then $S_{0,i,n}\in F_0$, and
	\begin{align}
		\varphi_q\left(\frac{T_{\sigma_{i}}}{T_{\sigma_{0}}}\right) =\left(\frac{T_{\sigma_{i}}}{T_{\sigma_{0}}}\right)^q+\sum_{n=1}^{\infty} \left(\frac{p}{T_{\sigma_{0}}^q}\right)^nS_{0,i,n}.
	\end{align}
	Similarly, by an easy computation, we can show that for any $\underline{n}=(n_1,\dots,n_{f-1})\in\mathbb{Z}^{f-1}$, there exists $\{S_{\underline{n},m}\in F_0\}_{m\geq 1}$ such that
	\begin{align}\label{computation 2}
		\varphi_q\left(\prod_{i=1}^{f-1}\left(\frac{T_{\sigma_{i}}}{T_{\sigma_{0}}}\right)^{n_i}\right)=\prod_{i=1}^{f-1}\left(\frac{T_{\sigma_{i}}}{T_{\sigma_{0}}}\right)^{qn_i}+\sum_{m=1}^{\infty} \left(\frac{p}{T_{\sigma_{0}}^q}\right)^m S_{\underline{n},m}.
	\end{align}
	For any $P\in A_{\mathrm{mv},E_0}$ and $n\geq 1$, we write
	\[P\equiv \sum_{\underline{n}\in\mathbb{Z}^{f-1}}\left(\sum_{k\in\mathbb{Z}}a_{\underline{n},k} T_{\sigma_{0}}^k\right)\prod_{i=1}^{f-1}\left(\frac{T_{\sigma_{i}}}{T_{\sigma_{0}}}\right)^{n_i} \mod p^n,\ a_{\underline{n},k}\in W(\mathbb{F})/p^n,\]
	with $a_{\underline{n},k}=0$ for $k\ll 0$, then $v_n(P)=\inf\{k: a_{\underline{n},k}\neq 0\ \text{for some}\ \underline{n}\}$, and
	\begin{align*}
		\varphi_q(P)&\equiv \sum_{\underline{n}\in\mathbb{Z}^{f-1},k\in\mathbb{Z}}a_{\underline{n},k} \varphi_q(T_{\sigma_{0}})^k\varphi_q\left(\prod_{i=1}\left(\frac{T_{\sigma_{i}}}{T_{\sigma_{0}}}\right)^{n_i}\right) \mod p^n
	\end{align*}
	Using (\ref{varphi_q(T_{sigma_{i}})})  and (\ref{computation 2}), we get
	\begin{align*}
		\varphi_q(T_{\sigma_{0}})^k\varphi_q\left(\prod_{i=1}^{f-1}\left(\frac{T_{\sigma_{i}}}{T_{\sigma_{0}}}\right)^{n_i}\right)&=\left(T_{\sigma_{0}}^q+pQ_0\right)^k\cdot \left( \prod_{i=1}^{f-1}\left(\frac{T_{\sigma_{i}}}{T_{\sigma_{0}}}\right)^{qn_i}+\sum_{m=1}^{\infty} \left(\frac{p}{T_{\sigma_{0}}^q}\right)^m\cdot S_{\underline{n},m}\right)\\
		&=T_{\sigma_{0}}^{qk}\left(1+\dfrac{p}{T_{\sigma_{0}}^q} Q_0\right)^k \cdot \left( \prod_{i=1}^{f-1}\left(\frac{T_{\sigma_{i}}}{T_{\sigma_{0}}}\right)^{qn_i}+\sum_{m=1}^{\infty} \left(\frac{p}{T_{\sigma_{0}}^q}\right)^m\cdot S_{\underline{n},m}\right)\\
		&=T_{\sigma_{0}}^{qk} \left(\sum_{m=0}^{k}\binom{k}{m}\left(\dfrac{p}{T_{\sigma_{0}}^q}\right)^mQ_0^m\right)\cdot \left( \prod_{i=1}^{f-1}\left(\frac{T_{\sigma_{i}}}{T_{\sigma_{0}}}\right)^{qn_i}+\sum_{m=1}^{\infty} \left(\frac{p}{T_{\sigma_{0}}^q}\right)^m\cdot S_{\underline{n},m}\right)\\
		&=T_{\sigma_{0}}^{qk}\cdot\sum_{m=0}^{\infty}\left(\frac{p}{T_{\sigma_{0}}^q}\right)^m\cdot S_{\underline{n},m}'
	\end{align*}
	for some $S_{\underline{n},m}'\in F_0$, $m\geq 0$, which implies that for $n\geq 1$,
	\[v_n\left(\varphi_q(T_{\sigma_{0}})^k\varphi_q\left(\prod_{i=0}^{f-1}\left(\frac{T_{\sigma_{i}}}{T_{\sigma_{0}}}\right)^{n_i}\right)\right)\geq qk-q(n-1),\]
	hence
	\[v_{n}(\varphi_q(P))\geq q v_{n}(P)-q(n-1),\quad P\in A_{\mathrm{mv},E_0}.\]
	By induction on $m\geq 1$, we have
	\begin{align}\label{phi_q and the semi-valuation}
		v_n(\varphi_q^m(P))\geq q^m v_{n}(P)-(n-1)\cdot\dfrac{q^{m+1}-q}{q-1}, \quad P\in A_{\mathrm{mv},E_0}, n\geq 1.
	\end{align}
	Moreover, since $v_1$ is induced by the Gauss norm on the Tate algebra $A_{\mathrm{mv},E_0}/p\cong A$ which is multiplicative (\cite[$\S$2.2 p.13]{bosch2014lectures}) and $\varphi_q^{f'}$ induces the $q'$-th power map on $A_{\mathrm{mv},E_0}/p\cong A$, we get
	\begin{align}\label{phi_q and the semi-valuation v_1}
		v_1(\varphi_q^m(P))=q^mv_1(P),\quad m\geq 1, P\in A_{\mathrm{mv},E_0}.
	\end{align}
	
	\begin{cor}\label{inverse image of a multiple of p under phi_q is a multiple of p}
		Let $P$ be an element of $A_{\mathrm{mv},E_0}$ such that $\varphi_q(P)\in p^nA_{\mathrm{mv},E_0}$, $n\geq 1$. Then $P\in p^nA_{\mathrm{mv},E_0}$.
	\end{cor}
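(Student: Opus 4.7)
The plan is to reduce the statement to the case $n=1$ by induction on $n$, and then to handle the base case using that $A_{\mathrm{mv},E_0}/p \cong A$ is a domain (so $\varphi_q$ modulo $p$ is injective), possibly via the refined identity (\ref{phi_q and the semi-valuation v_1}).

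For the base case $n=1$, assume $\varphi_q(P) \in p A_{\mathrm{mv},E_0}$. Reducing modulo $p$ gives $\varphi_q(\overline{P}) = 0$ in $A_{\mathrm{mv},E_0}/p \cong A$. Since $A$ is a Tate algebra that is an integral domain and $\varphi_q \colon A \to A$ is a nonzero ring homomorphism, it is injective, so $\overline{P} = 0$, i.e. $P \in p A_{\mathrm{mv},E_0}$. Equivalently, one can invoke (\ref{phi_q and the semi-valuation v_1}) with $m=1$: $v_1(\varphi_q(P)) = q v_1(P)$, so $v_1(\varphi_q(P)) = +\infty$ forces $v_1(P) = +\infty$, which by property (i) means $P \in p A_{\mathrm{mv},E_0}$.

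For the inductive step, assume the statement for $n-1$ and suppose $\varphi_q(P) \in p^n A_{\mathrm{mv},E_0} \subset p^{n-1}A_{\mathrm{mv},E_0}$. By the induction hypothesis, $P = p^{n-1}P'$ for some $P' \in A_{\mathrm{mv},E_0}$, hence $p^{n-1}\varphi_q(P') = \varphi_q(P) \in p^n A_{\mathrm{mv},E_0}$. Because $A_{\mathrm{mv},E_0}$ is a domain by Lemma \ref{A_{mathrm{m.v.},K} is noetherian} (so multiplication by $p^{n-1}$ is injective), we obtain $\varphi_q(P') \in p A_{\mathrm{mv},E_0}$, and then the base case yields $P' \in p A_{\mathrm{mv},E_0}$, so $P \in p^n A_{\mathrm{mv},E_0}$, completing the induction.

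There is essentially no serious obstacle: the only point to verify is the injectivity of $\varphi_q$ modulo $p$, which is immediate from the domain property of $A$ (or from (\ref{phi_q and the semi-valuation v_1})), while cancelling $p^{n-1}$ in the inductive step uses only that $A_{\mathrm{mv},E_0}$ is a domain, a fact already established.
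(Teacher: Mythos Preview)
Your proof is correct and follows essentially the same idea as the paper's. The paper argues by contradiction, writing $P=p^{n'}P'$ with $P'\notin pA_{\mathrm{mv},E_0}$ and using properties (i), (iv), (v) together with (\ref{phi_q and the semi-valuation v_1}) to show $n'\geq n$; your induction unwinds this into a base case (injectivity of $\varphi_q$ modulo $p$, equivalently (\ref{phi_q and the semi-valuation v_1})) plus a cancellation step using $p$-torsion-freeness. One small remark: Lemma~\ref{A_{mathrm{m.v.},K} is noetherian} is stated for $A_{\mathrm{mv},K}$ rather than $A_{\mathrm{mv},E_0}$, but the same proof applies verbatim, and in any case you only need that $p$ is not a zero-divisor, which is already observed in that proof.
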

	\begin{proof}
		It is a consequence of (\ref{phi_q and the semi-valuation v_1}) and the property (i), (v). Assume that $P=p^{n'}P'$ for some $P'\notin pA_{\mathrm{mv},E_0}$ and $n'+1\leq n$, then $v_1(P')$ is finite, hence
		\[v_{n'+1}(\varphi_q(P))=v_{n'+1}(p^{n'}\varphi_q(P'))=v_{1}(\varphi_q(P'))=q\cdot v_{1}(P')\]
		is also finite, while $n'+1\leq n$ and $\varphi_q(P)\in p^nA_{\mathrm{mv},E_0}$ implies that $v_{n'+1}(\varphi_q(P))\geq v_{n}(\varphi_q(P))=+\infty$ which leads to a contradiction. Thus $P\in p^nA_{\mathrm{mv},E_0}$.
	\end{proof}

	Using (\ref{phi_q,O_K^{*} action on A_{mv,E_0}}), for any $a\in\mathcal{O}_K^{\times}$, by a computation similar to (\ref{computation 1}), we deduce that
	\begin{align}\label{v_n and the action of a}
		v_n(a(P))\geq v_n(P)-(n-1),\quad  P\in A_{\mathrm{mv},E_0}, n\geq 1.
	\end{align}
	
	\begin{lem}
		The ring $A_{\mathrm{mv},E_0}$ is a complete Hausdorff topological ring, and the endomorphism $\varphi_q$ and the action of $\mathcal{O}_K^{\times}$ are continuous.
	\end{lem}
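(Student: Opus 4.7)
The plan is to verify the four claims (topological ring structure, Hausdorff, completeness, and continuity of $\varphi_q$ and of the $\mathcal{O}_K^\times$-action) separately, using properties (i)--(v) of the pseudo-valuations $v_n$ together with the bounds (\ref{phi_q and the semi-valuation}) and (\ref{v_n and the action of a}). Continuity of addition is already observed, since each $U_{m,n}$ is a subgroup. For continuity of multiplication at a pair $(P_0, Q_0)$, the identity $(P_0 + a)(Q_0 + b) - P_0 Q_0 = P_0 b + a Q_0 + ab$ reduces the problem to bounding $v_m$ of each summand via property (iii): letting $c = v_m(P_0)$ and $d = v_m(Q_0)$ (both finite unless the corresponding element lies in $p^m A_{\mathrm{mv},E_0}$), for a target neighborhood $U_{m,n}$ I take $m' \geq m$ and $n' \geq \max(n - c, n - d, \lceil n/2 \rceil)$; then each summand lies in $U_{m,n}$ whenever $a, b \in U_{m',n'}$. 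For the Hausdorff property, if $P \in \bigcap_{m,n} U_{m,n}$, then for each fixed $m$ the condition $v_m(P) \geq n$ for all $n \in \mathbb{Z}$ forces $v_m(P) = +\infty$, hence $P \in p^m A_{\mathrm{mv},E_0}$ by (i). Since $A_{\mathrm{mv},K}$ is $p$-adically complete by construction and $A_{\mathrm{mv},E_0}$ is finite free over $A_{\mathrm{mv},K}$, the latter is $p$-adically separated and $P = 0$.

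For completeness, I observe that the given topology is the initial topology with respect to the canonical projections $\pi_m \colon A_{\mathrm{mv},E_0} \twoheadrightarrow A_{\mathrm{mv},E_0}/p^m$, each target endowed with the (well-defined quotient) $v_m$-topology. A Cauchy net $(P_j)$ therefore descends to a Cauchy net in every $A_{\mathrm{mv},E_0}/p^m$ for $v_m$. The plan is to prove that each such quotient is complete: using the explicit expansion (\ref{expression of element of A_{mathrm{m.v.},E_0}}), a $v_m$-Cauchy net amounts to a coefficientwise Cauchy net in the $T_{\sigma_{0}}$-adically complete ring $(W(\mathbb{F})/p^m)[\negthinspace[T_{\sigma_{0}}]\negthinspace][1/T_{\sigma_{0}}]^\wedge$, which converges coefficientwise. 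The compatible limits modulo each $p^m$ then assemble, via the $p$-adic completeness $A_{\mathrm{mv},E_0} = \varprojlim_m A_{\mathrm{mv},E_0}/p^m$, into a limit $P$, and $P_j \to P$ in the topology of $A_{\mathrm{mv},E_0}$ by construction of the initial topology.

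For the continuity of $\varphi_q$, additivity reduces everything to continuity at $0$: given $U_{m,n}$, I choose $m' \geq m$ and $n' \geq \lceil n/q \rceil + m - 1$; then for $P = p^{m'} Q + R \in U_{m',n'}$ with $R \in F_{n'}$, the bound (\ref{phi_q and the semi-valuation}) applied with exponent $1$ gives $v_m(\varphi_q(R)) \geq q n' - q(m-1) \geq n$, while $p^{m'} \varphi_q(Q) \in p^m A_{\mathrm{mv},E_0} \subseteq U_{m,n}$, so $\varphi_q(P) \in U_{m,n}$. For the joint continuity of the $\mathcal{O}_K^\times$-action I use the decomposition $a(P) - a_0(P_0) = a(P - P_0) + \bigl(a(P_0) - a_0(P_0)\bigr)$, where the first summand is controlled uniformly in $a$ by (\ref{v_n and the action of a}). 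The main obstacle will be the continuity in $a$ of the orbit map $a \mapsto a(P_0)$ for a fixed $P_0$; the plan is to reduce this, by density of Laurent polynomials in $T_{\sigma_{0}}^{\pm 1}, T_{\sigma_{1}}, \dots, T_{\sigma_{f-1}}$ and the continuity of multiplication already established, to the continuity of $a \mapsto a(T_{\sigma_{i}})$ and of $a \mapsto a(T_{\sigma_{0}})^{-1}$. Both follow from the explicit formulas (\ref{phi_q,O_K^{*} action on A_{mv,E_0}}), the joint continuity of the $\mathcal{O}_K^\times$-action on the Iwasawa algebra $\mathcal{O}_K[\negthinspace[N_0]\negthinspace]$ for its $(p, T_i)$-adic topology, and a short check that $a(T_{\sigma_{0}})$ remains a unit with controlled inverse for $a$ in a small neighborhood of $a_0$.
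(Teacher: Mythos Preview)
Your proposal is correct and follows exactly the approach the paper indicates: the paper's own proof simply states that the topological ring axioms are ``easy to check'' and that continuity of $\varphi_q$ and of the $\mathcal{O}_K^\times$-action are ``direct consequences'' of the bounds (\ref{phi_q and the semi-valuation}) and (\ref{v_n and the action of a}), omitting all details. Your write-up supplies precisely those omitted details, and the reductions you perform (decomposition $U_{m,n}=p^mA_{\mathrm{mv},E_0}+F_n$ for $\varphi_q$, the uniform estimate from (\ref{v_n and the action of a}) combined with density of Laurent polynomials for the $\mathcal{O}_K^\times$-action) are the natural ones.
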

	
	\begin{proof}
		The claim that $A_{\mathrm{mv},E_0}$ is a complete Hausdorff topological ring is easy to check, and the continuity of $\varphi_q$ and the $\mathcal{O}_K^{\times}$-action are direct consequences of (\ref{phi_q and the semi-valuation}) and (\ref{v_n and the action of a}), 
		hence we omit the details. 
	\end{proof}
	
	\section{Two $(\varphi_q,\mathcal{O}_K^{\times})$-equivariant injections from non-perfect coefficient rings to the ring of Witt vectors}\label{section 2}
	
	The goal of this section is to construct a $(\varphi_q,\mathcal{O}_K^{\times})$-equivariant injection $A_{\mathrm{mv},E}\hookrightarrow W_E(A_{\infty})$ (\ref{(varphi_q,Z_p^*)-equivariant injection from A_{mv,E} to W_E(A_{infty})}). 
	Similar construction also applies to $A_{\mathrm{LT},E,\sigma_{0}}$, which gives a $(\varphi_q,\mathcal{O}_K^{\times})$-equivariant injection $A_{\mathrm{LT},E,\sigma_{0}}\hookrightarrow W_E\left(\mathbb{F}(\negthinspace(T_{\mathrm{LT}}^{1/p^{\infty}})\negthinspace)\right)$ (\ref{equivairant injection Lubin-Tate case}).
	
	\subsection{A $(\varphi_q,\mathcal{O}_K^{\times})$-equivariant injection from $A_{\mathrm{mv},E_0}$ to $W_E(A_{\infty})$}\label{How to define a phi_q,O_K^{*} equivariant inclusion}
	
	%In this section, we first define the colimit $\varinjlim_{\varphi_q}A_{\mathrm{mv},E_0}$ and endow it with a topology so that $\varinjlim_{\varphi_q}A_{\mathrm{mv},E_0}$ is a topological ring with a continuous $(\varphi_q,\mathcal{O}_K^{\times})$-action. Then we show that the topological completion of $\varinjlim_{\varphi_q}A_{\mathrm{mv},E_0}$ is $(\varphi_q,\mathcal{O}_K^{\times})$-equivariantly isomorphic to $W(A_{\infty})$, which allows us to construct the $(\varphi_q,\mathcal{O}_K^{\times})$-equivariant injection (\ref{(varphi_q,mathcal{O}_K^{*})-equivariant injection from A_{mv,E} to W_E(A_{infty})}).
	
	Consider the inductive system $(A_{\mathrm{mv},E_0}^{(n)})_{n\geq 0}$, where $A_{\mathrm{mv},E_0}\to A_{\mathrm{mv},E_0}^{(n)}, T_{\sigma_i}\mapsto T_{\sigma_i}^{(n)}$ is an isomorphism for every $n\geq 0$, and the transition maps are given by $A_{\mathrm{mv},E_0}^{(n)}\hookrightarrow A_{\mathrm{mv},E_0}^{(n+1)}, T_{\sigma_i}^{(n)}\mapsto \varphi_q(T_{\sigma_i}^{(n+1)})$. We denote the colimit of the inductive system $(A_{\mathrm{mv},E_0}^{(n)})_{n\geq 0}$ by $\varinjlim_{\varphi_q} A_{\mathrm{mv},E_0}$. In this section, we use the theory of strict $p$-rings to show that with an appropriate metric topology, the completion of $\varinjlim_{\varphi_q} A_{\mathrm{mv},E_0}$ is $(\varphi_q,\mathcal{O}_K^{\times})$-equivariantly isomorphic to $W(A_{\infty})$, then there is a $(\varphi_q,\mathcal{O}_K^{\times})$-equivariant injection $A_{\mathrm{mv},E_0}\hookrightarrow \varinjlim_{\varphi_q} A_{\mathrm{mv},E_0}\hookrightarrow W(A_{\infty})$. By tensoring with $\mathcal{O}_E$, we obtain the required $(\varphi_q,\mathcal{O}_K^{\times})$-equivariant injection $A_{\mathrm{mv},E}\hookrightarrow  W_E(A_{\infty})$.
	
	For $x\in \varinjlim_{\varphi_q}A_{\mathrm{mv},E_0}$, there exists $N\geq 0$ such that $P\in A_{\mathrm{mv},E_0}^{(N)}$ represents $x$. We put 
	\begin{align}\label{definition of w_n}
		w_n(x)\coloneq \liminf_{m\to +\infty}\dfrac{v_n(\varphi_q^m(P))}{q^{m+N}}\in\mathbb{R}\cup\{\pm\infty\}.
	\end{align}
	Then (\ref{phi_q and the semi-valuation}) implies that $w_n(x)\geq q^{-N}\left(v_n(P)-\frac{q(n-1)}{q-1}\right)$, hence is either finite or $+\infty$, and we can directly check that $w_n(x)$ does not depend on the choice of $N$ and $P$.
	\begin{lem}\label{semi-valuation on the colimit}
		The map $w_n: \varinjlim_{\varphi_q}A_{\mathrm{mv},E_0}\to\mathbb{R}\cup\{+\infty\}$ satisfies
		\begin{enumerate}
			\item [(i)]
			$w_n(x)=+\infty$ if and only if $x\in p^n \varinjlim_{\varphi_q}A_{\mathrm{mv},E_0}$,
			\item [(ii)]
			$w_n(x+x')\geq \min(w_n(x),w_n(x'))$,
			\item [(iii)]
			$w_{n}(x\cdot x')\geq w_n(x)+w_n(x')$,
			\item [(iv)]
			$w_n(x)\geq w_{n+1}(x)$,
			\item [(v)] 
			 $w_{n+m}(p^mx)=w_n(x)$,
			\item [(vi)]
			If $w_n(x)=+\infty$ for every $n\geq 1$, then $x=0$.
		\end{enumerate}
	\end{lem}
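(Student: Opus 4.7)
The plan is to show all six properties more or less directly from the analogous properties of $v_n$, with the only substantial inputs being Corollary \ref{inverse image of a multiple of p under phi_q is a multiple of p} (needed for (vi)) and the sharp multiplicativity formula (\ref{phi_q and the semi-valuation v_1}) (needed for (i)). First I would verify that $w_n(x)$ is independent of the choice of representative: if $P\in A_{\mathrm{mv},E_0}^{(N)}$ represents $x$, then so does $\varphi_q^k(P)\in A_{\mathrm{mv},E_0}^{(N+k)}$, and the sequence $v_n(\varphi_q^m(\varphi_q^k(P)))/q^{m+N+k}=v_n(\varphi_q^{m+k}(P))/q^{(m+k)+N}$ is a tail of the sequence defining $w_n(x)$ via $P$, so the $\liminf$'s coincide.

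Properties (ii), (iii), (iv) then follow by passing to the $\liminf$ in the corresponding inequalities for $v_n$, using that $\varphi_q$ is a ring endomorphism: for (ii) one uses $\liminf\min\geq\min\liminf$; for (iii) one uses $\liminf(a_m+b_m)\geq\liminf a_m+\liminf b_m$, which is valid here because (\ref{phi_q and the semi-valuation}) provides a uniform lower bound on both sequences; (iv) is immediate from $v_n\geq v_{n+1}$. For (v), the key input is the identity $v_{n+m}(p^mQ)=v_n(Q)$; applied to $Q=\varphi_q^k(P)$ with $P$ representing $x$, this gives $v_{n+m}(\varphi_q^k(p^mP))=v_n(\varphi_q^k(P))$, and since $p^mP$ represents $p^mx$, dividing by $q^{k+N}$ and taking $\liminf$ yields $w_{n+m}(p^mx)=w_n(x)$.

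For (i), the direction ($\Leftarrow$) is immediate: if $x=p^ny$ and $Q$ represents $y$ at level $N$, then $p^nQ$ represents $x$ at level $N$ and $v_n(\varphi_q^k(p^nQ))=v_n(p^n\varphi_q^k(Q))=+\infty$ for every $k$. For ($\Rightarrow$), suppose $w_n(x)=+\infty$ and let $P$ represent $x$ at level $N$. The crucial upper bound is
\[
v_n(\varphi_q^m(P))\leq v_1(\varphi_q^m(P))=q^m\,v_1(P),
\]
combining property (iv) of $v_n$ with (\ref{phi_q and the semi-valuation v_1}). If $v_1(P)<+\infty$ this forces $w_n(x)\leq v_1(P)/q^N<+\infty$, a contradiction, so $v_1(P)=+\infty$, i.e. $P=pP_1$ for some $P_1\in A_{\mathrm{mv},E_0}$. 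For $n=1$ this already gives $x\in p\varinjlim_{\varphi_q}A_{\mathrm{mv},E_0}$; for $n\geq 2$, setting $y=[P_1]$ one has $x=py$, and by property (v) (already proved) $w_{n-1}(y)=w_n(x)=+\infty$, so induction on $n$ yields $y\in p^{n-1}\varinjlim_{\varphi_q}A_{\mathrm{mv},E_0}$ and hence $x=py\in p^n\varinjlim_{\varphi_q}A_{\mathrm{mv},E_0}$.

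Finally, for (vi), property (i) shows $x\in\bigcap_{n\geq 1}p^n\varinjlim_{\varphi_q}A_{\mathrm{mv},E_0}$. Fixing a representative $P$ at level $N$, membership in $p^n$ in the colimit means $\varphi_q^k(P)\in p^nA_{\mathrm{mv},E_0}$ for some $k\geq 0$; applying Corollary \ref{inverse image of a multiple of p under phi_q is a multiple of p} $k$ times descends this to $P\in p^nA_{\mathrm{mv},E_0}$. Since this holds for every $n$ and $A_{\mathrm{mv},E_0}$ is $p$-adically separated, $P=0$ and thus $x=0$. The main obstacle in the whole argument is the ($\Rightarrow$) direction of (i): one must rule out the a priori possibility that $v_n(\varphi_q^m(P))$ is everywhere finite yet grows faster than $q^m$, and the sharp equality (\ref{phi_q and the semi-valuation v_1})---available because $v_1$ comes from a multiplicative Gauss norm on the Tate algebra $A$---is exactly what precludes this.
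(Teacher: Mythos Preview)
Your proposal is correct and follows essentially the same approach as the paper: both treat (ii)--(v) as direct consequences of the corresponding properties of $v_n$, and both handle (i) by induction on $n$ using the exact formula (\ref{phi_q and the semi-valuation v_1}) for $v_1$, then deduce (vi) from (i) via Corollary~\ref{inverse image of a multiple of p under phi_q is a multiple of p}. The only cosmetic difference is in the induction step of (i): you first use $v_n\le v_1$ to conclude $v_1(P)=+\infty$ and peel off one factor of $p$, whereas the paper uses (iv) to get $w_{n-1}(x)=+\infty$ and peels off $p^{n-1}$ via the induction hypothesis; both amount to the same reduction.
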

	\begin{proof}
		Since (ii), (iii), (iv) and (v) are direct consequences of the corresponding properties for $v_n$, we only need to check (i) and (vi). 
		
		If $x\in p^n \varinjlim_{\varphi_q}A_{\mathrm{mv},E_0}$, write $x=p^n P$ for some $P\in A_{\mathrm{mv},E_0}^{(N)}$, then $v_n(\varphi_q^m(p^nP))=+\infty$, for any $m\geq 1$, hence $w_n(x)=+\infty$. We prove the converse by induction on $n$. For $n=1$, by (\ref{phi_q and the semi-valuation v_1}), if $x$ is represented by $P\in A_{\mathrm{mv},E_0}^{(N)}$, then $w_1(x)=q^{-N}\cdot v_1(P)=+\infty$, hence $P\in pA_{\mathrm{mv},E_0}^{(N)}$, and $x\in p\varinjlim_{\varphi_q}A_{\mathrm{mv},E_0}$. For $n\geq 2$, using (iv) and the induction hypothesis, $x\in p^{n-1}\varinjlim_{\varphi_q}A_{\mathrm{mv},E_0}$. Write $x=p^{n-1}x'$ for some $x\in A_{\mathrm{mv},E_0}$, then (v) implies that $w_1(x')=w_n(p^{n-1}x')=+\infty$, thus $x\in p^n\varinjlim_{\varphi_q}A_{\mathrm{mv},E_0}$.
		
		If $w_n(x)=+\infty$ for every $n$, let $P\in A_{\mathrm{mv},E_0}^{(N)}$ be a representative of $x$, by $(i)$, $x\in p^n\varinjlim_{\varphi_q}A_{\mathrm{mv},E_0}$, hence for every $n\geq 1$, there exists $m\geq 1$ such that $\varphi_q^m(P) \in p^nA_{\mathrm{mv},E_0}$, hence $P\in p^nA_{\mathrm{mv},E_0}$ for every $n$ by Corollary \ref{inverse image of a multiple of p under phi_q is a multiple of p}, thus $P=0$.
	\end{proof}
	
	We endow the ring $\varinjlim_{\varphi_q}A_{\mathrm{mv},E_0}$ with the initial topology with respect to $\{w_n: n\geq 1\}$, i.e. the coarsest topology such that the map $\varinjlim_{\varphi_q}A_{\mathrm{mv},E_0}\to\mathbb{R}_{\geq 0},\ x\mapsto p^{-w_n(x)}$ is continuous for every $n\geq 1$. Then the ring $\varinjlim_{\varphi_q}A_{\mathrm{mv},E_0}$ is a Hausdorff topological ring by Lemma \ref{semi-valuation on the colimit}. %Moreover, $\{V_{m,n}\coloneq \{x\in \varinjlim_{\varphi_q}A_{\mathrm{mv},E_0}: w_n(x)\geq m\}: m,n\geq 1\}$ is a countable neighbourhoods basis of $0$, hence $\varinjlim_{\varphi_q}A_{\mathrm{mv},E_0}$ is first countable. By Birkhoff-Kakutani's Theorem (\cite[Chapitre IX, $\S$3.1, Proposition 1]{bourbaki2007topologie2}, \cite[Chapter 3, Theorem 3.3.12]{tkachenko2008topological}), $\varinjlim_{\varphi_q}A_{\mathrm{mv},E_0}$ is metrizable. In fact, 
	We define an invariant metric on $\varinjlim_{\varphi_q}A_{\mathrm{mv},E_0}$ by
	\[d(x,y)\coloneq \sum_{n=1}^{+\infty}2^{-n}\frac{p^{-w_n(x-y)}}{1+p^{-w_n(x-y)}},\quad x,y\in \varinjlim_{\varphi_q}A_{\mathrm{mv},E_0},\]
	then the topology on $\varinjlim_{\varphi_q}A_{\mathrm{mv},E_0}$ coincides with the metric topology. Let $\left(\varinjlim_{\varphi_q}A_{\mathrm{mv},E_0}\right)^{\wedge}$ be the topological completion of $\varinjlim_{\varphi_q} A_{\mathrm{mv},E_0}$ with respect to the metric $d$, which is a complete metric space.

	\begin{lem}\label{limit of valuation on Cauchy sequences}
		For $x\in \left(\varinjlim_{\varphi_q}A_{\mathrm{mv},E_0}\right)^{\wedge}$, take a Cauchy sequence $(x_l)_{l\geq 1}$ of $\varinjlim_{\varphi_q}A_{\mathrm{mv},E_0}$ converging to $x$. Then for every $n\geq 1$, exactly one of the following holds
		\begin{enumerate}
			\item[(1)] 
			$\lim\limits_{l\to+\infty}w_n(x_l)=+\infty$,
			\item[(2)] 
			there exists $L\geq 1$ such that $w_n(x_{l})=w_{n}(x_{L})$ is finite for any $l\geq L$.
		\end{enumerate}
	\end{lem}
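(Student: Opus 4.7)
The plan is to combine the ultrametric character of each $w_n$ (Lemma~\ref{semi-valuation on the colimit}(ii)) with a careful unpacking of the Cauchy condition imposed by the metric $d$.

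First I would unwind the metric: since $d(y,z)$ is a sum of the non-negative terms $2^{-n}p^{-w_n(y-z)}/(1+p^{-w_n(y-z)})$, each bounded by $2^{-n}$, a sequence is Cauchy for $d$ if and only if for every fixed $n \geq 1$ and every real $M > 0$ there exists $L_0$ such that $w_n(x_l - x_{l'}) \geq M$ for all $l, l' \geq L_0$. Since $(x_l)$ converges in the completion, it is Cauchy, so this holds. Next I would record the strict non-Archimedean consequence of property (ii): if $w_n(a) \neq w_n(b)$, then $w_n(a+b) = \min(w_n(a), w_n(b))$. This is the standard deduction from the ultrametric inequality applied to the identity $a = (a+b) - b$.

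The main step is a dichotomy on $c := \liminf_{l \to \infty} w_n(x_l) \in \mathbb{R} \cup \{+\infty\}$. If $c = +\infty$, case (1) follows directly. Otherwise, I would fix a real $M > c$, use the Cauchy condition to produce $L_0$ with $w_n(x_l - x_{l'}) \geq M$ for all $l, l' \geq L_0$, and invoke the definition of $\liminf$ to find $L \geq L_0$ with $w_n(x_L) < M$. For every $l \geq L_0$, the strict inequality $w_n(x_L) < M \leq w_n(x_l - x_L)$ combined with the strict ultrametric property then forces
\[
w_n(x_l) = w_n\bigl(x_L + (x_l - x_L)\bigr) = w_n(x_L),
\]
which is finite, giving case (2). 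The two cases are mutually exclusive because a sequence eventually constant at a finite value cannot tend to $+\infty$.

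The only subtle point is the simultaneous choice in the final step: one must take $M$ strictly above $c$ so that the Cauchy bound $w_n(x_l-x_{l'})\ge M$ and the existence of an index $L \geq L_0$ with $w_n(x_L) < M$ can both be arranged. Beyond this bookkeeping, the argument is a routine use of the non-Archimedean strong triangle inequality, so I do not anticipate any further obstacle.
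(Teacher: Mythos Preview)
Your proposal is correct and follows exactly the approach the paper indicates: the paper's proof simply says the result is a direct consequence of the strong triangle inequality (Lemma~\ref{semi-valuation on the colimit}(ii)) and leaves the details as an exercise, and your argument is precisely the standard execution of that exercise.
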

	\begin{proof}
		It is a direct consequence of the strong triangle inequality (Lemma \ref{semi-valuation on the colimit} (ii)), and we leave the details as an easy exercise to the reader.
	\end{proof}
	
	As a consequence, for $n\geq 1$, $w_n(x)\coloneq \lim\limits_{l\to+\infty}w_n(x_l)\in \mathbb{R}\cup\{+\infty\}$ does not depend on the choice of the Cauchy sequence $(x_l)_l$, hence is well-defined and extends the map $w_n$ on $\varinjlim_{\varphi_q}A_{\mathrm{mv},E_0}$. It is easy to check that $w_n$ also satisfies the properties (ii) to (v) in Lemma \ref{semi-valuation on the colimit}. 
	
	For $x,y\in \left(\varinjlim_{\varphi_q}A_{\mathrm{mv},E_0}\right)^{\wedge}$, let $(x_l)_l$, $(y_l)_l$ be Cauchy sequences in $\varinjlim_{\varphi_q}A_{\mathrm{mv},E_0}$ which converge to $x$ and $y$ respectively, then by Lemma \ref{limit of valuation on Cauchy sequences}, we have
	\[d(x,y)=\lim_{l\to+\infty}d(x_l,y_l)=\sum_{n=1}^{+\infty}2^{-n}\frac{\lim\limits_{l\to+\infty}p^{-w_n(x_l-y_l)}}{1+\lim\limits_{l\to+\infty}p^{-w_n(x_l-y_l)}}2^{-n}= \sum_{n=1}^{+\infty}2^{-n}\frac{p^{-w_n(x-y)}}{1+p^{-w_n(x-y)}},\]
	where the second equality holds since $\left(\sum_{n=1}^k2^{-n}\frac{p^{-w_n(x_l-y_l)}}{1+p^{-w_n(x_l-y_l)}}\right)_{k\geq 1}$ converges to $d(x_l,y_l)$ uniformly.
	Thus the metric topology on $\left(\varinjlim_{\varphi_q}A_{\mathrm{mv},E_0}\right)^{\wedge}$ coincides with the initial topology with respect to $\{w_n: n\geq 1\}$. Moreover, let $(x_l)_l$, $(y_l)_l$ be Cauchy sequences in $\varinjlim_{\varphi_q}A_{\mathrm{mv},E_0}$ converging to $x$ and $y$ respectively, then $(x_l+y_l)$ and $(x_l\cdot y_l)$ are also Cauchy sequences, thus the addition and the multiplication on $\varinjlim_{\varphi_q}A_{\mathrm{mv},E_0}$ extends uniquely to $\left(\varinjlim_{\varphi_q}A_{\mathrm{mv},E_0}\right)^{\wedge}$, which makes $\left(\varinjlim_{\varphi_q}A_{\mathrm{mv},E_0}\right)^{\wedge}$ into a complete Hausdorff metrizable topological ring. 
	
	For $x\in \varinjlim_{\varphi_q}A_{\mathrm{mv},E_0}$ and $n\geq 1$, by (\ref{definition of w_n}) and (\ref{v_n and the action of a}), we have
	\begin{align*}
		\begin{cases}
			w_n(\varphi_q(x))=qw_n(x),&\\
			w_n(a(x))=w_n(x), & a\in\mathcal{O}_K^{\times}.
		\end{cases}
	\end{align*}
	This implies that $\varphi_q$ extends to a continuous endomorphism of $\left(\varinjlim_{\varphi_q}A_{\mathrm{mv},E_0}\right)^{\wedge}$ and the action of $\mathcal{O}_K^{\times}$ on $\varinjlim_{\varphi_q}A_{\mathrm{mv},E_0}$ extends to a continuous $\mathcal{O}_K^{\times}$-action on $\left(\varinjlim_{\varphi_q}A_{\mathrm{mv},E_0}\right)^{\wedge}$.
	
	%\begin{remark}
	%	We can also do completion using the uniform structure of topological abelian groups, as described in \cite[Chapitre III $\S$3.5. Théorème 2]{bourbaki2007topologie}. The multiplication then extends canonically by \cite[Chapitre III $\S$6.5. Théorème 1]{bourbaki2007topologie}. In our case, the two constructions agree with each other.
	%\end{remark}
	
	%Since $p^n\left(\varinjlim_{\varphi_q}A_{\mathrm{mv},E_0}\right)^{\wedge}=\left\{x: w_n(x)=+\infty\right\}$ is a closed ideal of $\left(\varinjlim_{\varphi_q}A_{\mathrm{mv},E_0}\right)^{\wedge}$.
	
	Since colimits commute with quotients, there is a natural continuous map
	\[\varinjlim_{\varphi_q}A_{\mathrm{mv},E_0}\twoheadrightarrow\left.\left(\varinjlim_{\varphi_q}A_{\mathrm{mv},E_0}\right)\middle/ p\right. \xrightarrow{\sim}\varinjlim_{\varphi_q}\left(A_{\mathrm{mv},E_0}/p\right)\xrightarrow[\sim]{T_{\sigma_{i}}^{(0)}\mapsto T_{\sigma_{i}}}\varinjlim_{\varphi_q}A,\]
	where the topology on $\left.\left(\varinjlim_{\varphi_q}A_{\mathrm{mv},E_0}\right)\middle/ p\right.$ is the quotient topology, the topology on $\varinjlim_{\varphi_q}A$ is the $T_{\sigma_{0}}$-adic topology, the topology on $\varinjlim_{\varphi_q}\left(A_{\mathrm{mv},E_0}/p\right)$ is induced by the last isomorphism, and the second map is an isomorphism of topological rings. Note that $w_1$ induces a valuation on $\left.\left(\varinjlim_{\varphi_q}A_{\mathrm{mv},E_0}\right)\middle/ p\right.$ which defines the quotient topology, thus the quotient $\left.\left(\varinjlim_{\varphi_q}A_{\mathrm{mv},E_0}\right)\middle/ p\right.$ is also metrizable.
	Taking completions, we get a continuous map
	\[\left(\varinjlim_{\varphi_q}A_{\mathrm{mv},E_0}\right)^{\wedge}\xrightarrow{g} \left(\left.\left(\varinjlim_{\varphi_q}A_{\mathrm{mv},E_0}\right)\middle/ p\right.\right)^{\wedge} \xrightarrow{\sim}A_{\infty},\]
	where $A_{\infty}\coloneq\mathbb{F}(\negthinspace(T_{\sigma_{0}}^{p^{-\infty}})\negthinspace)\left\langle\left(\frac{T_{\sigma_i}}{T_{\sigma_{0}}}\right)^{\pm p^{-\infty}}: 1\leq i\leq f-1\right\rangle\cong\left(\varinjlim_{\varphi_q}A\right)^{\wedge}$ is a perfectoid $\mathbb{F}$-algebra, the second map is an isomorphism of topological rings. Since $g$ sends $p$ to $0$, we get a continuous map
	\begin{align}\label{residue rings}
		\left.\left(\varinjlim_{\varphi_q}A_{\mathrm{mv},E_0}\right)^{\wedge}\middle/p\right.\xrightarrow{\overline{g}} \left(\left.\left(\varinjlim_{\varphi_q}A_{\mathrm{mv},E_0}\right)\middle/ p\right.\right)^{\wedge} \xrightarrow{\sim} A_{\infty},
	\end{align}
	where the topology on $\left.\left(\varinjlim_{\varphi_q}A_{\mathrm{mv},E_0}\right)^{\wedge}\middle/p\right.$ is the quotient topology. %Note that $p\left(\varinjlim_{\varphi_q}A_{\mathrm{mv},E_0}\right)^{\wedge}=\left\{x: w_1(x)=+\infty\right\}$ is a closed ideal.
	
	\begin{lem}\label{Description of the colimit modulo p}
		The map $\overline{g}$ is an isomorphism of topological rings. 
	\end{lem}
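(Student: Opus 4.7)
The plan is to prove $\overline{g}$ is a bijection, after which the homeomorphism property will follow from matching of the $w_1$-based topologies on both sides. I first extend Lemma \ref{semi-valuation on the colimit}(i) to the completion: for $x \in \left(\varinjlim_{\varphi_q}A_{\mathrm{mv},E_0}\right)^{\wedge}$, one has $w_n(x) = +\infty$ if and only if $x \in p^n\left(\varinjlim_{\varphi_q}A_{\mathrm{mv},E_0}\right)^{\wedge}$. The ``if'' direction is immediate from property (v). For ``only if'', if $w_n(x) = +\infty$ and $(x_l)$ is any representing Cauchy sequence, then Lemma \ref{limit of valuation on Cauchy sequences} forces $w_n(x_l) \to +\infty$; after discarding finitely many terms we may assume $w_n(x_l) = +\infty$ for every $l$, and property (i) on the colimit writes $x_l = p^n y_l$ with $y_l \in \varinjlim_{\varphi_q}A_{\mathrm{mv},E_0}$, the sequence $(y_l)$ being Cauchy and converging to the required preimage of $x$ under multiplication by $p^n$.

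Injectivity of $\overline{g}$ then follows at once: if the class of $x$ lies in the kernel, the image of $x$ in $A_\infty$ is zero, which (tracing the construction of $g$ through the isomorphism $\left(\left(\varinjlim_{\varphi_q}A_{\mathrm{mv},E_0}\right)/p\right)^{\wedge} \xrightarrow{\sim} A_\infty$) forces $w_1(x) = +\infty$, whence $x \in p\left(\varinjlim_{\varphi_q}A_{\mathrm{mv},E_0}\right)^{\wedge}$ by the extended property (i). For surjectivity, I represent $y \in A_\infty$ by a Cauchy sequence $(\bar{x}_l)$ in $\varinjlim_{\varphi_q}A$ with rapidly decreasing differences in the $T_{\sigma_0}$-adic topology, and construct lifts $x_l \in \varinjlim_{\varphi_q}A_{\mathrm{mv},E_0}$ inductively. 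Using the description of $A_\infty$ as a completed Tate algebra generated over $\mathbb{F}$ by all $p$-power roots of the $T_{\sigma_i}$, I lift each monomial appearing in the expansion of $\bar{x}_{l+1} - \bar{x}_l$ to the corresponding product of $T_{\sigma_i}^{(N)}$ in some $A_{\mathrm{mv},E_0}^{(N)}$, then add suitable multiples of $p, p^2, \ldots$ to compensate for defects in the higher $w_n$'s (whose behavior under multiplication by $p^k$ is controlled by property (v)). The resulting sequence $(x_l)$ is Cauchy for the $w$-topology, and its class in $\left(\varinjlim_{\varphi_q}A_{\mathrm{mv},E_0}\right)^{\wedge}/p$ maps to $y$ under $\overline{g}$.

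Finally, since the quotient topology on $\left(\varinjlim_{\varphi_q}A_{\mathrm{mv},E_0}\right)^{\wedge}/p$ is precisely the topology induced by $w_1$ (using the extended property (i) to recognize $p\left(\varinjlim_{\varphi_q}A_{\mathrm{mv},E_0}\right)^{\wedge}$ as the $w_1 = +\infty$ locus), and this matches under $\overline{g}$ with the $T_{\sigma_0}$-adic topology on $A_\infty$, the map $\overline{g}$ is a homeomorphism. The main obstacle is the surjectivity argument: the target Cauchy condition involves only $w_1$, whereas the source demands Cauchyness for every $w_n$, so one is forced to use explicit lifts tied to the algebraic generators of $A_\infty$ and to juggle $p$-power corrections to control the higher valuations simultaneously, rather than arguing by a black-box completion principle.
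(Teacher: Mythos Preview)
Your approach is genuinely different from the paper's: the paper simply invokes a general completion principle from Bourbaki (\textit{Topologie G\'en\'erale}, Ch.~IX, \S3.1, Corollaire, p.~26), which says that for a metrizable topological group the completion of a quotient is the quotient of the completion by the closure of the subgroup. You instead try to argue by hand, which is a legitimate goal, but there is a real gap.

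The error is in your extension of Lemma~\ref{semi-valuation on the colimit}(i) to the completion. From $w_n(x)=+\infty$ you correctly deduce via Lemma~\ref{limit of valuation on Cauchy sequences} that $w_n(x_l)\to+\infty$, but this does \emph{not} allow you to ``discard finitely many terms and assume $w_n(x_l)=+\infty$ for every $l$'': the values $w_n(x_l)$ may all be finite while tending to infinity (think of $w_n(x_l)=l$). So you cannot write $x_l=p^n y_l$ in the colimit, and your injectivity argument collapses. In fact the paper deduces exactly this extended property~(i) (Lemma~\ref{p-adically complete}) \emph{from} the present lemma, not the other way around; reversing the logical order requires genuinely more work. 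One way to repair your argument for $n=1$: given $(x_l)$ Cauchy with $w_1(x_l)\to+\infty$, lift each $\overline{x_l}\in\varinjlim_{\varphi_q}A$ to an element $a_l$ of the colimit with $w_m(a_l)\geq w_1(x_l)-C_m$ for every $m$ (such lifts exist by writing $\overline{x_l}$ as $T_{\sigma_0}^{K}$ times something in $A^\circ$ and lifting coefficients to $W(\mathbb{F})$); then $x_l-a_l\in p\cdot(\text{colimit})$, and one checks that $\bigl(\frac{1}{p}(x_l-a_l)\bigr)_l$ is Cauchy and that $p$ times its limit equals $x$.

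Your surjectivity sketch is also shakier than you suggest. The phrase ``add suitable multiples of $p,p^2,\ldots$ to compensate for defects in the higher $w_n$'s'' is not the right mechanism: adding $p$-multiples does not obviously improve $w_n$ for $n\geq 2$ in a controllable way. The cleaner route is again to choose the lifts of $\bar{x}_{l+1}-\bar{x}_l$ \emph{well} from the start (as above, lifting into $F_0$ so that all $v_n$ are bounded below by $v_1$), which directly forces $w_n$ of the increments to go to infinity for every $n$; no $p$-power corrections are needed. Once you make this precise, the rest of your outline (homeomorphism via the $w_1$-topology) goes through.
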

	
	\begin{proof}
		This is a consequence of \cite[Chapitre IX $\S$3.1 p.26 Corollaire]{bourbaki2007topologie2}. 
	\end{proof}
	
	\begin{lem}\label{p-adically complete}
		For $x\in \left(\varinjlim_{\varphi_q}A_{\mathrm{mv},E_0}\right)^{\wedge}$, $n\geq 1$, $w_n(x)=+\infty$ if and only if $x\in p^n\left(\varinjlim_{\varphi_q}A_{\mathrm{mv},E_0}\right)^{\wedge}$. As a consequence, $\left(\varinjlim_{\varphi_q}A_{\mathrm{mv},E_0}\right)^{\wedge}$ is Hausdorff and complete for the $p$-adic topology.
	\end{lem}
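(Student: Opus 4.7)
Write $R\coloneq\left(\varinjlim_{\varphi_q}A_{\mathrm{mv},E_0}\right)^{\wedge}$ throughout. The plan is to first establish the equivalence $w_n(x)=+\infty\Leftrightarrow x\in p^n R$ by induction on $n$, and then derive Hausdorffness and $p$-adic completeness of $R$ as consequences.

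The easy direction ($\Leftarrow$) is straightforward: given $x=p^n y$, I approximate $y$ by a Cauchy sequence $(y_l)$ in $\varinjlim_{\varphi_q}A_{\mathrm{mv},E_0}$, so $(p^n y_l)\to x$ in the metric. Each $p^n y_l$ lies in $p^n\varinjlim_{\varphi_q}A_{\mathrm{mv},E_0}$, hence $w_n(p^n y_l)=+\infty$ by Lemma \ref{semi-valuation on the colimit}(i); passing to the limit via the extension of $w_n$ defined using Lemma \ref{limit of valuation on Cauchy sequences} gives $w_n(x)=+\infty$. The base case $n=1$ of the hard direction is the main point: by the extended property (v) together with the easy direction, $w_1$ descends to a valuation on $R/p$ inducing the quotient topology, and Lemma \ref{Description of the colimit modulo p} identifies this with the Hausdorff topology on $A_{\infty}$; hence $w_1(x)=+\infty$ forces $\bar{x}=0$ in $R/p$, i.e.\ $x\in pR$. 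The inductive step is formal: if $w_{n+1}(x)=+\infty$, property (iv) gives $w_n(x)=+\infty$, so by induction $x=p^n y$; property (v) then yields $w_1(y)=w_{n+1}(p^n y)=+\infty$, and the base case gives $y\in pR$, hence $x\in p^{n+1}R$.

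For Hausdorffness, I first extend property (vi) of Lemma \ref{semi-valuation on the colimit} to $R$: if $w_n(x)=+\infty$ for every $n$, then Lemma \ref{limit of valuation on Cauchy sequences}(1) forces $w_n(x_l)\to+\infty$ along any Cauchy approximation $(x_l)\to x$, and splitting the series defining $d(x,0)$ into a finite head (made small by taking $l$ large for each small $n$) and a tail bounded by $2^{-N+1}$ forces $x=0$. Together with the main equivalence, this yields $\bigcap_n p^n R=\{0\}$. For $p$-adic completeness, given a $p$-adic Cauchy sequence, I pass to a subsequence with $z_{l+1}-z_l=p^l u_l$ and study the series $\sum_l p^l u_l$ in the metric topology: the main equivalence gives $w_k(p^l u_l)=+\infty$ for $k\leq l$, while property (v) handles $k>l$, which bounds $d(S_{M'}-S_M,0)\leq 2^{-M-1}$ for $M'>M$. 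The series therefore converges in the metric to some $S\in R$, and I set $z\coloneq z_1+S$. The main equivalence also tells us that $p^l R=\{w_l=+\infty\}$ is metric-closed (as the intersection of the closed sets $\{w_l\geq M\}$), so the metric limit $z-z_l$ of a sequence in $p^l R$ remains in $p^l R$; this gives $z_l\to z$ in the $p$-adic topology.

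The main subtlety is the interplay between the metric topology on $R$ (in which $R$ is complete by construction) and the coarser $p$-adic topology: metric convergence does not a priori entail $p$-adic convergence, and the main equivalence is precisely the bridge, identifying $p^n R$ as a metric-closed subset so that convergence statements can be transferred between the two topologies.
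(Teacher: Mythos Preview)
Your proof is correct and follows essentially the same approach as the paper: the easy direction and the inductive step are identical, and your base case $n=1$ is the paper's argument (via Lemma~\ref{Description of the colimit modulo p} and the map $g$) in slightly different words. The paper handles Hausdorffness by the one-line observation that the metric topology is coarser than the $p$-adic one, and simply declares completeness ``easy to check''; your more explicit verification of both points via the series $\sum p^l u_l$ and the metric-closedness of $p^l R$ is a valid fleshing-out of what the paper omits.
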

	\begin{proof}
		If $x\in p^n\left(\varinjlim_{\varphi_q}A_{\mathrm{mv},E_0}\right)^{\wedge}$, say $x=p^ny$, $y\in \left(\varinjlim_{\varphi_q}A_{\mathrm{mv},E_0}\right)^{\wedge}$, then we can take a Cauchy sequence $(y_l)_l$ in $\varinjlim_{\varphi_q} A_{\mathrm{mv},E_0}$ which converges to $y$, then $(p^ny_l)_l$ is a Cauchy sequence in $\varinjlim_{\varphi_q} A_{\mathrm{mv},E_0}$ which converges to $x$. Hence $w_n(x)=\lim\limits_{l\to+\infty}w_n(p^ny_l)=+\infty$.
		
		We prove the converse by induction on $n$. Suppose that $w_1(x)=+\infty$, where $x\in \left(\varinjlim_{\varphi_q} A_{\mathrm{mv},E_0}\right)^{\wedge}$. Pick a Cauchy sequence $(x_l)_{l}$ of $\varinjlim_{\varphi_q}A_{\mathrm{mv},E_0}$ converging to $x$, thus $\lim\limits_{l\to+\infty}w_1(x_l)=+\infty$. The proof of Lemma \ref{Description of the colimit modulo p} shows that $(x_l\mod p)_{l}$ is also a Cauchy sequence in $\left.\left(\varinjlim_{\varphi_q}A_{\mathrm{mv},E_0}\right)\middle/ p\right. $ with respect to the quotient topology, which converges to $0$. Thus $g(x)=0$, and by Lemma \ref{Description of the colimit modulo p}, $x\in p\left(\varinjlim_{\varphi_q} A_{\mathrm{mv},E_0}\right)^{\wedge}$. For $n\geq 2$, we have $w_{n-1}(x)\geq w_{n}(x)=+\infty$, thus by the induction hypothesis, $x\in p^{n-1}\left(\varinjlim_{\varphi_q} A_{\mathrm{mv},E_0}\right)^{\wedge}$, say $x=p^{n-1}x'$, then $w_1(x')=w_{n}(x)=+\infty$, thus $x'\in p\left(\varinjlim_{\varphi_q} A_{\mathrm{mv},E_0}\right)^{\wedge}$, hence $x\in p^n\left(\varinjlim_{\varphi_q} A_{\mathrm{mv},E_0}\right)^{\wedge}$.
		
		As a consequence, the metric topology on $\left(\varinjlim_{\varphi_q}A_{\mathrm{mv},E_0}\right)^{\wedge}$ is coarser than the $p$-adic topology. Then the Hausdorffness for the $p$-adic topology is automatic. The completeness for the $p$-adic topology is easy to check, and we omit the details.
	\end{proof}

	Recall that there is a continuous $\mathbb{F}$-linear $(\varphi_q,\mathcal{O}_K^{\times})$-action on $A_{\infty}$ (\cite[$\S$2.2 (17), Lemma 2.4.2, $\S$2.6 (50)]{breuil2023multivariable}) satisfying (\ref{action of phi_q on A}). 
	The properties of the ring of Witt vectors imply that we can uniquely lift the $(\varphi_q,\mathcal{O}_K^{\times})$-action of $A_{\infty}$ to a $(\varphi_q,\mathcal{O}_K^{\times})$-action of $W(A_{\infty})$ (\cite[Chapter II $\S$5 Proposition 10]{serre1979local}):
	\begin{align}
		\begin{cases}
			\varphi_q\left(\sum_{i=0}^{\infty}[x_i]p^i\right)=\sum_{i=0}^{\infty}[\varphi_q(x_i)]p^i, & x_i\in A_{\infty},\\
			a\left(\sum_{i=0}^{\infty}[x_i]p^i\right)=\sum_{i=0}^{\infty}[a(x_i)]p^i, & x_i\in A_{\infty}, a\in\mathcal{O}_K^{\times}
		\end{cases}
	\end{align}
	where $[x_i]\in W(A_{\infty})$ is the Teichmüller lift of $x_i\in A_{\infty}$.
	We equip $W(A_{\infty})$ with the \textit{weak topology} (\cite[$\S$1.4.3]{fargues2018courbes}) as follows: recall that there is a bijection of sets
	\begin{align*}
		W(A_{\infty})\xrightarrow{\sim} A_{\infty}^{\mathbb{N}},\
		\sum_{i=0}^{\infty}[x_i]p^n  \mapsto (x_i)_{i\geq 0},
	\end{align*}
	then the weak topology is induced by the product topology on $A_{\infty}^{\mathbb{N}}$. For $0\leq i\leq f-1$, let $[T_{\sigma_i}]\in W(A_{\infty})$ be the Teichmüller lift of $T_{\sigma_i}$. The following lemma is a standard result in the literature.
	
	\begin{lem}\label{equivalent description of weak topology}
		The following topologies on $W(A_{\infty})$ coincide with each other:
		\begin{enumerate}
			\item [(i)] the weak topology.
			\item [(ii)] the unique linear topology such that $\{V_{m,n}\coloneq  p^mW(A_{\infty})+[T_{\sigma_{0}}]^n W(A_{\infty}^{\circ}): m,n\geq 0\}$ forms a fundamental system of neighbourhoods of $0$.
			\item [(iii)] the coarsest topology such that $N_n:W(A_{\infty})\to \mathbb{R}_{\geq 0}$ is continuous for every $n\geq 0$ , where $N_n(\sum_{i=0}[x_i]p^i)\coloneq \sup_{0\leq i\leq n} |x_i|$ and $|\cdot|$ is a fixed norm of $A_{\infty}$ which defines the topology on $A_{\infty}$ (\cite[Lemma 2.4.2. (iii)]{breuil2023multivariable}).
		\end{enumerate}
		Moreover, $W(A_{\infty})$ is a complete Hausdorff topological ring with respect to the weak topology, and the $(\varphi_q,\mathcal{O}_K^{\times})$-action on $W(A_{\infty})$ is continuous.
	\end{lem}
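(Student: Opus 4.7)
The strategy is to work through the set-theoretic bijection $W(A_\infty) \xrightarrow{\sim} A_\infty^{\mathbb{N}}$, $\sum_i [x_i] p^i \mapsto (x_i)_i$, and identify each of the three topologies with a more manageable topology on the product $A_\infty^{\mathbb{N}}$. First I would verify (i) $\Leftrightarrow$ (iii). By definition the weak topology is the pullback of the product topology on $A_\infty^{\mathbb{N}}$. A basic open neighbourhood of $0$ in the product is $\{(x_i) : |x_i| < \varepsilon \text{ for } 0 \leq i \leq n\}$ as $n \geq 0$ and $\varepsilon > 0$ vary, and this is exactly the set $\{x : N_n(x) < \varepsilon\}$. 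Hence the product topology agrees with the coarsest topology for which every $N_n$ is continuous.

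Next I would prove (ii) $\Leftrightarrow$ (iii) by giving a clean Teichm\"uller-coordinate description of $V_{m,n}$. Since $A_\infty$ is a perfect $\mathbb{F}_p$-algebra, $W(A_\infty)$ is a strict $p$-ring, so multiplication by $p$ shifts Teichm\"uller expansions; in particular, an element of $W(A_\infty)$ is determined modulo $p^m W(A_\infty)$ by its first $m$ Teichm\"uller coordinates. Using multiplicativity of the Teichm\"uller lift, $[T_{\sigma_0}]^n \cdot \sum_i [y_i] p^i = \sum_i [T_{\sigma_0}^n y_i] p^i$, so elements of $[T_{\sigma_0}]^n W(A_\infty^\circ)$ are exactly the Witt vectors with all Teichm\"uller coordinates in $T_{\sigma_0}^n A_\infty^\circ$. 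Combining, I would conclude
\[
V_{m,n} = \bigl\{x = \sum_i [x_i] p^i \in W(A_\infty) : x_i \in T_{\sigma_0}^n A_\infty^\circ \text{ for } 0 \leq i < m\bigr\},
\]
which in terms of the norm reads $|x_i| \leq |T_{\sigma_0}|^n$ for $0 \leq i < m$. As $(m,n)$ vary, this family is cofinal with the neighbourhood basis of $0$ for the topology (iii), giving equivalence.

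For the remaining claims, Hausdorffness and completeness of $W(A_\infty)$ in the weak topology are immediate from the product-topology characterization, since $A_\infty$ itself is Hausdorff and complete. To see that the ring operations are continuous, I would use that Witt addition and multiplication are given by universal Witt polynomials which, on the perfect ring $A_\infty$, can be rewritten via iterated $p^k$-th roots (which exist and are continuous, the Frobenius being a homeomorphism on the perfectoid $A_\infty$) as continuous polynomial maps in the coordinates. Finally, the $(\varphi_q,\mathcal{O}_K^\times)$-action acts coordinate-wise on Teichm\"uller expansions via the continuous action on $A_\infty$, and therefore is continuous in the product topology.

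The main technical point is the continuity of the Witt ring operations in the weak topology, which requires expressing the Witt polynomials in Teichm\"uller coordinates using Frobenius-roots and invoking continuity of these roots on the perfect ring $A_\infty$; but this is classical and handled in the cited reference \cite{fargues2018courbes}, so no genuine obstacles are expected.
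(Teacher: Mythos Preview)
Your proposal is correct, and in fact it is more detailed than what the paper provides: the paper's ``proof'' consists solely of a citation (``See for example \cite[Proposition 1.4.11]{fargues2018courbes} or \cite[\S16]{berger2010galois}''), with no argument given. Your sketch essentially unpacks what one finds in those references, and the key computation --- the Teichm\"uller-coordinate description
\[
V_{m,n}=\Bigl\{\,\textstyle\sum_i[x_i]p^i : x_i\in T_{\sigma_0}^n A_\infty^\circ \text{ for }0\le i<m\,\Bigr\}
\]
--- is correct (using that $[T_{\sigma_0}]$ is a unit in $W(A_\infty)$ with inverse $[T_{\sigma_0}^{-1}]$, and that reduction modulo $p^m$ reads off the first $m$ Teichm\"uller coordinates). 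One small point worth making explicit in your continuity claim for the $\mathcal{O}_K^\times$-action: you need \emph{joint} continuity of $\mathcal{O}_K^\times\times W(A_\infty)\to W(A_\infty)$, not merely continuity of each $a\in\mathcal{O}_K^\times$; this follows because the action on each factor $A_\infty$ is jointly continuous and the weak topology is the product topology, but it deserves a sentence.
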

	\begin{proof}
		See for example \cite[Proposition 1.4.11.]{fargues2018courbes} or \cite[$\S$16]{berger2010galois}
	\end{proof}
	
	\begin{prop}\label{B_E version: completion of colimit}
		There is an isomorphism of $W(\mathbb{F})$-algebras
		\[h:\left(\varinjlim_{\varphi_q}A_{\mathrm{mv},E_0}\right)^{\wedge}\xrightarrow{\sim}W(A_{\infty})\]
		which commutes with the continuous $(\varphi_q,\mathcal{O}_K^{\times})$-actions.
	\end{prop}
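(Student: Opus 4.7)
My plan is to realize $R \coloneq \bigl(\varinjlim_{\varphi_q} A_{\mathrm{mv},E_0}\bigr)^{\wedge}$ as a strict $p$-ring with residue ring $A_{\infty}$ and then invoke the classical uniqueness/existence theorem for strict $p$-rings (see \cite[Chapter~II \S5, Proposition~10]{serre1979local}) to obtain a canonical $W(\mathbb{F})$-algebra isomorphism $h\colon R \xrightarrow{\sim} W(A_{\infty})$ lifting the identity on the residue ring.

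The three defining properties of a strict $p$-ring are essentially already in place. First, $R$ is $p$-adically Hausdorff and complete by Lemma \ref{p-adically complete}. Second, Lemma \ref{Description of the colimit modulo p} identifies $R/p$ with the perfect $\mathbb{F}$-algebra $A_{\infty}$. Third, I would deduce $p$-torsion-freeness of $R$ by combining the (completed) versions of properties (i) and (v) of the semi-valuations $w_n$: if $px=0$ then $w_n(px)=+\infty$ for all $n\geq 1$, so property (v) gives $w_{n-1}(x)=+\infty$ for all $n\geq 2$, and then $x\in\bigcap_{m\geq 1}p^m R=0$ by $p$-adic Hausdorffness.

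Given these three properties, Proposition~10 of \cite[Chapter~II \S5]{serre1979local} produces the unique $W(\mathbb{F})$-algebra lift $h\colon R\to W(A_{\infty})$ of $\mathrm{id}_{A_{\infty}}$ (the $W(\mathbb{F})$-linearity being forced by uniqueness applied to the inclusion $\mathbb{F}\hookrightarrow A_{\infty}$), and the same uniqueness applied in the reverse direction shows $h$ is an isomorphism. The $(\varphi_q,\mathcal{O}_K^{\times})$-equivariance is then automatic by the same principle: the endomorphism $\varphi_q$ on $R$ and the Witt-vector Frobenius $\varphi_q$ on $W(A_{\infty})$ both lift the $q$-th power Frobenius on $A_{\infty}$, and each $a\in\mathcal{O}_K^{\times}$ acts on the two rings via lifts of the same automorphism of $A_{\infty}$; the uniqueness of such lifts forces the two actions to correspond under $h$.

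The point I expect to require the most care is verifying that $h$ is a \emph{homeomorphism}, i.e., that the metric topology on $R$ (defined by $\{w_n\}$) corresponds under $h$ to the weak topology on $W(A_{\infty})$. I would compare fundamental systems of neighborhoods of $0$: on $W(A_{\infty})$ the sets $V_{m,n}=p^m W(A_{\infty})+[T_{\sigma_0}]^n W(A_{\infty}^{\circ})$ from Lemma \ref{equivalent description of weak topology}(ii), and on $R$ the sets cut out by finitely many inequalities on the $w_n$'s. The key observation is that under $R/p\simeq A_{\infty}$ the semi-valuation $w_1$ corresponds to the $T_{\sigma_0}$-adic valuation defining the topology on $A_{\infty}$; reducing modulo $p^m$ and using the compatibility (\ref{phi_q and the semi-valuation v_1}) to control the behavior at each Witt coordinate, one checks that the two filtrations are mutually cofinal, yielding the homeomorphism and completing the proof.
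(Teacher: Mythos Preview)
Your approach is essentially identical to the paper's: both verify that $R$ is a strict $p$-ring (via Lemmas \ref{p-adically complete} and \ref{Description of the colimit modulo p}) and then invoke \cite[Chapter~II \S5, Proposition~10]{serre1979local} to obtain the unique lift $h$, with equivariance following from uniqueness of such lifts. Your explicit verification of $p$-torsion-freeness is a detail the paper leaves implicit.

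The last paragraph on the homeomorphism is superfluous for the proposition as stated, which only asserts an isomorphism of $W(\mathbb{F})$-algebras compatible with the (already continuous) $(\varphi_q,\mathcal{O}_K^{\times})$-actions, not a topological isomorphism. The paper does address continuity of $h$, but separately in the Remark immediately following, and by a different route: it constructs $h^{-1}$ directly and checks it is continuous, then deduces continuity of $h$ from the Open Mapping Theorem for Fr\'echet spaces over $E_0$ after inverting $p$, rather than by comparing neighborhood bases as you propose.
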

	\begin{proof}
		Recall that a \textit{strict} $p$-\textit{ring} $R$ is a $p$-torsion-free ring such that $R\cong \varprojlim_n R/p^n$ and $R/p$ is a perfect $\mathbb{F}_p$-algebra (\cite[Chapter II $\S$5]{serre1979local}). For instance, $W(A_{\infty})$ is a strict $p$-ring by definition. Moreover, $\left(\varinjlim_{\varphi_q}A_{\mathrm{mv},E_0}\right)^{\wedge}$ is also a strict $p$-ring: it is $p$-adically complete by Lemma \ref{p-adically complete}, and by Lemma \ref{Description of the colimit modulo p}, the residue ring is isomorphic to $A_{\infty}$ which is perfect. By \cite[Chapter II $\S$5 Proposition 10]{serre1979local}, there is a unique homomorphism of strict $p$-rings
		\[h:\left(\varinjlim_{\varphi_q}A_{\mathrm{mv},E_0}\right)^{\wedge}\to W(A_{\infty})\]
		such that $\overline{h}: \left(\varinjlim_{\varphi_q}A_{\mathrm{mv},E_0}\right)^{\wedge}\Big/p\to W(A_{\infty})/p\cong A_{\infty}$ is (\ref{residue rings}). As (\ref{residue rings}) is an isomorphism, by \cite[Chapter II $\S$5 Proposition 10]{serre1979local}, $h$ is also an isomorphism. Moreover, the map (\ref{residue rings}) commutes with the continuous $(\varphi_q,\mathcal{O}_K^{\times})$-actions, hence so does $h$.
	\end{proof}
	
	\begin{remark}
		We can directly construct a homomorphism $h^{-1}:W_E(A_{\infty})\to \left(\varinjlim_{\varphi_q} A_{\mathrm{mv},E_0}\right)^{\wedge}$ sending $[T_{\sigma_{i}}]\in W_E(A_{\infty})$ to the limit of the sequence $((T_{\sigma_{i}}^{(m)})^{q^m})_{m\geq 0}$ of $\left(\varinjlim_{\varphi_q} A_{\mathrm{mv},E_0}\right)^{\wedge}$. This construction gives a continuous $(\varphi_q,\mathcal{O}_K^{\times})$-equivariant isomorphism of rings, then we can define $h$ as the inverse of $h^{-1}$. In fact, $h^{-1}$ is an isomorphism of topological rings: it extends to a continuous bijective homomorphism of $E_0$-algebras $h^{-1}: W(A_{\infty})\left[\frac{1}{p}\right]\to \left(\varinjlim_{\varphi_q}A_{\mathrm{mv},E_0}\right)^{\wedge}\left[\frac{1}{p}\right]$. Since both $W(A_{\infty})[\frac{1}{p}]$ and $\left(\varinjlim_{\varphi_q}A_{\mathrm{mv},E_0}\right)^{\wedge}[\frac{1}{p}]$ are \textit{Fréchet spaces} over $E_0$ (\cite[Chapter I $\S$8 Definition]{schneider2013nonarchimedean}), we can apply Open Mapping Theorem (\cite[Chapter I $\S$8 Proposition 8.6]{schneider2013nonarchimedean}) to deduce that $h:\left(\varinjlim_{\varphi_q}A_{\mathrm{mv},E_0}\right)^{\wedge}\left[\frac{1}{p}\right]\to W(A_{\infty})\left[\frac{1}{p}\right]$ is continuous, hence $h: \left(\varinjlim_{\varphi_q}A_{\mathrm{mv},E_0}\right)^{\wedge}\to W(A_{\infty})$ is a continuous inverse of $h^{-1}$.
	\end{remark}
	
	As a consequence of Proposition \ref{B_E version: completion of colimit}, there is a $(\varphi_q,\mathcal{O}_K^{\times})$-equivariant injection 
	\[A_{\mathrm{mv},E_0}\xhookrightarrow{T_{\sigma_i}\mapsto T_{\sigma_i}^{(0)}} \varinjlim_{\varphi_q}A_{\mathrm{mv},E_0}\hookrightarrow \left(\varinjlim_{\varphi_q}A_{\mathrm{mv},E_0}\right)^{\wedge}\cong W(A_{\infty}).\]
	We define $A_{\mathrm{mv},E}\coloneq\mathcal{O}_E\otimes_{W(\mathbb{F})}A_{\mathrm{mv},E_0}$, $W_E(A_{\infty})\coloneq\mathcal{O}_E\otimes_{W(\mathbb{F})}W(A_{\infty})$. Viewing $A_{\mathrm{mv},E}$ as a finite free $A_{\mathrm{mv},E_0}$-module, we equip $A_{\mathrm{mv},E}$ with the product topology, and we equip $W_E(A_{\infty})$ with the weak topology. Then the continuous $(\varphi_q,\mathcal{O}_K^{\times})$-actions on $A_{\mathrm{mv},E_0}$ and $W(A_{\infty})$ extend $\mathcal{O}_E$-linearly to $A_{\mathrm{mv},E}$ and $W_E(A_{\infty})$. By tensoring with $\mathcal{O}_E$, we get a $(\varphi_q,\mathcal{O}_K^{\times})$-equivariant injection
	\begin{align}\label{(varphi_q,mathcal{O}_K^{*})-equivariant injection from A_{mv,E} to W_E(A_{infty})}
		A_{\mathrm{mv},E}\hookrightarrow  W_E(A_{\infty}).
	\end{align}
	
	Recall that there is a discrete valuation ring $A_{\mathrm{cycl},K}\coloneq\left(\mathcal{O}_K[\negthinspace[T_{\mathrm{cycl}}]\negthinspace]\left[\frac{1}{T_{\mathrm{cycl}}}\right]\right)^{\wedge}$ with residue field $\mathbb{F}_q(\negthinspace(T_{\mathrm{cycl}})\negthinspace)$, where the completion is the $p$-adic completion. We endow $A_{\mathrm{cycl},K}$ with the $(p,T_{\mathrm{cycl}})$-adic topology, i.e. an open neighbourhood basis of $0$ is given by $\left\{p^nA_{\mathrm{cycl},K}+T_{\mathrm{cycl}}^n\mathcal{O}_K[\negthinspace[T_{\mathrm{cycl}}]\negthinspace]: n\geq 1\right\}$.
	The ring $A_{\mathrm{cycl},K}$ admits a continuous $\mathcal{O}_K$-linear $(\varphi,\mathbb{Z}_p^{\times})$-action
	\begin{align}\label{cycltomic phi,Gmma action}
		\begin{cases}
			\varphi(T_{\mathrm{cycl}})=(1+T_{\mathrm{cycl}})^p-1, & \\
			a(T_{\mathrm{cycl}})=(1+T_{\mathrm{cycl}})^a-1, & a\in\mathbb{Z}_p^{\times}.
		\end{cases}
	\end{align} 
	Let $\varphi_q\coloneq \varphi^f$, $A_{\mathrm{cycl},E}\coloneq \mathcal{O}_E\otimes_{\sigma_{0},\mathcal{O}_K}A_{\mathrm{cycl},K}$, $\mathbb{F}(\negthinspace(T_{\mathrm{cycl}}^{1/p^{\infty}})\negthinspace)\coloneq \mathbb{F}\otimes_{\sigma_{0},\mathbb{F}_q}\mathbb{F}_q(\negthinspace(T_{\mathrm{cycl}}^{1/p^{\infty}})\negthinspace)$ and $W_E\left(\mathbb{F}(\negthinspace(T_{\mathrm{cycl}}^{1/p^{\infty}})\negthinspace)\right)\coloneq \mathcal{O}_E\otimes_{\sigma_{0},\mathcal{O}_K}W\left(\mathbb{F}_q(\negthinspace(T_{\mathrm{cycl}}^{1/p^{\infty}})\negthinspace)\right)$, where $\mathbb{F}_q(\negthinspace(T_{\mathrm{cycl}}^{1/p^{\infty}})\negthinspace)$ is the completed perfection of $\mathbb{F}_q(\negthinspace(T_{\mathrm{cycl}})\negthinspace)$. 
	
	When $K=\qp$, there is an isomorphism $A_{\mathrm{mv},E}\cong  A_{\mathrm{cycl},E}$ commuting with the continuous $(\varphi,\mathbb{Z}_p^{\times})$-actions. In this case, the map (\ref{(varphi_q,mathcal{O}_K^{*})-equivariant injection from A_{mv,E} to W_E(A_{infty})}) is a $(\varphi,\mathbb{Z}_p^{\times})$-equivariant injection
	\begin{align}\label{(varphi_q,Z_p^*)-equivariant injection from A_{mv,E} to W_E(A_{infty})}
		A_{\mathrm{cycl},E}\hookrightarrow W_E\left(\mathbb{F}(\negthinspace(T_{\mathrm{cycl}}^{1/p^{\infty}})\negthinspace)\right).
	\end{align}
	
	In the remain of this section, we prove the faithfully flatness of (\ref{(varphi_q,mathcal{O}_K^{*})-equivariant injection from A_{mv,E} to W_E(A_{infty})}).
	
	\begin{lem}\label{A to A_{infty} is faithfully flat}
		The ring homomorphism $A\hookrightarrow A_{\infty}$ is faithfully flat.
	\end{lem}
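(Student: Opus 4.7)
The plan is to exploit the fact that $A_\infty$ is obtained from $A$ in two steps, and to reduce faithful flatness to an explicit Banach basis description of $A_\infty$ as an $A$-module. I factor the inclusion as
\[A \hookrightarrow A^{\mathrm{perf}} := \varinjlim_{\varphi_q} A \hookrightarrow A_\infty,\]
where the second map is the $T_{\sigma_{0}}$-adic completion.

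For the first map, I would proceed exactly as in the proof of Lemma \ref{phi_q is flat}: since $A \cong A_{\mathrm{mv},E}/\varpi$ is a regular Noetherian ring, Kunz's theorem implies $\varphi_q : A \to A$ is faithfully flat. Filtered colimits of faithfully flat ring maps are faithfully flat, so $A \to A^{\mathrm{perf}}$ is faithfully flat.

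For the second map, the key step is to exhibit an explicit Schauder basis of $A_\infty$ as a Banach $A$-module. Concretely, for each $n \geq 0$, the $n$-th copy $A^{(n)}$ in the colimit $\varinjlim_{\varphi_q} A$ is free over $A$ on the monomials $T_{\sigma_{0}}^{a_0}\prod_{i=1}^{f-1}(T_{\sigma_{i}}/T_{\sigma_{0}})^{a_i}$ with $a_i \in \frac{1}{q^n}\mathbb{Z} \cap [0,1)$. Passing to the $T_{\sigma_{0}}$-adic completion, every element of $A_\infty$ has a unique convergent expansion
\[\sum_{\underline{a}\in I} c_{\underline{a}}\,T_{\sigma_{0}}^{a_0}\prod_{i=1}^{f-1}\bigl(T_{\sigma_{i}}/T_{\sigma_{0}}\bigr)^{a_i}, \quad c_{\underline{a}} \in A,\ |c_{\underline{a}}|_{\mathrm{Gauss}}\to 0,\]
where $I := (\mathbb{Z}[1/p]\cap [0,1))^f$. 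This identifies $A_\infty$ with the completed direct sum $\widehat{\bigoplus}_{I} A$. Flatness of $A_\infty$ over the Noetherian ring $A$ then follows because tensoring any finite presentation $A^m \to A^k \to N \to 0$ with $A_\infty$ remains exact: the completed direct sum functor preserves exactness of maps between finitely presented $A$-modules since completion commutes with cokernels and the Noetherianity of $A$ ensures Mittag-Leffler behaviour on the kernel.

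Finally, for the ``faithful'' part, the coefficient of the basis element $e_{\underline{0}} = 1$ defines an $A$-linear projection $\pi : A_\infty \twoheadrightarrow A$ which is the identity on $A \subset A_\infty$. If $\mathfrak{m} \subset A$ is any maximal ideal and $1 \in \mathfrak{m} A_\infty$, then applying $\pi$ gives $1 \in \mathfrak{m}$, a contradiction. Hence $\mathfrak{m} A_\infty \neq A_\infty$ for every maximal ideal $\mathfrak{m}$ of $A$, which combined with flatness gives faithful flatness.

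The main obstacle is Step 2, namely rigorously establishing the Schauder basis of $A_\infty$ over $A$ and deducing flatness from it. Once this is in place, both flatness and the surjectivity on $\mathrm{Spec}$ are essentially formal; in particular, invoking the first factor $A \to A^{\mathrm{perf}}$ is not strictly needed for the statement, but serves as useful motivation for the structure.
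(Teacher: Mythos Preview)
Your approach is genuinely different from the paper's, and the gap lies exactly where you suspect.

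The paper does not factor through $A^{\mathrm{perf}}$ or invoke a Schauder basis. Instead it passes to the subrings of power-bounded elements: since $A = A^{\circ}[1/T_{\sigma_{0}}]$ and $A_{\infty} = A_{\infty}^{\circ}[1/T_{\sigma_{0}}]$, it suffices to show that $A^{\circ} \hookrightarrow A_{\infty}^{\circ}$ is faithfully flat. For flatness the paper uses the local criterion (Stacks Project Tag 0AGW) along the element $T_{\sigma_{0}}$: one checks that $A_{\infty}^{\circ}$ is $T_{\sigma_{0}}$-torsion-free and that $A_{\infty}^{\circ}/(T_{\sigma_{0}})$ is \emph{free} over the Noetherian Laurent polynomial ring $A^{\circ}/(T_{\sigma_{0}}) \cong \mathbb{F}\bigl[(T_{\sigma_{i}}/T_{\sigma_{0}})^{\pm 1} : 1 \leq i \leq f-1\bigr]$. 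For faithfulness, since $T_{\sigma_{0}}$ lies in the Jacobson radical of both $A^{\circ}$ and $A_{\infty}^{\circ}$, one reduces to surjectivity of $\mathrm{Spec}(A_{\infty}^{\circ}/(T_{\sigma_{0}})) \to \mathrm{Spec}(A^{\circ}/(T_{\sigma_{0}}))$, which holds because the reduced quotient of the source is the perfection of the target. This route replaces your analytic step by a purely algebraic reduction modulo $T_{\sigma_{0}}$.

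The concrete gap in your argument is the flatness deduction. The sentence ``tensoring any finite presentation $A^{m} \to A^{k} \to N \to 0$ with $A_{\infty}$ remains exact'' is vacuous: tensor product is always right exact, so this checks nothing. What you actually need is that $J \otimes_{A} A_{\infty} \to A_{\infty}$ is injective for every finitely generated ideal $J \subset A$, and deducing this from an orthonormal-basis description $A_{\infty} \cong \widehat{\bigoplus}_{I} A$ requires showing that the completed-direct-sum functor is exact on finitely presented $A$-modules. This is true but is not a consequence of ``completion commutes with cokernels'' plus a Mittag--Leffler remark; it needs an honest argument (e.g.\ that for finitely presented $N$ the map $N \otimes_{A} \widehat{\bigoplus}_{I} A \to \widehat{\bigoplus}_{I} N$ is an isomorphism, using that $A$ is Noetherian Banach). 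Your faithfulness argument via the projection onto the $e_{\underline{0}}$-coefficient is correct and clean, but it presupposes that the basis description has been established.
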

	\begin{proof}
		Since $A=A^{\circ}\left[\dfrac{1}{T_{\sigma_{0}}}\right]$, $A_{\infty}=A^{\circ}_{\infty}\left[\dfrac{1}{T_{\sigma_{0}}}\right]$, it suffices to show that $A^{\circ}\hookrightarrow A_{\infty}^{\circ}$ is faithfully flat. Similar to the proof of Lemma \ref{phi_q is flat}, as $A_{\infty}^{\circ}$ is $T_{\sigma_{0}}$-trosion-free, we have $\mathrm{Tor}_{1}^{A^{\circ}}(A_{\infty}^{\circ},A^{\circ}/(T_{\sigma_{0}}))=0$. Note that $A^{\circ}/(T_{\sigma_{0}})\cong \mathbb{F}\left[\left(\dfrac{T_{\sigma_{i}}}{T_{\sigma_{0}}}\right)^{\pm 1}: 1\leq i\leq f-1\right]$ is a Noetherian domain, and $A_{\infty}^{\circ}/(T_{\sigma_{0}})$ is a free $A^{\circ}/(T_{\sigma_{0}})$-module, by \cite[\href{https://stacks.math.columbia.edu/tag/0AGW}{Tag 0AGW}]{stacks-project}, the ring map $A^{\circ}\hookrightarrow A_{\infty}^{\circ}$ is flat.

		By \cite[\href{https://stacks.math.columbia.edu/tag/00HQ}{Tag 00HQ}]{stacks-project}, it remains to show that every closed point of $\mathrm{Spec}(A^{\circ})$ lies in the image of $\mathrm{Spec}(A_{\infty}^{\circ})\to \mathrm{Spec}(A^{\circ})$. Since $1+T_{\sigma_{0}}x$ is invertible for any $x\in A^{\circ}$, we see that $T_{\sigma_{0}}$ is contained in the Jacobson radical of $A^{\circ}$. Similarly, $T_{\sigma_{0}}$ is also contained in the Jacobson radical of $A^{\circ}_{\infty}$, thus it suffices to show that 
		$\mathrm{Spec}(A_{\infty}^{\circ}/(T_{\sigma_{0}}))\to \mathrm{Spec}(A^{\circ}/(T_{\sigma_{0}}))$ is surjective. 
		Note that $\big(A_{\infty}^{\circ}/(T_{\sigma_{0}})\big)/\mathrm{Nil}\big(A_{\infty}^{\circ}/(T_{\sigma_{0}})\big)$ is the perfection of $A^{\circ}/(T_{\sigma_{0}})$ where $\mathrm{Nil}\big(A_{\infty}^{\circ}/(T_{\sigma_{0}})\big)$ is the nilradical of $A_{\infty}^{\circ}/(T_{\sigma_{0}})$, hence the morphism $\mathrm{Spec}\left(\big(A_{\infty}^{\circ}/(T_{\sigma_{0}})\big)/\mathrm{Nil}\big(A_{\infty}^{\circ}/(T_{\sigma_{0}})\big)\right)\to \mathrm{Spec}(A^{\circ}/(T_{\sigma_{0}}))$ is a universal homeomorphism,
		then the surjectivity of the morphism $\mathrm{Spec}(A_{\infty}^{\circ}/(T_{\sigma_{0}}))\to \mathrm{Spec}(A^{\circ}/(T_{\sigma_{0}}))$ follows. Hence the ring homomorphism $A^{\circ}\hookrightarrow A_{\infty}^{\circ}$ is faithfully flat.
	\end{proof}
	
	\begin{prop}\label{A_{mv,E} to W_E(A_{infty}) is faithfully flat}
		The ring homomorphism $A_{\mathrm{mv},E}\hookrightarrow W_E(A_{\infty})$ is faithfully flat.
	\end{prop}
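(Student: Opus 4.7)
The plan is to mirror the argument used in Lemma \ref{phi_q is flat}, reducing faithful flatness to the mod $\varpi$ situation, where the inclusion becomes $A \hookrightarrow A_{\infty}$ and we can invoke Lemma \ref{A to A_{infty} is faithfully flat}. Concretely, I would check the hypotheses of the local criterion \cite[\href{https://stacks.math.columbia.edu/tag/0AGW}{Tag 0AGW}]{stacks-project} applied to the $A_{\mathrm{mv},E}$-module $W_E(A_{\infty})$ with ideal $(\varpi)$, then upgrade to faithful flatness using that $\varpi$ lies in the Jacobson radical of $A_{\mathrm{mv},E}$.

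The preliminary structural step is to identify $W_E(A_{\infty})/\varpi$ and verify completeness and torsion-freeness. Since $\mathcal{O}_E$ is a finite free $W(\mathbb{F})$-module (via an Eisenstein equation $E(\varpi)=0$ with $E(0)=pu$, $u\in W(\mathbb{F})^{\times}$), one has $W_E(A_{\infty}) \cong W(A_{\infty})[X]/(E(X))$, which is a finite free $W(A_{\infty})$-module. Hence $W_E(A_{\infty})$ is $p$-adically complete; the $p$-adic and $\varpi$-adic topologies coincide (as $\varpi^e$ differs from $p$ by a unit), so $W_E(A_{\infty})$ is $\varpi$-adically complete. Moreover $W(A_{\infty})$ is $W(\mathbb{F})$-flat (being $p$-torsion-free over a DVR), so $W_E(A_{\infty})$ is $\mathcal{O}_E$-flat and in particular $\varpi$-torsion-free. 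Finally, reducing the presentation modulo $X=\varpi$ gives $W_E(A_{\infty})/\varpi \cong W(A_{\infty})/(E(0)) = W(A_{\infty})/p \cong A_{\infty}$, and the induced map $A_{\mathrm{mv},E}/\varpi \cong A \to A_{\infty}$ is precisely the inclusion from Lemma \ref{A to A_{infty} is faithfully flat}.

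Flatness now follows in exactly the pattern of Lemma \ref{phi_q is flat}: Lemma \ref{A to A_{infty} is faithfully flat} gives that $W_E(A_{\infty})/\varpi$ is flat over $A_{\mathrm{mv},E}/\varpi$, while $\varpi$-torsion-freeness of $W_E(A_{\infty})$ combined with the short exact sequence $0 \to A_{\mathrm{mv},E} \xrightarrow{\cdot \varpi} A_{\mathrm{mv},E} \to A_{\mathrm{mv},E}/\varpi \to 0$ yields $\mathrm{Tor}_1^{A_{\mathrm{mv},E}}(W_E(A_{\infty}), A_{\mathrm{mv},E}/\varpi) = 0$. Noetherianness of $A_{\mathrm{mv},E}$ (Lemma \ref{A_{mathrm{m.v.},K} is noetherian}) together with $\varpi$-adic completeness of $W_E(A_{\infty})$ lets us conclude flatness via \cite[\href{https://stacks.math.columbia.edu/tag/0AGW}{Tag 0AGW}]{stacks-project}.

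For faithful flatness, I apply \cite[\href{https://stacks.math.columbia.edu/tag/00HQ}{Tag 00HQ}]{stacks-project}: it suffices to see that every closed point of $\mathrm{Spec}(A_{\mathrm{mv},E})$ is hit. Because $A_{\mathrm{mv},E}$ is $\varpi$-adically complete, $1+\varpi\cdot A_{\mathrm{mv},E}$ consists of units, so $\varpi \in \mathrm{Jac}(A_{\mathrm{mv},E})$ and every maximal ideal contains $\varpi$; the surjectivity question therefore reduces to surjectivity of $\mathrm{Spec}(A_{\infty}) \to \mathrm{Spec}(A)$, which is immediate from Lemma \ref{A to A_{infty} is faithfully flat}. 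I do not anticipate any serious obstacle: the only mild subtlety is the clean identification of $W_E(A_{\infty})/\varpi$ with $A_{\infty}$ in the possibly ramified case $E/E_0$, but this is handled by the Eisenstein presentation above.
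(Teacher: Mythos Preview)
Your proposal is correct and follows essentially the same approach as the paper: reduce flatness modulo $\varpi$ to Lemma~\ref{A to A_{infty} is faithfully flat}, verify $\mathrm{Tor}_1^{A_{\mathrm{mv},E}}(W_E(A_{\infty}),A_{\mathrm{mv},E}/\varpi)=0$ via $\varpi$-torsion-freeness, apply \cite[\href{https://stacks.math.columbia.edu/tag/0AGW}{Tag 0AGW}]{stacks-project}, and then deduce faithful flatness from \cite[\href{https://stacks.math.columbia.edu/tag/00HQ}{Tag 00HQ}]{stacks-project} using that $\varpi$ lies in the Jacobson radical. Your write-up is slightly more explicit than the paper's in justifying the identifications $W_E(A_{\infty})/\varpi\cong A_{\infty}$ and the $\varpi$-torsion-freeness via the Eisenstein presentation, which the paper leaves implicit.
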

	\begin{proof}
		Note that $A_{\mathrm{mv},E}/\varpi\cong A$ is a Noetherian domain, and $W_E(A_{\infty})/\varpi\cong A_{\infty}$ is faithfully flat over $A$ by Lemma \ref{A to A_{infty} is faithfully flat}. Similar to the proof of Lemma \ref{phi_q is flat}, as $W_E(A_{\infty})$ is $\varpi$-trosion-free, we have $\mathrm{Tor}^{A_{\mathrm{mv},E}}_1(W_E(A_{\infty}),A)=0$. Hence by \cite[\href{https://stacks.math.columbia.edu/tag/0AGW}{Tag 0AGW}]{stacks-project}, the ring map $A_{\mathrm{mv},E}\hookrightarrow W_E(A_{\infty})$ is flat.
		
		By \cite[\href{https://stacks.math.columbia.edu/tag/00HQ}{Tag 00HQ}]{stacks-project}, it remains to show that every closed point of $\mathrm{Spec}A_{\mathrm{mv},E}$ lies in the image of $\mathrm{Spec}W_E(A_{\infty})\to \mathrm{Spec}A_{\mathrm{mv},E}$. As $\varpi$ is contained in the Jacobson radical of $A_{\mathrm{mv},E}$ and the Jacobson radical of $W_E(A_{\infty})$, it suffices to show the surjectivity of $\mathrm{Spec}(W_E(A_{\infty})/\varpi)\to \mathrm{Spec}(A_{\mathrm{mv},E}/\varpi)$, which is a direct consequence of Lemma \ref{A to A_{infty} is faithfully flat}.
	\end{proof}
	
	\subsection{A $(\varphi_q,\mathcal{O}_K^{\times})$ injection from $A_{\mathrm{LT},E,\sigma_{0}}$ to $W_E\left(\mathbb{F}(\negthinspace(T_{\mathrm{LT}}^{1/p^{\infty}})\negthinspace)\right)$}\label{injection in L-T case}
	
	We fix a Frobenius power series $p_{\mathrm{LT}}(T_{\mathrm{LT}})\in \mathcal{O}_K[\negthinspace[T_{\mathrm{LT}}]\negthinspace]$ associated to the uniformizer $p\in\mathcal{O}_K$, which gives a Lubin-Tate formal group $G_{\mathrm{LT}}=\mathrm{Spf}(\mathcal{O}_K[\negthinspace[T_{\mathrm{LT}}]\negthinspace])$ (\cite[Chapter 8, $\S$1]{lang2012cyclotomic}). For $a\in\mathcal{O}_K$, we denote the formal power series associated to $a$ by $a_{\mathrm{LT}}(T_{\mathrm{LT}})$. 
	
	Let $K_{\infty}$ be the Lubin-Tate extension of $K$. It is a totally ramified abelian extension of $K$ contained in $\overline{K}$, and $\gal(K_{\infty}/K)\cong\mathcal{O}_K^{\times}$.
	
	We consider the discrete valuation ring $A_{\mathrm{LT},K}\coloneq\left(\mathcal{O}_K[\negthinspace[T_{\mathrm{LT}}]\negthinspace][\frac{1}{T_{\mathrm{LT}}}]\right)^{\wedge}$ where the completion is the $p$-adic completion. The maximal ideal of $A_{\mathrm{LT},K}$ is generated by $p$, and the residue field is $\mathbb{F}_q(\negthinspace(T_{\mathrm{LT}})\negthinspace)$. We endow the ring $A_{\mathrm{LT},K}$ with the $(p,T_{\mathrm{LT}})$-adic topology, then $A_{\mathrm{LT},K}$ carries a continuous $\mathcal{O}_K$-linear $(\varphi_q,\mathcal{O}_K^{\times})$-action given by:
	\begin{align}\label{phi_q,O_k^* action ass. to Lubin-Tate formal groups}
		\begin{cases}
			\varphi_q(T_{\mathrm{LT}})= p_{\mathrm{LT}}(T_{\mathrm{LT}}), & \\
			a (T_{\mathrm{LT}})=a_{\mathrm{LT}}(T_{\mathrm{LT}}), & a\in \mathcal{O}_K^{\times}.
		\end{cases}
	\end{align}
	The $(\varphi_q,\mathcal{O}_K^{\times})$-action on $A_{\mathrm{LT},K}$ induces a continuous $(\varphi_q,\mathcal{O}_K^{\times})$-action on $\mathbb{F}_q(\negthinspace(T_{\mathrm{LT}})\negthinspace)$. We define $A_{\mathrm{LT},E_0,\sigma_{0}}\coloneq W(\mathbb{F})\otimes_{\sigma_0,\mathcal{O}_K}A_{\mathrm{LT},K}$, $A_{\mathrm{LT},E,\sigma_{0}}\coloneq \mathcal{O}_E\otimes_{\sigma_0,\mathcal{O}_K}A_{\mathrm{LT},K}$ and $\mathbb{F}(\negthinspace(T_{\mathrm{LT}})\negthinspace)\coloneq \mathbb{F}\otimes_{\sigma_{0},\mathbb{F}_q} \mathbb{F}_q(\negthinspace(T_{\mathrm{LT}})\negthinspace)$. The $(\varphi_q,\mathcal{O}_K^{\times})$-action on $A_{\mathrm{LT},K}$ extends $W(\mathbb{F})$-linearly (resp. $\mathcal{O}_E$-linearly) to $A_{\mathrm{LT},E_0,\sigma_{0}}$ (resp. $A_{\mathrm{LT},E,\sigma_{0}}$).
	
	Like for $A_{\mathrm{mv},E_0}$, we can define a metric on $\varinjlim_{\varphi_q}A_{\mathrm{LT},E_0,\sigma_{0}}$ so that its topological completion is $(\varphi_q,\mathcal{O}_K^{\times})$-equivariantly isomorphic to $W\left(\mathbb{F}(\negthinspace(T_{\mathrm{LT}}^{1/p^{\infty}})\negthinspace)\right)$, i.e. $\left(\varinjlim_{\varphi_q}A_{\mathrm{LT},E_0,\sigma_{0}}\right)^{\wedge}\cong W\left(\mathbb{F}(\negthinspace(T_{\mathrm{LT}}^{1/p^{\infty}})\negthinspace)\right)$. As a consequence, we obtain a $(\varphi_q,\mathcal{O}_K^{\times})$-equivariant injection
	\[A_{\mathrm{LT},E_0,\sigma_{0}}\to  \varinjlim_{\varphi_q}A_{\mathrm{LT},E_0,\sigma_{0}}\hookrightarrow W\left(\mathbb{F}(\negthinspace(T_{\mathrm{LT}}^{1/p^{\infty}})\negthinspace)\right),\]
	where $\mathbb{F}(\negthinspace(T_{\mathrm{LT}}^{1/p^{\infty}})\negthinspace)$ is the completed perfection of $\mathbb{F}(\negthinspace(T_{\mathrm{LT}})\negthinspace)$. We define $W_E\left(\mathbb{F}(\negthinspace(T_{\mathrm{LT}}^{1/p^{\infty}})\negthinspace)\right)\coloneq \mathcal{O}_E\otimes_{W(\mathbb{F})}W\left(\mathbb{F}(\negthinspace(T_{\mathrm{LT}}^{1/p^{\infty}})\negthinspace)\right)$. By tensoring with $\mathcal{O}_E$, we obtain a $(\varphi_q,\mathcal{O}_K^{\times})$-equivariant injection
	\begin{equation}\label{equivairant injection Lubin-Tate case}
		A_{\mathrm{LT},E,\sigma_{0}}\hookrightarrow W_E\left(\mathbb{F}(\negthinspace(T_{\mathrm{LT}}^{1/p^{\infty}})\negthinspace)\right).
	\end{equation}
	
	\section{Frobenius Descent for finitely presented étale $(\varphi_q,\mathcal{O}_K^{\times})$-modules over $W_E(A_{\infty})$}\label{section 3}
	
	In this section, we show that every finitely presented étale $(\varphi_q,\mathcal{O}_K^{\times})$-module over $W_E(A_{\infty})$ canonically descends to a finitely presented étale $(\varphi_q,\mathcal{O}_K^{\times})$-module over $A_{\mathrm{mv},E}$.
	
	\subsection{Finite projective étale $\varphi_q$-modules}
	
	In this section, we show that every finite projective étale $\varphi_q$-module over $W_E(A_{\infty})$ canonically descends to a finite projective étale $\varphi_q$-module over $A_{\mathrm{mv},E}$. We start with a lemma characterizing finite projective modules over $A_{\mathrm{mv},E}$ and $W_E(A_{\infty})$.
	
	\begin{lem}\label{description of finte projective modules}
		Let $R$ be one of the rings $\{A_{\mathrm{mv},E},A_{\mathrm{mv},E}/\varpi^n,W_E(A_{\infty}),W_E(A_{\infty})/\varpi^n\}$ where $n\geq 1$ is an integer. Any finite projective $R$-module is a free $R$-module.
	\end{lem}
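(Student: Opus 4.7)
The strategy is to reduce all four cases to the two base cases $R = A := A_{\mathrm{mv},E}/\varpi$ and $R = A_\infty := W_E(A_\infty)/\varpi$, and then to handle those base cases separately.

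For the reduction, I first observe that $\varpi$ lies in the Jacobson radical of each of the four rings: it is nilpotent in the mod-$\varpi^n$ quotients, and for the $\varpi$-adically complete rings $A_{\mathrm{mv},E}$ and $W_E(A_\infty)$ the same argument used for $A_{\mathrm{mv},E}$ in the proof of Lemma~\ref{phi_q is flat} applies (since $1+\varpi x$ is invertible in any $\varpi$-adically complete ring). Each of the four rings is also connected: $A_{\mathrm{mv},E}$ and $W_E(A_\infty)$ are domains, and their $\varpi^n$-quotients inherit connectedness because idempotents lift along the nilpotent thickening $R\twoheadrightarrow R/\varpi$. Hence any finite projective $M$ over $R$ has constant rank, say $r$. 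Granting the base case, $M/\varpi M$ is free of rank $r$ over $R/\varpi$; lifting a basis and applying Nakayama (valid because $\varpi\in J(R)$ and $M$ is finitely generated) gives a surjection $R^r\twoheadrightarrow M$, which splits by projectivity of $M$. Writing $R^r\cong M\oplus N$ with $N$ finitely generated, reducing modulo $\varpi$ and comparing ranks yields $N/\varpi N=0$, whence $N=0$ by a second application of Nakayama, so $M\cong R^r$.

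It remains to handle the two base cases. The ring $A$ is the Tate algebra $\mathbb{F}(\negthinspace(T_{\sigma_0})\negthinspace)\langle (T_{\sigma_i}/T_{\sigma_0})^{\pm 1}:1\le i\le f-1\rangle$ on a Laurent polyannulus over the complete discretely valued field $\mathbb{F}(\negthinspace(T_{\sigma_0})\negthinspace)$, and finite projective modules over such algebras are free by a Quillen--Suslin-type theorem of L\"utkebohmert. A more hands-on variant proceeds through the integral Tate subring $A^\circ=\mathbb{F}[\negthinspace[T_{\sigma_0}]\negthinspace]\langle (T_{\sigma_i}/T_{\sigma_0})^{\pm 1}\rangle$: its $T_{\sigma_0}$-adic residue ring is the Laurent polynomial ring $\mathbb{F}[(T_{\sigma_i}/T_{\sigma_0})^{\pm 1}]$ over which Quillen--Suslin (Swan's Laurent version) applies, and the same Nakayama-lifting argument as above propagates freeness up the $T_{\sigma_0}$-adic tower to $A^\circ$, and then to $A=A^\circ[1/T_{\sigma_0}]$. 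For $A_\infty$ the parallel argument runs through $A_\infty^\circ$, whose $T_{\sigma_0}^{1/p^\infty}$-adic residue ring is the perfection of the same Laurent polynomial ring; projective modules over this perfection remain free by taking a direct limit of Quillen--Suslin along Frobenius.

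The main obstacle is the very last step: passing from the integral subring ($A^\circ$, resp.\ $A_\infty^\circ$) to its Laurent localization ($A$, resp.\ $A_\infty$). Inverting $T_{\sigma_0}$ can in principle create finite projective modules that do not descend, so some form of Picard-triviality or Bass-type stable-range input is required. The cleanest route is to invoke the corresponding freeness statement for $A$ and $A_\infty$ proved in \cite{breuil2023multivariable}, where these same rings are the central object of study; alternatively, one combines the vanishing of $\mathrm{Pic}$ for Tate algebras on Laurent polyannuli over complete discretely valued fields with cancellation to upgrade Quillen--Suslin from the integral ring to the localization.
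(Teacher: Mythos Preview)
Your reduction from the four rings to the two residue rings $A$ and $A_\infty$ via Nakayama is correct and is exactly what the paper does (the paper spells out the case $A_{\mathrm{mv},E}/\varpi^n$, using a localization argument for linear independence where you use splitting-plus-Nakayama, and then declares the other three cases similar). For the base case $A$, citing L\"utkebohmert is also what the paper does.

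The gap is in the base case $A_\infty$. Your only argument there is the ``hands-on variant'' through $A_\infty^\circ$, and you correctly identify that the passage from $A_\infty^\circ$ to its localization $A_\infty = A_\infty^\circ[1/T_{\sigma_0}]$ is the obstacle: a finite projective $A_\infty$-module need not descend to $A_\infty^\circ$, so freeness over $A_\infty^\circ$ does not settle the question over $A_\infty$. Your fallback suggestions do not close this gap. The Pic-vanishing plus cancellation route you sketch is formulated for Tate algebras over complete \emph{discretely} valued fields, whereas the base field of $A_\infty$ is $\mathbb{F}(\!(T_{\sigma_0}^{1/p^\infty})\!)$, a perfectoid field with $p$-divisible value group, so neither L\"utkebohmert's theorem nor its standard variants apply directly. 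The paper handles this case by citing \cite[Theorem~2.19]{DH21Projective}, which proves freeness of finite projective modules over $A_\infty$; that external input is what you are missing, and the paper offers no elementary substitute.
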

	
	\begin{proof}
		Let $M_n$ be a finite projective $A_{\mathrm{mv},E}/\varpi^n$-module, locally free of rank $r$. For $n=1$, $A_{\mathrm{mv},E}/\varpi\cong A$, then by \cite[$\S$1 satz 3]{lutkebohmert1977vektorraumbundel}, $M_1$ is a free $A$-module of rank $r$. For $n\geq 2$, as $M_n/\varpi$ is a finite projective module over $A_{\mathrm{mv},E}/\varpi$ locally free of rank $r$, there exists $x_1,\dots,x_r\in M_n$ such that $\overline{x}_1,\dots,\overline{x}_r\in M_n/\varpi$ forms an $A_{\mathrm{mv},E}/\varpi$-basis of $M_n/\varpi$. By Nakayama's Lemma, $M_n=\sum_{i=1}^rA_{\mathrm{mv},E}/\varpi^nx_i$. Suppose that there exists $a_1,\dots,a_r\in A_{\mathrm{mv},E}/\varpi^n$ such that $\sum_{i=1}^ra_ix_i=0$. Since $M_n$ is locally free of rank $r$, for any prime ideal $\mathfrak{p}\subset A_{\mathrm{mv},E}/\varpi^n$, $(M_n)_{\mathfrak{p}}$ is a free $\left(A_{\mathrm{mv},E}/\varpi^n\right)_{\mathfrak{p}}$-module of rank $r$, and $x_1,\dots,x_r$ generate $(M_n)_{\mathfrak{p}}$, hence $x_1,\dots,x_r$ must be an $\left(A_{\mathrm{mv},E}/\varpi^n\right)_{\mathfrak{p}}$-basis of $(M_n)_{\mathfrak{p}}$, which implies that $a_1=\cdots=a_r=0$ in $\left(A_{\mathrm{mv},E}/\varpi^n\right)_{\mathfrak{p}}$. In other words, the localization of the $A_{\mathrm{mv},E}/\varpi^n$-module $a_iA_{\mathrm{mv},E}/\varpi^n$ at any prime ideal is $0$, which implies that $a_i=0$ for any $1\leq i\leq r$. Therefore, $M_n=\bigoplus_{i=1}^rA_{\mathrm{mv},E}/\varpi^nx_i$ is free.
		
		The conclusion for $A_{\infty}$ comes from \cite[Theorem 2.19]{DH21Projective}. For $W_E(A_{\infty})/\varpi^n$, $A_{\mathrm{mv},E}$ and $W_E(A_{\infty})$, the proof is similar, and we omit it.
	\end{proof}

	\begin{defn}
		Let $R$ be a ring with an endomorphism $\varphi$. Assume that $p$ is not invertible in $R$. We say that an $R$-module $M$ is a $\varphi$-\textit{module} over $R$ if there is an endomorphism $\varphi$ of $M$ such that, for any $r\in R$ and $m\in M$, $\varphi(rm)=\varphi(r)\varphi(m)$. For any $\varphi$-module $M$, there is a natural map, called the \textit{linearization map},
		\begin{equation}\label{linearization map}
			\begin{aligned}
				\mathrm{id}\otimes\varphi: R\otimes_{\varphi,R}M&\to M\\
				r\otimes m&\mapsto r\varphi(m).
			\end{aligned}
		\end{equation}
		We say that a $\varphi$-module $M$ is \textit{étale} if the linearization map (\ref{linearization map}) is an isomorphism. If, in addition, $R$ is a topological ring and $\varphi$ is a continuous endomorphism of $R$, we require $M$ to be a topological $R$-module and the endomorphism $\varphi$ of $M$ to be continuous.
	\end{defn}
	 
	 \begin{defn}
	 	Let $\Gamma$ be a topological group, and let $R$ be a topological ring with a continuous $(\varphi,\Gamma)$-action (Definition \ref{continuous (phi,Gamma)-action}). Assume that $p$ is not invertible in $R$. We say that an étale $\varphi$-module $M$ is an \textit{étale} $(\varphi,\Gamma)$-\textit{module} over $R$, if there is a continuous action of $\Gamma$ on $M$ commuting with the endomorphism $\varphi$ of $M$ such that
	 	\[\gamma(rm)=\gamma(r)\gamma(m), \quad \gamma\in \Gamma, r\in R,m\in M.\]
	 \end{defn}
	 
	For a finite module $M$ over a topological ring $R$, we endow $M$ with the quotient topology induced by some surjection $R^d\twoheadrightarrow M$, where $R^d$ is given the product topology. One can check that the topology on $M$ does not depend on the choice of the surjection $R^d\twoheadrightarrow M$.
	We denote by $\modet_{\varphi}(R)$ (resp. $\modetfp_{\varphi}(R)$, $\modet_{\varphi,\Gamma}(R)$, $\modetfp_{\varphi,\Gamma}(R)$) the category of finite projective étale $\varphi$-modules (resp. finitely presented étale $\varphi$-modules, finite projective étale $(\varphi,\Gamma)$-modules, finitely presented $(\varphi,\Gamma)$-modules) over $R$.

	For $n\geq 1$, let $W_{n}(\mathbb{F}_q)\coloneq W(\mathbb{F}_q)/p^n=\mathcal{O}_K/p^n$. Let $S$ be an affine $W_n(\mathbb{F}_q)$-scheme. Suppose that 
	\begin{align}\label{Katz's condition}
		\begin{cases}
				\text{(i) the affine scheme}\ S\ \text{is flat over}\ W_n(\mathbb{F}_q), \\
				\text{(ii) the special fibre}\ S\times_{W_n(\mathbb{F}_q)}\mathbb{F}_q\ \text{is normal and integral},\\
				\text{(iii) there is an endomorphism}\ \varphi_q\ \text{of}\ S\ \text{which induces the}\ q\text{-th power map on}\ S\times_{}\mathbb{F}_q
			\end{cases}
	\end{align}
	For example, $S\coloneq \mathrm{Spec}(A_{\mathrm{mv},K}/p^n)$, $\mathrm{Spec}(\varphi_q)$ satisfies (\ref{Katz's condition}): its special fibre is $\mathrm{Spec}A_q$, which is a normal integral scheme over $\mathbb{F}_q$ by \cite[Corollary 3.1.1.2]{breuil2023conjectures} and \cite[Theorem 19.4]{matsumura1989commutative}, and $S$ is flat over $W_n(\mathbb{F}_q)$ since $A_{\mathrm{mv},K}$ is flat over $\mathcal{O}_K$. %Similarly, $S\coloneq \mathrm{Spec}(W(A_{q,\infty})/p^n),\varphi_q\coloneq \mathrm{Spec}(\varphi_q)$ also satisfies (\ref{Katz's condition}) by Proposition \ref{Normality of A_{infty}}.
	
	For a scheme $(S,\varphi_q)$ satisfying (\ref{Katz's condition}, (iii)), we denote by $\modet_{\varphi_q}(\mathcal{O}_S)$ the category of pairs $(\mathcal{V},\phi_{\mathcal{V}})$ where $\mathcal{V}$ is a Zariski-locally free $\mathcal{O}_S$-module of finite rank with an isomorphism $\phi_{\mathcal{V}}: \varphi_q^*\mathcal{V}\xrightarrow{\sim}\mathcal{V}$ of $\mathcal{O}_S$-modules. Fix a geometric point $\overline{s}$ of $S$, we denote by $\mathrm{Rep}^{\mathrm{free}}_{W_n(\mathbb{F}_q)}(\pi_1^{\mathrm{\acute{e}t}}(S,\overline{s}))$ the category of finite free $W_n(\mathbb{F}_q)$-modules with a continuous $\pi_1^{\mathrm{\acute{e}t}}(S,\overline{s})$-action, where $\pi_1^{\mathrm{\acute{e}t}}(S,\overline{s})$ is the étale fundamental group of $(S,\overline{s})$. For the definition and basic properties of $\pi_1^{\mathrm{\acute{e}t}}(S,\overline{s})$, see \cite[Exposé V]{raynaud1971revetements}.
	
	\begin{prop}[Katz]\label{Katz's method: descend using phi_q}
		Let $(S,\varphi_q)$ be a scheme satisfying (\ref{Katz's condition}). Then there is an equivalence of categories between $\mathrm{Rep}^{\mathrm{free}}_{W_n(\mathbb{F}_q)}(\pi_1^{\mathrm{\acute{e}t}}(S,\overline{s}))$ and $\modet_{\varphi_q}(\mathcal{O}_S)$. The functor is given as follows: for an object $\rho_n$ of $\mathrm{Rep}^{\mathrm{free}}_{W_n(\mathbb{F}_q)}(\pi_1^{\mathrm{\acute{e}t}}(S,\overline{s}))$, $\rho_n$ corresponds to an étale local system of $W_n(\mathbb{F}_q)$-modules $\mathcal{F}_n$ over $S$, then $(\nu_*(\mathcal{F}\otimes_{W_n(\mathbb{F}_q)}\mathcal{O}_S),\nu_*(\mathrm{id}\otimes \phi_{\mathcal{O}_S}))$ is an object of $\modet_{\varphi_q}(\mathcal{O}_S)$, where $\nu: S_{\mathrm{\acute{e}t}}\to S_{\mathrm{Zar}}$ is the restriction from the étale topos to the Zariski topos.
	\end{prop}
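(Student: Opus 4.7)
The plan is to construct explicit quasi-inverse functors and reduce the equivalence to a single local étale-trivialization statement. Using the standard equivalence between $\mathrm{Rep}^{\mathrm{free}}_{W_n(\mathbb{F}_q)}(\pi_1^{\mathrm{\acute{e}t}}(S,\overline{s}))$ and the category of finite locally constant étale sheaves of free $W_n(\mathbb{F}_q)$-modules on $S$, it suffices to exhibit an equivalence between the latter and $\modet_{\varphi_q}(\mathcal{O}_S)$. The forward direction is the one stated in the proposition. In the reverse direction, the isomorphism $\phi_{\mathcal{V}}:\varphi_q^*\mathcal{V}\xrightarrow{\sim}\mathcal{V}$ induces a $\varphi_q$-semilinear endomorphism $\Phi:\mathcal{V}\to\mathcal{V}$, $v\mapsto\phi_{\mathcal{V}}(1\otimes v)$, and the candidate quasi-inverse sends $(\mathcal{V},\phi_{\mathcal{V}})$ to the étale subsheaf $\mathcal{F}:=\ker(\Phi-1)$ of $\Phi$-fixed sections, viewed as a sheaf of $W_n(\mathbb{F}_q)$-modules on $S_{\mathrm{\acute{e}t}}$.

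The heart of the argument is to show that every $(\mathcal{V},\phi_{\mathcal{V}})$ is trivialized by a $\Phi$-fixed basis after a finite étale faithfully flat base change. Shrinking $S$ to an affine open on which $\mathcal{V}$ is free, fix a basis $e_1,\dots,e_r$ and write $\Phi(e_j)=\sum_i B_{ij}e_i$ for some $B\in\mathrm{GL}_r(\mathcal{O}_S)$. Form the finite $S$-scheme
\[
\pi:U\;=\;\mathrm{Spec}\bigl(\mathcal{O}_S[x_1,\dots,x_r]\,/\,(x_i-\textstyle\sum_j B_{ij}\varphi_q(x_j):1\le i\le r)\bigr)\longrightarrow S,
\]
so that the universal tuple $(x_1,\dots,x_r)$ provides, via $v:=\sum_i x_i\otimes e_i$, a canonical $\Phi$-fixed section of $\pi^*\mathcal{V}$ which specializes to a basis.

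The key technical step is to verify that $\pi:U\to S$ is finite étale. By condition (iii) of (\ref{Katz's condition}), $\varphi_q$ reduces to the $q$-th power map modulo $p$, so the defining equations become $x_i-\sum_j\overline{B}_{ij}x_j^q$ over $\overline{S}$; the Jacobian matrix equals the identity (since $qx_j^{q-1}\equiv 0$), giving étaleness of $U\times_S\overline{S}\to\overline{S}$, hence also finiteness of $U\to S$ of degree $q^r$ on the special fibre. To promote this to étaleness over $S$, note that $\mathcal{O}_{U\times_S\overline{S}}$ is a finite flat $\mathcal{O}_{\overline{S}}$-algebra, so its defining ideal is generated by a Koszul-regular sequence; the $W_n(\mathbb{F}_q)$-flatness of $S$ (condition (i)) lifts this to regularity on the nose via the local criterion for flatness, whence $U$ is $S$-flat. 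Flatness together with étaleness on the closed fibre yields étaleness of $\pi$ over $S$. Condition (ii) enters now: integrality of $\overline{S}$ gives connectedness, ensuring that the rank of $\mathcal{F}$ is globally the constant $r$, while normality of $\overline{S}$ ensures that the étale fundamental group behaves well enough for the local étale trivializations to glue into a genuine finite locally constant étale sheaf on $S$.

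Granting the étaleness of $\pi$, both compositions of the two functors become tautological on $U$: the pullback $\pi^*\mathcal{V}$ trivially recovers $\mathcal{F}|_U\otimes_{W_n(\mathbb{F}_q)}\mathcal{O}_U$, while for any étale local system $\mathcal{F}_n$ the construction $\nu_*(\mathcal{F}_n\otimes\mathcal{O}_{S,\mathrm{\acute{e}t}})^{\Phi=1}=\mathcal{F}_n$ is immediate on the trivializing cover that splits $\mathcal{F}_n$. Both adjunction maps are then isomorphisms globally by faithfully flat descent along $\pi$. The main obstacle is precisely the étaleness of $\pi:U\to S$: this is where all three hypotheses (i)--(iii) enter essentially, with $W_n(\mathbb{F}_q)$-flatness providing the lift from characteristic $p$ and the normality/integrality of $\overline{S}$ giving the global rigidity needed to pass from the local construction to a well-defined functor.
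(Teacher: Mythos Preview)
The paper does not prove this proposition at all: it simply records that this is \cite[Proposition~4.1.1]{katz1973padic}. Your attempt, by contrast, tries to reproduce Katz's argument directly, and the overall strategy (take $\Phi$-fixed points in one direction, tensor up in the other, and produce an \'etale cover trivializing any given $(\mathcal{V},\phi_{\mathcal{V}})$) is the right one. However, there is a genuine gap in your construction of the cover $U$.

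The expression $\varphi_q(x_j)$ in your defining equations is not well-defined: $\varphi_q$ is an endomorphism of $\mathcal{O}_S$, not of the polynomial ring $\mathcal{O}_S[x_1,\dots,x_r]$. The only sensible interpretation is that you are extending $\varphi_q$ by $x_j\mapsto x_j^q$, so that $U=\mathrm{Spec}\bigl(\mathcal{O}_S[x_j]/(x_i-\sum_jB_{ij}x_j^q)\bigr)$. This $U$ is indeed \'etale over $S$ (the Jacobian is $I-q\cdot(\text{matrix})$, and $q=p^f$ is nilpotent in $W_n(\mathbb{F}_q)$, so the Jacobian is a unit; your reduction-mod-$p$ detour is unnecessary). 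But the universal section $v=\sum_i x_i\otimes e_i$ is \emph{not} $\Phi$-fixed over $U$: being $\Phi$-fixed means $x_i=\sum_jB_{ij}\varphi_{q,U}(x_j)$, where $\varphi_{q,U}$ is the unique Frobenius lift on $\mathcal{O}_U$ compatible with $\varphi_q$ on $\mathcal{O}_S$ (it exists because $U/S$ is \'etale and $p$ is nilpotent). For $n\ge 2$ one has $\varphi_{q,U}(x_j)\neq x_j^q$ in general, so your $U$ does not represent the functor of $\Phi$-fixed vectors, and $v$ satisfies the wrong equation.

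Katz's actual argument is a d\'evissage: first carry out your Lang--torsor construction modulo $p$, where $\varphi_q$ genuinely is the $q$-th power map and your reasoning is correct; this yields an \'etale cover $\overline{U}\to\overline{S}$ and a $\Phi$-fixed basis of $\mathcal{V}/p$ over $\overline{U}$. Then lift $\overline{U}$ to an \'etale $U\to S$ (topological invariance of the \'etale site under the nilpotent thickening $\overline{S}\hookrightarrow S$), and lift the $\Phi$-fixed basis from $\bmod\ p$ to $\bmod\ p^n$ by successive approximation: writing a candidate basis fixed $\bmod\ p^k$ and correcting by $p^k\cdot(\text{unknowns})$, the condition to be fixed $\bmod\ p^{k+1}$ becomes an additive Artin--Schreier equation $a-a^q\equiv c\pmod p$, which can be solved after a further \'etale localization. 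Since there are only finitely many steps, the process terminates. This is the missing idea in your write-up; the flatness/Koszul discussion you give does not substitute for it.
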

	
	\begin{proof}
		This is a \cite[Proposition 4.1.1]{katz1973padic}.
	\end{proof}

	\begin{prop}\label{perfect case of Katz's result}
		Let $R$ be a perfect $\mathbb{F}_q$-algebra, and suppose that $R$ is an integral domain. For every $n\geq 1$, let $W_n(R)\coloneq W(R)/p^n$, and let $\varphi$ be the unique endomorphism of $W_n(R)$ such that $\overline{\varphi}: R\to R$ is the $q$-th power map. Then there is a natural equivalence between the category $\modet_{\varphi}(W_n(R))$ and the category $\mathrm{Rep}^{\mathrm{free}}_{W_n(\mathbb{F}_q)}(\pi_1^{\mathrm{\acute{e}t}}(\mathrm{Spec}(R),\overline{s}))$, where $\overline{s}$ is a fixed geometric point of $\mathrm{Spec}(R)$.
	\end{prop}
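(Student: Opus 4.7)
The plan is to deduce Proposition~\ref{perfect case of Katz's result} from Katz's Proposition~\ref{Katz's method: descend using phi_q} applied to the scheme $S\coloneq\mathrm{Spec}(W_n(R))$ with its Frobenius lift $\varphi$, combined with the identification $\pi_1^{\mathrm{\acute{e}t}}(S,\overline{s})\cong\pi_1^{\mathrm{\acute{e}t}}(\mathrm{Spec}(R),\overline{s})$ coming from the nilpotent thickening $W_n(R)\twoheadrightarrow R$. Granting both ingredients, the identification $\modet_{\varphi_q}(\mathcal{O}_S)\cong\modet_{\varphi}(W_n(R))$ (valid because $S$ is affine, using Lemma~\ref{description of finte projective modules}) then yields the desired equivalence of categories.

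The first task is to verify the hypotheses (\ref{Katz's condition}) for $(S,\varphi)$. Condition (iii) is immediate from the construction of $\varphi$. For (i), since $R$ is a perfect $\mathbb{F}_p$-algebra, the ring $W(R)$ is $p$-torsion-free and therefore flat over $W(\mathbb{F}_q)=\mathcal{O}_K$; reducing modulo $p^n$ gives flatness of $W_n(R)$ over $W_n(\mathbb{F}_q)$. The special fibre in (ii) is $\mathrm{Spec}(R)$: integrality is part of the hypothesis, but normality must still be proved, and this is the main technical obstacle. I would establish it via the sub-lemma that \emph{every perfect integral domain of characteristic $p$ is integrally closed in its fraction field}: the fraction field $F\coloneq\mathrm{Frac}(R)$ is again perfect (Frobenius, being an automorphism of $R$, extends to an automorphism of $F$), so for any $\alpha\in F$ integral over $R$ with equation $\alpha^n+a_{n-1}\alpha^{n-1}+\cdots+a_0=0$, $a_i\in R$, one can extract compatible $p^k$-th roots $\alpha^{1/p^k}\in F$ and $a_i^{1/p^k}\in R$; an analysis of the resulting family of integral equations as $k$ varies then forces $\alpha\in R$.

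Once (\ref{Katz's condition}) is established, Proposition~\ref{Katz's method: descend using phi_q} provides an equivalence
\[
\modet_{\varphi_q}(\mathcal{O}_S)\xrightarrow{\sim}\mathrm{Rep}^{\mathrm{free}}_{W_n(\mathbb{F}_q)}\!\bigl(\pi_1^{\mathrm{\acute{e}t}}(S,\overline{s})\bigr).
\]
To finish, it remains to identify $\pi_1^{\mathrm{\acute{e}t}}(S,\overline{s})$ with $\pi_1^{\mathrm{\acute{e}t}}(\mathrm{Spec}(R),\overline{s})$. The kernel of the natural surjection $W_n(R)\twoheadrightarrow R$ equals $pW_n(R)$, which is a nilpotent ideal because $p^n=0$ in $W_n(R)$. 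Hence $\mathrm{Spec}(R)\hookrightarrow S$ is a nilpotent closed immersion, and topological invariance of the étale site under nilpotent thickenings (\href{https://stacks.math.columbia.edu/tag/039R}{Tag 039R}) supplies the desired identification of étale fundamental groups. Combining these two equivalences completes the proof.
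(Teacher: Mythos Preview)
Your approach has a genuine gap: the sub-lemma that \emph{every perfect integral domain of characteristic $p$ is integrally closed} is false, so condition~(\ref{Katz's condition})(ii) cannot be verified this way. A concrete counterexample (for $p$ odd) is the perfection $R=(R_0)^{\mathrm{perf}}$ of the nodal cubic $R_0=\overline{\mathbb{F}}_p[x,y]/(y^2-x^2(x+1))$. Via $x\mapsto t^2-1$, $y\mapsto t^3-t$ one identifies $R_0$ with the subring $\{f\in \overline{\mathbb{F}}_p[t]:f(1)=f(-1)\}\subset \overline{\mathbb{F}}_p[t]$; the element $t=y/x\in\mathrm{Frac}(R_0)$ satisfies $t^2=x+1$, so $t$ is integral over $R_0$ and hence over $R$. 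But $t\notin R$: since $p$ is odd, $(\pm 1)^{p^n}=\pm 1$ are distinct, so $t^{p^n}\notin R_0$ for every $n\ge 0$, whence $t\notin R=\{a\in\mathrm{Frac}(R_0)^{\mathrm{perf}}:a^{p^n}\in R_0\text{ for some }n\}$. Thus $R$ is a perfect integral $\mathbb{F}_q$-domain that is not normal. Your sketched argument only shows that the integral closure of a perfect domain is again perfect (each $\alpha^{1/p^k}$ is still integral), which is much weaker.

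The paper sidesteps this entirely by not invoking Proposition~\ref{Katz's method: descend using phi_q} at all. Instead it appeals to \cite[Proposition~3.2.7]{kedlaya2015relative}, which for a perfect $\mathbb{F}_q$-algebra $R$ directly gives an equivalence between $\modet_{\varphi}(W_n(R))$ and the category of \'etale sheaves of flat $W_n(\mathbb{F}_q)$-modules on $\mathrm{Spec}(R)$ represented by finite \'etale $R$-schemes; the integrality hypothesis is then used only to identify the latter category with $\repfr_{W_n(\mathbb{F}_q)}(\pi_1^{\mathrm{\acute{e}t}}(\mathrm{Spec}(R),\overline{s}))$. This route requires no normality assumption. (Minor side remark: your appeal to Lemma~\ref{description of finte projective modules} for the identification $\modet_{\varphi_q}(\mathcal{O}_S)\cong\modet_{\varphi}(W_n(R))$ is misplaced---that lemma concerns specific rings---but this identification holds for any affine $S$ anyway and is not the real issue.)
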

	
	\begin{proof}
		By \cite[Proposition 3.2.7]{kedlaya2015relative}, there is a natural equivalence between the category $\modet_{\varphi}(W_n(R))$ and the category of étale shaves of flat $W_n(\mathbb{F}_q)/p^n$-modules over $\mathrm{Spec}(R)$ represented by finite étale $R$-schemes. Since $R$ is an integral domain, there is an equivalence of categories between the latter category and $\mathrm{Rep}^{\mathrm{free}}_{W_n(\mathbb{F}_q)}(\pi_1^{\mathrm{\acute{e}t}}(\mathrm{Spec}(R),\overline{s}))$ (see for example \cite[\href{https://stacks.math.columbia.edu/tag/03RV}{Tag 03RV}]{stacks-project}).
	\end{proof}
	
	%Note that there is an equivalence of categories between the category of étale shaves of flat $W_n(\mathbb{F}_q)/p^n$-modules over $\mathrm{Spec}(R)$ represented by finite étale $R$-schemes and the category of finite locally constant étale shaves of free $W_n(\mathbb{F}_q)/p^n$-modules over $\mathrm{Spec}(R)$ . In particular, if $\mathrm{Spec}R$ is irreducible with a fixed geometric point $\overline{s}$, then there is an equivalence of categories between the category of étale shaves of flat $W_n(\mathbb{F}_q)/p^n$-modules over $\mathrm{Spec}(R)$ represented by finite étale $R$-schemes and the category $\mathrm{Rep}^{\mathrm{free}}_{W_n(\mathbb{F}_q)}(\pi_1^{\mathrm{\acute{e}t}}(\mathrm{Spec}(R),\overline{s}))$ of finite free continuous $W_n(\mathbb{F}_q)$-representations of $\pi_1^{\mathrm{\acute{e}t}}(\mathrm{Spec}(R),\overline{s})$.
	
	\begin{prop}\label{mod p^n descent using Katz}
		For $n\geq 1$, the functor
		\begin{align*}
			\modet_{\varphi_q}(A_{\mathrm{mv},K}/p^n) & \to \modet_{\varphi_q}(W(A_{q,\infty})/p^n)\\
			M & \mapsto W(A_{q,\infty})/p^n\otimes_{A_{\mathrm{mv},K}/p^n} M
		\end{align*}
		is an equivalence of categories, where $A_{q,\infty}$ is the completed perfection of $A_q$.
	\end{prop}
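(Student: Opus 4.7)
The plan is to reduce both sides to continuous representations of an étale fundamental group via the two Katz-type equivalences, and then to compare those fundamental groups. Since $\spec(A_{\mathrm{mv},K}/p^n)$ together with $\varphi_q$ satisfies hypothesis (\ref{Katz's condition}) (as already observed in the excerpt), Proposition \ref{Katz's method: descend using phi_q} gives an equivalence
\[\modet_{\varphi_q}(A_{\mathrm{mv},K}/p^n) \;\simeq\; \repfr_{W_n(\mathbb{F}_q)}\!\bigl(\pi_1^{\mathrm{\acute{e}t}}(\spec(A_{\mathrm{mv},K}/p^n),\overline{s})\bigr),\]
while Proposition \ref{perfect case of Katz's result}, applied to the perfect $\mathbb{F}_q$-algebra domain $A_{q,\infty}$ (the $T_{\sigma_0}$-adic completion of the perfection of the Noetherian integral domain $A_q$), gives
\[\modet_{\varphi_q}(W(A_{q,\infty})/p^n) \;\simeq\; \repfr_{W_n(\mathbb{F}_q)}\!\bigl(\pi_1^{\mathrm{\acute{e}t}}(\spec(A_{q,\infty}),\overline{s}')\bigr).\]
Choosing the geometric points $\overline{s}$ and $\overline{s}'$ compatibly along the canonical ring map $A_{\mathrm{mv},K}/p^n \to W(A_{q,\infty})/p^n$, the base-change functor of the statement corresponds under these equivalences to restriction along the induced continuous homomorphism $\pi_1^{\mathrm{\acute{e}t}}(\spec A_{q,\infty}) \to \pi_1^{\mathrm{\acute{e}t}}(\spec A_{\mathrm{mv},K}/p^n)$. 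It therefore suffices to show that this homomorphism is an isomorphism of profinite groups.

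To this end, I would factor the underlying ring map as
\[A_{\mathrm{mv},K}/p^n \;\twoheadrightarrow\; A_q \;\hookrightarrow\; A_q^{\mathrm{perf}} \;\hookrightarrow\; A_{q,\infty},\]
where $A_q^{\mathrm{perf}} = \varinjlim_{\varphi_q} A_q$ is the perfection of $A_q$. The first arrow is a nilpotent thickening (since $p^n=0$ in $A_{\mathrm{mv},K}/p^n$), hence an equivalence of étale sites and in particular an isomorphism on $\pi_1^{\mathrm{\acute{e}t}}$. The second arrow is a filtered colimit of absolute Frobenii on a reduced $\mathbb{F}_p$-algebra, each of which is a universal homeomorphism, so it likewise preserves the étale fundamental group.

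The main obstacle is the third arrow, namely showing that the $T_{\sigma_0}$-adic completion of the perfect $\mathbb{F}_q$-algebra $A_q^{\mathrm{perf}}$ induces an isomorphism on étale fundamental groups. The key input is that $A_{q,\infty}$ is a perfectoid $\mathbb{F}_q$-algebra, so that in the characteristic-$p$ setting the almost purity theorem (or equivalently, the analysis of the pro-étale site of $\spa(A_\infty,A_\infty^{\circ})$ and its tilt carried out in \cite[$\S$2.4]{breuil2023multivariable}) implies that the finite étale site of $\spec(A_{q,\infty})$ is canonically equivalent to that of $\spec(A_q^{\mathrm{perf}})$. Granting this, the three stages above combine to give the required isomorphism of profinite groups, which via the two Katz-type equivalences yields the desired equivalence of categories.
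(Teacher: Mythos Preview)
Your overall strategy is exactly the paper's: reduce via the two Katz-type equivalences (Propositions~\ref{Katz's method: descend using phi_q} and~\ref{perfect case of Katz's result}) to showing that $\pi_1^{\mathrm{\acute{e}t}}(\spec A_{q,\infty}) \to \pi_1^{\mathrm{\acute{e}t}}(\spec A_{\mathrm{mv},K}/p^n)$ is an isomorphism, then factor through $A_q$ and $A_q^{\mathrm{perf}}$. The nilpotent-thickening and perfection steps are handled just as in the paper, via topological invariance of the étale site under universal homeomorphisms.

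The gap is your justification of the completion step $\spec A_{q,\infty} \to \spec A_q^{\mathrm{perf}}$. Almost purity compares the finite étale site of a perfectoid ring with that of its \emph{tilt}; since $A_{q,\infty}$ already lives in characteristic~$p$ and is therefore its own tilt, almost purity is vacuous here and cannot give the claimed equivalence. Your alternative reference to \cite[\S 2.4]{breuil2023multivariable} is also off-target: that section treats the pro-étale $\Delta_1$-torsor $m\colon \spa(A_\infty',(A_\infty')^\circ)\to\spa(A_\infty,A_\infty^\circ)$, which has nothing to do with the completion map $A_q^{\mathrm{perf}}\to A_{q,\infty}$. The paper closes this step by invoking \cite[Theorem~7.4.8]{WS2020berkeley}, which supplies the required isomorphism of étale fundamental groups for the passage from the perfection to its completion. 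So your outline is correct and matches the paper, but the key input you name for the completion step is the wrong one.
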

	
	\begin{proof}
		There is a morphism
		\begin{align}\label{morphism 1}
			\mathrm{Spec}(A_{q,\infty})\to \mathrm{Spec}(A_q)\to \mathrm{Spec}(A_{\mathrm{mv},K}/p^n).
		\end{align}
		We fix a geometric point $\overline{x}_{\infty}$ of $\mathrm{Spec}(A_{q,\infty})$, and denote the image of $\overline{x}$ in $\mathrm{Spec}(A_q)$ (resp.  $\mathrm{Spec}(A_{\mathrm{mv},K}/p^n)$) by $\overline{x}$ (resp.  $\overline{x}'$). Recall that an $A_{\mathrm{mv},K}/p^n$-module $M$ is locally free if and only if $M$ is finite projective, hence there is an equivalence of categories
		\begin{align}\label{equi 3}
			\Gamma(\mathrm{Spec}(A_{\mathrm{mv},K}/p^n),-): \modet_{\varphi_q}(\mathcal{O}_{\mathrm{Spec}(A_{\mathrm{mv},K}/p^n)})\to \modet_{\varphi_q}(A_{\mathrm{mv},K}/p^n).
		\end{align}
		Combining with Proposition \ref{Katz's method: descend using phi_q}, there is an equivalence of categories
		\begin{align}\label{equi 1}
			\repfr_{W_n(\mathbb{F}_q)}\Big(\pi_1^{\mathrm{\acute{e}t}}(\mathrm{Spec}(A_{\mathrm{mv},K}/p^n),\overline{x}')\Big)\xrightarrow{\sim} \modet_{\varphi_q}(A_{\mathrm{mv},K}/p^n).
		\end{align}
		By Proposition \ref{perfect case of Katz's result}, there is an equivalence of categories
		\begin{align}\label{equi 2}
			\repfr_{W_n(\mathbb{F}_q)}\Big(\pi_1^{\mathrm{\acute{e}t}}(\mathrm{Spec}(A_{q,\infty}),\overline{x}_{\infty})\Big)\xrightarrow{\sim} \modet_{\varphi_q}(W(A_{q,\infty})/p^n).
		\end{align}
		Note that (\ref{morphism 1}) induces a natural map of profinite groups
		\begin{align}\label{map 1}
			\pi_1^{\mathrm{\acute{e}t}}(\mathrm{Spec}(A_{q,\infty}),\overline{x}_{\infty})\to \pi_1^{\mathrm{\acute{e}t}}(\mathrm{Spec}(A_q),\overline{x})\to \pi_1^{\mathrm{\acute{e}t}}(\mathrm{Spec}(A_{\mathrm{mv},K}/p^n),\overline{x}').
		\end{align}
		The map $\pi_1^{\mathrm{\acute{e}t}}(\mathrm{Spec}(A_q),\overline{x})\to \pi_1^{\mathrm{\acute{e}t}}(\mathrm{Spec}(A_{\mathrm{mv},K}/p^n),\overline{x}')$ is an isomorphism since it is induced by a universal homeomorphism of schemes (\cite[EXPOSÉ VIII, Théorème 1.1]{SGA4}), and $\pi_1^{\mathrm{\acute{e}t}}(\mathrm{Spec}(A_{q,\infty}),\overline{x}_{\infty})\to \pi_1^{\mathrm{\acute{e}t}}(\mathrm{Spec}(A_q),\overline{x})$ is also an isomorphism since $\mathrm{Spec}(A_{q,\infty})\to \mathrm{Spec}(A_q)$ factorizes as
		\[\mathrm{Spec}(A_{q,\infty})\to \mathrm{Spec}(A_q^{\mathrm{perf}})\to \mathrm{Spec}(A_q)\]
		where the first morphism induces an isomorphism of étale fundamental groups (\cite[Theorem 7.4.8]{WS2020berkeley}) and the second morphism is a universal homeomorphism of schemes. Therefore, the composition (\ref{map 1}) is an isomorphism.
		
		Combining with (\ref{equi 1}) and (\ref{equi 2}), there is a commutative diagram of categories:
		\begin{equation*}
			\begin{tikzcd}
				\repfr_{W_n(\mathbb{F}_q)}(\pi_1^{\mathrm{\acute{e}t}}(\mathrm{Spec}(A_{\mathrm{mv},K}/p^n),\overline{x}')) \arrow[rr,"\text{pullback}","\sim"'] \arrow[d,"\wr"] & & \repfr_{W_n(\mathbb{F}_q)}(\pi_1^{\mathrm{\acute{e}t}}(\mathrm{Spec}(A_{q,\infty}),\overline{x}_{\infty})) \arrow[d,"\wr"] \\ 
				\modet_{\varphi_q}(A_{\mathrm{mv},K}/p^n) \arrow[rr,"W(A_{q,\infty})/p^n\otimes_{A_{\mathrm{mv},K}/p^n}-"] & & \modet_{\varphi_q}(W(A_{q,\infty})/p^n).
			\end{tikzcd}
		\end{equation*}
		Then the conclusion follows.
	\end{proof}
	
	\begin{thm}\label{descent of phi_q-module}
		The functor
		\begin{align*}
			\modet_{\varphi_q}(A_{\mathrm{mv},E}) & \to \modet_{\varphi_q}(W_E(A_{\infty}))\\
			M & \mapsto W_E(A_{\infty})\otimes_{A_{\mathrm{mv},E}} M
		\end{align*}
		is an equivalence of categories.
	\end{thm}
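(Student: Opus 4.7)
The plan is to reduce to the mod $\varpi^n$ version of the statement for each $n \geq 1$, and then lift via a $\varpi$-adic limit argument. By Lemma \ref{description of finte projective modules}, all finite projective étale $\varphi_q$-modules in sight are in fact free of some rank $r$, so they are described by invertible matrices encoding the linearization of $\varphi_q$ in a chosen basis; this reduces the descent problem to a question about compatible systems of matrices.

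At the finite level, I would show that for every $n \geq 1$ the functor
\[\modet_{\varphi_q}(A_{\mathrm{mv},E}/\varpi^n) \to \modet_{\varphi_q}(W_E(A_\infty)/\varpi^n),\quad M \mapsto W_E(A_\infty)/\varpi^n \otimes_{A_{\mathrm{mv},E}/\varpi^n} M\]
is an equivalence, adapting the proof of Proposition \ref{mod p^n descent using Katz} to the ramified $\mathcal{O}_E$-setting. The hypotheses of Katz's theorem (Proposition \ref{Katz's method: descend using phi_q}) still hold: $\mathrm{Spec}(A_{\mathrm{mv},E}/\varpi^n)$ is flat over $\mathcal{O}_E/\varpi^n$, its special fibre $\mathrm{Spec}(A)$ is normal and integral by \cite[Corollary 3.1.1.2]{breuil2023conjectures}, and $\varphi_q$ lifts a power of Frobenius on $A$. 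Combined with Proposition \ref{perfect case of Katz's result} on the perfect side and the fact that
\[\pi_1^{\mathrm{\acute{e}t}}(\mathrm{Spec}(A_\infty)) \xrightarrow{\sim} \pi_1^{\mathrm{\acute{e}t}}(\mathrm{Spec}(A)) \xrightarrow{\sim} \pi_1^{\mathrm{\acute{e}t}}(\mathrm{Spec}(A_{\mathrm{mv},E}/\varpi^n))\]
are isomorphisms (as in the proof of Proposition \ref{mod p^n descent using Katz}, via a universal-homeomorphism argument and the invariance of the étale site under completed perfection), this yields the finite-level equivalence.

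To pass to the limit, fix $M_\infty \in \modet_{\varphi_q}(W_E(A_\infty))$, free of some rank $r$ with étale-structure matrix $U_\infty \in \mathrm{GL}_r(W_E(A_\infty))$. Each reduction $M_\infty/\varpi^n$ descends, essentially uniquely by Step 1, to a free étale $\varphi_q$-module $M_n$ over $A_{\mathrm{mv},E}/\varpi^n$. Using this essential uniqueness together with the faithful flatness of $A_{\mathrm{mv},E} \hookrightarrow W_E(A_\infty)$ (Proposition \ref{A_{mv,E} to W_E(A_{infty}) is faithfully flat}), I can choose bases compatibly so that the corresponding matrices $U_n \in \mathrm{GL}_r(A_{\mathrm{mv},E}/\varpi^n)$ form a coherent inverse system. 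Since $A_{\mathrm{mv},E}$ is $\varpi$-adically complete, this system converges to $U \in \mathrm{GL}_r(A_{\mathrm{mv},E})$, defining $M \in \modet_{\varphi_q}(A_{\mathrm{mv},E})$ with $W_E(A_\infty) \otimes_{A_{\mathrm{mv},E}} M \cong M_\infty$. Fully faithfulness follows by writing $\Hom_{\varphi_q}(M,N) = \Hom(M,N)^{\varphi_q = 1}$ and invoking faithfully flat descent. The main obstacle is the compatibility of the descent across different levels $n$: the modules $M_n$ are only canonical up to isomorphism, so one must carefully exploit the essential uniqueness at each level, together with $\varpi$-adic completeness of $\mathrm{GL}_r(A_{\mathrm{mv},E})$, to glue the descents into a genuine inverse system.
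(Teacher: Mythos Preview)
Your overall plan — reduce mod $\varpi^n$, invoke Katz, take the limit — matches the paper's proof for the case $E=K$, and your limit argument is fine. The gap is in the first step, where you try to apply Katz's theorem directly to $A_{\mathrm{mv},E}/\varpi^n$.

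Katz's result (Proposition~\ref{Katz's method: descend using phi_q}) requires that $\varphi_q$ induce \emph{the} $q$-th power Frobenius on the special fibre, not merely a power of Frobenius. On $A=A_{\mathrm{mv},E}/\varpi$ the endomorphism $\varphi_q$ is $\mathbb{F}$-linear (it fixes all of $\mathbb{F}$, not just $\mathbb{F}_q$), so it is \emph{not} the $q$-th power map when $\mathbb{F}\neq\mathbb{F}_q$; only $\varphi_q^{f'}$ is the absolute Frobenius. The same obstruction hits your use of Proposition~\ref{perfect case of Katz's result} on the perfect side. In addition, when $E/K$ is ramified the ring $\mathcal{O}_E/\varpi^n$ is not $W_n(\mathbb{F}_q)$, so the flatness and special-fibre hypotheses in Katz's conditions~(\ref{Katz's condition}) do not translate cleanly to the $\mathcal{O}_E$-setting: the special fibre of $\mathrm{Spec}(A_{\mathrm{mv},E}/\varpi^n)$ over $\mathbb{F}_q$ is $\mathrm{Spec}\big(A_{\mathrm{mv},E}/(\varpi^n,p)\big)$, which need not be reduced. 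So ``the hypotheses of Katz's theorem still hold'' is not justified.

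The paper handles this by \emph{not} attempting Katz over $\mathcal{O}_E$. It first proves the theorem for $E=K$ exactly as you outline (there $\varphi_q$ really is the Frobenius lift on $A_q$), and then bootstraps to general $E$: given $M_\infty\in\modet_{\varphi_q}(W_E(A_\infty))$, view it as an étale $\varphi_q$-module over $W(A_{q,\infty})$, descend to some $M\in\modet_{\varphi_q}(A_{\mathrm{mv},K})$, and then descend the $\mathcal{O}_E$-scalar multiplications (which are $\varphi_q$-endomorphisms of $M_\infty$) to $M$ using full faithfulness in the $K$-case. Faithfully flat descent of projectivity along $A_{\mathrm{mv},E}\hookrightarrow W_E(A_\infty)$ then shows $M$ is finite projective over $A_{\mathrm{mv},E}$. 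You could repair your argument by inserting exactly this two-step structure.
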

	\begin{proof}
		First we assume that $E=K$. For fully faithfulness, let $M,N$ be finite projective étale $\varphi_q$-modules over $A_{\mathrm{mv},K}$, $M_{\infty}\coloneq  W(A_{q,\infty})\otimes_{A_{\mathrm{mv},K}}M$, $N_{\infty}\coloneq  W(A_{q,\infty})\otimes_{A_{\mathrm{mv},K}}N$. If $f: M\to N$ is a homomorphism of $\varphi_q$-modules, and $f_{\infty}\coloneq \id\otimes f: M_{\infty}\to N_{\infty}$ is the zero map, then the map $\overline{f}_{\infty}: M_{\infty}/p^n\to N_{\infty}/p^n$ induced by $f_{\infty}$ is zero. By Proposition \ref{mod p^n descent using Katz}, we deduce that, for any $n\geq 1$, the map $\overline{f}: M/p^n\to N/p^n$ induced by $f$ is zero, hence $f=0$. Moreover, let $h_{\infty}: M_{\infty}\to N_{\infty}$ be a homomorphism of $\varphi_q$-modules, by Proposition \ref{mod p^n descent using Katz}, for every $n$, there exists a unique homomorphism $h_n: M/p^n\to N/p^n$ of $\varphi_q$-modules such that $\id\otimes h_n: M_{\infty}/p^n\to N_{\infty}/p^n$ is induced by $h_{\infty}$. Then $\{h_n\}_n$ induces a homomorphism $h: M\to N$ of $\varphi_q$-modules, and $\id\otimes h=h_{\infty}$.
		
		For the essential surjectivity, let $M_{\infty}$ be a finite free étale $\varphi_q$-module over $W(A_{q,\infty})$ of rank $r$, by Proposition \ref{mod p^n descent using Katz}, for every $n$, there exists a finite free étale $\varphi_q$-module $M_n$ of rank $r$ over $A_{\mathrm{mv},K}/p^n$ such that $M_{\infty}/p^n\cong W(A_{q,\infty})/p^n\otimes_{A_{\mathrm{mv},K}/p^n} M_n$. Moreover, these $M_n$ are compatible, i.e. $M_{n+1}/p^n\cong M_n$ as finite projective étale $\varphi_q$-modules over $A_{\mathrm{mv},K}/p^n$. We put $M\coloneq \varprojlim_n M_n$, which is a $\varphi_q$-module over $A_{\mathrm{mv},K}$. Take $x_1,\dots,x_r\in M$ such that $\overline{x_1},\dots,\overline{x_r}$ forms an $A_q$-basis of $M_1\cong M/p$, then $\overline{x_1},\dots,\overline{x_r}$ forms a $A_{\mathrm{mv},K}/p^n$-basis of $M_n\cong M/p^n$ for any $n$. As $p$ is contained in the Jacobson radical of $A_{\mathrm{mv},K}$, using Nakayama's lemma, we deduce that $x_1,\dots,x_r$ generate $M$ as an $A_{\mathrm{mv},K}$-module. Moreover, if $\sum_{i=1}^ra_ix_i=0$, by reducing modulo $p^n$ for $n\geq 1$, we get $a_i\in \bigcap_{n\geq 1}p^nA_{\mathrm{mv},K}=0$ for any $0\leq i\leq r$, thus $M=\bigoplus_{i=1}^rA_{\mathrm{mv},K}x_i$ is free, and we have
		\[A_{\mathrm{mv},K}\otimes_{\varphi_q,A_{\mathrm{mv},K}}M\cong \varprojlim_n(A_{\mathrm{mv},K}\otimes_{\varphi_q,A_{\mathrm{mv},K}}M_n)\xrightarrow[\sim]{\varprojlim_n(\mathrm{id}\otimes\varphi_q)}\varprojlim_nM_n=M,\]
		hence $M$ is a finite projective étale $\varphi_q$-module over $A_{\mathrm{mv},K}$. Moreover, 
		\[M_{\infty} \cong \varprojlim_n \left( W(A_{q,\infty})\otimes_{A_{\mathrm{mv},K}/p^n} M_n\right)\cong W(A_{q,\infty}) \otimes_{A_{\mathrm{mv},K}}\varprojlim_nM_n= W(A_{q,\infty}) \otimes_{A_{\mathrm{mv},K}} M,\]
		which proves the essential surjectivity.
		
		Now we show the fully faithfulness for the general case. %Note that $\varphi_q$ is $\mathcal{O}_E$-linear and
		%\[A_{\mathrm{mv},E}=\mathcal{O}_E\otimes_{\sigma_0,\mathcal{O}_K}A_{\mathrm{mv},K},\ W_E(A_{\infty})=\mathcal{O}_E\otimes_{\sigma_0,\mathcal{O}_K}W(A_{q,\infty}),\]
		%by Lemma \ref{description of finte projective modules}, 
		%thus an étale $\varphi_q$-module over $A_{\mathrm{mv},E}$ (resp. over $W_E(A_{\infty})$) is an étale $\varphi_q$-module over $A_{\mathrm{mv},K}$ (resp. over $W(A_{q,\infty})$) with a structure of $\mathcal{O}_E$-module such that the scalar multiplication given by $\mathcal{O}_K\xhookrightarrow{\sigma_{0}}\mathcal{O}_E$ is the same as the one given by $\mathcal{O}_K\hookrightarrow A_{\mathrm{mv},K}$ (resp. $\mathcal{O}_K\hookrightarrow W(A_{q,\infty})$).
		The faithfulness can be deduced directly from the following commutative diagram:
		\begin{equation}\label{fatihfulness A and A_{infty}}
			\begin{tikzcd}
				\mathrm{Hom}_{\modet_{\varphi_q}(A_{\mathrm{mv},E})}(M_1,M_2) \arrow[d,hook]\arrow[r] & \mathrm{Hom}_{\modet_{\varphi_q}(W_E(A_{\infty}))}(M_{1,\infty},M_{2,\infty})\arrow[d,hook]\\
				\mathrm{Hom}_{\modet_{\varphi_q}(A_{\mathrm{mv},K})}(M_1,M_2) \arrow[r,"\sim"] & \mathrm{Hom}_{\modet_{\varphi_q}(W(A_{q,\infty}))}(M_{1,\infty},M_{2,\infty})
			\end{tikzcd}
		\end{equation}
		where $M_1, M_2$ are finite projective étale $\varphi_q$-modules over $A_{\mathrm{mv},E}$, $M_{i,\infty}\coloneq W_{E}(A_{\infty})\otimes_{A_{\mathrm{mv},E}}M_{i}$ for $i=1,2$, and in the bottom horizontal map, we use the identification
		\[W_{E}(A_{\infty})\otimes_{A_{\mathrm{mv},E}}M_i\cong W(A_{q,\infty})\otimes_{A_{\mathrm{mv},K}}M_i,\quad i=1,2.\]
		For $f_{\infty}\in \mathrm{Hom}_{\modet_{\varphi_q}(W_E(A_{\infty}))}(M_{1,\infty},M_{2,\infty})$, the commutative diagram (\ref{fatihfulness A and A_{infty}}) shows that $f_{\infty}=\mathrm{id}\otimes f$ for some $f\in\mathrm{Hom}_{\modet_{\varphi_q}(A_{\mathrm{mv},K})}(M_1,M_2)$. Since $A_{\mathrm{mv},E}=\mathcal{O}_E\otimes_{\sigma_{0},\mathcal{O}_K}A_{\mathrm{mv},K}$, if we can show that $f$ is $\mathcal{O}_E$-linear, then $f$ is $A_{\mathrm{mv},E}$-linear, which finishes the proof of the fully faithfulness. For $x\in \mathcal{O}_E$, consider the map
		\begin{align*}
			f_x: M_1 \to M_2,\
			m_1 \mapsto f(x\cdot m_1)-x\cdot f(m_1).
		\end{align*}
		Then $f_x$ is a homomorphism of $\varphi_q$-modules over $A_{\mathrm{mv},K}$. Moreover, for any $a\otimes m_1\in W(A_{q,\infty})\otimes_{A_{\mathrm{mv},K}}M_1\cong M_{1,\infty}$,
		\begin{align*}
			(\mathrm{id}\otimes f_x)(a\otimes m_1)=a\otimes f(xm_1)-a\otimes(xf(m_1))
		\end{align*}
		Using the identification $W(A_{q,\infty})\otimes_{A_{\mathrm{mv},K}}M_1\cong M_{1,\infty}$, since $\mathrm{id}\otimes f_x=f_{\infty}$ is $\mathcal{O}_E$-linear, we have 
		\begin{align*}
			a\otimes f(xm_1)=f_{\infty}(a\otimes xm_1)&=f_{\infty}(x\cdot(a\otimes m_1))\\
			&=x\cdot f_{\infty}(a\otimes m_1)=x\cdot(a\otimes f(m_1))=a\otimes(xf(m_1)).
		\end{align*}
		Thus $\mathrm{id} \otimes f_x=0$, hence $f_x=0$ by the fully faithfulness of $W(A_{q,\infty})\otimes_{A_{\mathrm{mv},K}}-$, i.e. $f$ is $\mathcal{O}_E$-linear.
		
		It remains to show the essential surjectivity.
		Let $M_{\infty}$ be a finite projective étale $\varphi_q$-module over $W_E(A_{\infty})$. The case $E=K$ implies that there is a finite projective étale $\varphi_q$-module $M$ over $A_{\mathrm{mv},K}$ such that 
		\[W(A_{q,\infty})\otimes_{A_{\mathrm{mv},K}}M\cong M_{\infty}. \]
		For $a\in \mathcal{O}_E$, the scalar multiplication $M_{\infty}\to M_{\infty}, m\mapsto a\cdot m$ (viewing $M_{\infty}$ as a $W_E(A_{\infty})$-module) is an endomorphism of the étale $\varphi_q$-module $M_{\infty}$ over $W(A_{q,\infty})$, hence descends to an endomorphism $g_a$ of the étale $\varphi_q$-module $M$ over $A_{\mathrm{mv},K}$, which gives an $\mathcal{O}_E$-module structure on $M$. Moreover, if $a\in \mathcal{O}_{K}$, then the scalar multiplication by $a$ on $M$ agrees with $g_{\sigma_{0}(a)}$, since they are the same on $M_{\infty}$. Hence there is a $A_{\mathrm{mv},E}=\mathcal{O}_E\otimes_{\sigma_0,\mathcal{O}_K}A_{\mathrm{mv},K}$-module structure on $M$. Since
		\[W_E(A_{\infty})\otimes_{A_{\mathrm{mv},E}}M\cong W(A_{q,\infty})\otimes_{A_{\mathrm{mv},K}}M\cong M_{\infty}\]
		is finite projective over $W_E(A_{\infty})$ and $A_{\mathrm{mv},E}\to W_E(A_{\infty})$ is faithfully flat (Proposition \ref{A_{mv,E} to W_E(A_{infty}) is faithfully flat}), by faithfully flat descent of projectivity (see for example \cite[\href{https://stacks.math.columbia.edu/tag/058S}{Tag 058S}]{stacks-project}), $M$ is a finite projective module over $A_{\mathrm{mv},E}$. Hence by Lemma \ref{description of finte projective modules}, $M$ is a finite free $A_{\mathrm{mv},E}$-module. Besides, it is easy to check that $M$ is a $\varphi_q$-module over $A_{\mathrm{mv},E}$.  Since
		\[A_{\mathrm{mv},E}\otimes_{\varphi_q,A_{\mathrm{mv},E}}M\cong A_{\mathrm{mv},K}\otimes_{\varphi_q,A_{\mathrm{mv},K}}M\cong M,\]
		we see that $M$ is a finite projective étale $\varphi_q$-module over $A_{\mathrm{mv},E}$.
	\end{proof}

	\subsection{Finitely presented étale $(\varphi_q,\mathcal{O}_K^{\times})$-modules}\label{Frobenius descent of finitely presented étale (varphi_q,mathcal{O}_K*)-modules}
	In this section, we use a standard lemma in algebraic $K$-theory (Lemma \ref{existence of equivariant surjection of phi_q-modules}) to extend Theorem \ref{descent of phi_q-module} to finitely presented étale $\varphi_q$-modules over $A_{\mathrm{mv},E}$, then we add the $\mathcal{O}_K^{\times}$-action to deduce that every finitely presented étale $(\varphi_q,\mathcal{O}_K^{\times})$-module over $W_E(A_{\infty})$ descends.
	
	Let $R$ be a ring, $M$ be an $R$-module, $r\in R$, we put $M[r]\coloneq\left\{m\in M: rm=0\right\}$ which is a submodule of $M$. We start with the structure of finitely presented étale $\varphi_q$-modules over $A_{\mathrm{mv},E}$. 
	
	\begin{lem}\label{phi_q invarinat ideals of A}
		If $I\subseteq A$ is an ideal of $A$, and $A\cdot\varphi_q(I)=I$, then $I=0$ or $I=A$.
	\end{lem}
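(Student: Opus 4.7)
Assume $I \neq 0$; the goal is to show $I = A$. The strategy is to iterate the hypothesis enough times that $\varphi_q$ becomes the absolute (relative to $\mathbb{F}$) Frobenius on $A$, and then invoke Nakayama's lemma.

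First I would prove by induction on $n$ that $A \cdot \varphi_q^n(I) = I$ for all $n \geq 1$. The inclusion $A\varphi_q^n(I) \subseteq I$ is immediate because $\varphi_q(I) \subseteq A\varphi_q(I) = I$ already gives $\varphi_q^n(I) \subseteq I$. For the reverse, given $x \in I = A\varphi_q(I)$, write $x = \sum_i a_i \varphi_q(y_i)$ with $y_i \in I$, then apply the hypothesis once more to each $y_i$ as $y_i = \sum_j b_{ij}\varphi_q(z_{ij})$ with $z_{ij} \in I$; pushing the outer $\varphi_q$ inside yields $x \in A\varphi_q^2(I)$, and iterating gives the general $n$. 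Taking $n = f'$ and using that $\varphi_q^{f'}$ acts on $A$ as the $q'$-th power map $x \mapsto x^{q'}$ (recall the discussion preceding (\ref{phi_q and the semi-valuation v_1}): $\varphi_q$ is $\mathbb{F}$-linear with $\varphi_q(T_{\sigma_{i}}) = T_{\sigma_{i}}^q$, and $x^{q'} = x$ for $x \in \mathbb{F} = \mathbb{F}_{q'}$), we obtain
\begin{equation*}
I \;=\; A \cdot \varphi_q^{f'}(I) \;=\; A \cdot \{x^{q'} : x \in I\} \;=\; I^{[q']},
\end{equation*}
where $I^{[q']}$ denotes the Frobenius power ideal. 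Since $x^{q'} \in I^{q'}$ for every $x \in I$, we deduce $I = I^{[q']} \subseteq I^{q'} \subseteq I^2 \subseteq I$, so $I = I^2$.

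Finally, $A$ is Noetherian (being the mod-$\varpi$ reduction of the Noetherian ring $A_{\mathrm{mv},E}$), so $I$ is finitely generated; from $I \cdot I = I$ Nakayama's lemma furnishes some $a \in I$ with $(1-a)I = 0$. Since $A$ is a domain (a Laurent Tate algebra over the field $\mathbb{F}(\negthinspace(T_{\sigma_{0}})\negthinspace)$) and $I \neq 0$, we conclude $1 - a = 0$, hence $1 \in I$ and $I = A$. The only delicate point in this argument is the iteration step $A\varphi_q^n(I) = I$; once it is available, the Frobenius-power identity combined with Nakayama finishes the proof mechanically.
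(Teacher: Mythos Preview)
Your proof is correct and follows the same core strategy as the paper: iterate the hypothesis to reach the absolute Frobenius $\varphi_q^{f'}$, obtain $I \subseteq I^{q'}$, and then conclude via a standard Noetherian-domain argument. The only difference is cosmetic: the paper passes to $I = \bigcap_n I^{q'^n} \subseteq \bigcap_n I^n$ and invokes Krull's intersection theorem, whereas you stop at $I = I^2$ and apply the determinant-trick form of Nakayama; both endings require exactly that $A$ be a Noetherian domain, so neither buys anything the other does not.
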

	\begin{proof}
		Note that $\varphi_q^{f'}=\mathrm{Frob}_{q'}: A\to A$, where $q'=q^{f'}=\#\mathbb{F}$. In particular, 
		\[A\cdot \varphi_q^{nf'}(I)\subseteq I^{q'^n},\quad n\geq 1.\]
		Since $A\cdot\varphi_q(I)=I$, by induction on $m\geq 1$, we deduce $A\cdot\varphi_q^m(I)=I$ for any $m\geq 1$. As a consequence, we have
		\[I= A\cdot \varphi_q^{nf'}(I)\subseteq I^{q'^n}\subseteq I, \quad n\geq 1,\]
		which implies that $I^{q'^n}= I$ for any $n\geq 1$. Therefore,
		\[I=\bigcap_{n=1}^{+\infty}I^{q'^n}\subseteq \bigcap_{n=1}^{+\infty}I^{n}.\]
		On the other hand, $A$ is a Noetherian domain, hence if $I$ is a proper ideal of $A$, by Krull's intersection theorem, $\bigcap_{n=1}I^{n}=0$, hence $I=0$.
	\end{proof}
	
	\begin{cor}\label{finite etale phi_q-module over A}
		If $M$ is a finitely presented étale $\varphi_q$-module over $A$, then $M$ is a finite free $A$-module.
	\end{cor}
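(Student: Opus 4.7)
The plan is to use Fitting ideals to reduce the claim to projectivity, then apply Lemma \ref{description of finte projective modules} (which shows that every finite projective $A$-module is free) to conclude. The key new input is the previous Lemma \ref{phi_q invarinat ideals of A}, classifying ideals $I \subseteq A$ satisfying $A \cdot \varphi_q(I) = I$.

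First I would recall that for a finitely presented $A$-module $M$, the Fitting ideals $\mathrm{Fitt}_i(M) \subseteq A$ are defined intrinsically and commute with base change. In particular, using the base change along $\varphi_q: A \to A$, one obtains $\mathrm{Fitt}_i(A \otimes_{\varphi_q, A} M) = A \cdot \varphi_q(\mathrm{Fitt}_i(M))$ for every $i \geq 0$. The étale hypothesis yields an $A$-linear isomorphism $A \otimes_{\varphi_q, A} M \xrightarrow{\sim} M$, so that $\mathrm{Fitt}_i(M) = A \cdot \varphi_q(\mathrm{Fitt}_i(M))$. Invoking Lemma \ref{phi_q invarinat ideals of A}, each Fitting ideal $\mathrm{Fitt}_i(M)$ is either $0$ or $A$.

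Since the Fitting ideals form an increasing chain that eventually equals $A$ (because $M$ is finitely generated), there exists a unique integer $r \geq 0$ such that $\mathrm{Fitt}_i(M) = 0$ for $i < r$ and $\mathrm{Fitt}_i(M) = A$ for $i \geq r$. The standard Fitting criterion for local freeness (checked prime by prime: over a local Noetherian ring, a finitely presented module $N$ with $\mathrm{Fitt}_{r-1}(N) = 0$ and $\mathrm{Fitt}_r(N) = R$ is free of rank $r$) then shows that $M$ is locally free of rank $r$ over $A$, and since $M$ is finitely presented, $M$ is projective of rank $r$. By Lemma \ref{description of finte projective modules}, $M \cong A^r$. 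I do not anticipate any serious obstacle: the argument is essentially a one-line application of Lemma \ref{phi_q invarinat ideals of A} to Fitting ideals, combined with standard commutative-algebra facts; the case $r = 0$ (in which $\mathrm{Fitt}_0(M) = A$ forces $M = 0$) is subsumed automatically.
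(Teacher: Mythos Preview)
Your proposal is correct and follows essentially the same approach as the paper: both use the base-change behavior of Fitting ideals along $\varphi_q$ together with Lemma \ref{phi_q invarinat ideals of A} to force each $\mathrm{Fit}_k(M)$ to be $0$ or $A$, then invoke the Fitting criterion for projectivity and finally Lemma \ref{description of finte projective modules}. The paper's version is slightly more terse (citing the Stacks Project tags directly), but the argument is the same.
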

	\begin{proof}
		Let $\mathrm{Fit}_k(M)$ be the $k$-th fitting ideal of $M$, $k\geq0$ (for the definition, see \cite[\href{https://stacks.math.columbia.edu/tag/07Z8}{Tag 07Z8}]{stacks-project}). By \cite[\href{https://stacks.math.columbia.edu/tag/07ZA}{Tag 07ZA}]{stacks-project}, we have
		\[\mathrm{Fit}_k(M)=\mathrm{Fit}_k(A\otimes_{\varphi_q,A}M)=A\cdot \varphi_q(\mathrm{Fit}_k(M)).\]
		Then Lemma \ref{phi_q invarinat ideals of A} implies that $\mathrm{Fit}_k(M)$ is either $0$ or $A$. Thus \cite[\href{https://stacks.math.columbia.edu/tag/07ZD}{Tag 07ZD}]{stacks-project} implies that $M$ is a finite projective $A$-module, hence is finite free by \cite[$\S$1 satz 3]{lutkebohmert1977vektorraumbundel} (Lemma \ref{description of finte projective modules}).
	\end{proof}

	\begin{prop}\label{description of finitely presented etale phi_q-module over A_{mv,E}}
		Let $M$ be a non-zero finitely presented étale $\varphi_q$-module over  $A_{\mathrm{mv},E}$. Then there exists $1\leq n_1<\dots<n_r\leq \infty$ and $m_1,\dots,m_r\geq 1$ such that
		\[M\cong \bigoplus_{i=1}^r\left(A_{\mathrm{mv},E}/\varpi^{n_i}\right)^{\oplus m_i},\]
		here we use the convention that $A_{\mathrm{mv},E}/\varpi^{\infty}=A_{\mathrm{mv},E}$.
	\end{prop}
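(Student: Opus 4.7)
My plan is to prove the stated decomposition by showing that every Fitting ideal of $M$ is a power of $\varpi$, then diagonalizing a presentation matrix via a Smith-normal-form procedure.

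Since $M$ is étale, the isomorphism $A_{\mathrm{mv},E}\otimes_{\varphi_q,A_{\mathrm{mv},E}}M\xrightarrow{\sim}M$ combined with base-change invariance of Fitting ideals gives $\mathrm{Fit}_k(M)=A_{\mathrm{mv},E}\cdot\varphi_q(\mathrm{Fit}_k(M))$ for every $k\geq 0$. I first extend Lemma \ref{phi_q invarinat ideals of A} to $A_{\mathrm{mv},E}$: the only ideals $I\subseteq A_{\mathrm{mv},E}$ with $A_{\mathrm{mv},E}\cdot\varphi_q(I)=I$ are $0$, $A_{\mathrm{mv},E}$, and $\varpi^n A_{\mathrm{mv},E}$ for $n\geq 1$. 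Reducing modulo $\varpi$, Lemma \ref{phi_q invarinat ideals of A} forces $I\bmod\varpi\in\{0,A\}$; the case $A$ gives $I=A_{\mathrm{mv},E}$ by Nakayama (as $\varpi$ lies in the Jacobson radical of $A_{\mathrm{mv},E}$), and the case $0$ lets me write $I=\varpi I'$ with $I'$ again $\varphi_q$-stable (using $\varphi_q(\varpi)=\varpi$ and that $\varpi$ is a non-zero-divisor); iteration together with Krull's intersection theorem (applicable since $A_{\mathrm{mv},E}$ is Noetherian) yields the claim. Consequently $\mathrm{Fit}_k(M)=\varpi^{d_k}A_{\mathrm{mv},E}$ for some $d_k\in\mathbb{Z}_{\geq 0}\cup\{\infty\}$, with $d_0\geq d_1\geq\cdots$ eventually stabilizing at $0$.

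I then present $M\cong A_{\mathrm{mv},E}^N/K$ by lifting an $A$-basis of the finite free $A$-module $M/\varpi M$ (free by Corollary \ref{finite etale phi_q-module over A}) via Nakayama. The resulting presentation matrix $P$ has $(N-k)\times(N-k)$-minors generating $\varpi^{d_k}A_{\mathrm{mv},E}$. A Smith-normal-form procedure diagonalizes $P$: at each step the ideal of entries of the current matrix is some $\varpi^{d}A_{\mathrm{mv},E}$, so writing entries as $\varpi^{d}$ times elements of $A_{\mathrm{mv},E}$ yields a unimodular collection, which can be completed to invertible row/column operations by Lemma \ref{description of finte projective modules} (finite projective $A_{\mathrm{mv},E}$-modules are free); this places $\varpi^{d}$ in position $(1,1)$ and clears the rest of the first row and column, and the block-diagonal decomposition $M\cong A_{\mathrm{mv},E}/\varpi^{d}\oplus M'$ combined with the formula $\mathrm{Fit}_k(M)=\varpi^{d}\mathrm{Fit}_k(M')+\mathrm{Fit}_{k-1}(M')$ forces the residual Fitting ideals of $M'$ to remain $\varpi$-powers. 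Induction on matrix size thus yields $M\cong\bigoplus_i A_{\mathrm{mv},E}/\varpi^{a_i}$; collecting equal exponents (with $a_i=0$ yielding a trivial summand and $a_i=\infty$ yielding a free summand $A_{\mathrm{mv},E}$) produces the stated decomposition with $1\leq n_1<\cdots<n_r\leq\infty$.

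The main obstacle is the Smith-normal-form step: $A_{\mathrm{mv},E}$ is not a PID and no individual entry of $P$ need be $\varpi^{d}$ times a unit, only the ideal of entries is a $\varpi$-power. The crucial input is the unimodular completion provided by Lemma \ref{description of finte projective modules}, together with a careful induction that verifies at each step that the Fitting-ideal structure descends to the residual submatrix and that the elementary operations over $A_{\mathrm{mv},E}$ (rather than a DVR) actually perform the clearing as described.
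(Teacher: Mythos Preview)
Your first step---showing that every $\varphi_q$-stable ideal of $A_{\mathrm{mv},E}$ is $0$, the unit ideal, or $\varpi^nA_{\mathrm{mv},E}$, hence all $\mathrm{Fit}_k(M)$ are $\varpi$-powers---is correct and a clean reduction. The gap is in the Smith-normal-form step. Knowing that the entries of $Q=\varpi^{-d}P$ generate the unit ideal does \emph{not} let you place a unit in position $(1,1)$ by row and column operations: that is essentially the elementary-divisor-ring property, whereas Lemma~\ref{description of finte projective modules} only yields the weaker Hermite property (unimodular rows complete to invertible matrices). For $f\ge 2$ there is no reason to expect $A_{\mathrm{mv},E}$ to satisfy the stronger condition. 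Your Fitting recursion also fails to close: from $\varpi^{d_k}=\varpi^d\,\mathrm{Fit}_k(M')+\mathrm{Fit}_{k-1}(M')$ with $\mathrm{Fit}_{k-1}(M')=\varpi^{e_{k-1}}$, the case $e_{k-1}=d_k$ (which occurs whenever the smallest elementary divisor repeats) only gives $\varpi^{d_k}\subseteq\mathrm{Fit}_k(M')\subseteq\varpi^{d_k-d}$, not that $\mathrm{Fit}_k(M')$ is a $\varpi$-power.

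Both issues are repaired by carrying the invariant ``every $\varpi^sM/\varpi^{s+1}M$ is $A$-free'' instead of the Fitting condition; this holds for the original étale $M$ by Corollary~\ref{finite etale phi_q-module over A} and is inherited by direct summands. Writing $K=\varpi^dL$ with $L\not\subseteq\varpi A_{\mathrm{mv},E}^N$, multiplication by $\varpi^d$ identifies $A_{\mathrm{mv},E}^N/L$ with $\varpi^dM$, so $A^N/\overline L\cong\varpi^dM/\varpi^{d+1}M$ is $A$-free; hence $\overline L$ is a nonzero free direct summand of $A^N$, and any basis vector lifts (via Nakayama) to a unimodular $e_1\in L$. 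Then $K=\varpi^dA_{\mathrm{mv},E}e_1\oplus(K\cap\bigoplus_{i\ge2}A_{\mathrm{mv},E}e_i)$ splits off $A_{\mathrm{mv},E}/\varpi^d$, and induction on $N$ finishes. This is exactly the content of the external reference the paper invokes: its proof observes graded-freeness and then cites \cite[Theorem~4.3]{marquis2024studyvariouscategoriesgravitating} rather than carrying out this induction by hand.
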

	
	\begin{proof}
		As $\varphi_q: A_{\mathrm{mv},E}\to A_{\mathrm{mv},E}$ is faithfully flat, we can deduce that $\varpi^sM/\varpi^{s+1}M$ is a finitely presented étale $\varphi_q$-module over $A$ for any $s\geq 0$, hence is finite free (Corollary \ref{finite etale phi_q-module over A}). 
		We apply \cite[Theorem 4.3]{marquis2024studyvariouscategoriesgravitating} to deduce that
		\[M\cong M_{\infty}\oplus \bigoplus_{i=1}^{r-1}M_{i}\]
		with $M_{\infty}$ is a finite projective $A_{\mathrm{mv},E}$-module, and $M_{i}$ is a finite projective $A_{\mathrm{mv},E}/\varpi^{n_i}$-module for some $1\leq n_i<+\infty$. By Lemma \ref{description of finte projective modules}, every $M_i$ is a finite free $A_{\mathrm{mv},E}/\varpi^{n_i}$-module, and $M_{\infty}$ is a finite free $A_{\mathrm{mv},E}$-module.
	\end{proof}

	Since $A_{\mathrm{mv},E}$ is a Noetherian domain, and $\varphi_q: A_{\mathrm{mv},E}\to A_{\mathrm{mv},E}$ is flat (Lemma \ref{A_{mathrm{m.v.},K} is noetherian} and Lemma \ref{phi_q is flat}), one can easily deduce that
	\begin{lem}\label{f.p. etale (phi_q,O_K*)-modules over A is an abelian category}
		The categories $\modetfp_{\varphi_q}(A_{\mathrm{mv},E})$ and $\modetfp_{\varphi_q,\mathcal{O}_K^{\times}}(A_{\mathrm{mv},E})$ are abelian categories.
	\end{lem}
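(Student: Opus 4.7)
The plan is to verify the axioms directly by combining the Noetherian hypothesis on $A_{\mathrm{mv},E}$ (Lemma \ref{A_{mathrm{m.v.},K} is noetherian}) with the flatness of $\varphi_q$ (Lemma \ref{phi_q is flat}). Since $A_{\mathrm{mv},E}$ is Noetherian, the underlying category of finitely presented $A_{\mathrm{mv},E}$-modules coincides with the category of finitely generated $A_{\mathrm{mv},E}$-modules, which is well-known to be abelian: kernels, cokernels, images and coimages all exist and remain finitely generated. So the only content of the lemma is that these constructions preserve the étale $\varphi_q$-module structure (and the $\mathcal{O}_K^{\times}$-action in the second case).

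Let $f: M \to N$ be a morphism in $\modetfp_{\varphi_q}(A_{\mathrm{mv},E})$. The $A_{\mathrm{mv},E}$-modules $\ker f$, $\mathrm{coker}\,f$, and $\mathrm{im}\,f$ are finitely presented, and inherit a (continuous) $\varphi_q$-action by functoriality. To see that they are étale, I would apply the functor $A_{\mathrm{mv},E} \otimes_{\varphi_q, A_{\mathrm{mv},E}} -$ to the four-term exact sequence $0 \to \ker f \to M \to N \to \mathrm{coker}\,f \to 0$. By Lemma \ref{phi_q is flat}, this functor is exact, so we obtain a commutative diagram
\begin{equation*}
\begin{tikzcd}[column sep=small]
0 \arrow[r] & A_{\mathrm{mv},E}\otimes_{\varphi_q}\ker f \arrow[r]\arrow[d] & A_{\mathrm{mv},E}\otimes_{\varphi_q}M \arrow[r]\arrow[d,"\wr"] & A_{\mathrm{mv},E}\otimes_{\varphi_q}N \arrow[r]\arrow[d,"\wr"] & A_{\mathrm{mv},E}\otimes_{\varphi_q}\mathrm{coker}\,f \arrow[r]\arrow[d] & 0 \\
0 \arrow[r] & \ker f \arrow[r] & M \arrow[r] & N \arrow[r] & \mathrm{coker}\,f \arrow[r] & 0
\end{tikzcd}
\end{equation*}
with exact rows, in which the middle two vertical arrows (the linearization maps for $M$ and $N$) are isomorphisms. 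The five lemma then forces the outer two linearizations to be isomorphisms, so $\ker f$ and $\mathrm{coker}\,f$ are étale $\varphi_q$-modules; the same argument applied to $0 \to \mathrm{im}\,f \to N$ and $M \to \mathrm{im}\,f \to 0$ shows that $\mathrm{im}\,f$ is étale, and the canonical comparison $\mathrm{coim}\,f \to \mathrm{im}\,f$ is already an isomorphism of $A_{\mathrm{mv},E}$-modules, hence of étale $\varphi_q$-modules.

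For the second category $\modetfp_{\varphi_q,\mathcal{O}_K^{\times}}(A_{\mathrm{mv},E})$, one only needs to check that the constructions above carry the induced $\mathcal{O}_K^{\times}$-action, which is automatic from functoriality, that this action commutes with $\varphi_q$ (inherited from the same property on $M$ and $N$), and that it is continuous. Continuity is a routine check using the fact that subspace and quotient topologies on finitely generated $A_{\mathrm{mv},E}$-modules are independent of the chosen presentation, so the induced maps $\mathcal{O}_K^{\times} \times \ker f \to \ker f$ and $\mathcal{O}_K^{\times} \times \mathrm{coker}\,f \to \mathrm{coker}\,f$ are continuous because they fit into commutative diagrams with the continuous actions on $M$ and $N$. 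The only step requiring any care is the exactness of $A_{\mathrm{mv},E} \otimes_{\varphi_q, A_{\mathrm{mv},E}} -$, but this is exactly the flatness result already established.
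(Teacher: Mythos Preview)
Your argument is correct and is exactly the standard verification the paper has in mind: the paper's own proof simply invokes Lemma~\ref{A_{mathrm{m.v.},K} is noetherian} and Lemma~\ref{phi_q is flat} and says ``one can easily deduce'' the result, and you have supplied those easy details. One small remark: your appeal to ``the five lemma'' on the four-term sequence is slightly informal, since the classical five lemma gives the middle map from the outer four rather than the reverse; the clean way to phrase it is to split via $\mathrm{im}\,f$ into two short exact sequences and apply the snake/short-five lemma twice (first to get $\mathrm{coker}\,f$ and $\mathrm{im}\,f$ étale, then $\ker f$), which is in effect what you do in the following sentence anyway.
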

	
	We quote an important and technical lemma:
	\begin{lem}\label{existence of equivariant surjection of phi_q-modules}
		Let $R$ be a ring with an endomorphism $\varphi_q$. Let $M$ be a finitely generated étale $\varphi_q$-module over $R$. Then there exists a finite free étale $\varphi_q$-module $F$ and a $\varphi_q$-equivariant surjection $F\twoheadrightarrow M$.
	\end{lem}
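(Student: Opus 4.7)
The plan is to construct $F$ explicitly as a free $R$-module of rank $2n$, where $n$ is the number of $R$-module generators of $M$. The key idea is to use two copies of the generators: the $m_i$ themselves and their $\varphi_q$-images (which also generate $M$ by étaleness); this doubling is what allows us to build a $\varphi_q$-structure whose matrix has determinant $\pm 1$.

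Concretely, I would first pick generators $m_1,\dots,m_n$ of $M$ and record a matrix $B=(B_{ij})\in M_n(R)$ by writing $\varphi_q(m_i)=\sum_j B_{ij}m_j$. Since $M$ is étale, the linearization $R\otimes_{\varphi_q,R}M\to M$, $r\otimes m\mapsto r\varphi_q(m)$, is an isomorphism; as $\{1\otimes m_j\}$ generates the left-hand side, $\{\varphi_q(m_1),\dots,\varphi_q(m_n)\}$ also generates $M$, so there exists a matrix $C=(C_{ij})\in M_n(R)$ with $m_i=\sum_j C_{ij}\varphi_q(m_j)$.

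Next I would set $F=R^{2n}$ with basis $e_1,\dots,e_n,f_1,\dots,f_n$, define the $R$-linear surjection $\pi\colon F\twoheadrightarrow M$ by $\pi(e_i)=m_i$ and $\pi(f_i)=\varphi_q(m_i)$, and equip $F$ with the $\varphi_q$-semilinear endomorphism
\[
\varphi_q(e_i)=f_i,\qquad \varphi_q(f_i)=e_i+\sum_{j}\bigl(\varphi_q(B_{ij})-C_{ij}\bigr)f_j.
\]
Equivariance of $\pi$ is immediate on the $e_i$, and for the $f_i$ one computes
\[
\pi(\varphi_q(f_i))=m_i+\sum_j\varphi_q(B_{ij})\varphi_q(m_j)-\sum_j C_{ij}\varphi_q(m_j)=m_i+\varphi_q^2(m_i)-m_i=\varphi_q(\pi(f_i)),
\]
using $\varphi_q^2(m_i)=\sum_j\varphi_q(B_{ij})\varphi_q(m_j)$ and the defining identity of $C$.

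To conclude that $F$ is étale I would compute the matrix of $\varphi_q$ on $F$ in the basis $(e_1,\dots,e_n,f_1,\dots,f_n)$ and check it is invertible: it has block form
\[
\begin{pmatrix} 0 & I_n \\ I_n & (\varphi_q(B)-C)^{T} \end{pmatrix},
\]
whose determinant is $\pm 1$ by a row-swap, so the linearization $\mathrm{id}\otimes\varphi_q\colon R\otimes_{\varphi_q,R}F\to F$ is an isomorphism. The main obstacle is really conceptual rather than technical: one has to notice that neither $B$ nor $C$ is invertible in general, but pairing them in the antidiagonal block pattern forces an invertible structure matrix; once this construction is in hand, the verifications are purely mechanical.
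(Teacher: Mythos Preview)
Your proof is correct. The paper itself does not give an argument: it simply cites \cite[Lemma~1.5.2]{kedlaya2015relative}, and your construction is exactly the ``doubling'' trick used there, so you have essentially reproduced the referenced proof rather than found an alternative route.
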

	\begin{proof}
		This is \cite[Lemma 1.5.2]{kedlaya2015relative}.
	\end{proof}
	
	\begin{cor}\label{f.p. phi_q-modules}
		Let $R$ be a ring with a flat endomorphism $\varphi_q$. Let $M$ be a finitely presented étale $\varphi_q$-module over $R$. Then there exists an exact sequence $F'\to F\to M\to 0$ of étale $\varphi_q$-modules over $R$, where $F,F'$ are finite free étale $\varphi_q$-modules over $R$.
	\end{cor}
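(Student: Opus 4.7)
The plan is to apply Lemma \ref{existence of equivariant surjection of phi_q-modules} twice. First, since $M$ is finitely generated as a $\varphi_q$-module, the lemma produces a finite free étale $\varphi_q$-module $F$ together with a $\varphi_q$-equivariant surjection $\pi: F \twoheadrightarrow M$. Let $N \coloneq \ker(\pi)$. Then $N$ is automatically a $\varphi_q$-submodule of $F$, and we only need to verify two things: that $N$ is finitely generated as an $R$-module, and that $N$ is étale as a $\varphi_q$-module.

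The finite generation of $N$ is a standard consequence of the hypothesis that $M$ is finitely presented and $F$ is finitely generated (see for example \cite[\href{https://stacks.math.columbia.edu/tag/0519}{Tag 0519}]{stacks-project}). For the étaleness, consider the short exact sequence of $R$-modules
\[0 \to N \to F \to M \to 0.\]
Since $\varphi_q: R \to R$ is flat by hypothesis, tensoring along $\varphi_q$ yields an exact sequence
\[0 \to R\otimes_{\varphi_q, R} N \to R\otimes_{\varphi_q, R} F \to R\otimes_{\varphi_q, R} M \to 0.\]
The linearization maps for $F$ and $M$ are isomorphisms by assumption, so by the five lemma (or snake lemma) the linearization map $\mathrm{id}\otimes \varphi_q: R\otimes_{\varphi_q, R} N \to N$ is also an isomorphism. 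Hence $N$ is an étale $\varphi_q$-module.

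Now apply Lemma \ref{existence of equivariant surjection of phi_q-modules} again, this time to $N$: there exists a finite free étale $\varphi_q$-module $F'$ and a $\varphi_q$-equivariant surjection $F' \twoheadrightarrow N$. Composing with the inclusion $N \hookrightarrow F$ produces a $\varphi_q$-equivariant map $F' \to F$ whose image is $N = \ker(\pi)$. This gives the desired exact sequence
\[F' \to F \to M \to 0\]
of étale $\varphi_q$-modules over $R$. There is no real obstacle here; the only subtlety is checking that the kernel of a surjection of étale $\varphi_q$-modules is again étale, which is exactly where the flatness of $\varphi_q$ is used.
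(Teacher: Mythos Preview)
Your proof is correct and follows essentially the same approach as the paper: apply Lemma~\ref{existence of equivariant surjection of phi_q-modules} to $M$, check that the kernel is finitely generated (from finite presentation of $M$) and étale (from flatness of $\varphi_q$ and the five lemma), then apply the lemma again to the kernel. The paper's proof is just a terser version of what you wrote.
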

	\begin{proof}
		By Lemma \ref{existence of equivariant surjection of phi_q-modules}, there exists an exact sequence $F\xrightarrow{f} M\to 0$ of étale $\varphi_q$-modules over $R$. Since $M$ is finitely presented, $\ker f$ is finitely generated. Since $M$, $F$ are étale and $\varphi_q$ is flat, $\ker f$ is étale, then we apply Lemma \ref{existence of equivariant surjection of phi_q-modules} to $\ker f$ to conclude.
	\end{proof}
	
	Let $M_{\infty}$ be a finitely presented étale $\varphi_q$-module over $W_E(A_{\infty})$. By Lemma \ref{existence of equivariant surjection of phi_q-modules}, we can take a finite free étale $\varphi_q$-module $F_{\infty}$ over $W_E(A_{\infty})$ and a $\varphi_q$-equivariant surjection $f_{\infty}: F_{\infty}\twoheadrightarrow M_{\infty}$. By Theorem \ref{descent of phi_q-module}, there exists a unique finite free étale $\varphi_q$-module $F$ over $A_{\mathrm{mv},E}$ such that $F_{\infty}\cong W_E(A_{\infty})\otimes_{A_{\mathrm{mv},E}}F$. We use the injection $F\hookrightarrow W_E(A_{\infty})\otimes_{A_{\mathrm{mv},E}}F, x\mapsto 1\otimes x$ to identify $F$ with a $A_{\mathrm{mv},E}$-submodule of $F_{\infty}$, then we put
	\begin{align}\label{definition of Dec}
		\mathrm{Dec}(M_{\infty})\coloneq f_{\infty}(F)\subset M_{\infty}.
	\end{align}
	Then $\mathrm{Dec}(M_{\infty})$ is a finite $A_{\mathrm{mv},E}$-module with an endomorphism $\varphi_q$ induced by the endomorphism of $F$.
	
	Suppose that there is another finite free étale $\varphi_q$-module $F_{\infty}'$ over $W_E(A_{\infty})$ and a $\varphi_q$-equivariant homomorphism $f_{\infty}': F_{\infty}'\to M_{\infty}$ (not necessarily surjective). Let $F'$ be a finite free étale $\varphi_q$-module over $A_{\mathrm{mv},E}$ such that $F_{\infty}'\cong W_E(A_{\infty})\otimes_{A_{\mathrm{mv},E}}F'$, we have the following lemma:
	
	\begin{lem}\label{Dec(M_{infty}) is well-defined}
		With notations above, we have $f_{\infty}'(F')\subseteq f_{\infty}(F)$.
	\end{lem}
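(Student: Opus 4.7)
The plan is to reduce $f'_\infty(F')\subseteq f_\infty(F)$ to a pointwise lifting statement by inserting a common finite free étale $\varphi_q$-module between $F'_\infty$ and $F_\infty$. Specifically, I would form the fiber product
\[ Q_\infty \coloneq F_\infty\times_{M_\infty}F'_\infty =\ker\bigl(F_\infty\oplus F'_\infty\xrightarrow{(f_\infty,-f'_\infty)} M_\infty\bigr), \]
equipped with the restricted $\varphi_q$-action and the two projections $\pi_1,\pi_2$. The short exact sequence $0\to Q_\infty\to F_\infty\oplus F'_\infty\to M_\infty\to 0$ (right surjectivity comes from $f_\infty$ being surjective) presents $Q_\infty$ as the kernel of a surjection from a finite free $W_E(A_\infty)$-module onto a finitely presented one, so Schanuel's lemma forces $Q_\infty$ to be finitely generated over $W_E(A_\infty)$. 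Since $A_\infty$ is perfect, $\varphi_q$ is an isomorphism on $W_E(A_\infty)$, hence $W_E(A_\infty)\otimes_{\varphi_q,W_E(A_\infty)}-$ is exact; applying it to the sequence above and chasing the square of linearization maps by the five lemma would then show that $Q_\infty$ is étale.

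Having placed $Q_\infty$ in the category of finitely generated étale $\varphi_q$-modules, I would apply Lemma \ref{existence of equivariant surjection of phi_q-modules} to obtain a finite free étale $\varphi_q$-module $\tilde F_\infty$ over $W_E(A_\infty)$ together with a $\varphi_q$-equivariant surjection $\pi_\infty\colon \tilde F_\infty\twoheadrightarrow Q_\infty$. Setting $\alpha_\infty\coloneq \pi_1\circ\pi_\infty\colon \tilde F_\infty\to F_\infty$ and $\beta_\infty\coloneq \pi_2\circ\pi_\infty\colon \tilde F_\infty\to F'_\infty$, both are $\varphi_q$-equivariant, $\beta_\infty$ is surjective, and the defining property of the fiber product yields $f_\infty\circ\alpha_\infty=f'_\infty\circ\beta_\infty$. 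By Theorem \ref{descent of phi_q-module}, $\tilde F_\infty$ descends to a finite free étale $\varphi_q$-module $\tilde F$ over $A_{\mathrm{mv},E}$, and $\alpha_\infty,\beta_\infty$ are the base changes of unique $\varphi_q$-equivariant morphisms $\alpha\colon \tilde F\to F$ and $\beta\colon \tilde F\to F'$; faithful flatness of $A_{\mathrm{mv},E}\hookrightarrow W_E(A_\infty)$ (Proposition \ref{A_{mv,E} to W_E(A_{infty}) is faithfully flat}) transports the surjectivity of $\beta_\infty$ to surjectivity of $\beta$.

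Finally, for any $x'\in F'$ I would lift it to some $\tilde x\in\tilde F$ with $\beta(\tilde x)=x'$ and set $y\coloneq \alpha(\tilde x)\in F$. Under the canonical embeddings $\tilde F\subseteq\tilde F_\infty$, $F\subseteq F_\infty$, $F'\subseteq F'_\infty$ used throughout Section 3.1, the maps $\alpha_\infty,\beta_\infty$ restrict to $\alpha,\beta$, so evaluating the identity $f_\infty\circ\alpha_\infty=f'_\infty\circ\beta_\infty$ at $\tilde x$ gives $f_\infty(y)=f'_\infty(x')$, which exhibits $f'_\infty(x')\in f_\infty(F)$. The substantive step is the first: identifying $Q_\infty$ as a finitely generated étale $\varphi_q$-module so that Lemma \ref{existence of equivariant surjection of phi_q-modules} becomes applicable; everything that follows is formal bookkeeping with the already established descent of finite projective modules.
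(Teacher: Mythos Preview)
Your proposal is correct and follows essentially the same route as the paper: form the fiber product $F_\infty\times_{M_\infty}F'_\infty$, show it is a finitely generated étale $\varphi_q$-module, cover it by a finite free étale $\varphi_q$-module via Lemma~\ref{existence of equivariant surjection of phi_q-modules}, descend the two resulting maps to $A_{\mathrm{mv},E}$ using Theorem~\ref{descent of phi_q-module}, and use faithful flatness (Proposition~\ref{A_{mv,E} to W_E(A_{infty}) is faithfully flat}) to transport surjectivity. The only cosmetic difference is that the paper establishes finite generation and étaleness of the fiber product via the exact sequence $0\to\ker f_\infty\to L\to F'_\infty\to 0$ (noting $\ker f_\infty$ is finite and étale because $M_\infty$ is finitely presented and étale), whereas you use $0\to Q_\infty\to F_\infty\oplus F'_\infty\to M_\infty\to 0$; both work, and your invocation of ``Schanuel's lemma'' is really just the standard fact that the kernel of a surjection from a finite free module onto a finitely presented module is finitely generated.
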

	
	\begin{proof}
		Let $L$ be the fibre product of the diagram $F_{\infty}'\to M_{\infty}\twoheadleftarrow F_{\infty}$ in the category of $W_E(A_{\infty})$-modules, 
		\[L=\left\{(x,x')\in F_{\infty}\oplus F_{\infty}': f_{\infty}(x)=f_{\infty}'(x')\right\},\]
		then $L$ is a $\varphi_q$-module, and by the surjectivity of $f_{\infty}$, there is a short exact sequence of $\varphi_q$-modules over $W_E(A_{\infty})$:
		\[0\to \ker f_{\infty}\xrightarrow{x\mapsto (x,0)}L\xrightarrow{(x,x')\mapsto x'}F_{\infty}'\to 0.\]
		As $M_{\infty}$ is finitely presented, $\ker f_{\infty}$ must be a finite $W_E(A_{\infty})$-module, hence $L$ is a finite $W_E(A_{\infty})$-module. Moreover, $ M_{\infty}$ and $F_{\infty}$ are finitely presented étale $\varphi_q$-modules, hence a standard argument using the five lemma implies that $\ker f_{\infty}$ is étale, hence $L$ is étale. Therefore, we can apply Lemma \ref{existence of equivariant surjection of phi_q-modules} to deduce that there exists a finite free étale $\varphi_q$-module $F_{\infty}''$ with a $\varphi_q$-equivariant surjection $F_{\infty}''\twoheadrightarrow L$. Let $h_{1,\infty}$ be the composition $F_{\infty}''\twoheadrightarrow L\to F_{\infty}$, and let $h_{2,\infty}$ be the composition $F_{\infty}''\twoheadrightarrow L\twoheadrightarrow F_{\infty}'$, there is a commutative diagram of finitely presented étale $\varphi_q$-modules
		\begin{equation*}
			\begin{tikzcd}
				F_{\infty}'' \arrow[d,two heads,"h_{2,\infty}"]\arrow[r,"h_{1,\infty}"] & F_{\infty}\arrow[d,two heads,"f_{\infty}"]\\
				F_{\infty}'\arrow[r,"f_{\infty}'"] & M_{\infty}.
			\end{tikzcd}
		\end{equation*}
		Let $F''$ be the finite free étale $\varphi_q$-module over $A_{\mathrm{mv},E}$ such that $F''_{\infty}\cong W_E(A_{\infty})\otimes F''$, then $h_{1,\infty}$ and $h_{2,\infty}$ descend to $F''\xrightarrow{h_1} F$ and $F''\xrightarrow{h_2} F'$ (Theorem \ref{descent of phi_q-module}). Moreover, by Proposition \ref{A_{mv,E} to W_E(A_{infty}) is faithfully flat}, $A_{\mathrm{mv},E}\hookrightarrow W_E(A_{\infty})$ is faithfully flat, hence $h_2$ is surjective. Therefore,
		\[f_{\infty}'(F')=f_{\infty}'(h_{2,\infty}(F''))=f_{\infty}(h_{1,\infty}(F''))\subseteq f_{\infty}(F).\]
	\end{proof}
	
	In particular, $\mathrm{Dec}(M_{\infty})$ is independent of the choice of $F_{\infty}$ and the surjection $f_{\infty}: F_{\infty}\twoheadrightarrow M_{\infty}$. 
	
	\begin{defn}\label{defn of decompletion}
		Let $M_{\infty}$ be a finitely presented étale $\varphi_q$-module over $W_E(A_{\infty})$. We call $\mathrm{Dec}(M_{\infty})$ the \textit{decompletion and deperfection} of $M_{\infty}$. Moreover, any homomorphism $h_{\infty}: M_{\infty}\to N_{\infty}$ between finitely presented étale $\varphi_q$-modules over $W_E(A_{\infty})$ restricts to a homomorphism $h: \mathrm{Dec}(M_{\infty})\to \mathrm{Dec}(N_{\infty})$ of finite $A_{\mathrm{mv},E}$-modules which commutes with $\varphi_q$. Thus $\mathrm{Dec}$ is a functor sending finitely presented étale $\varphi_q$-modules over $W_E(A_{\infty})$ to finite $A_{\mathrm{mv},E}$-modules with a semi-linear endomorphism $\varphi_q$.
	\end{defn}

	\begin{lem}\label{the intersection operation behaves very well}
		Let $f: F'\to F$ be a homomorphism of finite free étale $\varphi_q$-module over $A_{\mathrm{mv},E}$ which induces a homomorphism $f_{\infty}:  W_E(A_{\infty})\otimes_{A_{\mathrm{mv},E}} F'\to W_E(A_{\infty})\otimes_{A_{\mathrm{mv},E}} F$ of étale $\varphi_q$-module over $W_E(A_{\infty})$. Then
		\[F\cap \mathrm{Im}(f_{\infty})=\mathrm{Im}(f).\]
	\end{lem}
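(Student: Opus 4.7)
The plan is to reduce the lemma to a statement about the injectivity of the base change map $Q \to W_E(A_{\infty}) \otimes_{A_{\mathrm{mv},E}} Q$ for the quotient $Q \coloneqq F/\mathrm{Im}(f)$, which then follows from the faithful flatness of $A_{\mathrm{mv},E} \hookrightarrow W_E(A_{\infty})$ established in Proposition \ref{A_{mv,E} to W_E(A_{infty}) is faithfully flat}. The inclusion $\mathrm{Im}(f) \subseteq F \cap \mathrm{Im}(f_{\infty})$ is immediate: any element of the form $f(y) \in F$ also lies in $\mathrm{Im}(f_{\infty})$ via the identification $f_{\infty}(1 \otimes y) = 1 \otimes f(y)$.

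For the reverse inclusion, I would consider the short exact sequence of $A_{\mathrm{mv},E}$-modules
\[
0 \to \mathrm{Im}(f) \to F \to Q \to 0,
\]
and tensor with the flat $A_{\mathrm{mv},E}$-algebra $W_E(A_{\infty})$ (flatness being part of Proposition \ref{A_{mv,E} to W_E(A_{infty}) is faithfully flat}). This yields
\[
0 \to W_E(A_{\infty}) \otimes_{A_{\mathrm{mv},E}} \mathrm{Im}(f) \to F_{\infty} \to W_E(A_{\infty}) \otimes_{A_{\mathrm{mv},E}} Q \to 0,
\]
and by right exactness of tensoring we may identify $W_E(A_{\infty}) \otimes_{A_{\mathrm{mv},E}} \mathrm{Im}(f)$ with $\mathrm{Im}(f_{\infty}) \subseteq F_{\infty}$. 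Thus $F_{\infty}/\mathrm{Im}(f_{\infty}) \cong W_E(A_{\infty}) \otimes_{A_{\mathrm{mv},E}} Q$.

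Now for $x \in F \cap \mathrm{Im}(f_{\infty})$, its image in $F_{\infty}/\mathrm{Im}(f_{\infty}) \cong W_E(A_{\infty}) \otimes_{A_{\mathrm{mv},E}} Q$ is zero. On the other hand, this image factors as $x \mapsto \bar{x} \mapsto 1 \otimes \bar{x}$, where $\bar{x} \in Q$ denotes the class of $x$. It therefore suffices to verify that the natural map $Q \to W_E(A_{\infty}) \otimes_{A_{\mathrm{mv},E}} Q$, $q \mapsto 1 \otimes q$, is injective; this is a standard consequence of faithful flatness (for instance, cite \cite[\href{https://stacks.math.columbia.edu/tag/00HQ}{Tag 00HQ}]{stacks-project} applied to the cyclic submodule generated by any element of the kernel). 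Granting this, $\bar{x} = 0$, i.e.\ $x \in \mathrm{Im}(f)$.

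There is no real obstacle here; the statement is essentially formal once one has faithful flatness in hand. The only small care needed is the identification of $\mathrm{Im}(f_{\infty})$ with $W_E(A_{\infty}) \otimes_{A_{\mathrm{mv},E}} \mathrm{Im}(f)$, which uses flatness to promote the right-exactly-defined image to an honest submodule of $F_{\infty}$, and the injectivity of $F \hookrightarrow F_{\infty}$ (implicit in making the intersection $F \cap \mathrm{Im}(f_{\infty})$ meaningful), which likewise follows from faithful flatness since $F$ is in particular an $A_{\mathrm{mv},E}$-module.
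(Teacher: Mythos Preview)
Your proof is correct and follows essentially the same route as the paper's: both tensor the short exact sequence $0\to\mathrm{Im}(f)\to F\to F/\mathrm{Im}(f)\to 0$ with $W_E(A_{\infty})$, identify $W_E(A_{\infty})\otimes_{A_{\mathrm{mv},E}}\mathrm{Im}(f)$ with $\mathrm{Im}(f_{\infty})$ via flatness, and then use faithful flatness (Proposition~\ref{A_{mv,E} to W_E(A_{infty}) is faithfully flat}) to deduce that $F/\mathrm{Im}(f)\to W_E(A_{\infty})\otimes_{A_{\mathrm{mv},E}}(F/\mathrm{Im}(f))$ is injective. The only cosmetic difference is the citation for this last injectivity: the paper invokes \cite[\href{https://stacks.math.columbia.edu/tag/05CK}{Tag 05CK}]{stacks-project} directly, whereas your argument via cyclic submodules and Tag~00HQ is a slightly indirect (though valid) way to reach the same conclusion.
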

	\begin{proof}
		Obviously, $\mathrm{Im}(f)\subseteq F\cap \mathrm{Im}(f_{\infty})$. We put $F'_{\infty}\coloneq W_E(A_{\infty})\otimes F'$ and $F_{\infty}\coloneq W_E(A_{\infty})\otimes F$. Let $e_1',\dots,e_r'$ be an $A_{\mathrm{mv},E}$-basis of $F'$, then $\mathrm{Im}(f)=\sum_{i=1}^{r}A_{\mathrm{mv},E}f(e_i')$, and $\mathrm{Im}(f_{\infty})=\sum_{i=1}^{r}W_E(A_{\infty})f(e_i')$. As a consequence, $\mathrm{Im}(f_{\infty})$ is the image of the natural map
		\begin{align}\label{natural map induced by tensoring}
			W_E(A_{\infty})\otimes_{A_{\mathrm{mv},E}}\mathrm{Im}(f)\to F_{\infty}.
		\end{align}
		On the other hand, $A_{\mathrm{mv},E}\to W_E(A_{\infty})$ is flat, hence (\ref{natural map induced by tensoring}) is also injective as it is the base change of $\mathrm{Im}(f)\hookrightarrow F$, hence we may identify $\mathrm{Im}(f_{\infty})$ with $W_E(A_{\infty})\otimes_{A_{\mathrm{mv},E}}\mathrm{Im}(f)$. Now consider the following commutative diagram with exact rows
		\begin{equation*}
			\begin{tikzcd}
				0\arrow[r] &\mathrm{Im}(f)\arrow[d]\arrow[r]&F\arrow[d,hook]\arrow[r,"\pi"]& F/\mathrm{Im}(f)\arrow[r]\arrow[d]&0\\
				0\arrow[r] &\mathrm{Im}(f_{\infty})\arrow[r]&F_{\infty}\arrow[r,"\pi_{\infty}"]& F_{\infty}/\mathrm{Im}(f_{\infty})\arrow[r]&0
			\end{tikzcd}
		\end{equation*} 
		Note that $F_{\infty}/\mathrm{Im}(f_{\infty})\cong W_E(A_{\infty})\otimes_{A_{\mathrm{mv},E}}F/\mathrm{Im}(f)$, and by \cite[\href{https://stacks.math.columbia.edu/tag/05CK}{Tag 05CK}]{stacks-project} and Lemma \ref{A_{mv,E} to W_E(A_{infty}) is faithfully flat}, the right vertical map is injective. Hence if $x\in F\cap \mathrm{Im}(f_{\infty})$, we have $\pi_{\infty}(x)=0$, which implies that $\pi(x)=0$, hence $x$ lies in $\mathrm{Im}(f)$. Therefore, $\mathrm{Im}(f)=F\cap \mathrm{Im}(f_{\infty})$.
	\end{proof}
	
	As a consequence, we can deduce the following result:
	
	\begin{prop}\label{Dec(M_{infty}) is etale}
		For any finitely presented étale $\varphi_q$-module $M_{\infty}$ over $W_E(A_{\infty})$, $\mathrm{Dec}(M_{\infty})$ is a finitely presented étale $\varphi_q$-module over $A_{\mathrm{mv},E}$. 
	\end{prop}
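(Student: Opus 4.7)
The plan is to exhibit $\mathrm{Dec}(M_\infty)$ as the cokernel of a morphism between two finite free étale $\varphi_q$-modules over $A_{\mathrm{mv},E}$; once this identification is established, both finite presentation and the étale property will follow immediately.

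First I would verify that $\varphi_q$ on $W_E(A_\infty)$ is flat. Since $A_\infty$ is a perfect $\mathbb{F}$-algebra and $\varphi_q^{f'}$ coincides with the $q'$-th power Frobenius on $A_\infty$, the endomorphism $\varphi_q$ is bijective on $A_\infty$, hence bijective on $W_E(A_\infty)$, and in particular flat. Applying Corollary \ref{f.p. phi_q-modules} to the finitely presented étale $\varphi_q$-module $M_\infty$, I choose a resolution
\[
F'_\infty \xrightarrow{g_\infty} F_\infty \xrightarrow{f_\infty} M_\infty \to 0
\]
where $F'_\infty$ and $F_\infty$ are finite free étale $\varphi_q$-modules over $W_E(A_\infty)$. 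By Theorem \ref{descent of phi_q-module}, these descend uniquely to finite free étale $\varphi_q$-modules $F'$ and $F$ over $A_{\mathrm{mv},E}$, and $g_\infty$ descends to a $\varphi_q$-equivariant map $g \colon F' \to F$.

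Next, I identify $\mathrm{Dec}(M_\infty)$ explicitly. By definition $\mathrm{Dec}(M_\infty) = f_\infty(F)$, and the first isomorphism theorem gives $f_\infty(F) = F / (F \cap \ker f_\infty) = F / (F \cap \mathrm{Im}(g_\infty))$. Applying Lemma \ref{the intersection operation behaves very well} to the descended map $g \colon F' \to F$ yields $F \cap \mathrm{Im}(g_\infty) = \mathrm{Im}(g)$, and hence
\[
\mathrm{Dec}(M_\infty) \;=\; F/\mathrm{Im}(g) \;=\; \mathrm{coker}(g).
\]
This displays $\mathrm{Dec}(M_\infty)$ as the cokernel of a map between finite free $A_{\mathrm{mv},E}$-modules, so it is automatically finitely presented.

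To verify étaleness, I tensor the exact sequence $F' \xrightarrow{g} F \to \mathrm{Dec}(M_\infty) \to 0$ with $A_{\mathrm{mv},E}$ over $\varphi_q$. By right exactness of tensor product, I obtain a commutative diagram with exact rows
\begin{equation*}
\begin{tikzcd}
A_{\mathrm{mv},E} \otimes_{\varphi_q, A_{\mathrm{mv},E}} F' \arrow[r] \arrow[d,"\wr"] & A_{\mathrm{mv},E} \otimes_{\varphi_q, A_{\mathrm{mv},E}} F \arrow[r] \arrow[d,"\wr"] & A_{\mathrm{mv},E} \otimes_{\varphi_q, A_{\mathrm{mv},E}} \mathrm{Dec}(M_\infty) \arrow[r] \arrow[d] & 0 \\
F' \arrow[r,"g"] & F \arrow[r] & \mathrm{Dec}(M_\infty) \arrow[r] & 0
\end{tikzcd}
\end{equation*}
in which the first two vertical linearization maps are isomorphisms since $F'$ and $F$ are étale. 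The five lemma then forces the third vertical arrow, which is the linearization map for $\mathrm{Dec}(M_\infty)$, to be an isomorphism as well. The main subtlety in the argument is the identification $\mathrm{Dec}(M_\infty) = \mathrm{coker}(g)$: without Lemma \ref{the intersection operation behaves very well}, the definition $\mathrm{Dec}(M_\infty) = f_\infty(F)$ only manifestly gives a finite $A_{\mathrm{mv},E}$-module, and neither finite presentation nor étaleness would be visible.
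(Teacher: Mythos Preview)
Your proof is correct and follows essentially the same approach as the paper: both use Corollary \ref{f.p. phi_q-modules} to produce a two-term free resolution $F'_\infty \to F_\infty \to M_\infty \to 0$, descend it via Theorem \ref{descent of phi_q-module}, invoke Lemma \ref{the intersection operation behaves very well} to identify $\mathrm{Dec}(M_\infty)$ with $\mathrm{coker}(g)$, and conclude étaleness from the linearization maps on $F'$ and $F$. Your explicit preliminary check that $\varphi_q$ is flat (indeed bijective) on $W_E(A_\infty)$, needed to invoke Corollary \ref{f.p. phi_q-modules}, is a detail the paper leaves implicit.
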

	
	\begin{proof}
		We only need to show that the linearization map (\ref{linearization map}) $A_{\mathrm{mv},E}\otimes_{\varphi_q}\mathrm{Dec}(M_{\infty})\to \mathrm{Dec}(M_{\infty})$ is an isomorphism.
		By Corollary \ref{f.p. phi_q-modules}, there exist a short exact sequence of étale $\varphi_q$-modules
		\[F_{\infty}'\xrightarrow{g_{\infty}}F_{\infty}\xrightarrow{f_{\infty}}M_{\infty}\to 0,\]
		where $F_{\infty}, F_{\infty}'$ are finite free étale $\varphi_q$-modules over $W_E(A_{\infty})$, and this induces a sequence of finite $\varphi_q$-modules over $A_{\mathrm{mv},E}$ by Theorem \ref{descent of phi_q-module} and Definition \ref{defn of decompletion}:
		\begin{align}\label{a sequence of phi_q-modules}
			F'\xrightarrow{g}F\xrightarrow{f}\mathrm{Dec}(M_{\infty})\to 0,
		\end{align}
		where $F, F'$ are finite free étale $\varphi_q$-modules over $A_{\mathrm{mv},E}$ such that $F_{\infty}\cong W_E(A_{\infty})\otimes_{A_{\mathrm{mv},E}}F$, $F_{\infty}'\cong W_E(A_{\infty})\otimes_{A_{\mathrm{mv},E}}F'$.
		By (\ref{definition of Dec}), $f$ is surjective and $f\circ g=0$. Moreover, if $x\in F$, $f(x)=0$, then $x\in F\cap\mathrm{Im}(g_{\infty})=\mathrm{Im}(g)$ by Lemma \ref{the intersection operation behaves very well}. Thus (\ref{a sequence of phi_q-modules}) is an exact sequence. Then the étaleness of $F'$ and $F$ implies that $\mathrm{Dec}(M_{\infty})$ is étale. Hence $\mathrm{Dec}(M_{\infty})$ is a finitely presented étale $\varphi_q$-module over $A_{\mathrm{mv},E}$.
	\end{proof}
	
	\begin{thm}\label{descent of fin pre etale phi_q-mod}
		The functor
		\begin{align*}
			\mathrm{Dec}:\ \modetfp_{\varphi_q}(W_E(A_{\infty}))&\to \modetfp_{\varphi_q}(A_{\mathrm{mv},E})\\
			M_{\infty}&\mapsto \mathrm{Dec}(M_{\infty})
		\end{align*}
		is an equivalence of categories with a quasi-inverse given by $W_E(A_{\infty})\otimes_{A_{\mathrm{mv},E}}-$. Moreover, $\mathrm{Dec}$ preserves short exact sequences.
	\end{thm}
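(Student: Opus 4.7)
The plan is to show that the functor $G \coloneq W_E(A_{\infty})\otimes_{A_{\mathrm{mv},E}}-$ lands in $\modetfp_{\varphi_q}(W_E(A_{\infty}))$ and is a two-sided quasi-inverse to $\mathrm{Dec}$. First I would verify that $G$ is well-defined: flatness of $A_{\mathrm{mv},E}\to W_E(A_{\infty})$ (Proposition \ref{A_{mv,E} to W_E(A_{infty}) is faithfully flat}) preserves finite presentation, while the étaleness of $G(M)$ follows by applying the base change $W_E(A_{\infty})\otimes_{A_{\mathrm{mv},E}}-$ to the isomorphism $\mathrm{id}\otimes\varphi_q:A_{\mathrm{mv},E}\otimes_{\varphi_q,A_{\mathrm{mv},E}}M\xrightarrow{\sim}M$; the $\varphi_q$-equivariance is built into the definition of $G$.

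Next I would construct a natural isomorphism $\mathrm{Dec}\circ G\simeq\mathrm{id}$. Given $M\in\modetfp_{\varphi_q}(A_{\mathrm{mv},E})$, choose via Lemma \ref{existence of equivariant surjection of phi_q-modules} a $\varphi_q$-equivariant surjection $f:F\twoheadrightarrow M$ with $F$ finite free étale over $A_{\mathrm{mv},E}$, so that $f_{\infty}:F_{\infty}\twoheadrightarrow G(M)$ is the induced surjection after base change. By Definition \ref{defn of decompletion}, $\mathrm{Dec}(G(M))$ is the image of $F$ in $G(M)$ under $f_{\infty}$, where $F\hookrightarrow F_{\infty}$ is $x\mapsto 1\otimes x$. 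The faithful flatness of $A_{\mathrm{mv},E}\hookrightarrow W_E(A_{\infty})$ gives an injection $M\hookrightarrow G(M)$, and the restriction $f_{\infty}|_F$ factors as $F\xrightarrow{f}M\hookrightarrow G(M)$, so the image is precisely $M$. Independence of the choice of $f$ is guaranteed by Lemma \ref{Dec(M_{infty}) is well-defined}, making the identification natural in $M$.

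For the reverse direction $G\circ\mathrm{Dec}\simeq\mathrm{id}$, I would use the exact sequence $F'\xrightarrow{g}F\xrightarrow{f}\mathrm{Dec}(M_{\infty})\to 0$ constructed in the proof of Proposition \ref{Dec(M_{infty}) is etale}, whose base change along $A_{\mathrm{mv},E}\hookrightarrow W_E(A_{\infty})$ fits into the sequence $F'_{\infty}\xrightarrow{g_{\infty}}F_{\infty}\xrightarrow{f_{\infty}}M_{\infty}\to 0$ used to define $\mathrm{Dec}(M_{\infty})$. Applying the right-exact functor $G$ to the first sequence and invoking the universal property of cokernels against $f_{\infty}$ yields a natural $\varphi_q$-equivariant isomorphism $G(\mathrm{Dec}(M_{\infty}))\xrightarrow{\sim}M_{\infty}$; alternatively one can observe that by Lemma \ref{the intersection operation behaves very well} we have $\ker(f)=F\cap\mathrm{Im}(g_{\infty})=\mathrm{Im}(g)$, whose base change is $\ker(f_{\infty})$, and apply flatness directly.

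Finally, preservation of short exact sequences is then formal: given an exact $0\to M'_{\infty}\to M_{\infty}\to M''_{\infty}\to 0$ in $\modetfp_{\varphi_q}(W_E(A_{\infty}))$, applying $G$ to the candidate sequence $0\to\mathrm{Dec}(M'_{\infty})\to\mathrm{Dec}(M_{\infty})\to\mathrm{Dec}(M''_{\infty})\to 0$ recovers the original exact sequence via the natural isomorphism above; since $A_{\mathrm{mv},E}\hookrightarrow W_E(A_{\infty})$ is faithfully flat, $G$ reflects exactness, so the $\mathrm{Dec}$'d sequence is exact. The only subtle point is to check that $\mathrm{Dec}$ is genuinely functorial in morphisms, i.e.\ that the restriction prescription of Definition \ref{defn of decompletion} is well-defined and compatible with composition — but this is handled by Lemma \ref{Dec(M_{infty}) is well-defined} applied to the composite $F\to M_{\infty}\to N_{\infty}$ compared against a surjection $F_N\twoheadrightarrow \mathrm{Dec}(N_{\infty})$.
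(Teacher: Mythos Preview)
Your proposal is correct and follows essentially the same approach as the paper: both directions of the quasi-inverse are established via the presentation $F'\to F\to\mathrm{Dec}(M_\infty)\to 0$ from Proposition~\ref{Dec(M_{infty}) is etale} (the paper phrases the $G\circ\mathrm{Dec}\simeq\mathrm{id}$ step as a commutative diagram with exact rows rather than invoking the cokernel universal property, but this is the same argument), and exactness is deduced from faithful flatness exactly as you do. Your additional remarks on well-definedness of $G$ and functoriality of $\mathrm{Dec}$ are implicit in the paper but do no harm.
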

	
	\begin{proof}
		We need to show that there are natural isomorphisms
		\begin{align*}
			\mathrm{Dec}\circ \left(W_E(A_{\infty})\otimes_{A_{\mathrm{mv},E}}-\right)\cong \mathrm{Id}, W_E(A_{\infty})\otimes_{A_{\mathrm{mv},E}}\left(\mathrm{Dec}(-)\right)\cong \mathrm{Id}. 
		\end{align*}
		
		For a finitely presented étale $\varphi_q$-module $M$ over $A_{\mathrm{mv},E}$, by Lemma \ref{existence of equivariant surjection of phi_q-modules}, there exists a $\varphi_q$-equivariant surjection $f: F\twoheadrightarrow M$, where $F$ is a finite free étale $\varphi_q$-module over $A_{\mathrm{mv},E}$. Then applying the functor $W_E(A_{\infty})\otimes_{A_{\mathrm{mv},E}}-$, we get a $\varphi_q$-equivariant surjection $f_{\infty}: W_E(A_{\infty})\otimes_{A_{\mathrm{mv},E}}F\twoheadrightarrow W_E(A_{\infty})\otimes_{A_{\mathrm{mv},E}}M$. By Definition \ref{defn of decompletion}, we have
		\[\mathrm{Dec}\left(W_E(A_{\infty})\otimes_{A_{\mathrm{mv},E}}M\right)=f_{\infty}(F)=M.\]
		
		Conversely, for a finitely presented étale $\varphi_q$-module $M_{\infty}$ over $W_E(A_{\infty})$, By the proof of Proposition \ref{Dec(M_{infty}) is etale}, there is an exact sequence of étale $\varphi_q$-modules $F_{\infty}'\xrightarrow{g_{\infty}} F_{\infty}\xrightarrow{f_{\infty}}M_{\infty}\to 0$, where $F_{\infty}'$, $F_{\infty}$ are as in \textit{loc. cit.}, and the functor $\mathrm{Dec}$ induces an exact sequence $F'\xrightarrow{g}F\xrightarrow{f}\mathrm{Dec}(M_{\infty})\to 0$. Then there is a commutative diagram with exact rows
		\begin{equation*}
			\begin{tikzcd}
				W_E(A_{\infty})\otimes_{A_{\mathrm{mv},E}}F'\arrow[r,"\mathrm{id}\otimes g"]\arrow[d,"\wr"]& W_E(A_{\infty})\otimes_{A_{\mathrm{mv},E}}F\arrow[r,"\mathrm{id}\otimes f"]\arrow[d,"\wr"]& W_E(A_{\infty})\otimes_{A_{\mathrm{mv},E}}\mathrm{Dec}(M_{\infty})\arrow[r]\arrow[d]&0\\
				F_{\infty}'\arrow[r,"g_{\infty}"]&F_{\infty}\arrow[r,"f_{\infty}"]&M_{\infty}\arrow[r]&0
			\end{tikzcd}
		\end{equation*}
		where the right vertical map is the natural homomorphism
		\[W_E(A_{\infty})\otimes_{A_{\mathrm{mv},E}}\mathrm{Dec}(M_{\infty})\to M_{\infty},\ x\otimes m\mapsto xm.\]
		This implies that the natural homomorphism $W_E(A_{\infty})\otimes_{A_{\mathrm{mv},E}}\mathrm{Dec}(M_{\infty})\to M_{\infty}$ is an isomorphism.
		
		It remains to show that $\mathrm{Dec}$ preserves short exact sequences, which is a direct consequence of the fact that  $A_{\mathrm{mv},E}\hookrightarrow W_E(A_{\infty})$ is faithfully flat (Lemma \ref{A_{mv,E} to W_E(A_{infty}) is faithfully flat}) and $\mathrm{Dec}$ is a quasi-inverse of $W_E(A_{\infty})\otimes_{A_{\mathrm{mv},E}}-$.
	\end{proof}
	
	Now we add the action of $\mathcal{O}_K^{\times}$.
	
	\begin{cor}\label{descent of f.p. phi_q,O_K*-modules}
		The functor
		\begin{align*}
			\mathrm{Dec}:\ \modetfp_{\varphi_q,\mathcal{O}_K^{\times}}(W_E(A_{\infty}))&\to \modetfp_{\varphi_q,\mathcal{O}_K^{\times}}(A_{\mathrm{mv},E})\\
			M_{\infty}&\mapsto \mathrm{Dec}(M_{\infty})
		\end{align*}
		is an equivalence of categories with a quasi-inverse given by $W_E(A_{\infty})\otimes_{A_{\mathrm{mv},E}}-$. Moreover, the functor $\mathrm{Dec}$ preserves short exact sequences
	\end{cor}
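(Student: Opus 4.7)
The plan is to upgrade the equivalence of $\varphi_q$-modules in Theorem \ref{descent of fin pre etale phi_q-mod} to one that respects the $\mathcal{O}_K^\times$-action. The only genuinely new input required is the following: for an étale $(\varphi_q,\mathcal{O}_K^\times)$-module $M_\infty$ over $W_E(A_\infty)$, the $A_{\mathrm{mv},E}$-submodule $\mathrm{Dec}(M_\infty)\subseteq M_\infty$ is stable under the semi-linear action of $\mathcal{O}_K^\times$. Once this is established, $\mathrm{Dec}(M_\infty)$ inherits the $\mathcal{O}_K^\times$-action from $M_\infty$ by restriction, and the $\mathcal{O}_K^\times$-equivariance of the natural isomorphism $W_E(A_\infty)\otimes_{A_{\mathrm{mv},E}}\mathrm{Dec}(M_\infty)\xrightarrow{\sim}M_\infty$ is automatic.

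To prove stability, I would fix $a\in\mathcal{O}_K^\times$ and choose a $\varphi_q$-equivariant surjection $f_\infty\colon F_\infty\twoheadrightarrow M_\infty$ with $F_\infty$ finite free, furnished by Lemma \ref{existence of equivariant surjection of phi_q-modules}, with canonical descent $F\subseteq F_\infty$ over $A_{\mathrm{mv},E}$ so that by definition $\mathrm{Dec}(M_\infty)=f_\infty(F)$. The composite $a\circ f_\infty\colon F_\infty\to M_\infty$ is $\varphi_q$-equivariant but only $a$-semi-linear for the $W_E(A_\infty)$-action. To remedy this, introduce the twisted module $F_\infty^{(a)}$: the same abelian group and Frobenius as $F_\infty$, but with $W_E(A_\infty)$-action $r\star x\coloneq a^{-1}(r)x$. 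A direct check shows $F_\infty^{(a)}$ is again a finite free étale $\varphi_q$-module over $W_E(A_\infty)$ and that $a\circ f_\infty\colon F_\infty^{(a)}\to M_\infty$ is now genuinely $W_E(A_\infty)$-linear. Since $a$ preserves the subring $A_{\mathrm{mv},E}\subseteq W_E(A_\infty)$, the same twist applied to $F$ yields an $A_{\mathrm{mv},E}$-descent $F^{(a)}$ of $F_\infty^{(a)}$. By Lemma \ref{Dec(M_{infty}) is well-defined} applied to $a\circ f_\infty\colon F_\infty^{(a)}\to M_\infty$, the image $(a\circ f_\infty)(F^{(a)})$ lies in $\mathrm{Dec}(M_\infty)$; but as a subset of $M_\infty$ this image equals $a(f_\infty(F))=a(\mathrm{Dec}(M_\infty))$. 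Hence $a(\mathrm{Dec}(M_\infty))\subseteq\mathrm{Dec}(M_\infty)$, and running the same argument with $a^{-1}$ gives equality.

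The resulting $\mathcal{O}_K^\times$-action on $\mathrm{Dec}(M_\infty)$ commutes with $\varphi_q$, is $A_{\mathrm{mv},E}$-semi-linear, and satisfies the group law, simply because these properties are inherited from $M_\infty$. Continuity of the action, with respect to the quotient topology on $\mathrm{Dec}(M_\infty)$ coming from an $A_{\mathrm{mv},E}$-linear surjection $A_{\mathrm{mv},E}^{\oplus d}\twoheadrightarrow\mathrm{Dec}(M_\infty)$, follows by comparing with the continuous action on $M_\infty\cong W_E(A_\infty)\otimes_{A_{\mathrm{mv},E}}\mathrm{Dec}(M_\infty)$: base-changing the presentation of $\mathrm{Dec}(M_\infty)$ gives a presentation of $M_\infty$, and one verifies that the inclusion $\mathrm{Dec}(M_\infty)\hookrightarrow M_\infty$ is a topological embedding using that $A_{\mathrm{mv},E}\hookrightarrow W_E(A_\infty)$ is a closed embedding of topological rings.

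In the reverse direction, for a finitely presented étale $(\varphi_q,\mathcal{O}_K^\times)$-module $M$ over $A_{\mathrm{mv},E}$, the tensor product $W_E(A_\infty)\otimes_{A_{\mathrm{mv},E}}M$ carries a diagonal $\mathcal{O}_K^\times$-action $a(r\otimes m)\coloneq a(r)\otimes a(m)$, which is manifestly semi-linear, continuous, and commutes with $\varphi_q$. The two natural isomorphisms of Theorem \ref{descent of fin pre etale phi_q-mod} are $\mathcal{O}_K^\times$-equivariant by functoriality of both constructions, and exactness of $\mathrm{Dec}$ is inherited directly from \emph{loc.\ cit}. The main (minor) obstacle is organizing the semi-linear action via the twist $F_\infty^{(a)}$ so that Lemma \ref{Dec(M_{infty}) is well-defined} applies directly; everything else is bookkeeping.
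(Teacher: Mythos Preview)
Your proposal is correct and follows essentially the same approach as the paper. Both arguments convert the semi-linear $a$-action into a genuine morphism of $\varphi_q$-modules by twisting: you twist the source $F_\infty$ to $F_\infty^{(a)}$ and invoke Lemma \ref{Dec(M_{infty}) is well-defined}, while the paper twists on the target side, applying the equivalence of Theorem \ref{descent of fin pre etale phi_q-mod} to the linearized isomorphism $W_E(A_\infty)\otimes_{a,W_E(A_\infty)}M_\infty\xrightarrow{\sim}M_\infty$ and then identifying $\mathrm{Dec}\big(W_E(A_\infty)\otimes_{a}M_\infty\big)\cong A_{\mathrm{mv},E}\otimes_{a}\mathrm{Dec}(M_\infty)$ via the presentation; the two formulations are interchangeable.
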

	\begin{proof}
		Let $M_{\infty}$ be a finitely presented étale $(\varphi_q,\mathcal{O}_K^{\times})$-module over $W_E(A_{\infty})$. Let $a\in\mathcal{O}_K^{\times}$, since $W_E(A_{\infty})\otimes_{a,W_E(A_{\infty})}M_{\infty}$ is also a finitely presented étale $\varphi_q$-module over $W_E(A_{\infty})$, by Theorem \ref{descent of fin pre etale phi_q-mod}, the natural isomorphism
		\[W_E(A_{\infty})\otimes_{a,W_E(A_{\infty})}M_{\infty}\xrightarrow{\sim} M_{\infty},\ x\otimes m\mapsto xa(m)\]
		descends to an isomorphism of $\varphi_q$-modules
		\[\mathrm{Dec}\Big(W_E(A_{\infty})\otimes_{a,W_E(A_{\infty})}M_{\infty}\Big)\xrightarrow{\sim} \mathrm{Dec}(M_{\infty}).\]
		Let $f_{\infty}: F_{\infty}\twoheadrightarrow M_{\infty}$ be a surjection of finitely presented étale $\varphi_q$-modules, where $F_{\infty}=W_E(A_{\infty})\otimes_{A_{\mathrm{mv},E}} F$ for some finite free étale $\varphi_q$-module $F$ over $A_{\mathrm{mv},E}$. Then we get a surjection
		\begin{align*}
			W_E(A_{\infty})\otimes_{A_{\mathrm{mv},E}}\left(A_{\mathrm{mv},E}\otimes_{a,A_{\mathrm{mv},E}} F\right)
			&\cong W_E(A_{\infty})\otimes_{a,W_E(A_{\infty})}F_{\infty}\xdbheadrightarrow{\mathrm{id}\otimes f_{\infty}}W_E(A_{\infty})\otimes_{a,W_E(A_{\infty})}M_{\infty},
		\end{align*}
		which easily implies that
		\[\mathrm{Dec}\left(W_E(A_{\infty})\otimes_{a,W_E(A_{\infty})}M_{\infty}\right)\cong A_{\mathrm{mv},E}\otimes_{a,A_{\mathrm{mv},E}}\mathrm{Dec}(M_{\infty}),\]
		hence we get an endomorphism 
		\[a: \mathrm{Dec}(M_{\infty})\xrightarrow{m\mapsto 1\otimes m}A_{\mathrm{mv},E}\otimes_{a,A_{\mathrm{mv},E}}\mathrm{Dec}(M_{\infty})\cong \mathrm{Dec}(W_E(A_{\infty})\otimes_{a}M_{\infty})\xrightarrow{\sim} \mathrm{Dec}(M_{\infty}).\]
		This gives the semi-linear action of $\mathcal{O}_K^{\times}$ on $\mathrm{Dec}(M_{\infty})$ which makes $\mathrm{Dec}(M_{\infty})$ into a finitely presented étale $(\varphi_q,\mathcal{O}_K^{\times})$-module over $A_{\mathrm{mv},E}$. By the same argument in the proof of Theorem \ref{descent of fin pre etale phi_q-mod}, we have
		\begin{align*}
			\mathrm{Dec}\circ \left(W_E(A_{\infty})\otimes_{A_{\mathrm{mv},E}}-\right)\cong \mathrm{Id},\ W_E(A_{\infty})\otimes_{A_{\mathrm{mv},E}}\left(\mathrm{Dec}(-)\right)\cong \mathrm{Id}, 
		\end{align*}
		which finishes the proof.
	\end{proof}
	
	When $K=\qp$, we have $ \mathbb{F}(\negthinspace(T_{\mathrm{cycl}}^{1/p^{\infty}})\negthinspace)=A_{\infty}$ and $A_{\mathrm{cycl},E}=A_{\mathrm{mv},E}$, hence
	
	\begin{lem}\label{descent of f.p. etale modules from W_E(T_cycl) to A_cycl}
		The functor 
		\begin{align*}
			W_E\left( \mathbb{F}(\negthinspace(T_{\mathrm{cycl}}^{1/p^{\infty}})\negthinspace)\right) \otimes_{A_{\mathrm{cycl},E}}-: \modetfp_{\varphi_q,\mathbb{Z}_p^{\times}}\left(A_{\mathrm{cycl},E}\right)& \to \modetfp_{\varphi_q,\mathbb{Z}_p^{\times}}\left(W_E\left( \mathbb{F}(\negthinspace(T_{\mathrm{cycl}}^{1/p^{\infty}})\negthinspace)\right)\right) \\
			M &\mapsto W_E\left( \mathbb{F}(\negthinspace(T_{\mathrm{cycl}}^{1/p^{\infty}})\negthinspace)\right) \otimes_{A_{\mathrm{cycl},E}}M
		\end{align*}
		is an equivalence of categories.
	\end{lem}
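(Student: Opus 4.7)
The plan is to recognize that the lemma is a direct specialization of Corollary \ref{descent of f.p. phi_q,O_K*-modules} to the case $K = \qp$, where the multivariable setup degenerates to the cyclotomic (single-variable) setup. All the work has already been done; what remains is to match the data on both sides. So first I would verify the two identifications asserted in the paragraph just before the lemma. When $K = \qp$ we have $f = 1$ and $q = p$, there is a single embedding $\sigma_0 = \mathrm{id}$, the group $N_0 \cong \mathcal{O}_K = \mathbb{Z}_p$ is procyclic, and the Iwasawa algebra $\mathcal{O}_K[\negthinspace[N_0]\negthinspace]$ has a single topological generator. Under the identification $N_0 \cong \mathbb{Z}_p$, the variable $T_0$ of (\ref{choice of variables}) corresponds to $T_{\mathrm{cycl}} = [1] - 1$, and the $(\varphi, \mathbb{Z}_p^{\times})$-action on $A_{\mathrm{mv},E}$ coming from the multiplication by $p$ and $\mathbb{Z}_p^{\times}$ on $\mathbb{Z}_p$ agrees with the cyclotomic action (\ref{cycltomic phi,Gmma action}), since both are determined by the group law. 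This yields $A_{\mathrm{mv},E} = A_{\mathrm{cycl},E}$ as topological $\mathcal{O}_E$-algebras equipped with a continuous $(\varphi, \mathbb{Z}_p^{\times})$-action. Similarly, the perfectoid ring $A_{\infty}$ of the introduction, defined as the completed perfection of $A$, reduces to $\mathbb{F}(\negthinspace(T_{\mathrm{cycl}}^{1/p^{\infty}})\negthinspace)$, and this identification is $(\varphi, \mathbb{Z}_p^{\times})$-equivariant, so that $W_E(A_{\infty}) = W_E\!\left(\mathbb{F}(\negthinspace(T_{\mathrm{cycl}}^{1/p^{\infty}})\negthinspace)\right)$ as $(\varphi, \mathbb{Z}_p^{\times})$-equivariant topological rings.

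With these matching data, the functor in the statement of the lemma is literally the functor $W_E(A_{\infty}) \otimes_{A_{\mathrm{mv},E}} -$ of Corollary \ref{descent of f.p. phi_q,O_K*-modules} in the case $K = \qp$. That corollary asserts it is an equivalence of abelian categories with quasi-inverse $\mathrm{Dec}$, which is exactly the claim. There is no new obstacle of substance; the entire content is a routine check that the pairs $(A_{\mathrm{mv},E}, A_{\infty})$ and $(A_{\mathrm{cycl},E}, \mathbb{F}(\negthinspace(T_{\mathrm{cycl}}^{1/p^{\infty}})\negthinspace))$ coincide in the single-variable case, together with invoking Corollary \ref{descent of f.p. phi_q,O_K*-modules}.
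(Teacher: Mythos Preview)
Your proposal is correct and matches the paper's approach exactly: the paper's entire argument is the single sentence preceding the lemma, namely that for $K=\qp$ one has $A_{\mathrm{cycl},E}=A_{\mathrm{mv},E}$ and $\mathbb{F}(\negthinspace(T_{\mathrm{cycl}}^{1/p^{\infty}})\negthinspace)=A_{\infty}$, so the statement is Corollary~\ref{descent of f.p. phi_q,O_K*-modules} specialized to this case. Your write-up is simply a more explicit version of the same identification.
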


	As a consequence of Proposition \ref{description of finitely presented etale phi_q-module over A_{mv,E}} and Theorem \ref{descent of fin pre etale phi_q-mod}, we have
	\begin{prop}\label{description of finitely presented etale phi_q-module over W_E(A_{infty})}
		Let $M$ be a non-zero finitely presented étale $\varphi_q$-module over  $W_E(A_{\infty})$. Then there exists $1\leq n_1<\dots<n_r\leq \infty$ and $m_1,\dots,m_r\geq 1$ such that
		\[M\cong \bigoplus_{i=1}^r\left(W_E(A_{\infty})/\varpi^{n_i}\right)^{\oplus m_i},\]
		here we use the convention that $W_E(A_{\infty})/\varpi^{\infty}=W_E(A_{\infty})$.
	\end{prop}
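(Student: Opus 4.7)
My plan is to transport the structure theorem via the equivalence of categories $\mathrm{Dec}$ established in Theorem \ref{descent of fin pre etale phi_q-mod}. Given a non-zero finitely presented étale $\varphi_q$-module $M$ over $W_E(A_{\infty})$, the first step is to form $\mathrm{Dec}(M)$, which by Proposition \ref{Dec(M_{infty}) is etale} is a finitely presented étale $\varphi_q$-module over $A_{\mathrm{mv},E}$. Since $W_E(A_{\infty})\otimes_{A_{\mathrm{mv},E}}\mathrm{Dec}(M)\cong M$ and $M\neq 0$, the module $\mathrm{Dec}(M)$ is itself non-zero.

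Next, I would apply Proposition \ref{description of finitely presented etale phi_q-module over A_{mv,E}} to $\mathrm{Dec}(M)$ to obtain an isomorphism
\[\mathrm{Dec}(M)\cong \bigoplus_{i=1}^r\left(A_{\mathrm{mv},E}/\varpi^{n_i}\right)^{\oplus m_i}\]
for some $1\leq n_1<\cdots<n_r\leq \infty$ and $m_1,\dots,m_r\geq 1$, with the convention $A_{\mathrm{mv},E}/\varpi^{\infty}=A_{\mathrm{mv},E}$.

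Finally, I would apply the functor $W_E(A_{\infty})\otimes_{A_{\mathrm{mv},E}}-$ to this decomposition. Since tensor product commutes with finite direct sums and since $W_E(A_{\infty})\otimes_{A_{\mathrm{mv},E}}\left(A_{\mathrm{mv},E}/\varpi^{n}\right)\cong W_E(A_{\infty})/\varpi^{n}$ for any $1\leq n\leq \infty$, and using the natural isomorphism $W_E(A_{\infty})\otimes_{A_{\mathrm{mv},E}}\mathrm{Dec}(M)\cong M$ from Theorem \ref{descent of fin pre etale phi_q-mod}, we obtain
\[M\cong \bigoplus_{i=1}^r\left(W_E(A_{\infty})/\varpi^{n_i}\right)^{\oplus m_i},\]
as desired. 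There is no real obstacle here; the proposition is a formal consequence of the descent equivalence established in Theorem \ref{descent of fin pre etale phi_q-mod} applied to the structural description on the base $A_{\mathrm{mv},E}$.
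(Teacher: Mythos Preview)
Your proposal is correct and follows exactly the approach the paper takes: the proposition is stated as a direct consequence of Proposition \ref{description of finitely presented etale phi_q-module over A_{mv,E}} and Theorem \ref{descent of fin pre etale phi_q-mod}, and you have simply spelled out the details of transporting the structure theorem through the equivalence $\mathrm{Dec}$.
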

	
	\section{Descent from $W_E(A_{\infty}')$ to $W_E(A_{\infty})$}\label{section 4}
	
	The goal of this section is to show that every finitely presented étale $(\varphi_q,(\mathcal{O}_K^{\times})^f)$-module over $W_E(A_{\infty}')$ canonically descends to a finitely presented étale $(\varphi_q,\mathcal{O}_K^{\times})$-module over $W_E(A_{\infty})$.
	
	\subsection{A vanishing result on certain continuous cohomology groups}
	
	Throughout this section, for a topological space $X$ and a topological ring $R$, we denote by $C(X,R)$ the set of continuous maps from $X$ to $R$.
	
	Recall that the perfectoid ring $A_{\infty}'\coloneq\mathbb{F}\left(\negthinspace\left(T_{\mathrm{LT},0}^{1/p^{\infty}}\right)\negthinspace\right)\left\langle\left(\frac{T_{\mathrm{LT},i}}{T_{\mathrm{LT},0}^{p^{i}}}\right)^{\pm 1/p^{\infty}}: 1\leq i\leq f-1\right\rangle$ admits an $\mathbb{F}$-linear continuous $(\varphi_q,(\mathcal{O}_K^{\times})^f)$-action (see \cite[$\S$2.3 (28), $\S$2.5 (47)]{breuil2023multivariable}, where the variable $T_{K,i}$ of \textit{loc. cit.} is denoted by $T_{\mathrm{LT},i}$ here) given by \begin{align}\label{phi_q,O_K*-action on A_{infty}'}
			\begin{cases}
				\varphi_q(T_{\mathrm{LT},i})=T_{\mathrm{LT},i}^q, & 0\leq i\leq f-1\\
				\underline{a}(T_{\mathrm{LT},i})=a_{i,\mathrm{LT}}(T_{\mathrm{LT},i}), & \underline{a}=(a_0,\dots,a_{f-1})\in (\mathcal{O}_K^{\times})^f, 0\leq i\leq f-1.
				\end{cases}
	\end{align}
	The topology on $A_{\infty}'$ is defined by the multiplicative norm $|\cdot|: A_{\infty}'\to\mathbb{R}_{\geq 0}$ sending $T_{\mathrm{LT},i}$ to $p^{-1/p^{i}}$.
	
	Let $M$ be a finite projective étale $(\varphi_q,(\mathcal{O}_{K}^{\times})^f)$-module over $A_{\infty}'$, and let $\Delta_1\coloneq\{(a_0,\dots,a_{f-1})\in \mathcal{O}_K^{\times}: \prod_i a_i=1\}$ be a subgroup of $\left(\mathcal{O}_K^{\times}\right)^f$. In this section, we prove that 
	\[H^k_{\mathrm{cont}}(\Delta_1,M)=0, k\geq 1.\]

	We recall the definition of continuous cohomology groups. Let $G$ be a topological group, and let $M$ be a topological abelian group with a continuous $G$-action. The $i$-th \textit{continuous cohomology group} $H^i_{\mathrm{cont}}(G,M)$ is computed by the complex of abelian groups
	\[C_{\mathrm{cont}}^0(G,M)\xrightarrow{d^0} C_{\mathrm{cont}}^1(G,M)\xrightarrow{d^1} C_{\mathrm{cont}}^2(G,M)\to\cdots\]
	where $C_{\mathrm{cont}}^0(G,M)=M$,  $C_{\mathrm{cont}}^i(G,M)=C(G^{i},M)$ for $i\geq 1$, $d^0: M\to C_{\mathrm{cont}}^1(G,M)$ is the map sending $m$ to the function $ g_1\mapsto g_1(m)-m$ and for $i\geq 1$
	\begin{equation}\label{defn of d_n for continuous coh}
		\begin{aligned}
			d^i(f)(g_1,\dots,g_{i+1}):=g_1\cdot f(g_2,\dots,g_{i+1})&+\sum_{j=1}^i(-1)^jf(g_1,\dots,g_jg_{j+1},\dots,g_{i+1})\\
			&+(-1)^{i+1}f(g_1,\dots,g_i).
		\end{aligned}
	\end{equation}
	%for $i\geq 1$.
	%In particular, $H^0_{\mathrm{cont}}(G,M)=M^G$.
	%Suppose that $0\to M'\xrightarrow{f'} M\xrightarrow{f''} M''\to 0$ is a sequence of continuous $G$-modules, i.e. $\ker f''=\mathrm{im}f'$, $f'$ and $f''$ are continuous, the topology on $M'$ is induced from the topology of $M$, and the topology on $M''$ is the quotient topology.
	%In general, it is not true that a short exact sequence of continuous $G$-modules induces a long exact sequence of continuous cohomology groups. However, the following lemma ensures that with some additional conditions, it does.
	
	\begin{lem}\label{continuous cohomology induces a long exact sequence}
		Let $G$ be a profinite group. Suppose that $0\to M'\xrightarrow{f'} M\xrightarrow{f''} M''\to 0$ is an exact sequence of continuous $G$-modules. If $f''$ admits local sections, i.e., for every element $m''$ of $ M''$, there exists an open neighbourhood $U$ of $m''$ in $M''$ and a continuous map $s: U\to M$ such that $f''\circ s=\mathrm{id}_{U}$, then there is a long exact sequence of abelian groups
		\[0\to M'^G\to M^G\to \cdots \to H^{i-1}_{\mathrm{cont}}(G,M'')\to H^{i}_{\mathrm{cont}}(G,M')\to H^{i}_{\mathrm{cont}}(G,M)\to H^{i}_{\mathrm{cont}}(G,M'')\to\cdots.\]
	\end{lem}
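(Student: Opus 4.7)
The plan is to obtain the long exact sequence from the snake lemma applied to the short exact sequence of inhomogeneous cochain complexes
\[0 \to C^{\bullet}_{\mathrm{cont}}(G, M') \to C^{\bullet}_{\mathrm{cont}}(G, M) \to C^{\bullet}_{\mathrm{cont}}(G, M'') \to 0,\]
where the differentials are defined as in (\ref{defn of d_n for continuous coh}). The entire content of the lemma is therefore to prove exactness of this sequence of abelian groups in every degree $i \geq 0$.

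Exactness at the left-hand term and in the middle is essentially formal. Injectivity of $C^i_{\mathrm{cont}}(G, M') \to C^i_{\mathrm{cont}}(G, M)$ follows from the injectivity of $f'$. Exactness in the middle comes from the fact that if a continuous map $g : G^i \to M$ lands in $\ker(f'') = \mathrm{Im}(f')$, then by injectivity of $f'$ the set-theoretic factorization $g = f' \circ h$ defines $h : G^i \to M'$; since $M'$ carries the subspace topology inherited from $M$ (as is the case in the intended applications, e.g.\ for the sub-$G$-modules of $A_{\infty}'$ considered in this section), $h$ is automatically continuous.

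The main step, and the only place where the hypotheses on $G$ and on $f''$ are used, is surjectivity of $C^i_{\mathrm{cont}}(G, M) \to C^i_{\mathrm{cont}}(G, M'')$. Given a continuous map $g : G^i \to M''$, I would produce a continuous lift $\tilde g : G^i \to M$ with $f'' \circ \tilde g = g$ as follows. For each $x \in G^i$, the assumption gives an open neighbourhood $U_x \subset M''$ of $g(x)$ and a continuous section $s_x : U_x \to M$ of $f''$. The preimages $\{g^{-1}(U_x)\}_{x \in G^i}$ form an open cover of $G^i$. Since $G$ is profinite, $G^i$ is a compact, Hausdorff, totally disconnected space, and every such open cover admits a finite refinement by pairwise disjoint clopen subsets $G^i = V_1 \sqcup \cdots \sqcup V_n$ with each $V_j \subset g^{-1}(U_{x_j})$ for some $x_j$. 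Define
\[\tilde g(x) \coloneq s_{x_j}(g(x)) \quad \text{for } x \in V_j.\]
Because each $V_j$ is clopen and $\tilde g|_{V_j} = s_{x_j} \circ g|_{V_j}$ is a composition of continuous maps, $\tilde g$ is continuous on all of $G^i$, and by construction $f'' \circ \tilde g = g$.

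The main obstacle, and really the only non-formal point, is the surjectivity argument above. It crucially uses the profiniteness of $G$ to turn the open cover into a clopen partition, so that one may patch the local sections together without introducing discontinuities at the boundary. Once this is established, the snake lemma immediately yields the asserted long exact sequence, with the connecting homomorphisms $H^{i-1}_{\mathrm{cont}}(G, M'') \to H^i_{\mathrm{cont}}(G, M')$ defined in the usual way.
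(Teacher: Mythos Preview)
Your argument is correct and is the standard direct proof of this result. The paper itself does not give an argument at all: it simply cites \cite[Proposition 1.3, Corollary 2.4]{lichtenbaum2009weil}, so you have in fact supplied more detail than the paper does. One small remark: your hedge about $M'$ carrying the subspace topology is warranted, since the lemma as stated does not make this explicit; in all of the paper's applications (e.g.\ the sequence $0\to W_E(A_\infty')\xrightarrow{\varpi}W_E(A_\infty')\to A_\infty'\to 0$) this is indeed the case, so the point is harmless, but strictly speaking it is an additional hypothesis your proof uses.
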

	\begin{proof}
		This is a direct consequence of \cite[Proposition 1.3, Corollary 2.4]{lichtenbaum2009weil}.
	\end{proof}
	
	Recall the following theorem in \cite[$\S$2.5 Theorem 2.5.1]{breuil2023multivariable}:
	
	\begin{thm}\label{descent in char p, using Delta_1}
		The functor $A_{\infty}'\otimes_{m}-: \modet_{\varphi_q,\mathcal{O}_K^{\times}}(A_{\infty})\to \modet_{\varphi_q,\left(\mathcal{O}_K^{\times}\right)^f}(A_{\infty}')$ is an equivalence of categories which is exact and preserves the rank. A quasi-inverse is given by $\left(-\right)^{\Delta_1}$.
	\end{thm}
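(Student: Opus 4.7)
The plan is to exhibit $(-)^{\Delta_1}$ as a quasi-inverse to $A_{\infty}'\otimes_m -$ by combining the pro-\'etale $\Delta_1$-torsor structure of $m:\mathrm{Spa}(A_{\infty}',(A_{\infty}')^{\circ})\to\mathrm{Spa}(A_{\infty},A_{\infty}^{\circ})$ (cited from \cite[Proposition 2.4.4]{breuil2023multivariable}) with a continuous cohomology vanishing. From the torsor structure I would take for granted the two inputs $(A_{\infty}')^{\Delta_1}=A_{\infty}$ and the faithful flatness of $A_{\infty}\hookrightarrow A_{\infty}'$.

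For the composition $(-)^{\Delta_1}\circ (A_{\infty}'\otimes_m-)\cong\mathrm{Id}$, note that $\Delta_1\subset\ker\Pi$ acts trivially on the image of $A_{\infty}$ in $A_{\infty}'$, so for $M\in\modet_{\varphi_q,\mathcal{O}_K^{\times}}(A_{\infty})$ the $\Delta_1$-action on $A_{\infty}'\otimes_m M$ is concentrated on the left factor. For $M$ finite free of rank $r$ one has $(A_{\infty}'\otimes M)^{\Delta_1}=((A_{\infty}')^{\Delta_1})^r=A_{\infty}^r=M$; the finite projective case follows by realizing $M$ as a direct summand of a finite free module and using exactness of $(-)^{\Delta_1}$ on finite direct sums.

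The nontrivial direction is to show that for any $N\in\modet_{\varphi_q,(\mathcal{O}_K^{\times})^f}(A_{\infty}')$, the module $N^{\Delta_1}$ is finite projective over $A_{\infty}$ and the natural adjunction map $\alpha_N:A_{\infty}'\otimes_{A_{\infty}}N^{\Delta_1}\to N$ is an isomorphism. The crucial input is the vanishing
\[H^i_{\mathrm{cont}}(\Delta_1,A_{\infty}')=0,\quad i\geq 1,\]
which one establishes by exploiting the explicit description of $A_{\infty}'$ as a completed perfectoid tower over $A_{\infty}$, generated by $p$-power roots of the coordinates $T_{\mathrm{LT},i}/T_{\mathrm{LT},0}^{p^i}$ on which $\Delta_1$ acts through characters close to $1$; a Koszul/contracting-homotopy argument (or, more conceptually, pro-\'etale almost purity applied to the torsor) yields the vanishing. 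Granted this, for any finite free $N$ one deduces $H^i_{\mathrm{cont}}(\Delta_1,N)=0$ for $i\geq 1$, and a standard non-archimedean Galois-descent argument (successively lifting a basis of $N^{\Delta_1}/\mathfrak{m}$ to one of $N^{\Delta_1}$, using $H^1=0$ at each approximation step) produces a basis of $N^{\Delta_1}$ that also freely generates $N$ over $A_{\infty}'$; in particular $\alpha_N$ is an isomorphism and $N^{\Delta_1}$ has the same rank as $N$. A general finite projective $N$ is a $\Delta_1$-equivariant direct summand of a finite free module (the projector, viewed as an element of $\mathrm{End}(N)$, can be averaged/descended because $H^1(\Delta_1,\mathrm{End}(N))=0$), and the free case then applies summand-wise.

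Finally, the $\varphi_q$-action and the diagonal $\mathcal{O}_K^{\times}\hookrightarrow(\mathcal{O}_K^{\times})^f$-action on $N$ commute with $\Delta_1$ and therefore restrict to $N^{\Delta_1}$, endowing it with the required $(\varphi_q,\mathcal{O}_K^{\times})$-module structure; \'etaleness of $N^{\Delta_1}$ over $A_{\infty}$ descends from \'etaleness of $N$ over $A_{\infty}'$ by the faithful flatness of $m$. Exactness of both functors follows from the flatness of $m$ combined with the already-established equivalence, and preservation of rank is built into the construction. The main obstacle is unquestionably the cohomological vanishing $H^i_{\mathrm{cont}}(\Delta_1,A_{\infty}')=0$; everything else is formal Galois/flat descent once this input is in hand.
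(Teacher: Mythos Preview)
The paper does not prove this theorem; it is recalled verbatim from \cite[\S2.5, Theorem~2.5.1]{breuil2023multivariable} and used as an external input. So there is no proof in the paper to compare against---your proposal is essentially a sketch of what the cited reference does.

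Your outline is in the right spirit, but the central step is muddled. You write that from $H^i_{\mathrm{cont}}(\Delta_1,A_{\infty}')=0$ ``for any finite free $N$ one deduces $H^i_{\mathrm{cont}}(\Delta_1,N)=0$''. This does not follow: $N$ is free over $A_{\infty}'$, but the $\Delta_1$-action is only \emph{semilinear}, so $N$ need not be $\Delta_1$-equivariantly isomorphic to $(A_{\infty}')^{\oplus r}$---that is exactly the statement you are trying to prove. The actual argument is: pick any $A_{\infty}'$-basis of $N$; the $\Delta_1$-action is then a continuous $1$-cocycle valued in $\mathrm{GL}_r(A_{\infty}')$, and one shows it is a coboundary by successive approximation along powers of a pseudo-uniformizer (e.g.\ $T_{\mathrm{LT},0}$), using at each step the \emph{additive} vanishing $H^1_{\mathrm{cont}}(\Delta_1,M_r(A_{\infty}'))=H^1_{\mathrm{cont}}(\Delta_1,A_{\infty}')^{r^2}=0$ to correct the basis. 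Your phrase ``lifting a basis of $N^{\Delta_1}/\mathfrak{m}$'' gestures at this, but $A_{\infty}$ is not local, so there is no maximal ideal $\mathfrak{m}$ to reduce modulo; the filtration is topological.

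Two minor points: every finite projective module over $A_{\infty}'$ is already free (this is \cite{DH21Projective}, used in the paper as Lemma~\ref{description of finte projective modules}), so your separate handling of the non-free projective case via ``averaging a projector'' is unnecessary and in any event would be circular as written. And note that in the paper's logical flow, the vanishing $H^1_{\mathrm{cont}}(\Delta_1,A_{\infty}')=0$ is Theorem~\ref{vanishing of continuous cohomology of A_{infty}'}, proved \emph{after} the present theorem is quoted; its proof is independent (via \v{C}ech cohomology of the pro-\'etale torsor), so your ordering is logically permissible, but you should be aware that the reduction of Corollary~\ref{vanishing of continuous cohomology of etale modules} (vanishing for general $M$) to the case of $A_{\infty}'$ itself \emph{does} invoke the present theorem.
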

	
	Let $M$ be a finite projective étale $(\varphi_q,\left(\mathcal{O}_K^{\times}\right)^f)$-module over $A_{\infty}'$ of rank $r$. By Theorem \ref{descent in char p, using Delta_1}, as a $\Delta_1$-module, it is isomorphic to $\left(A_{\infty}'\right)^{\oplus r}$. Therefore, for any $m\geq 1$, there is an isomorphism of abelian groups
	\[H^k_{\mathrm{cont}}(\Delta_1,M)\cong H^k_{\mathrm{cont}}(\Delta_1,A_{\infty}')^{\oplus r},\quad k\geq 1.\]
	Thus to show $H^k_{\mathrm{cont}}(\Delta_1,M)=0$, it suffices to show:
	\begin{thm}\label{vanishing of continuous cohomology of A_{infty}'}
		For $k\geq 1$, we have
		\[H^k_{\mathrm{cont}}(\Delta_1,A_{\infty}')=0.\]
	\end{thm}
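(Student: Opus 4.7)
The plan is to deduce this vanishing from the pro-\'etale $\Delta_1$-torsor structure
\[
m:\ Y \coloneq \mathrm{Spa}(A_{\infty}', (A_{\infty}')^{\circ}) \longrightarrow X \coloneq \mathrm{Spa}(A_{\infty}, A_{\infty}^{\circ})
\]
established in \cite[Proposition 2.4.4]{breuil2023multivariable}, coupled with the acyclicity of the completed structure sheaf on affinoid perfectoid spaces. The underlying idea is: for a pro-\'etale $G$-torsor with $G$ profinite, cohomology on $X$ is computable via continuous $G$-cohomology of global sections on $Y$, so the vanishing of $R\Gamma_{\mathrm{pro\acute{e}t}}(X, \widehat{\mathcal{O}}_X)$ in positive degrees will force the vanishing of $H^i_{\mathrm{cont}}(\Delta_1, A_{\infty}')$ for $i \geq 1$.

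Concretely, I would invoke the Cartan--Leray spectral sequence attached to the torsor $m$ with coefficients in the completed structure sheaf:
\[
E_2^{i,j} = H^i_{\mathrm{cont}}\bigl(\Delta_1,\, H^j(Y_{\mathrm{pro\acute{e}t}}, \widehat{\mathcal{O}}_Y)\bigr) \;\Longrightarrow\; H^{i+j}(X_{\mathrm{pro\acute{e}t}}, \widehat{\mathcal{O}}_X).
\]
Since $X$ and $Y$ are affinoid perfectoid, Scholze's acyclicity theorem for the completed structure sheaf (\cite[Proposition 7.13]{scholze2012perfectoid}, applied after inverting the pseudo-uniformizer $T_{\mathrm{LT},0}$ so that the ``almost zero'' statement becomes an honest vanishing) yields $H^j(Y_{\mathrm{pro\acute{e}t}}, \widehat{\mathcal{O}}_Y) = 0$ for $j \geq 1$ with $H^0 = A_{\infty}'$, and similarly $H^j(X_{\mathrm{pro\acute{e}t}}, \widehat{\mathcal{O}}_X) = 0$ for $j \geq 1$. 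The spectral sequence therefore degenerates along the row $j = 0$, giving
\[
H^i_{\mathrm{cont}}(\Delta_1, A_{\infty}') \;\cong\; H^i(X_{\mathrm{pro\acute{e}t}}, \widehat{\mathcal{O}}_X) \;=\; 0 \quad \text{for all } i \geq 1,
\]
while $E_2^{0,0}$ recovers $(A_{\infty}')^{\Delta_1} = A_{\infty}$ consistently with \textit{loc. cit.}

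The technical heart of the argument, and the step I expect to be the main obstacle, is to rigorously identify the $E_2$-page of this pro-\'etale Cartan--Leray with the continuous group cohomology defined in (\ref{defn of d_n for continuous coh}). For a pro-\'etale $G$-torsor $m: Y \to X$ with $G$ profinite, the $(n+1)$-fold iterated fiber product $Y^{\times_X(n+1)}$ is, as a pro-\'etale object over $X$, isomorphic to $Y \times \underline{G}^{\,n}$, where $\underline{G}^{\,n}$ denotes the profinite set $G^n$ viewed in the pro-\'etale site. Applying global sections of $\widehat{\mathcal{O}}$ to the Cech nerve then produces the complex of continuous cochains $C(\Delta_1^{\,n}, A_{\infty}')$, and one must verify both that the appropriate completed tensor products $A_{\infty}' \,\widehat{\otimes}_{A_{\infty}}\, \cdots \,\widehat{\otimes}_{A_{\infty}}\, A_{\infty}'$ compute these cochain groups in the correct topological sense, and that the Cech differentials match the standard bar differentials of (\ref{defn of d_n for continuous coh}). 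Once this comparison is in place, combined with the perfectoid acyclicity recalled above, the vanishing is immediate.
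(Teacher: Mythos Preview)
Your proposal is correct and follows essentially the same strategy as the paper. The paper phrases it via the \v{C}ech-to-derived spectral sequence $E_2^{k,l}=\check{H}^k(\mathcal{U},\underline{H}^l(\mathcal{O}))\Rightarrow H^{k+l}((\zokgen)_{\mathrm{pro\acute{e}t}},\mathcal{O})$ (citing \cite[Proposition 8.2.8]{WS2020berkeley} for acyclicity rather than \cite{scholze2012perfectoid}), and then carries out explicitly exactly the step you flag as the ``technical heart'': it writes down the isomorphisms $t^n:\widehat{\otimes}^{n+1}_{A_{\infty}}A_{\infty}'\xrightarrow{\sim}C(\Delta_1^n,A_{\infty}')$ coming from $U_{\underline{n}_0}^{\times(n+1)}\cong U_{\underline{n}_0}\times\Delta_1^n$ and checks termwise that $t^{n+1}\circ\partial^n=d^n\circ t^n$, thereby identifying the \v{C}ech complex with the continuous cochain complex.
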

	
	\begin{proof}
		Let $\zokgen\coloneq \mathrm{Spa}(A_{\infty},A_{\infty}^{\circ})$ and $U_{\underline{n}_0}\coloneq \mathrm{Spa}(A_{\infty}',\left(A_{\infty}'\right)^{\circ})$ which are affinoid perfectoid spaces. By \cite[Proposition 2.4.4]{breuil2023multivariable}, there is a morphism $U_{\underline{n}_0}\xrightarrow{m}\zokgen$ which is a pro-étale $\Delta_1$-torsor. In particular, it is a covering in the pro-étale site $\left(\zokgen\right)_{\mathrm{pro\acute{e}t}}$. We also denote the ring homomorphism $A_{\infty}\hookrightarrow A_{\infty}'$ by $m$.

		Recall that for any perfectoid space $X$ and any profinite group $G$, the fibre product $\underline{G}\times X$ of pro-étale sheaves is represented by the perfectoid space $X \times G\coloneq\varprojlim\limits_{H=\text{ open subgroups of } G} X\times G/H$, where $\underline{G}$ is the pro-étale sheaf sending $Y$ to the set of continuous maps from $Y$ to $G$, and $X\times G/H$ is a finite disjoint union of $G/H$ copies of $X$ (\cite[$\S$9.3]{WS2020berkeley}, \cite[Lemma 10.13]{scholze2022etale}). In particular, for $n\geq 1$ we have the perfectoid space $ U_{\underline{n_0}}\times\Delta_1^n$. In fact, we have
		\begin{equation}\label{affinoid ring of Delta_1^n times U_n}
			\begin{aligned}
				U_{\underline{n_0}}\times\Delta_1^n%&=S_n\times_{\zokgen}U_{\underline{n_0}}\\
				%&=\mathrm{Spa}\left(C(\Delta_1^n,A_{\infty})\widehat{\otimes}_{A_{\infty}}A_{\infty}^{'},C(\Delta_1^n,A_{\infty}^{\circ})\widehat{\otimes}_{A_{\infty}^{\circ}}\left(A_{\infty}'\right)^{\circ}\right)\\
				&=\mathrm{Spa}\left(C(\Delta_1^n,A_{\infty}'),C(\Delta_1^n,\left(A_{\infty}'\right)^{\circ})\right),\quad n\geq 1.
			\end{aligned}
		\end{equation}
		
		Claim: for $n\geq 1$, there is an isomorphism of affinoid perfectoid spaces.
		\begin{align*}
			U_{\underline{n_0}}\times\Delta_1^n&\xrightarrow{\sim} U_{\underline{n_0}}^{\times (n+1)}\\
			\left(x,g_1,\dots,g_n\right)&\mapsto \left(x,g_1(x),\dots,\left(\prod_{i=1}^{n-1}g_i\right) (x),\left(\prod_{i=1}^ng_i\right) (x)\right),
		\end{align*}
		 where $U_{\underline{n_0}}^{\times (n+1)}\coloneq U_{\underline{n_0}}\times_{\zokgen}U_{\underline{n_0}}\times_{\zokgen}\cdots U_{\underline{n_0}}$ is the fibre product of $n+1$ copies of $U_{\underline{n_0}}$ for $n\geq 0$. We shall prove the claim by induction on $n\geq 1$. The case $n=1$ is proved by the first paragraph of the proof of \cite[Proposition 4.3.2]{weinstein2017gal}. For $n\geq 2$, then there is an isomorphism of perfectoid spaces
		\[U_{\underline{n_0}}\times\Delta_1^n\cong \left( U_{\underline{n_0}}\times\Delta_1^{n-1}\right)\times\Delta_1\cong U_{\underline{n_0}}^{\times n}\times\Delta_1\cong U_{\underline{n_0}}^{\times (n-1)}\times_{\zokgen}\left( U_{\underline{n_0}}\times\Delta_1\right)\cong U_{\underline{n_0}}^{\times (n+1)},\]
		which proves the claim. As a consequence, using (\ref{affinoid ring of Delta_1^n times U_n}), for $n\geq 1$, there is an isomorphism 
		\begin{equation}\label{terms of complex of continuous cohomology}
			\begin{aligned}
				t^n: \mathcal{O}_{U_{\underline{n_0}}^{\times (n+1)}}\left(U_{\underline{n_0}}^{\times (n+1)}\right)&\xrightarrow{\sim}C(\Delta_1^n,A_{\infty}')\\
				a_0\otimes\cdots \otimes a_n&\mapsto \left((g_1,\dots,g_n)\mapsto a_0\cdot g_1(a_1)\cdot (g_1g_2)(a_2)\cdot\cdots\cdot \left(\prod_{i=1}^{n}g_i\right)(a_n)\right),
			\end{aligned}
		\end{equation}
		where we use the isomorphism
		\[\mathcal{O}_{U_{\underline{n_0}}^{\times (n+1)}}(U_{\underline{n_0}}^{\times (n+1)})\cong A_{\infty}'\widehat{\otimes}_{A_{\infty}}\cdots \widehat{\otimes}_{A_{\infty}}A_{\infty}'\eqcolon \widehat{\otimes}^{n+1}_{A_{\infty}}A_{\infty}',\quad n\geq 1.\]
		We let $t^0$ be the isomorphism $ \mathcal{O}_{U_{\underline{n_0}}^{\times 1}}(U_{\underline{n_0}}^{\times 1})=\mathcal{O}_{U_{\underline{n_0}}}(U_{\underline{n_0}})\cong A_{\infty}'\eqcolon \widehat{\otimes}^{1}_{A_{\infty}}A_{\infty}'$.
		
		By \cite[Proposition 8.2.8]{WS2020berkeley}, the presheaf $\mathcal{O}: X\mapsto \mathcal{O}_X(X)$ is a sheaf on $\left(\zokgen\right)_{\mathrm{pro\acute{e}t}}$, and for $i\geq 1$ $H^i\left(\left(\zokgen\right)_{\mathrm{pro\acute{e}t}},\mathcal{O}\right)=0$. Since $U_{\underline{n_0}}\to \zokgen$ is a pro-étale covering, by \cite[\href{https://stacks.math.columbia.edu/tag/03OW}{Tag 03OW}]{stacks-project}, there is a spectral sequence
		\[E_2^{k,l}=\check{H}^k\left(\mathcal{U},\underline{H}^l(\mathcal{O})\right)\Longrightarrow H^{k+l}\left(\left(\zokgen\right)_{\mathrm{pro\acute{e}t}},\mathcal{O}\right),\]
		where $\mathcal{U}=\{U_{\underline{n_0}}\to \zokgen\}$, $\underline{H}^l(\mathcal{O})$ is the pre-sheaf sending $X$ to $H^l(X_{\mathrm{pro\acute{e}t}},\mathcal{O})$ on $\left(\zokgen\right)_{\mathrm{pro\acute{e}t}}$, and $\check{H}^k\left(\mathcal{U},\underline{H}^l(\mathcal{O})\right)$ is the $k$-th \v{C}ech cohomology group associated to the covering $\mathcal{U}$ and the presheaf $\underline{H}^l(\mathcal{O})$. Since $U_{\underline{n_0}}^{\times n}$ is affinoid perfectoid for every $n\geq 1$, by \cite[Proposition 8.2.8]{WS2020berkeley}, we have
		\[\underline{H}^l(\mathcal{O})(U_{\underline{n}_0}^{\times n})=H^l\left(\left(U_{\underline{n}_0}^{\times n}\right)_{\mathrm{pro\acute{e}t}},\mathcal{O}\right)=0,\quad l\geq 1\]
		hence
		\[\check{H}^k\left(\mathcal{U},\underline{H}^l(\mathcal{O})\right)=0,\quad l\geq 1, k\geq 0.\]
		Therefore, the spectral sequence collapses at the second page, and induces an isomorphism
		\[\check{H}^k\left(\mathcal{U},\mathcal{O}\right)\cong H^k\left(\left(\zokgen\right)_{\mathrm{pro\acute{e}t}},\mathcal{O}\right)=0,\quad k\geq 1.\]
		On the other hand, by the definition of \v{C}ech cohomology groups, $\check{H}^k\left(\mathcal{U},\mathcal{O}\right)$ is computed by the complex
		\begin{align}\label{complex of Cech cohomology coincides with continuous cohomology}
			\mathcal{O}_{U_{\underline{n_0}}^{\times 1}}(U_{\underline{n_0}}^{\times 1})\xrightarrow{\partial^{0}}\mathcal{O}_{U_{\underline{n_0}}^{\times 2}}(U_{\underline{n_0}}^{\times 2})\xrightarrow{\partial^{1}}\mathcal{O}_{U_{\underline{n_0}}^{\times 3}}(U_{\underline{n_0}
			}^{\times 3})\xrightarrow{\partial^{2}}\cdots.
		\end{align}
		To be explicit, for $n\geq 0$, using the isomorphism $\mathcal{O}_{U_{\underline{n_0}}^{\times (n+1)}}(U_{\underline{n_0}
		}^{\times (n+1)})\cong \widehat{\otimes}^{n+1}_{A_{\infty}}A_{\infty}'$, $\partial^{n}$ is given by
		\begin{align*}
			\partial^{n}:\quad  \widehat{\otimes}^{n+1}_{A_{\infty}}A_{\infty}'&\to \widehat{\otimes}^{n+2}_{A_{\infty}}A_{\infty}'\\
			a_0\otimes\cdots\otimes a_n&\mapsto \sum_{i=0}^{n+1}(-1)^{i}a_0\otimes\cdots\otimes a_{i-1}\otimes 1\otimes a_i\otimes\cdots\otimes a_{n}.
		\end{align*}
		Then for $n\geq 0$, $a_0\otimes\cdots\otimes a_n\in \widehat{\otimes}^{n+1}_{A_{\infty}}A_{\infty}'$ and $(g_1,\dots,g_{n+1})\in\Delta_1^{n+1}$, using (\ref{defn of d_n for continuous coh}), (\ref{terms of complex of continuous cohomology}) and (\ref{complex of Cech cohomology coincides with continuous cohomology}), we have
		\begin{align*}
			d^{n}\Big( t^{n}(a_0\otimes\cdots\otimes a_n)\Big)\big(g_1,\dots,g_{n+1}\big)
			&=g_1\Big(a_0\cdot g_2(a_1)\cdot(g_2g_3)(a_2)\cdot\cdots\cdot\left(g_2\cdots g_{n+1}\right)(a_n)\Big)\\
			&\quad +\sum_{l=1}^{n}(-1)^la_0\cdot g_1(a_1)\cdot\cdots\cdot (g_1g_2\cdots g_{i-1})(a_{i-1})\\
			&\qquad\cdot (g_1g_2\cdots g_{i}g_{i+1})(a_{i})\cdot\cdots\cdot(g_1\cdots g_{n+1})(a_n)\\
			&\quad +(-1)^{n+1}a_0\cdot g_1(a_1)\cdot\cdots\cdot \left(g_1\cdots g_n\right)(a_n)\\
			&=\prod_{i=0}^{n}\left(\left(\prod_{j=1}^{i+1}g_j\right)(a_i)\right)\\
			&\quad+\sum_{l=1}^{n}(-1)^la_0\prod_{i=1}^{l-1}\left(\left(\prod_{j=1}^{i}g_j\right)(a_i)\right)\prod_{i=l}^{n}\left(\left(\prod_{j=1}^{i+1}g_j\right)(a_i)\right)\\
			&\quad +(-1)^{n+1}a_0\prod_{i=1}^{n}\left(\left(\prod_{j=1}^{i}g_j\right)(a_i)\right)
		\end{align*}
		and
		\begin{align*}
			t^{n+1}\Big(\partial^{n}(a_0\otimes\cdots\otimes a_n)\Big)\big(g_1,\dots,g_{n+1}\big)&=\sum_{l=0}^{n+1}(-1)^{l}t^{n+1}\left(a_0\otimes\cdots\otimes 1\otimes a_{l}\otimes \cdots \otimes a_n\right)(g_1,\dots,g_{n+1})\\
			&=t^{n+1}\left(1\otimes a_0\otimes \cdots \otimes a_n\right)(g_1,\dots,g_{n+1})\\
			&\quad+\sum_{l=1}^{n}(-1)^{l}t^{n+1}\left(a_0\otimes\cdots\otimes 1\otimes a_{l}\otimes \cdots \otimes a_n\right)(g_1,\dots,g_{n+1})\\
			&\quad +(-1)^{n+1}t^{n+1}\left(a_0\otimes\cdots\otimes a_n\otimes 1\right)(g_1,\dots,g_{n+1})\\
			&=\prod_{i=0}^{n}\left(\left(\prod_{j=1}^{i+1}g_j\right)(a_i)\right)\\
			&\quad+\sum_{l=1}^{n}(-1)^la_0\prod_{i=1}^{l-1}\left(\left(\prod_{j=1}^{i}g_j\right)(a_i)\right)\prod_{i=l}^{n}\left(\left(\prod_{j=1}^{i+1}g_j\right)(a_i)\right)\\
			&\quad +(-1)^{n+1}a_0\prod_{i=1}^{n}\left(\left(\prod_{j=1}^{i}g_j\right)(a_i)\right).
		\end{align*}
		We see that $t^{n+1}\circ\partial^{n}=d^{n}\circ t^{n}$, hence there is a commutative diagram of complexes 
		\begin{equation*}
			\begin{tikzcd}
				\dots \arrow[r] & \widehat{\otimes}^{n}_{A_{\infty}}A_{\infty}' \arrow[r,"\partial^{n-1}"]\arrow[d,"t^{n-1}","\wr"'] & \widehat{\otimes}^{n+1}_{A_{\infty}}A_{\infty}' \arrow[r,"\partial^{n}"]\arrow[d,"t^{n}","\wr"'] &\widehat{\otimes}^{n+2}_{A_{\infty}}A_{\infty}' \arrow[r,"\partial^{n+1}"]\arrow[d,"t^{n+1}","\wr"']&\dots\\
				\dots\arrow[r]& C_{\mathrm{cont}}^{n-1}(\Delta_1,A_{\infty}')\arrow[r,"d^{n-1}"]&C_{\mathrm{cont}}^{n}(\Delta_1,A_{\infty}')\arrow[r,"d^{n}"]&C_{\mathrm{cont}}^{n+1}(\Delta_1,A_{\infty}')\arrow[r,"d^{n+1}"]&\dots.
			\end{tikzcd}
		\end{equation*}
		Thus $H^k_{\mathrm{cont}}(\Delta_1,A_{\infty}')\cong \check{H}^k\left(\mathcal{U},\mathcal{O}\right)=0$ for $k\geq 1$.
	\end{proof}
	
	\begin{cor}\label{vanishing of continuous cohomology of etale modules}
		Let $M$ be a finite projective étale $(\varphi_q,\left(\mathcal{O}_K^{\times}\right)^f)$-module over $A_{\infty}'$. Then for $k\geq 1$,
		\[H^k_{\mathrm{cont}}(\Delta_1,M)=0.\]
	\end{cor}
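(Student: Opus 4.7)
The plan is to reduce the cohomology vanishing for $M$ to the scalar case $A_\infty'$, which has already been handled in Theorem \ref{vanishing of continuous cohomology of A_{infty}'}. This corollary is essentially packaged into the setup immediately preceding that theorem, so the argument should be short.

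First, I invoke Theorem \ref{descent in char p, using Delta_1}: since $(-)^{\Delta_1}$ is a quasi-inverse to $A_\infty' \otimes_{m,A_\infty} -$ and preserves rank, the module $M^{\Delta_1}$ is a finite projective étale $(\varphi_q,\mathcal{O}_K^\times)$-module over $A_\infty$ of the same rank $r$ as $M$, and there is a $(\varphi_q,(\mathcal{O}_K^\times)^f)$-equivariant isomorphism $M \cong A_\infty' \otimes_{A_\infty} M^{\Delta_1}$, where $\Delta_1$ acts trivially on $M^{\Delta_1}$.

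Second, I need to replace $M^{\Delta_1}$ by a free module. Since $A_\infty$ is a perfectoid $\mathbb{F}$-algebra, \cite[Theorem 2.19]{DH21Projective} (quoted in the proof of Lemma \ref{description of finte projective modules}) tells us every finite projective $A_\infty$-module is free, so $M^{\Delta_1} \cong A_\infty^{\oplus r}$. Consequently
\[
M \;\cong\; A_\infty' \otimes_{A_\infty} A_\infty^{\oplus r} \;\cong\; (A_\infty')^{\oplus r}
\]
as $\Delta_1$-modules (the $\Delta_1$-action being the diagonal one, componentwise equal to the standard action on $A_\infty'$). If one preferred to avoid invoking freeness, one could instead choose a complementary module $N$ with $M^{\Delta_1}\oplus N \cong A_\infty^{\oplus s}$ and argue by a direct summand decomposition; either way works.

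Finally, since continuous cohomology commutes with finite direct sums of topological $\Delta_1$-modules (the Bar complex respects finite products), we obtain $H^k_{\mathrm{cont}}(\Delta_1, M) \cong H^k_{\mathrm{cont}}(\Delta_1, A_\infty')^{\oplus r}$ for every $k \geq 1$, and this vanishes by Theorem \ref{vanishing of continuous cohomology of A_{infty}'}. There is no real obstacle here; the entire content lies in Theorem \ref{vanishing of continuous cohomology of A_{infty}'} and the structural fact that any finite projective $(\varphi_q,(\mathcal{O}_K^\times)^f)$-module over $A_\infty'$ is $\Delta_1$-equivariantly trivializable.
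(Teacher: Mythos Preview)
Your proof is correct and follows essentially the same approach as the paper: the paper explains (in the paragraph just before Theorem \ref{vanishing of continuous cohomology of A_{infty}'}) that by Theorem \ref{descent in char p, using Delta_1} the $\Delta_1$-module $M$ is isomorphic to $(A_\infty')^{\oplus r}$, whence $H^k_{\mathrm{cont}}(\Delta_1,M)\cong H^k_{\mathrm{cont}}(\Delta_1,A_\infty')^{\oplus r}$ vanishes by Theorem \ref{vanishing of continuous cohomology of A_{infty}'}. Your argument merely makes the freeness of $M^{\Delta_1}$ over $A_\infty$ explicit via Lemma \ref{description of finte projective modules}, which is implicit in the paper's rank-preservation claim.
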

	
	As $W_E(A_{\infty})$, we equip $W_E(A_{\infty}')\coloneq \mathcal{O}_E\otimes_{W(\mathbb{F})}W(A_{\infty}')$ with the weak topology so that the $(\varphi_q,(\mathcal{O}_K^{\times})^f)$-action on $W_E(A_{\infty}')$ is continuous. The ring homomorphism $m:A_{\infty}\hookrightarrow A_{\infty}'$ lifts to a map $m:W_E(A_{\infty})\hookrightarrow W_E(A_{\infty}')$ commuting with the continuous $(\varphi_q,(\mathcal{O}_K^{\times})^f)$-actions, where $(\mathcal{O}_K^{\times})^f$ acts on $W_E(A_{\infty})$ via the map $(\mathcal{O}_K^{\times})^f\to \mathcal{O}_K^{\times}, (a_0,\dots,a_{f-1})\mapsto \prod_{i=0}^{f-1}a_i$.
	
	\begin{cor}\label{vanishing of continuous cohomology of W_E(A_{infty}')}
		For $k\geq 1$, we have
		\[H^k_{\mathrm{cont}}(\Delta_1,W_E(A_{\infty}'))=0.\]
	\end{cor}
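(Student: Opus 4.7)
The plan is to reduce the vanishing of $H^k_{\mathrm{cont}}(\Delta_1, W_E(A_{\infty}'))$ to the already-proved vanishing for $A_{\infty}'$ (Theorem \ref{vanishing of continuous cohomology of A_{infty}'}) by using the $\varpi$-adic filtration on $W_E(A_{\infty}')$, and then to pass to the inverse limit via a Milnor-type exact sequence.

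First I would establish the finite-level vanishing
\[H^k_{\mathrm{cont}}(\Delta_1, W_E(A_{\infty}')/\varpi^n) = 0, \quad k \geq 1,\, n \geq 1,\]
by induction on $n$. For the base case $n=1$, there is a $\Delta_1$-equivariant isomorphism $W_E(A_{\infty}')/\varpi \cong A_{\infty}'$, coming from $\mathcal{O}_E/\varpi = \mathbb{F}$ together with $W(A_{\infty}')/p = A_{\infty}'$, so the vanishing follows from Theorem \ref{vanishing of continuous cohomology of A_{infty}'}. For the inductive step, since $\Delta_1$ acts $\mathcal{O}_E$-linearly, multiplication by $\varpi^n$ gives a $\Delta_1$-equivariant short exact sequence
\[0 \to A_{\infty}' \xrightarrow{\varpi^n} W_E(A_{\infty}')/\varpi^{n+1} \to W_E(A_{\infty}')/\varpi^n \to 0.\]
The surjection on the right admits a continuous set-theoretic section in the weak topology (using the coordinate description in Lemma \ref{equivalent description of weak topology}, where a lift is obtained by setting a single Witt-type coordinate to zero). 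Hence Lemma \ref{continuous cohomology induces a long exact sequence} applies, and combining the inductive hypothesis with Theorem \ref{vanishing of continuous cohomology of A_{infty}'} gives the vanishing at level $n+1$.

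Second, I would pass to the limit. The weak topology identifies $W_E(A_{\infty}')$ with $\varprojlim_n W_E(A_{\infty}')/\varpi^n$ (each equipped with its quotient topology), and the standard Milnor-type short exact sequence for continuous cohomology of a profinite group acting on a sequential inverse limit gives
\[0 \to {\varprojlim_n}^{\!1}\, H^{k-1}_{\mathrm{cont}}(\Delta_1, W_E(A_{\infty}')/\varpi^n) \to H^k_{\mathrm{cont}}(\Delta_1, W_E(A_{\infty}')) \to \varprojlim_n H^k_{\mathrm{cont}}(\Delta_1, W_E(A_{\infty}')/\varpi^n) \to 0.\]
The rightmost term vanishes by the first step. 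For the $\varprojlim^1$ term, when $k \geq 2$ the system is already zero; when $k = 1$, the long exact sequence attached to the short exact sequence above, combined with $H^1_{\mathrm{cont}}(\Delta_1, A_{\infty}') = 0$, shows that the transition maps $(W_E(A_{\infty}')/\varpi^{n+1})^{\Delta_1} \twoheadrightarrow (W_E(A_{\infty}')/\varpi^n)^{\Delta_1}$ are surjective. The Mittag-Leffler condition then forces the $\varprojlim^1$ to vanish, yielding the desired result.

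The main obstacle I expect is the technical compatibility between the weak topology on $W_E(A_{\infty}')$, the quotient topologies on the $W_E(A_{\infty}')/\varpi^n$, and the inverse-limit topology — in particular, verifying that the surjections $W_E(A_{\infty}')/\varpi^{n+1} \twoheadrightarrow W_E(A_{\infty}')/\varpi^n$ genuinely admit continuous local sections and that the Milnor sequence is valid in this topological setting. The description of the weak topology via Witt-type coordinates in Lemma \ref{equivalent description of weak topology} should resolve the first point, and the profiniteness of $\Delta_1$ together with the Mittag-Leffler property on the $H^0$-system should justify the second; but some care is needed because the ramification of $\mathcal{O}_E$ over $W(\mathbb{F})$ makes the identification of the graded pieces $\varpi^n W_E(A_{\infty}')/\varpi^{n+1} \cong A_{\infty}'$ slightly less transparent than in the unramified case.
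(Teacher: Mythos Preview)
Your proposal is correct but takes a genuinely different route from the paper. The paper works directly on $W_E(A_{\infty}')$ rather than on the finite quotients: from the short exact sequence
\[0 \to W_E(A_{\infty}') \xrightarrow{\varpi} W_E(A_{\infty}') \to A_{\infty}' \to 0\]
(with continuous section given by Teichm\"uller lifts) and Theorem~\ref{vanishing of continuous cohomology of A_{infty}'}, it deduces that multiplication by $\varpi$ on $H^k_{\mathrm{cont}}(\Delta_1, W_E(A_{\infty}'))$ is \emph{surjective} for $k\geq 1$. It then runs an explicit successive-approximation argument at the cochain level: given a cocycle $c$, write $c = d^{k-1}(c_0') + \varpi c_0$, iterate to produce $c_n = d^{k-1}(c_{n+1}') + \varpi c_{n+1}$, and observe that $c' := \sum_{i\geq 0}\varpi^i c_i'$ converges to a continuous cochain with $c = d^{k-1}(c')$.

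The two arguments have the same core content: both d\'evissage through the $\varpi$-filtration with the same key input. What the paper buys by working at the cochain level is that it bypasses entirely the Milnor $\varprojlim^1$ formalism you invoke; the convergence of $\sum \varpi^i c_i'$ in the weak topology is immediate, so no topological compatibility between $W_E(A_{\infty}')$ and $\varprojlim_n W_E(A_{\infty}')/\varpi^n$ needs to be checked. Your approach is equally valid and perhaps more structured, but the obstacle you flag --- justifying the Milnor sequence for continuous cohomology in this topology --- is a real one that the paper simply sidesteps. In effect, the paper's hands-on argument \emph{is} the proof of the needed piece of the Milnor sequence, specialized to this situation.
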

	
	\begin{proof}
		Consider the short exact sequence of continuous $\Delta_1$-modules
		\[0\to W_E(A_{\infty}')\xrightarrow{\varpi}W_E(A_{\infty}')\to A_{\infty}'\to 0,\]
		since Teichmüller lifts give a continuous section of the projection $W_E(A_{\infty}')\to A_{\infty}'$, Lemma \ref{continuous cohomology induces a long exact sequence} and Theorem \ref{vanishing of continuous cohomology of A_{infty}'} implies that the map
		\begin{align}\label{multiplication by pi}
			H^k_{\mathrm{cont}}(\Delta_1,W_E(A_{\infty}'))\xrightarrow{\varpi}H^k_{\mathrm{cont}}(\Delta_1,W_E(A_{\infty}'))
		\end{align}
		is surjective for every $k\geq 1$. For any continuous cocycle $c\in Z^k_{\mathrm{cont}}(\Delta_1,W_E(A_{\infty}'))$, the surjectivity of (\ref{multiplication by pi}) implies that there exists $c_0'\in C^{k-1}_{\mathrm{cont}}(\Delta_1,W_E(A_{\infty}'))$ and $c_0\in Z^{k}_{\mathrm{cont}}(\Delta_1,W_E(A_{\infty}'))$ such that
		\[c=d^{k-1}(c_0')+\varpi\cdot c_0.\]
		By induction on $n\geq 0$, we can show that for any cocycle $c_n\in Z^{k}_{\mathrm{cont}}(\Delta_1,W_E(A_{\infty}'))$, there exists $c_{n+1}'\in C^{k-1}_{\mathrm{cont}}(\Delta_1,W_E(A_{\infty}'))$ and $c_{n+1}\in Z^{k}_{\mathrm{cont}}(\Delta_1,W_E(A_{\infty}'))$ such that
		\[c_n=d^{k-1}(c_{n+1}')+\varpi\cdot c_{n+1}, \quad n\geq 0.\]
		It is easy to check that $c'\coloneq \sum_{i=0}^{\infty}\varpi^{i}c_i'$ is a continuous cocycle, and $c=d^{k-1}(c')$.
		Therefore,
		\[H^k_{\mathrm{cont}}(\Delta_1,W_E(A_{\infty}'))=0,\quad k\geq 1.\]
	\end{proof}
	
	\subsection{Descent for finite projective modules}
	
	The goal of this section is to show that every finite projective étale $(\varphi_q,(\mathcal{O}_K^{\times})^f)$-module over $W_E(A_{\infty}')$ canonically descends to a finite projective étale $(\varphi_q,\mathcal{O}_K^{\times})$-module over $W_E(A_{\infty})$. 
	
	Note that $A_{\infty}$ and $A_{\infty}'$ are isomorphic as perfectoid algebras, hence the topological rings $W_E(A_{\infty})$ and $W_E(A_{\infty}')$ are isomorphic, therefore, by Lemma \ref{description of finte projective modules}, every finite projective module over $W_E(A_{\infty}')/\varpi^n$ is a finite free $W_E(A_{\infty}')/\varpi^n$-module for $1\leq n\leq+\infty$.

	Recall that every element of the ring $W_E(A_{\infty}')$ can be uniquely written as
	\[\sum_{i=0}^{\infty}[x_i]\varpi^{i},\quad x_i\in A_{\infty}'.\]
	Thus for $n\geq 2$, there exists a continuous map of topological spaces
	\begin{equation}\label{s_n}
		\begin{aligned}
			s_n: W_E(A_{\infty}')/\varpi^{n-1}\to W_E(A_{\infty}')/\varpi^{n},\ 
			\sum_{i=0}^{n-2}[x_i]\varpi^{i}\mapsto \sum_{i=0}^{n-2}[x_i]\varpi^{i},
		\end{aligned}
	\end{equation}
	where $W_E(A_{\infty}')/\varpi^{n-1}$ and $W_E(A_{\infty}')/\varpi^{n}$ are endowed with the product topology induced by
	\[\left(A_{\infty}'\right)^{n}\xrightarrow{\sim}W_E(A_{\infty}')/\varpi^{n},\ (x_i)_{0\leq i\leq n-1}\mapsto \sum_{i=0}^{n-1}[x_i]\varpi^{i}.\]
	
	\begin{lem}\label{A{infty} to A{infty}'}
		Let $M$ be a finite projective étale $(\varphi_q,(\mathcal{O}_K^{\times})^f)$-module over $W_E(A_{\infty}')/\varpi^n$ of rank $r$, where $n\geq 1$ is an integer. Then $M^{\Delta_1}$ is a finite projective étale $(\varphi_q,\mathcal{O}_K^{\times})$-module over $W_E(A_{\infty})/\varpi^n$ of rank $r$, and there is a natural isomorphism
		\[W_E(A_{\infty}')/\varpi^n\otimes_{W_E(A_{\infty})/\varpi^n}M^{\Delta_1}\cong M.\]
	\end{lem}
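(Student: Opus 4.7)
The plan is to proceed by induction on $n$, strengthening the inductive hypothesis to also include the vanishing $H^k_{\mathrm{cont}}(\Delta_1, M) = 0$ for all $k \geq 1$. For the base case $n=1$: the identification $W_E(A_{\infty}')/\varpi \cong A_{\infty}'$ (using $\mathcal{O}_E/\varpi = \mathbb{F}$ and the perfectness of $A_{\infty}'$) reduces the structural claim to Theorem \ref{descent in char p, using Delta_1}, while the cohomological vanishing is precisely Corollary \ref{vanishing of continuous cohomology of etale modules}.

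For the inductive step $n \geq 2$: since $M$ is finite free of rank $r$ over $W_E(A_{\infty}')/\varpi^n$ by Lemma \ref{description of finte projective modules}, multiplication by $\varpi^{n-1}$ induces an isomorphism $M/\varpi M \cong \varpi^{n-1} M$ of $\Delta_1$-modules, giving a short exact sequence
\[0 \to \varpi^{n-1} M \to M \to M/\varpi^{n-1} M \to 0\]
whose kernel is a finite projective étale $(\varphi_q, (\mathcal{O}_K^{\times})^f)$-module over $A_{\infty}'$ and whose cokernel is a finite free such module over $W_E(A_{\infty}')/\varpi^{n-1}$. Applying $s_n$ from \eqref{s_n} coordinate-wise to a chosen basis of $M$ produces a continuous section of the quotient, so Lemma \ref{continuous cohomology induces a long exact sequence} applies. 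Combining the resulting long exact sequence with $H^k_{\mathrm{cont}}(\Delta_1, \varpi^{n-1} M) = 0$ from the base case and $H^k_{\mathrm{cont}}(\Delta_1, M/\varpi^{n-1} M) = 0$ from the inductive hypothesis, I obtain $H^k_{\mathrm{cont}}(\Delta_1, M) = 0$ for all $k \geq 1$ and the short exact sequence
\[0 \to (\varpi^{n-1} M)^{\Delta_1} \to M^{\Delta_1} \to (M/\varpi^{n-1} M)^{\Delta_1} \to 0.\]

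Applying $H^1_{\mathrm{cont}}(\Delta_1, M) = 0$ to the sequence $0 \to M \xrightarrow{\varpi^{n-1}} M \to M/\varpi^{n-1} M \to 0$ yields both the identification $\varpi^{n-1} M^{\Delta_1} = (\varpi^{n-1} M)^{\Delta_1}$ and the isomorphism $M^{\Delta_1}/\varpi^{n-1} M^{\Delta_1} \xrightarrow{\sim} (M/\varpi^{n-1} M)^{\Delta_1}$. Lifting a $W_E(A_{\infty})/\varpi^{n-1}$-basis of the rank-$r$ free module $(M/\varpi^{n-1} M)^{\Delta_1}$ to elements $e_1, \dots, e_r \in M^{\Delta_1}$, a five-lemma comparison of
\[0 \to \varpi^{n-1} (W_E(A_{\infty})/\varpi^n)^r \to (W_E(A_{\infty})/\varpi^n)^r \to (W_E(A_{\infty})/\varpi^{n-1})^r \to 0\]
with the analogous short exact sequence for $M^{\Delta_1}$ shows that the $e_i$ constitute a $W_E(A_{\infty})/\varpi^n$-basis of $M^{\Delta_1}$. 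An entirely analogous five-lemma argument, now comparing $(W_E(A_{\infty}')/\varpi^n)^r \to M$ with its reductions mod $\varpi^{n-1}$ (inductive hypothesis) and mod $\varpi$ (Theorem \ref{descent in char p, using Delta_1}), shows that the natural map $W_E(A_{\infty}')/\varpi^n \otimes_{W_E(A_{\infty})/\varpi^n} M^{\Delta_1} \to M$ is an isomorphism; the étale $\varphi_q$-structure on $M^{\Delta_1}$ then descends from that of $M$ via this faithfully flat base change, and the $\mathcal{O}_K^{\times}$-action is inherited through the quotient $(\mathcal{O}_K^{\times})^f/\Delta_1 \cong \mathcal{O}_K^{\times}$.

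The main technical obstacle is the recurring identification $\varpi^k M^{\Delta_1} = (\varpi^k M)^{\Delta_1}$, which is what allows the $\Delta_1$-invariants to inherit the full finite-free structure from $M$; this depends on the $H^1_{\mathrm{cont}}$-vanishing propagated through the induction, and ultimately on Corollary \ref{vanishing of continuous cohomology of etale modules}.
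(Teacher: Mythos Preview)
Your approach is essentially the paper's: induct on $n$ via the sequence $0 \to \varpi^{n-1}M \to M \to M/\varpi^{n-1}M \to 0$, produce a continuous section from $s_n$, invoke the $H^1$-vanishing to obtain a short exact sequence on $\Delta_1$-invariants, and lift a basis. Two points are worth noting.

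First, the strengthened inductive hypothesis is unnecessary. The only vanishing the paper uses is $H^1_{\mathrm{cont}}(\Delta_1,\varpi^{n-1}M)=0$, and since $\varpi^{n-1}M$ is a finite projective étale module over $A_\infty'$, this is supplied directly by Corollary~\ref{vanishing of continuous cohomology of etale modules} without any induction. Second, the sequence you write as $0 \to M \xrightarrow{\varpi^{n-1}} M \to M/\varpi^{n-1}M \to 0$ is not exact on the left: multiplication by $\varpi^{n-1}$ on a free $W_E(A_\infty')/\varpi^n$-module has kernel $\varpi M$. The identification $\varpi^{n-1}M^{\Delta_1}=(\varpi^{n-1}M)^{\Delta_1}$ you want does hold, but it requires a separate argument (surjectivity of $M^{\Delta_1}\to (M/\varpi M)^{\Delta_1}$, hence $H^1_{\mathrm{cont}}(\Delta_1,\varpi M)=0$), and your five-lemma for freeness of $M^{\Delta_1}$ then further needs the left vertical map to be an isomorphism, which amounts to knowing that the images $\bar e_i\in (M/\varpi M)^{\Delta_1}$ form a basis---another step you do not spell out.

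The paper sidesteps all of this: having lifted $e_1',\dots,e_r'\in M^{\Delta_1}$ from a basis of $(M/\varpi^{n-1}M)^{\Delta_1}$, it observes via the inductive hypothesis that their images already generate $M/\varpi^{n-1}M$ over $W_E(A_\infty')/\varpi^{n-1}$, so by Nakayama they generate $M$; since $M$ is free of rank $r$, they form a $W_E(A_\infty')/\varpi^n$-basis of $M$ itself. Then $M^{\Delta_1}=\bigoplus_i (W_E(A_\infty')/\varpi^n)^{\Delta_1}e_i'=\bigoplus_i (W_E(A_\infty)/\varpi^n)e_i'$ follows immediately, with the tensor isomorphism and étaleness dropping out at once.
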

	\begin{proof}
		We proceed by induction on $n$. The case $n=1$ is Theorem \ref{descent in char p, using Delta_1}. For $n\geq 2$, consider the following short exact sequence of topological $\Delta_1$-modules
		\[0\to \varpi^{n-1}M\to M \to M/\varpi^{n-1}\to 0.\]
		Then $\varpi^{n-1}M$ and $M/\varpi^{n-1}$ are finite projective étale $(\varphi_q,(\mathcal{O}_K^{\times})^f)$-modules over $A_{\infty}'$ and $W_E(A_{\infty}')/\varpi^{n-1}$ of rank $r$ respectively. With a chosen basis of $M$, we can use (\ref{s_n}) to construct a topological section of the quotient map $M\twoheadrightarrow M/\varpi^{n-1}$. Hence we can apply Lemma \ref{continuous cohomology induces a long exact sequence} to deduce the following long exact sequence
		\begin{align}\label{long exact sequence attached to continuous coh grps}
			0\to \left(\varpi^{n-1}M\right)^{\Delta_1}\to M^{\Delta_1}\to \left(M/\varpi^{n-1}\right)^{\Delta_1}\to H^1_{\mathrm{cont}}(\Delta_1,\varpi^{n-1}M)\to\cdots.
		\end{align}
		Corollary \ref{vanishing of continuous cohomology of etale modules} ensures that $H^1_{\mathrm{cont}}(\Delta_1,\varpi^{n-1}M)=0$. By the induction hypothesis, $\left(M/\varpi^{n-1}\right)^{\Delta_1}$ is a finite projective étale $(\varphi_q,\mathcal{O}_K^{\times})$-module over $W_E(A_{\infty})/\varpi^{n-1}$. In particular, we can take $e_1',\dots,e_r'\in M^{\Delta_1}\subset M$ such that the image of $e_1',\dots,e_r'$ in $\left(M/\varpi^{n-1}\right)^{\Delta_1}$ forms a $W_E(A_{\infty})/\varpi^{n-1}$-basis of $\left(M/\varpi^{n-1}\right)^{\Delta_1}$. By the induction hypothesis, the image of $e_1',\dots,e_r'$ under the surjection $M\twoheadrightarrow M/\varpi^{n-1}$ generates $M/\varpi^{n-1}$ as a $W_E(A_{\infty}')/\varpi^{n-1}$-module. Hence $M=\sum_{i=1}^rW_E(A_{\infty}')/\varpi^n\cdot e_i'+\varpi^{n-1}M.$ By Nakayama's lemma, $e_1',\dots,e_r'$ generate $M$ over $W_E(A_{\infty}')/\varpi^n$. As $M$ is a free $W_E(A_{\infty}')/\varpi^n$-module of rank $r$, $\{e_1',\dots,e_r'\}$ must be a $W_E(A_{\infty}')/\varpi^n$-basis of $M$, i.e.  $M=\bigoplus_{i=1}^rW_E(A_{\infty}')/\varpi^n\cdot e_i'$. By our choice for $e_i'$, we have
		\[M^{\Delta_1}=\bigoplus_{i=1}^r\left(W_E(A_{\infty}')/\varpi^n\right)^{\Delta_1}\cdot e_i'=\bigoplus_{i=1}^rW_E(A_{\infty})/\varpi^n\cdot e_i'.\]
		Thus $M^{\Delta_1}$ is a free $W_E(A_{\infty})/\varpi^n$-module of rank $r$. Moreover, $M\cong W_E(A_{\infty}')/\varpi^n\otimes_{W_E(A_{\infty})/\varpi^n}M^{\Delta_1}$. 
		
		The endomorphism $\varphi_q$ of $M$ (resp. $\varpi^{n-1}M$, $M/\varpi^{n-1}$) commutes with the action of $(\mathcal{O}_K^{\times})^{f}$, hence the restriction of $\varphi_q$ on $M^{\Delta_1}\subseteq M$ (resp. $\left(\varpi^{n-1}M\right)^{\Delta_1}\subseteq \varpi^{n-1}M$, $\left(M/\varpi^{n-1}\right)^{\Delta_1}\subseteq M/\varpi^{n-1}$) is an endomorphism of $M^{\Delta_1}$ (resp. $\left(\varpi^{n-1}M\right)^{\Delta_1}$, $\left(M/\varpi^{n-1}\right)^{\Delta_1}$), which makes $M^{\Delta_1}$ (resp. $\left(\varpi^{n-1}M\right)^{\Delta_1}$, $\left(M/\varpi^{n-1}\right)^{\Delta_1}$) into a $\varphi_q$-module over $W_E(A_{\infty})/\varpi^{n}$ (resp. $A_{\infty}$, $W_E(A_{\infty})/\varpi^{n-1}$).
		By the induction hypothesis, $\left(\varpi^{n-1}M\right)^{\Delta_1}$ and $\left(M/\varpi^{n-1}\right)^{\Delta_1}$ are étale $\varphi_q$-modules over $A_{\infty}$ and $W_E(A_{\infty})/\varpi^{n-1}$ respectively, hence are étale over $W_E(A_{\infty})/\varpi^{n}$. Then by a standard argument using five lemma, $M^{\Delta_1}$ is étale.
	\end{proof}
	
	%\begin{remark}
	%	Since what we really need in the proof is the exactness at $\left(M/\varpi^{n-1}\right)^{\Delta_1}$ of the sequence (\ref{long exact sequence attached to continuous coh grps}), one can build the connecting homomorphism $\left(M/\varpi^{n-1}\right)^{\Delta_1}\xrightarrow{\delta} H^1_{\mathrm{cont}}(\Delta_1,\varpi^{n-1}M)$ explicitly and check the exactness by hand without applying Lemma \ref{continuous cohomology induces a long exact sequence}. 
	%\end{remark}
	
	%Combining Lemma \ref{A{infty}' to A{infty}} and Lemma \ref{A{infty} to A{infty}'}, we have the following result:
	
	%\begin{lem}\label{descent from R_n' to R_n}
	%	The functor
	%	\begin{align*} \modet_{\varphi_q,\mathcal{O}_K^{\times}}(W_E(A_{\infty})/\varpi^n)&\to\modet_{\varphi_q,(\mathcal{O}_K^{\times})^f}(W_E(A_{\infty}')/\varpi^{n})\\
	%	M& \mapsto W_E(A_{\infty}')/\varpi^n\otimes_{W_E(A_{\infty})/\varpi^n}M
	%	\end{align*}
	%	is an equivalence of categories for $n\geq 1$ with a quasi-inverse given by $(-)^{\Delta_1}$.
	%\end{lem}
	
	Now we are able to prove the following theorem:
	\begin{thm}\label{descent from W_E(A{infty}') to W_E(A{infty})}
		The functor
		\begin{align*} \modet_{\varphi_q,\mathcal{O}_K^{\times}}(W_E(A_{\infty}))&\to\modet_{\varphi_q,(\mathcal{O}_K^{\times})^f}(W_E(A_{\infty}'))\\
			M& \mapsto W_E(A_{\infty}')\otimes_{W_E(A_{\infty})}M
		\end{align*}
		is an equivalence of categories with a quasi-inverse given by $(-)^{\Delta_1}$.
	\end{thm}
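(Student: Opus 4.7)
The strategy is to reduce to the mod $\varpi^n$ setting treated by Lemma \ref{A{infty} to A{infty}'}, and then pass to the inverse limit. Let $M$ be a finite projective étale $(\varphi_q,(\mathcal{O}_K^{\times})^f)$-module over $W_E(A_{\infty}')$. By Lemma \ref{description of finte projective modules}, $M$ is in fact finite free of some rank $r$, hence $\varpi$-adically separated and complete with $M \cong \varprojlim_n M/\varpi^n$. By Lemma \ref{A{infty} to A{infty}'}, for every $n\geq 1$ the invariants $(M/\varpi^n)^{\Delta_1}$ form a finite free étale $(\varphi_q,\mathcal{O}_K^{\times})$-module over $W_E(A_{\infty})/\varpi^n$ of rank $r$, and $W_E(A_{\infty}')/\varpi^n \otimes_{W_E(A_{\infty})/\varpi^n} (M/\varpi^n)^{\Delta_1} \xrightarrow{\sim} M/\varpi^n$.

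Next I would show that the transition maps $(M/\varpi^{n+1})^{\Delta_1} \twoheadrightarrow (M/\varpi^n)^{\Delta_1}$ are surjective. Applying Lemma \ref{continuous cohomology induces a long exact sequence} to the short exact sequence $0 \to \varpi^n M/\varpi^{n+1}M \to M/\varpi^{n+1} \to M/\varpi^n \to 0$ (the quotient admits a continuous set-theoretic section via Teichmüller lifts, as in \eqref{s_n}), surjectivity follows from $H^1_{\mathrm{cont}}(\Delta_1,\varpi^n M/\varpi^{n+1}M) = 0$, which is Corollary \ref{vanishing of continuous cohomology of etale modules}. Set $N \coloneq \varprojlim_n (M/\varpi^n)^{\Delta_1}$. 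Picking $e'_1,\dots,e'_r \in N$ whose images in $(M/\varpi)^{\Delta_1}$ form a basis over $A_{\infty}$, the Mittag-Leffler-type compatibility coming from surjectivity, together with Nakayama at each level, shows that $(e'_i)$ is simultaneously a $W_E(A_{\infty})/\varpi^n$-basis of $(M/\varpi^n)^{\Delta_1}$ for every $n$. Hence $N$ is finite free of rank $r$ over $W_E(A_{\infty})$.

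I would then identify $N$ with $M^{\Delta_1}$: since $M$ is $\varpi$-adically separated and the cohomology obstructions vanish, $M^{\Delta_1} = (\varprojlim M/\varpi^n)^{\Delta_1} = \varprojlim (M/\varpi^n)^{\Delta_1} = N$. The $\varphi_q$- and $\mathcal{O}_K^{\times}$-actions on $M$ restrict to $M^{\Delta_1}$ (since $\varphi_q$ commutes with $(\mathcal{O}_K^{\times})^f$ and $\mathcal{O}_K^{\times}$ acts diagonally inside $(\mathcal{O}_K^{\times})^f$, normalizing $\Delta_1$). Étaleness of $M^{\Delta_1}$ as a $\varphi_q$-module follows from étaleness of each $(M/\varpi^n)^{\Delta_1}$ (Lemma \ref{A{infty} to A{infty}'}) and the fact that the linearization map $W_E(A_{\infty}) \otimes_{\varphi_q} M^{\Delta_1} \to M^{\Delta_1}$ is an inverse limit of isomorphisms between finite free modules. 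Finally, the natural map $W_E(A_{\infty}') \otimes_{W_E(A_{\infty})} M^{\Delta_1} \to M$ is an isomorphism mod $\varpi^n$ for every $n$ by Lemma \ref{A{infty} to A{infty}'}, and both sides are finite free, hence $\varpi$-adically complete, so the map itself is an isomorphism.

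For the other composition, start with a finite projective étale $(\varphi_q,\mathcal{O}_K^{\times})$-module $N$ over $W_E(A_{\infty})$, which is finite free by Lemma \ref{description of finte projective modules}; then $W_E(A_{\infty}') \otimes_{W_E(A_{\infty})} N$ is finite free and $\varpi$-adically complete, and Lemma \ref{A{infty} to A{infty}'} yields $(W_E(A_{\infty}')/\varpi^n \otimes N/\varpi^n)^{\Delta_1} \cong N/\varpi^n$, which upon taking $\varprojlim_n$ gives $(W_E(A_{\infty}') \otimes N)^{\Delta_1} \cong N$. The principal obstacle is the inverse limit step: one must verify carefully that $(-)^{\Delta_1}$ commutes with $\varprojlim_n (-/\varpi^n)$ (handled by the vanishing of $H^1_{\mathrm{cont}}$ at each level) and that the resulting limit is genuinely finite free rather than merely a topological module (handled by lifting a basis through surjective transition maps); once these are in place, étaleness and functoriality are formal.
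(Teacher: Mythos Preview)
Your proof is correct and follows essentially the same approach as the paper: reduce to the mod $\varpi^n$ statement (Lemma \ref{A{infty} to A{infty}'}), use the vanishing of $H^1_{\mathrm{cont}}(\Delta_1,-)$ to get surjective transition maps, and pass to the inverse limit. The only cosmetic differences are that the paper identifies $M^{\Delta_1}=\varprojlim_n(M/\varpi^n)^{\Delta_1}$ at the outset rather than defining an auxiliary $N$, and finishes the isomorphism $W_E(A_{\infty}')\otimes M^{\Delta_1}\to M$ via Nakayama after checking it mod $\varpi$, whereas you invoke $\varpi$-adic completeness directly; both arguments are equivalent.
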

	\begin{proof}
		It is easy to check that $\left(-\right)^{\Delta_1}\circ \left(W_E(A_{\infty}')\otimes_{W_E(A_{\infty})}-\right)\cong \mathrm{Id}_{\modet_{\varphi_q,\mathcal{O}_K^{\times}}(W_E(A_{\infty}))}$. We need to show that for any finite projective étale $(\varphi_q,(\mathcal{O}_K^{\times})^f)$-module $M$ over $W_E(A_{\infty}')$ of rank $r$, $M^{\Delta_1}$ is a finite projective étale $(\varphi_q,\mathcal{O}_K^{\times})$-module over $W_E(A_{\infty})$ of rank $r$, and there is a canonical isomorphism
		\[W_E(A_{\infty}')\otimes_{W_E(A_{\infty})}M^{\Delta_1}\cong M.\]
		Since $M$ is a finite projective étale $(\varphi_q,(\mathcal{O}_K^{\times})^f)$-module over $W_E(A_{\infty}')$, we have $M\cong \varprojlim_n\left(M/\varpi^n\right)$,
		and $M/\varpi^n$ is a finite projective étale $(\varphi_q,\mathcal{O}_K^{\times})$-module over $W_E(A_{\infty}')/\varpi^n$ of rank $r$ for every $n$. Thus by Lemma \ref{A{infty} to A{infty}'}, $\left(M/\varpi^n\right)^{\Delta_1}$ is a finite projective étale $(\varphi_q,\mathcal{O}_K^{\times})$-module over $W_E(A_{\infty})/\varpi^n$ of rank $r$. Moreover, there is a natural injection of abelian groups $\varprojlim_n\left(M/\varpi^n\right)^{\Delta_1}\hookrightarrow \varprojlim_n M/\varpi^n\cong M$, and the image of $\varprojlim_n\left(M/\varpi^n\right)^{\Delta_1}$ is exactly $M^{\Delta_1}$. Hence we can identify  $\varprojlim_n\left(M/\varpi^n\right)^{\Delta_1}$ with $M^{\Delta_1}$. 
		
		The proof of Lemma \ref{A{infty} to A{infty}'} shows that the transition map $\left(M/\varpi^n\right)^{\Delta_1}\to \left(M/\varpi^{n-1}\right)^{\Delta_1}$
		is surjective and can be identified with $\left(M/\varpi^{n}\right)^{\Delta_1}\twoheadrightarrow\left(M/\varpi^{n}\right)^{\Delta_1}/\varpi^{n-1}$. Since $\left(M/\varpi^n\right)^{\Delta_1}$ is a free $W_E(A_{\infty})/\varpi^n$-module of rank $r$ for every $n\geq 1$, the inverse limit $\varprojlim_n\left(M/\varpi^n\right)^{\Delta_1}$ is a free $W_E(A_{\infty})$-module of rank $r$. 
		
		The endomorphism $\varphi_q: W_E(A_{\infty})\to W_E(A_{\infty})$ is an isomorphism, hence
		\begin{align*}
			W_E(A_{\infty})\otimes_{\varphi_q,W_E(A_{\infty})}\varprojlim_n\left(M/\varpi^n\right)^{\Delta_1}&\cong \varprojlim_n\left(W_E(A_{\infty})\otimes_{\varphi_q,W_E(A_{\infty})}\left(M/\varpi^n\right)^{\Delta_1}\right)\\
			&\cong \varprojlim_n\left(W_E(A_{\infty})/\varpi^n\otimes_{\varphi_q,W_E(A_{\infty})/\varpi^n}\left(M/\varpi^n\right)^{\Delta_1}\right)\\
			&\cong \varprojlim_n\left(M/\varpi^n\right)^{\Delta_1},
		\end{align*}
		where the last isomorphism uses the étaleness of $\left(M/\varpi^n\right)^{\Delta_1}$ for $n\geq 1$.
		Hence $\varprojlim_n\left(M/\varpi^n\right)^{\Delta_1}=M^{\Delta_1}$ is a finite projective étale $(\varphi_q,\mathcal{O}_K^{\times})$-module over $W_E(A_{\infty})$ of rank $r$.
		
		It remains to show that the natural injection $W_{E}(A_{\infty}')\otimes_{W_E(A_{\infty})}M^{\Delta_1}\hookrightarrow M$ is an isomorphism. For $m_1\in \left(M/\varpi\right)^{\Delta_1}$, since every transition map of the inverse system $\left\{\left(M/\varpi^n\right)^{\Delta_1}: n\geq 1\right\}$ is surjective, %by induction on $n\geq 2$, for every $n\geq 2$, we can pick $m_n\in \left(M/\varpi^n\right)^{\Delta_1}$ such that the image of $m_n$ under the surjection $\left(M/\varpi^n\right)^{\Delta_1}\twoheadrightarrow \left(M/\varpi^{n-1}\right)^{\Delta_1}$ is $m_{n-1}$, thus $(m_n)_{n\geq 1}$ is an element of $ \varprojlim_n\left(M/\varpi^n\right)^{\Delta_1}$, whose image under the projection $ \varprojlim_n\left(M/\varpi^n\right)^{\Delta_1}\to \left(M/\varpi^1\right)^{\Delta_1}$ is $m_1$. Therefore, 
		the natural projection 
		\[\mathrm{pr}_1: M^{\Delta_1}= \varprojlim_n\left(M/\varpi^n\right)^{\Delta_1}\to  \left(M/\varpi\right)^{\Delta_1}\]
		must be surjective.
		On the other hand, applying $\left(-\right)^{\Delta_1}$ to the short exact sequence $0\to \varpi M\to M\to M/\varpi\to 0$, we get an exact sequence
		\[0\to \varpi M^{\Delta_1}\to M^{\Delta_1}\xrightarrow{\mathrm{pr}_1} \left(M/\varpi \right)^{\Delta_1},\]
		thus the map $\mathrm{pr}_1$ induces an isomorphism of $A_{\infty}$-modules
		\[M^{\Delta_1}/\varpi\cong \left(M/\varpi\right)^{\Delta_1}.\]
		
		By Lemma \ref{A{infty} to A{infty}'}, $A_{\infty}'\otimes_{A_{\infty}}\left(M/\varpi\right)^{\Delta_1}\cong M/\varpi$, thus the natural map $W_{E}(A_{\infty}')\otimes_{W_E(A_{\infty})}M^{\Delta_1}\hookrightarrow M$ induces an isomorphism
		\[\left(W_{E}(A_{\infty}')\otimes_{W_E(A_{\infty})}M^{\Delta_1}\right)/\varpi\cong A_{\infty}'\otimes_{A_{\infty}}M^{\Delta_1}/\varpi\cong A_{\infty}'\otimes_{A_{\infty}}\left(M/\varpi\right)^{\Delta_1}\cong M/\varpi.\]
		Since $1+\varpi x$ is invertible for every $x\in W_E(A_{\infty}')$,  we see that $\varpi$ is contained in the Jacobson radical of $W_E(A_{\infty}')$, hence by Nakayama's lemma, the natural injection $W_{E}(A_{\infty}')\otimes_{W_E(A_{\infty})}M^{\Delta_1}\hookrightarrow M$ is surjective, thus it is an isomorphism.
	\end{proof}
	
	\begin{cor}\label{vanishing of continuous cohomology of M}
		Let $M$ be a finite projective étale $(\varphi_q,(\mathcal{O}_K^{\times})^{f})$-module over $W_E(A_{\infty}')$, then
		\[H^k_{\mathrm{cont}}\left(\Delta_1,M\right)=0,\quad k\geq 1.\]
	\end{cor}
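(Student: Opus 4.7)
The plan is to reduce to the case $M=W_E(A_\infty')$, which is already handled by Corollary \ref{vanishing of continuous cohomology of W_E(A_{infty}')}. The descent result of Theorem \ref{descent from W_E(A{infty}') to W_E(A{infty})} lets us trivialize $M$ as a $\Delta_1$-module up to choosing a basis of $M^{\Delta_1}$.

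First, by Theorem \ref{descent from W_E(A{infty}') to W_E(A{infty})}, $N\coloneq M^{\Delta_1}$ is a finite projective étale $(\varphi_q,\mathcal{O}_K^{\times})$-module over $W_E(A_{\infty})$, and the natural map $W_E(A_{\infty}')\otimes_{W_E(A_{\infty})}N\xrightarrow{\sim}M$ is a $\Delta_1$-equivariant isomorphism, where $\Delta_1$ acts trivially on $N$ and through the given action on $W_E(A_{\infty}')$. By Lemma \ref{description of finte projective modules}, $N$ is free of some rank $r$ over $W_E(A_{\infty})$, so a choice of basis of $N$ yields an isomorphism of topological $\Delta_1$-modules
\[M\cong W_E(A_{\infty}')^{\oplus r},\]
where the right-hand side carries the componentwise action of $\Delta_1$.

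Next, since continuous maps from the profinite group $\Delta_1^{n}$ to a finite product commute with the product, the complex computing $H^{\bullet}_{\mathrm{cont}}(\Delta_1,M)$ decomposes as a direct sum of $r$ copies of the complex computing $H^{\bullet}_{\mathrm{cont}}(\Delta_1,W_E(A_{\infty}'))$. Passing to cohomology gives
\[H^k_{\mathrm{cont}}(\Delta_1,M)\cong H^k_{\mathrm{cont}}(\Delta_1,W_E(A_{\infty}'))^{\oplus r}\]
for every $k\geq 0$. Applying Corollary \ref{vanishing of continuous cohomology of W_E(A_{infty}')} to the right-hand side yields the claimed vanishing for $k\geq 1$.

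There is essentially no obstacle here beyond bookkeeping; the only point to check is that the isomorphism $M\cong W_E(A_{\infty}')^{\oplus r}$ constructed from a $W_E(A_{\infty})$-basis of $N$ is a homeomorphism for the weak topology and is $\Delta_1$-equivariant, both of which are immediate because $\Delta_1$ fixes the basis elements and the weak topology on $M$ is by construction the product topology induced by any such basis.
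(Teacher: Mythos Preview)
Your proof is correct and follows the same approach as the paper: use Theorem \ref{descent from W_E(A{infty}') to W_E(A{infty})} to identify $M$ with $W_E(A_{\infty}')^{\oplus r}$ as a $\Delta_1$-module, then apply Corollary \ref{vanishing of continuous cohomology of W_E(A_{infty}')}. The paper's proof is simply a two-sentence summary of exactly what you spelled out in detail.
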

	\begin{proof}
		By Theorem \ref{descent from W_E(A{infty}') to W_E(A{infty})}, as a $\Delta_1$-module, $M\cong W_E(A_{\infty}')^{\oplus r}$ for some $r\geq 0$. Then the conclusion follows from Corollary \ref{vanishing of continuous cohomology of W_E(A_{infty}')}. 
	\end{proof}
	
	\subsection{Descent for finitely presented modules}
	
	In this section, we show that the functor $(-)^{\Delta_1}$ is an equivalence of categories from $\modetfp_{\varphi_q,(\mathcal{O}_K^{\times})^f}(W_E(A_{\infty}'))$ to $\modetfp_{\varphi_q,\mathcal{O}_K^{\times}}(W_E(A_{\infty}))$ with a quasi-inverse given by $W_E(A_{\infty}')\otimes_{m,W_E(A_{\infty})}-$.
	
	Note that the map
	\begin{align*}
		A_{\infty}=\mathbb{F}\left(\negthinspace\left(T_{\sigma_{0}}^{p^{-\infty}}\right)\negthinspace\right)\left\langle\left(\frac{T_{\sigma_i}}{T_{\sigma_{0}}}\right)^{\pm 1/ p^{\infty}}: i\right\rangle&\longrightarrow A_{\infty}'=\mathbb{F}\left(\negthinspace\left(T_{\mathrm{LT},0}^{1/p^{\infty}}\right)\negthinspace\right)\left\langle\left(\frac{T_{\mathrm{LT},i}}{T_{\mathrm{LT},0}^{p^{i}}}\right)^{\pm 1/p^{\infty}}:  i\right\rangle\\
		T_{\sigma_{i}}&\mapsto T_{\mathrm{LT},i}^{1/p^{i}}
	\end{align*}
	is an isomorphism of perfectoid algebras commuting with $\varphi_q$, and it lifts to an isomorphism of topological rings $W_E(A_{\infty})\xrightarrow{\sim}W_E(A_{\infty}')$ commuting with $\varphi_q$, hence Proposition \ref{description of finitely presented etale phi_q-module over W_E(A_{infty})} implies that
	\begin{prop}\label{description of finitely presented etale phi_q-module over W_E(A_{infty}')}
		Let $M$ be a non-zero finitely presented étale $\varphi_q$-module over  $W_E(A_{\infty}')$. Then there exists $1\leq n_1<\dots<n_r\leq \infty$ and $m_1,\dots,m_r\geq 1$ such that
		\[M\cong \bigoplus_{i=1}^r\left(W_E(A_{\infty}')/\varpi^{n_i}\right)^{\oplus m_i},\]
		here we use the convention that $W_E(A_{\infty}')/\varpi^{\infty}=W_E(A_{\infty}')$.
	\end{prop}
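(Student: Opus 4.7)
The plan is to reduce the statement directly to the analogous result already proved for $W_E(A_{\infty})$, namely Proposition \ref{description of finitely presented etale phi_q-module over W_E(A_{infty})}. The paragraph immediately preceding the statement already identifies the crucial input: the assignment $T_{\sigma_i}\mapsto T_{\mathrm{LT},i}^{1/p^i}$ defines a map $A_{\infty}\to A_{\infty}'$ of perfectoid $\mathbb{F}$-algebras which intertwines the two $\varphi_q$-actions, since both sides send $T_{\sigma_i}$ (resp.\ $T_{\mathrm{LT},i}^{1/p^i}$) to its $q$-th power under $\varphi_q$. Once this is established, functoriality of Witt vectors lifts it to an isomorphism $W_E(A_{\infty})\xrightarrow{\sim}W_E(A_{\infty}')$ of topological rings commuting with $\varphi_q$.

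Given this $\varphi_q$-equivariant isomorphism of rings, I would then transport the structure theorem along it. Explicitly, let $M$ be a non-zero finitely presented étale $\varphi_q$-module over $W_E(A_{\infty}')$. Pulling back the scalar action along the isomorphism $W_E(A_{\infty})\xrightarrow{\sim}W_E(A_{\infty}')$ turns $M$ into a finitely presented étale $\varphi_q$-module $M'$ over $W_E(A_{\infty})$ (étaleness is preserved because the isomorphism commutes with $\varphi_q$, so the linearization map of $M'$ is identified with that of $M$). Applying Proposition \ref{description of finitely presented etale phi_q-module over W_E(A_{infty})} to $M'$ produces integers $1\leq n_1<\cdots<n_r\leq\infty$ and $m_1,\ldots,m_r\geq 1$ together with an isomorphism
\[
M'\cong \bigoplus_{i=1}^{r}\left(W_E(A_{\infty})/\varpi^{n_i}\right)^{\oplus m_i}
\]
of étale $\varphi_q$-modules over $W_E(A_{\infty})$. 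Pushing this back along the isomorphism yields the desired decomposition of $M$ over $W_E(A_{\infty}')$, since the uniformizer $\varpi$ of $\mathcal{O}_E$ is the same on both sides and the quotients $W_E(A_{\infty})/\varpi^{n}$ correspond to $W_E(A_{\infty}')/\varpi^{n}$.

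There is essentially no obstacle: the entire content is the verification that $T_{\sigma_i}\mapsto T_{\mathrm{LT},i}^{1/p^i}$ is a well-defined $\varphi_q$-equivariant isomorphism of the two perfectoid algebras (which is immediate from the explicit formulas) and the functoriality of $W_E(-)$ with respect to such maps. The rest is a transport of structure.
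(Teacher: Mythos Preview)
Your proposal is correct and follows essentially the same approach as the paper: the paper also observes (in the paragraph immediately preceding the statement) that $T_{\sigma_i}\mapsto T_{\mathrm{LT},i}^{1/p^i}$ gives a $\varphi_q$-equivariant isomorphism $A_{\infty}\xrightarrow{\sim}A_{\infty}'$ of perfectoid algebras, lifts it to $W_E(A_{\infty})\xrightarrow{\sim}W_E(A_{\infty}')$, and then invokes Proposition~\ref{description of finitely presented etale phi_q-module over W_E(A_{infty})}. Your transport-of-structure argument is exactly the content of that deduction.
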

	As a consequence, we can show that
	
	\begin{prop}\label{vanishing of cohomology groups of f.p. etale (phi_q,O_K*)-modules}
		Let $M$ be a finitely presented étale $(\varphi_q,(\mathcal{O}_K^{\times})^f)$-module over  $W_E(A_{\infty}')$. Then
		\[H^k_{\mathrm{cont}}(\Delta_1,M)=0,\quad k\geq 1.\]
	\end{prop}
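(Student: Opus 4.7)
The plan is to reduce to the finite projective case handled by Corollary \ref{vanishing of continuous cohomology of M}, via a two-stage dévissage: first separating the $\varpi^{\infty}$-torsion from the torsion-free part of $M$, then inducting on the annihilator of the torsion part. The key observation is that although Proposition \ref{description of finitely presented etale phi_q-module over W_E(A_{infty}')} describes $M$ only as a $\varphi_q$-module (not equivariantly for $(\mathcal{O}_K^{\times})^{f}$), any filtration defined canonically from the $W_E(A_{\infty}')$-module structure is automatically $(\mathcal{O}_K^{\times})^f$-stable.

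First I would set $M_{\mathrm{tors}}\coloneq \{x\in M:\varpi^n x=0\ \text{for some}\ n\geq 1\}$. By Proposition \ref{description of finitely presented etale phi_q-module over W_E(A_{infty}')}, $M_{\mathrm{tors}}$ is the sum of the summands with $n_i<\infty$, so it is killed by some $\varpi^N$, and $M/M_{\mathrm{tors}}$ is finite free over $W_E(A_{\infty}')$, hence a finite projective étale $(\varphi_q,(\mathcal{O}_K^{\times})^f)$-module. Corollary \ref{vanishing of continuous cohomology of M} then yields the vanishing of $H^k_{\mathrm{cont}}(\Delta_1,M/M_{\mathrm{tors}})$ for $k\geq 1$. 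Lifting a $W_E(A_{\infty}')$-basis of $M/M_{\mathrm{tors}}$ to $M$ and using Teichmüller lifts as in (\ref{s_n}) produces a continuous set-theoretic section of $M\twoheadrightarrow M/M_{\mathrm{tors}}$, so Lemma \ref{continuous cohomology induces a long exact sequence} applied to $0\to M_{\mathrm{tors}}\to M\to M/M_{\mathrm{tors}}\to 0$ reduces the problem to proving $H^k_{\mathrm{cont}}(\Delta_1,M_{\mathrm{tors}})=0$.

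I would then prove this vanishing by induction on the smallest $N\geq 1$ with $\varpi^N M_{\mathrm{tors}}=0$. For $N=1$, $M_{\mathrm{tors}}$ is a finitely presented étale $(\varphi_q,(\mathcal{O}_K^{\times})^f)$-module over $A_{\infty}'=W_E(A_{\infty}')/\varpi$, and Proposition \ref{description of finitely presented etale phi_q-module over W_E(A_{infty}')} applied modulo $\varpi$ forces it to be finite free over $A_{\infty}'$, so Corollary \ref{vanishing of continuous cohomology of etale modules} applies. For the inductive step I would use the canonical short exact sequence
\begin{equation*}
0\to \varpi M_{\mathrm{tors}}\to M_{\mathrm{tors}}\to M_{\mathrm{tors}}/\varpi M_{\mathrm{tors}}\to 0
\end{equation*}
of $(\varphi_q,(\mathcal{O}_K^{\times})^f)$-modules, where the structure theorem shows that $\varpi M_{\mathrm{tors}}$ and $M_{\mathrm{tors}}/\varpi M_{\mathrm{tors}}$ are finitely presented étale and annihilated respectively by $\varpi^{N-1}$ and $\varpi$; the induction hypothesis thus applies to both, and Teichmüller lifts on the summands again furnish a continuous section so that Lemma \ref{continuous cohomology induces a long exact sequence} closes the induction.

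The main subtlety is technical rather than conceptual: one must verify that each canonical subquotient remains étale as a $\varphi_q$-module, which is immediate from the explicit form of Proposition \ref{description of finitely presented etale phi_q-module over W_E(A_{infty}')} since $\varphi_q$ fixes $\varpi$ and acts as an isomorphism on $W_E(A_{\infty}')$ (so e.g.\ $\varpi M_{\mathrm{tors}}\cong\bigoplus_i(W_E(A_{\infty}')/\varpi^{n_i-1})^{\oplus m_i}$), and that continuous set-theoretic sections at each stage are supplied uniformly by Teichmüller lifts on the free summands produced by the structure theorem.
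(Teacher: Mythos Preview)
Your proposal is correct and follows essentially the same dévissage strategy as the paper: reduce to the finite projective case via Proposition \ref{description of finitely presented etale phi_q-module over W_E(A_{infty}')}, Corollary \ref{vanishing of continuous cohomology of etale modules}, and Corollary \ref{vanishing of continuous cohomology of M}, using Lemma \ref{continuous cohomology induces a long exact sequence} with Teichmüller sections at each step. The only cosmetic difference is that the paper inducts on the torsion part using the filtration by $M[\varpi^{n-1}]$ rather than your $\varpi M_{\mathrm{tors}}$, and treats the torsion case before splitting off the torsion-free part; both orderings and both filtrations work for the same reasons.
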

	\begin{proof}
		If $M$ is zero, there is nothing to prove, so we assume that $M$ is non-zero. We first deal with torsion modules, i.e. modules $M$ such that $\varpi^n M=0$ and $\varpi^{n-1} M\neq 0$ for some $n\geq 1$. We proceed by induction on $n\geq 1$. If $n=1$, Proposition \ref{description of finitely presented etale phi_q-module over W_E(A_{infty}')} implies that $M$ is a finite projective étale $(\varphi_q,(\mathcal{O}_K^{\times})^{f})$-module over $A_{\infty}'$, then applying Corollary \ref{vanishing of continuous cohomology of etale modules}, the conclusion follows. For $n\geq 2$, consider the short exact sequence
		\[0\to M[\varpi^{n-1}]\to M\to M/M[\varpi^{n-1}]\to 0,\]
		By Proposition \ref{description of finitely presented etale phi_q-module over W_E(A_{infty}')}, we can construct a continuous section (of topological spaces) $M/M[\varpi^{n-1}]\to M$ using (\ref{s_n}). Applying Lemma \ref{continuous cohomology induces a long exact sequence}, we get a long exact sequence
		\begin{align*}
			\cdots&\to H^k_{\mathrm{cont}}\left(\Delta_1,M[\varpi^{n-1}]\right)\to H^k_{\mathrm{cont}}\left(\Delta_1,M\right)\to H^k_{\mathrm{cont}}\left(\Delta_1, M/M[\varpi^{n-1}]\right)\to \cdots .
		\end{align*}
		By the induction hypothesis, 
		\[H^k_{\mathrm{cont}}\left(\Delta_1,M[\varpi^{n-1}]\right)= H^k_{\mathrm{cont}}\left(\Delta_1, M/M[\varpi^{n-1}]\right)= 0,\]
		hence for any torsion module $M$,
		\[H^k_{\mathrm{cont}}\left(\Delta_1,M\right)=0,\quad k\geq 1.\]
		
		If $M$ is torsion-free, Proposition \ref{description of finitely presented etale phi_q-module over W_E(A_{infty}')} implies $M$ is finite free, and we apply Corollary \ref{vanishing of continuous cohomology of M} to conclude.
		
		In general, let $M_{\mathrm{tor}}\coloneq \bigcup_{n=1}^{\infty}M[\varpi^n]$ be the torsion submodule of $M$ which is also finitely presented by Proposition \ref{description of finitely presented etale phi_q-module over W_E(A_{infty}')}, then $M/M_{\mathrm{tor}}$ is torsion-free, and there is an exact sequence of finitely presented étale $(\varphi_q,(\mathcal{O}_K^{\times})^f)$-modules over  $W_E(A_{\infty}')$
		\[0\to M_{\mathrm{tor}}\to M\to M/M_{\mathrm{tor}}\to 0.\]
		Again,  it is easy to see that there is a continuous section of $M\twoheadrightarrow M/M_{\mathrm{tor}}$, hence we may apply Lemma \ref{continuous cohomology induces a long exact sequence} to deduce that there is a long exact sequence
		\begin{align*}
			\dots \to H^k_{\mathrm{cont}}\left(\Delta_1,M_{\mathrm{tor}}\right)\to H^k_{\mathrm{cont}}\left(\Delta_1,M\right)\to H^k_{\mathrm{cont}}\left(\Delta_1,M/M_{\mathrm{tor}}\right)\to \dots .
		\end{align*}
		Therefore, for $k\geq 1$, $H^k_{\mathrm{cont}}\left(\Delta_1,M\right)=0$.
	\end{proof}
	
	\begin{thm}\label{descent of f.p. from W_E(A{infty}') to W_E(A{infty})}
		The functor
		\begin{align*} \modetfp_{\varphi_q,\mathcal{O}_K^{\times}}(W_E(A_{\infty}))&\to\modetfp_{\varphi_q,(\mathcal{O}_K^{\times})^f}(W_E(A_{\infty}'))\\
			M& \mapsto W_E(A_{\infty}')\otimes_{W_E(A_{\infty})}M
		\end{align*}
		is an equivalence of categories with a quasi-inverse given by $(-)^{\Delta_1}$.
	\end{thm}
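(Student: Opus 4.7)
The plan is to reduce the statement to the finite projective case handled by Theorem \ref{descent from W_E(A{infty}') to W_E(A{infty})} via dévissage, using the structural decomposition of Proposition \ref{description of finitely presented etale phi_q-module over W_E(A_{infty}')} together with the cohomology vanishing of Proposition \ref{vanishing of cohomology groups of f.p. etale (phi_q,O_K*)-modules}. The guiding principle is that, for any short exact sequence $0\to M'\to M\to M''\to 0$ of finitely presented étale $(\varphi_q,(\mathcal{O}_K^{\times})^f)$-modules over $W_E(A_{\infty}')$ that admits a continuous set-theoretic section, the vanishing $H^1_{\mathrm{cont}}(\Delta_1,M')=0$ combined with Lemma \ref{continuous cohomology induces a long exact sequence} yields exactness of $(-)^{\Delta_1}$ on such a sequence, enabling inductive Five Lemma arguments.

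Concretely, given $M\in\modetfp_{\varphi_q,(\mathcal{O}_K^{\times})^f}(W_E(A_{\infty}'))$, I would first use the torsion short exact sequence $0\to M_{\mathrm{tor}}\to M\to M/M_{\mathrm{tor}}\to 0$. By Proposition \ref{description of finitely presented etale phi_q-module over W_E(A_{infty}')}, the quotient $M/M_{\mathrm{tor}}$ is finite free, so Theorem \ref{descent from W_E(A{infty}') to W_E(A{infty})} applies directly. For $M_{\mathrm{tor}}$, which is killed by some $\varpi^n$, I would induct on $n$: the base case $\varpi M_{\mathrm{tor}}=0$ reduces $M_{\mathrm{tor}}$ to a finitely presented étale $(\varphi_q,(\mathcal{O}_K^{\times})^f)$-module over $A_{\infty}'$, which is finite free over $A_{\infty}'$ again by Proposition \ref{description of finitely presented etale phi_q-module over W_E(A_{infty}')}, and Theorem \ref{descent in char p, using Delta_1} handles both the descent and the isomorphism $A_{\infty}'\otimes_{A_{\infty}}M_{\mathrm{tor}}^{\Delta_1}\cong M_{\mathrm{tor}}$. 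The inductive step uses the sequence $0\to\varpi M_{\mathrm{tor}}\to M_{\mathrm{tor}}\to M_{\mathrm{tor}}/\varpi M_{\mathrm{tor}}\to 0$, where a continuous section is built by transporting the formula (\ref{s_n}) along a module-level decomposition from Proposition \ref{description of finitely presented etale phi_q-module over W_E(A_{infty}')}; Lemma \ref{continuous cohomology induces a long exact sequence} together with Proposition \ref{vanishing of cohomology groups of f.p. etale (phi_q,O_K*)-modules} then produces a short exact sequence on $\Delta_1$-invariants, to which the inductive hypothesis and the Five Lemma apply. Glueing the torsion and free parts back together by another Five Lemma yields that $M^{\Delta_1}$ is finitely generated over $W_E(A_{\infty})$ and that the natural map $W_E(A_{\infty}')\otimes_{W_E(A_{\infty})}M^{\Delta_1}\to M$ is an isomorphism; étaleness of $M^{\Delta_1}$ follows by a standard diagram chase using that $\varphi_q$ acts as an automorphism on $W_E(A_{\infty})$, and finite presentation of $M^{\Delta_1}$ follows by faithfully flat descent from $M$ along $W_E(A_{\infty})\hookrightarrow W_E(A_{\infty}')$ (arguing as in the finite projective case). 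The quasi-inverse property in the other direction, $N\xrightarrow{\sim}(W_E(A_{\infty}')\otimes_{W_E(A_{\infty})}N)^{\Delta_1}$ for $N\in\modetfp_{\varphi_q,\mathcal{O}_K^{\times}}(W_E(A_{\infty}))$, is then deduced by taking a finite free presentation of $N$ via Corollary \ref{f.p. phi_q-modules}, tensoring to $W_E(A_{\infty}')$, taking $\Delta_1$-invariants, and invoking Theorem \ref{descent from W_E(A{infty}') to W_E(A{infty})} together with a Five Lemma.

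The main obstacle is ensuring that each short exact sequence arising in the dévissage admits a continuous section so that Lemma \ref{continuous cohomology induces a long exact sequence} applies: since Proposition \ref{description of finitely presented etale phi_q-module over W_E(A_{infty}')} only provides a module-level decomposition, not a $(\varphi_q,(\mathcal{O}_K^{\times})^f)$-equivariant one, the continuous section has to be built by transporting (\ref{s_n}) along a \emph{chosen} $W_E(A_{\infty}')$-module isomorphism. This is fortunately sufficient, because the section need not intertwine any algebraic structure; it is the $\Delta_1$-action on the image that matters, and this is controlled by Proposition \ref{vanishing of cohomology groups of f.p. etale (phi_q,O_K*)-modules}. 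A secondary technical point is keeping track of finite presentation for $M^{\Delta_1}$, which is handled by faithfully flat descent along $W_E(A_{\infty})\hookrightarrow W_E(A_{\infty}')$ once finite generation has been established via the Five Lemma.
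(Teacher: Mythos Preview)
Your approach is essentially the paper's: d\'evissage via the torsion/free decomposition, induction on the torsion exponent using the cohomology vanishing of Proposition~\ref{vanishing of cohomology groups of f.p. etale (phi_q,O_K*)-modules} to keep $(-)^{\Delta_1}$ exact, and Five Lemma glueing. The paper uses the filtration by $M[\varpi^{n-1}]$ rather than by $\varpi M$, but this is immaterial.

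One step you should rework: you invoke ``faithfully flat descent along $W_E(A_{\infty})\hookrightarrow W_E(A_{\infty}')$'' to obtain finite presentation of $M^{\Delta_1}$, but the paper never establishes that $m$ is faithfully flat (only $A_{\mathrm{mv},E}\hookrightarrow W_E(A_{\infty})$ is shown to be, Proposition~\ref{A_{mv,E} to W_E(A_{infty}) is faithfully flat}), and for a pro-\'etale $\Delta_1$-torsor of perfectoids this is not automatic. Fortunately you don't need it: the short exact sequence $0\to (M')^{\Delta_1}\to M^{\Delta_1}\to (M'')^{\Delta_1}\to 0$ that you already produce from Lemma~\ref{continuous cohomology induces a long exact sequence} has finitely presented ends by induction, so the middle is finitely presented by the standard two-out-of-three property. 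Similarly, for the reverse direction $N\xrightarrow{\sim}(W_E(A_{\infty}')\otimes N)^{\Delta_1}$, the paper bypasses presentations entirely and just reads it off the structure theorem (Proposition~\ref{description of finitely presented etale phi_q-module over W_E(A_{infty})}) together with $(W_E(A_{\infty}')/\varpi^n)^{\Delta_1}=W_E(A_{\infty})/\varpi^n$; your route via Corollary~\ref{f.p. phi_q-modules} works too but is heavier.
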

	\begin{proof}
		Using Proposition \ref{description of finitely presented etale phi_q-module over W_E(A_{infty})}, one can easily deduce that for every finitely presented étale $(\varphi_q,\mathcal{O}_K^{\times})$-module $M$ over $W_E(A_{\infty})$, the natural map $M\to \left(W_E(A_{\infty}')\otimes_{m} M\right)^{\Delta_1}$ is an isomorphism. 
		Let $M$ be a finitely presented étale $(\varphi_q,(\mathcal{O}_K^{\times})^{f})$-module over $W_E(A_{\infty}')$, it remains to show that $M^{\Delta_1}$ is a finitely presented étale $(\varphi_q,\mathcal{O}_K^{\times})$-module over $W_E(A_{\infty})$, and the natural homomorphism $W_E(A_{\infty}')\otimes_{m}M^{\Delta_1}\to M$ is an isomorphism.
		
		If $M$ is torsion, suppose that $\varpi^{n-1}M\neq 0$, $\varpi^nM=0$, we prove that $M^{\Delta_1}$ is a finitely presented étale $(\varphi_q,\mathcal{O}_K^{\times})$-module by induction on $n\geq 1$. The case $n=1$ is Theorem \ref{descent in char p, using Delta_1}. For $n\geq 2$, there is a short exact sequence of finitely presented étale $(\varphi_q,\mathcal{O}_K^{\times})$-modules over $W_E(A_{\infty}')$ by Proposition \ref{description of finitely presented etale phi_q-module over W_E(A_{infty}')}
		\[0\to M[\varpi^{n-1}]\to M\to M/M[\varpi^{n-1}]\to 0,\]
		and taking $\Delta_1$-invariants yields an exact sequence
		\[0\to \left(M[\varpi^{n-1}]\right)^{\Delta_1}\to M^{\Delta_1}\to \left(M/M[\varpi^{n-1}]\right)^{\Delta_1}\to H^1_{\mathrm{cont}}\left(\Delta_1,M[\varpi^{n-1}]\right)\]
		Proposition \ref{vanishing of cohomology groups of f.p. etale (phi_q,O_K*)-modules} implies that $H^1_{\mathrm{cont}}\left(\Delta_1,M[\varpi^{n-1}]\right)=0$, hence there is an exact sequence of $W_E(A_{\infty})$-modules
		\begin{align}\label{short exact sequence of Delta_1-invariants}
			0\to \left(M[\varpi^{n-1}]\right)^{\Delta_1}\to M^{\Delta_1}\to \left(M/M[\varpi^{n-1}]\right)^{\Delta_1}\to0.
		\end{align}
		The induction hypothesis implies that $\left(M[\varpi^{n-1}]\right)^{\Delta_1}$ and $\left(M/M[\varpi^{n-1}]\right)^{\Delta_1}$ are finitely presented, hence $M^{\Delta_1}$ is also finitely presented. By a standard argument using five lemma, $M^{\Delta_1}$ is étale. Thus $M^{\Delta_1}$ is a finitely presented étale $(\varphi_q,\mathcal{O}_K^{\times})$-module. Moreover, applying $W_E(A_{\infty}')\otimes_m-$ to the short exact sequence (\ref{short exact sequence of Delta_1-invariants}), there is a commutative diagram with exact rows
		\begin{equation*}
			\begin{tikzcd}[column sep=small]
				& W_E(A_{\infty}')\otimes_m \left(M[\varpi^{n-1}]\right)^{\Delta_1}\arrow[r]\arrow[d,"\wr"]&W_E(A_{\infty}')\otimes_mM^{\Delta_1}\arrow[r]\arrow[d]& W_E(A_{\infty}')\otimes_m \left(M/M[\varpi^{n-1}]\right)^{\Delta_1}\arrow[r]\arrow[d,"\wr"]&0\\
				0\arrow[r]& M[\varpi^{n-1}]\arrow[r]&M\arrow[r]& M/M[\varpi^{n-1}]\arrow[r]&0.
			\end{tikzcd}
		\end{equation*}
		The left and right vertical maps are isomorphisms by the induction hypothesis, then the snake lemma implies that the middle vertical map is an isomorphism.
		
		If $M$ is torsion-free, by Proposition \ref{description of finitely presented etale phi_q-module over W_E(A_{infty}')}, $M$ is free. Then the conclusion follows from Theorem \ref{descent from W_E(A{infty}') to W_E(A{infty})}.
		
		For general $M$, by Proposition \ref{description of finitely presented etale phi_q-module over W_E(A_{infty}')}, there is a short exact sequence of finitely presented étale $(\varphi_q,\mathcal{O}_K^{\times})$-modules over $W_E(A_{\infty}')$
		\[0\to M_{\mathrm{tor}}\to M\to M/M_{\mathrm{tor}}\to 0.\]
		The same arguments as for torsion representations applies here as well, allowing us to conclude. 
	\end{proof}
	
	\section{Review of classical $(\varphi,\Gamma)$-modules}\label{section 5}
	
	For a finite $\mathbb{Z}_p$-algebra $R$ with the $p$-adic topology, we denote the category of finite $R$-modules with a continuous $R$-linear $\gal(\overline{K}/K)$-action by $\rep_{R}(\gal(\overline{K}/K))$, and let $\repfr_{R}(\gal(\overline{K}/K))$ be the full subcategory consisting of continuous $R$-representations of $\gal(\overline{K}/K)$ as finite free $R$-modules. In this section, we recall some results on classical étale $(\varphi,\Gamma)$-modules. The main reference is \cite{Fontaine1990}, \cite{schneider2017galois} and \cite{chiarellotto2014note}.
	
	\subsection{Lubin-Tate and cyclotomic $(\varphi,\Gamma)$-modules}
	
	Recall that in Section \ref{injection in L-T case} we define the ring $A_{\mathrm{LT},K}$ as the $p$-adic completion of $\mathcal{O}_K[\negthinspace[T_{\mathrm{LT}}]\negthinspace][\frac{1}{T_{\mathrm{LT}}}]$, and there is a continuous $(\varphi_q,\mathcal{O}_K^{\times})$-action on $A_{\mathrm{LT},K}$ given by (\ref{phi_q,O_k^* action ass. to Lubin-Tate formal groups}).
	Let $\left(A_{\mathrm{LT},K}^{\mathrm{ur}}\right)^{\wedge}$ be the $p$-adic completion of the maximal unramified extension of $A_{\mathrm{LT},K}$, which admits an action by $\gal(\overline{K}/K)$. The ring $\left(A_{\mathrm{LT},K}^{\mathrm{ur}}\right)^{\wedge}$ is a discrete valuation ring with residue field isomorphic to $\mathbb{F}_q(\negthinspace(T_{\mathrm{LT}})\negthinspace)^{\mathrm{sep}}$ (\cite[Lemma 6.3]{chiarellotto2014note}). Let $\rho$ be a finite type $\mathcal{O}_K$-representation of $\gal(\overline{K}/K)$, the abelian group
	\[D_{\mathrm{LT}}(\rho)\coloneq\left(\left(A_{\mathrm{LT},K}^{\mathrm{ur}}\right)^{\wedge}\otimes_{\mathcal{O}_K}\rho\right)^{\gal(\overline{K}/K_{\infty})}\]
	is a finitely presented étale $(\varphi_q,\mathcal{O}_K^{\times})$-module over $A_{\mathrm{LT},K}$, and
	\[D_{\mathrm{LT}}: \rep_{\mathcal{O}_K}(\gal(\overline{K}/K))\to \modetfp_{\varphi_q,\mathcal{O}_K^{\times}}\left(A_{\mathrm{LT},K}\right)\] 
	is an equivalence of categories (see for instance \cite[Theorem 3.3.10]{schneider2017galois}). Moreover, the restriction $D_{\mathrm{LT}}: \repfr_{\mathcal{O}_K}(\gal(\overline{K}/K))\to \modet_{\varphi_q,\mathcal{O}_K^{\times}}\left(A_{\mathrm{LT},K}\right)$ is also an equivalence of categories which preserves the rank. As a consequence, we have
	\begin{lem}
		The functor
		\begin{align*}
			D_{\mathrm{LT},\sigma_{0}}: \rep_{\mathcal{O}_E}(\gal(\overline{K}/K)) & \to \modetfp_{\varphi_q,\mathcal{O}_K^{\times}} (A_{\mathrm{LT},E,\sigma_{0}})\\
			\rho &\mapsto \left(\left(\mathcal{O}_E\otimes_{\sigma_{0},\mathcal{O}_K}\left(A_{\mathrm{LT},K}^{\mathrm{ur}}\right)^{\wedge}\right)\otimes_{\mathcal{O}_E}\rho\right)^{\gal(\overline{K}/K_{\infty})}
		\end{align*}
		is an equivalence of categories, and its restriction $D_{\mathrm{LT},\sigma_{0}}: \repfr_{\mathcal{O}_E}(\gal(\overline{K}/K)) \to \modet_{\varphi_q,\mathcal{O}_K^{\times}} (A_{\mathrm{LT},E,\sigma_{0}})$ is also an equivalence of categories which preserves the rank.
	\end{lem}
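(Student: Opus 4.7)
The plan is to bootstrap the $\mathcal{O}_E$-version from the $\mathcal{O}_K$-equivalence recalled just before, by treating the $\mathcal{O}_E$-action as extra data tracked through $D_{\mathrm{LT}}$ by functoriality. The starting point is the canonical $\gal(\overline{K}/K)$-equivariant identification
\[\bigl(\mathcal{O}_E\otimes_{\sigma_{0},\mathcal{O}_K}(A_{\mathrm{LT},K}^{\mathrm{ur}})^{\wedge}\bigr)\otimes_{\mathcal{O}_E}\rho \;\cong\; (A_{\mathrm{LT},K}^{\mathrm{ur}})^{\wedge}\otimes_{\sigma_{0},\mathcal{O}_K}\rho,\]
whose $\gal(\overline{K}/K_{\infty})$-invariants identify $D_{\mathrm{LT},\sigma_{0}}(\rho)$ with $D_{\mathrm{LT}}(\rho|_{\mathcal{O}_K})$ as $A_{\mathrm{LT},K}$-modules with $(\varphi_q,\mathcal{O}_K^{\times})$-action; the $\mathcal{O}_E$-action on $\rho$ commutes with everything in sight and promotes this to a module over $A_{\mathrm{LT},E,\sigma_{0}}=\mathcal{O}_E\otimes_{\sigma_{0},\mathcal{O}_K}A_{\mathrm{LT},K}$. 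Finite presentation and étaleness over $A_{\mathrm{LT},E,\sigma_{0}}$ then follow from the $A_{\mathrm{LT},K}$-version by base change along the finite free morphism $A_{\mathrm{LT},K}\to A_{\mathrm{LT},E,\sigma_{0}}$.

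For fully faithfulness, I would observe that a morphism $f\colon D_{\mathrm{LT},\sigma_{0}}(\rho)\to D_{\mathrm{LT},\sigma_{0}}(\rho')$ is, in particular, $A_{\mathrm{LT},K}$-linear and $(\varphi_q,\mathcal{O}_K^{\times})$-equivariant, so by the $\mathcal{O}_K$-equivalence it equals $D_{\mathrm{LT}}(\tilde f)$ for a unique Galois-equivariant $\mathcal{O}_K$-linear $\tilde f\colon\rho\to\rho'$; for each $a\in\mathcal{O}_E$, applying the faithful functor $D_{\mathrm{LT}}$ to the $\mathcal{O}_K[\gal(\overline{K}/K)]$-linear map $a\tilde f-\tilde f a$ recovers $af-fa=0$, forcing $\tilde f$ to be $\mathcal{O}_E$-linear. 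For essential surjectivity, given $M$ in $\modetfp_{\varphi_q,\mathcal{O}_K^{\times}}(A_{\mathrm{LT},E,\sigma_{0}})$, restriction of scalars makes $M$ a finitely presented étale $(\varphi_q,\mathcal{O}_K^{\times})$-module over $A_{\mathrm{LT},K}$, hence a unique $\rho_0\in\rep_{\mathcal{O}_K}(\gal(\overline{K}/K))$ with $D_{\mathrm{LT}}(\rho_0)\cong M$. For $a\in\mathcal{O}_E$, multiplication by $a$ on $M$ is $A_{\mathrm{LT},K}$-linear and commutes with $(\varphi_q,\mathcal{O}_K^{\times})$, so the $\mathcal{O}_K$-equivalence yields a unique $\tilde a\in\mathrm{End}_{\mathcal{O}_K[\gal(\overline{K}/K)]}(\rho_0)$; functoriality makes $a\mapsto\tilde a$ a ring homomorphism $\mathcal{O}_E\to\mathrm{End}(\rho_0)$, endowing $\rho_0$ with the desired $\mathcal{O}_E$-structure.

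Finally, the rank claim uses that $A_{\mathrm{LT},E,\sigma_{0}}$ is a discrete valuation ring (hence local) that is free of rank $[E:K]$ over $A_{\mathrm{LT},K}$: if $\rho$ is $\mathcal{O}_E$-free of rank $r$, then $\rho|_{\mathcal{O}_K}$ is $\mathcal{O}_K$-free of rank $r[E:K]$, so $D_{\mathrm{LT}}(\rho|_{\mathcal{O}_K})$ is $A_{\mathrm{LT},K}$-free of rank $r[E:K]$, which by faithfully flat descent along $A_{\mathrm{LT},K}\to A_{\mathrm{LT},E,\sigma_{0}}$ forces $D_{\mathrm{LT},\sigma_{0}}(\rho)$ to be finite projective—hence finite free by locality—over $A_{\mathrm{LT},E,\sigma_{0}}$ of rank $r$. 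The main thing I expect to have to track carefully is the compatibility, in essential surjectivity, of the produced ring homomorphism $\mathcal{O}_E\to\mathrm{End}(\rho_0)$ with $\sigma_{0}\colon\mathcal{O}_K\hookrightarrow\mathcal{O}_E$ and the original $\mathcal{O}_K$-action on $\rho_0$; this will reduce, via faithfulness of $D_{\mathrm{LT}}$, to the tautological identity $\sigma_{0}(b)\otimes 1=1\otimes b$ in $A_{\mathrm{LT},E,\sigma_{0}}$ for $b\in\mathcal{O}_K$.
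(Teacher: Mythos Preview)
Your approach is correct and is exactly what the paper intends: it states the lemma as an immediate ``consequence'' of the $\mathcal{O}_K$-equivalence $D_{\mathrm{LT}}$ without further argument, and your bootstrap via tracking the $\mathcal{O}_E$-action through functoriality is the natural way to fill this in. The paper uses precisely this pattern elsewhere (see the proof of Theorem~\ref{descent of phi_q-module} for the passage from $K$ to general $E$), so you have recovered the implicit reasoning.

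One small correction in your rank argument: the phrase ``faithfully flat descent along $A_{\mathrm{LT},K}\to A_{\mathrm{LT},E,\sigma_{0}}$'' is not the right tool here, since descent of projectivity goes the other direction (from projectivity after base change to projectivity before). What you actually want is simpler: $A_{\mathrm{LT},E,\sigma_{0}}$ is a discrete valuation ring, and $D_{\mathrm{LT},\sigma_{0}}(\rho)$ is finitely generated over it (being finitely generated over the subring $A_{\mathrm{LT},K}$) and torsion-free (since it is free over $A_{\mathrm{LT},K}$ and the uniformizer $\varpi$ of $A_{\mathrm{LT},E,\sigma_{0}}$ divides a power of $p$). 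Hence it is free, and comparing $A_{\mathrm{LT},K}$-ranks gives the correct rank $r$.
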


	Let $K(\mu_{p^\infty})$ be the abelian extension of $K$ contained in $\overline{K}$ generated by all $p^n$-th roots of unity for all $n\geq 1$. Since $K/\qp$ is unramified, we have $\gal(K(\mu_{p^\infty})/K)\cong \gal(\qp(\mu_{p^\infty})/\qp)\cong\mathbb{Z}_p^{\times}$.
	
	Recall that there is a continuous $(\varphi,\mathbb{Z}_p^{\times})$-action on $A_{\mathrm{cycl},K}$ given by (\ref{cycltomic phi,Gmma action}). Let $\left(A_{\mathrm{cycl},K}^{\mathrm{ur}}\right)^{\wedge}$ be the $p$-adic completion of the maximal unramified extension of $A_{\mathrm{cycl},K}$, which admits an action by $\gal(\overline{K}/K)$. For $0\leq i\leq f-1$ and any embedding $\sigma_{i}:\mathcal{O}_K\hookrightarrow \mathcal{O}_E$, we put $A_{\mathrm{cycl},E,\sigma_{i}}\coloneq \mathcal{O}_E\otimes_{\sigma_{i},\mathcal{O}_K}A_{\mathrm{cycl},K}$, then
	\[\mathcal{O}_E\otimes_{\mathbb{Z}_p}A_{\mathrm{cycl},K}\cong \prod_{i=0}^{f-1}A_{\mathrm{cycl},E,\sigma_{i}},\]
	and the endomorphism $\varphi$ of $\mathcal{O}_E\otimes_{\mathbb{Z}_p}A_{\mathrm{cycl},K}$ induces a homomorphism $\varphi: A_{\mathrm{cycl},E,\sigma_{i}}\to A_{\mathrm{cycl},E,\sigma_{i-1}}$ for every $i$. Let $\varphi_q\coloneq\varphi^f$ be the endomorphism of $A_{\mathrm{cycl},E,\sigma_{i}}$.
	
	For any finite type $\mathcal{O}_E$-representation $\rho$ of $\gal(\overline{K}/K)$ and $0\leq i\leq f-1$, by \cite{Fontaine1990}, the abelian group
	\[D_{\mathrm{cycl},\sigma_{i}}(\rho)\coloneq A_{\mathrm{cycl},E,\sigma_{i}}\otimes_{\mathcal{O}_E\otimes_{\mathbb{Z}_p}A_{\mathrm{cycl},K}}\left(\left(A_{\mathrm{cycl},K}^{\mathrm{ur}}\right)^{\wedge}\otimes_{\mathbb{Z}_p}\rho\right)^{\gal(\overline{K}/K(\mu_{p^\infty}))}\]
	is a finitely presented étale $(\varphi_q,\mathbb{Z}_p^{\times})$-module over $A_{\mathrm{cycl},E,\sigma_{i}}$. In fact, by \cite{Fontaine1990}, we have
	\begin{lem}\label{D cycl E sigma i}
		For $0\leq i\leq f-1$, the functor
		$D_{\mathrm{cycl},\sigma_{i}}: \rep_{\mathcal{O}_E}(\gal(\overline{K}/K))  \to \modetfp_{\varphi_q,\mathbb{Z}_p^{\times}}\left(A_{\mathrm{cycl},E,\sigma_{i}}\right)$
		is an equivalence of categories, and its restriction $D_{\mathrm{cycl},\sigma_{i}}: \repfr_{\mathcal{O}_E}(\gal(\overline{K}/K)) \to \modet_{\varphi_q,\mathbb{Z}_p^{\times}}\left(A_{\mathrm{cycl},E,\sigma_{i}}\right)$ is also an equivalence of categories which preserves the rank. 
	\end{lem}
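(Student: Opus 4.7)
The plan is to deduce the lemma from Fontaine's classical equivalence combined with the product decomposition $\mathcal{O}_E\otimes_{\mathbb{Z}_p}A_{\mathrm{cycl},K}\cong \prod_{i=0}^{f-1}A_{\mathrm{cycl},E,\sigma_i}$ already recorded in the text. Fontaine's theorem (\cite{Fontaine1990}) supplies an equivalence
\[D:\rep_{\mathbb{Z}_p}(\gal(\overline{K}/K))\xrightarrow{\sim}\modetfp_{\varphi,\mathbb{Z}_p^{\times}}(A_{\mathrm{cycl},K}),\qquad \rho\mapsto \left((A_{\mathrm{cycl},K}^{\mathrm{ur}})^{\wedge}\otimes_{\mathbb{Z}_p}\rho\right)^{\gal(\overline{K}/K(\mu_{p^\infty}))},\]
which restricts on finite free objects to a rank-preserving equivalence. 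For $\rho\in\rep_{\mathcal{O}_E}(\gal(\overline{K}/K))$, the $\mathcal{O}_E$-action on $\rho$ commutes with $\gal(\overline{K}/K)$ and hence transfers functorially to $D(\rho)$, making it a finitely presented étale $(\varphi,\mathbb{Z}_p^{\times})$-module over $\mathcal{O}_E\otimes_{\mathbb{Z}_p}A_{\mathrm{cycl},K}$. A direct categorical argument then upgrades $D$ to an equivalence $\rep_{\mathcal{O}_E}(\gal(\overline{K}/K))\xrightarrow{\sim}\modetfp_{\varphi,\mathbb{Z}_p^{\times}}(\mathcal{O}_E\otimes_{\mathbb{Z}_p}A_{\mathrm{cycl},K})$.

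Next I would exploit the decomposition $\mathcal{O}_E\otimes_{\mathbb{Z}_p}A_{\mathrm{cycl},K}\cong \prod_{i=0}^{f-1}A_{\mathrm{cycl},E,\sigma_i}$ together with the fact, recorded in the excerpt, that $\varphi$ cyclically permutes the factors via maps $\varphi:A_{\mathrm{cycl},E,\sigma_i}\to A_{\mathrm{cycl},E,\sigma_{i-1}}$, so that $\varphi_q=\varphi^f$ stabilizes each factor. A finitely presented étale $(\varphi,\mathbb{Z}_p^{\times})$-module $N$ over the product ring decomposes as $N=\bigoplus_i e_iN$ with each $e_iN$ finitely presented over $A_{\mathrm{cycl},E,\sigma_i}$; étaleness of $\varphi$ translates into $\mathbb{Z}_p^{\times}$-equivariant isomorphisms $A_{\mathrm{cycl},E,\sigma_{i-1}}\otimes_{\varphi,A_{\mathrm{cycl},E,\sigma_i}}e_iN\xrightarrow{\sim}e_{i-1}N$, and composing them along the cycle produces an étale $\varphi_q$-structure on each $e_iN$. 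Conversely, a finitely presented étale $(\varphi_q,\mathbb{Z}_p^{\times})$-module $M$ over $A_{\mathrm{cycl},E,\sigma_i}$ reconstructs $N$ by pulling back along the appropriate powers of $\varphi$. This yields mutually inverse equivalences
\[\modetfp_{\varphi,\mathbb{Z}_p^{\times}}(\mathcal{O}_E\otimes_{\mathbb{Z}_p}A_{\mathrm{cycl},K})\xrightarrow{\sim}\modetfp_{\varphi_q,\mathbb{Z}_p^{\times}}(A_{\mathrm{cycl},E,\sigma_i}),\qquad N\mapsto A_{\mathrm{cycl},E,\sigma_i}\otimes_{\mathcal{O}_E\otimes_{\mathbb{Z}_p}A_{\mathrm{cycl},K}}N,\]
which is precisely the projection used in the definition of $D_{\mathrm{cycl},\sigma_i}$.

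Composing the two equivalences yields the first assertion. For the rank statement, if $\rho$ is finite free of $\mathcal{O}_E$-rank $r$, then $D(\rho)$ is finite free of $A_{\mathrm{cycl},K}$-rank $r[E:\qp]$; since $\mathcal{O}_E\otimes_{\mathbb{Z}_p}A_{\mathrm{cycl},K}$ is finite free of rank $[E:\qp]$ over $A_{\mathrm{cycl},K}$, $D(\rho)$ becomes projective of rank $r$ over $\mathcal{O}_E\otimes_{\mathbb{Z}_p}A_{\mathrm{cycl},K}$, and projecting to the $i$-th factor gives a finite projective, hence finite free, $A_{\mathrm{cycl},E,\sigma_i}$-module of rank $r$. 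The main obstacle I anticipate is the bookkeeping in the second step --- carefully verifying that the cyclic shifting by $\varphi$ is compatible with the $\mathbb{Z}_p^{\times}$-action and that the assembled $\varphi_q$-structure on a single factor is in fact étale --- but this is a formal unwinding once the decomposition has been made explicit.
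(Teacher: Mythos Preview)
Your proposal is correct and is essentially the approach the paper takes: the paper does not supply any argument at all beyond the phrase ``by \cite{Fontaine1990}'', so the lemma is simply being recorded as a formal consequence of Fontaine's equivalence together with the product decomposition $\mathcal{O}_E\otimes_{\mathbb{Z}_p}A_{\mathrm{cycl},K}\cong \prod_i A_{\mathrm{cycl},E,\sigma_i}$ already spelled out in the preceding paragraph. Your write-up faithfully fleshes out exactly this deduction, and the bookkeeping you flag (cyclic $\varphi$-action, étaleness of the induced $\varphi_q$-structure) is standard and poses no difficulty.
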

	
	Since the trivial map $A_{\mathrm{cycl},E,\sigma_{i}}\to A_{\mathrm{cycl},E,\sigma_{0}}, \sum_{n\in\mathbb{Z}} a_n T_{\mathrm{cycl}}^n\mapsto \sum_{n\in\mathbb{Z}} a_n T_{\mathrm{cycl}}^n$ is an isomorphism commuting with the continuous $(\varphi_q,\mathbb{Z}_p^{\times})$-actions for $1\leq i\leq f-1$, we may simply denote $A_{\mathrm{cycl},E,\sigma_{0}}$ by $A_{\mathrm{cycl},E}=\left(\mathcal{O}_E[\negthinspace[T_{\mathrm{cycl}}]\negthinspace][\frac{1}{T_{\mathrm{cycl}}}]\right)^{\wedge}$ where the completion is the $p$-adic completion, and Lemma \ref{D cycl E sigma i} gives an equivalence of categories
	\[D_{\mathrm{cycl},\sigma_{i}}: \rep_{\mathcal{O}_E}(\gal(\overline{K}/K)) \to \modetfp_{\varphi_q,\mathbb{Z}_p^{\times}}\left(A_{\mathrm{cycl},E}\right)\] 
	for every $0\leq i\leq f-1$. Moreover, let $\varphi$ be the $\mathcal{O}_E$-linear endomorphism of $A_{\mathrm{cycl},E}$ which sends $T_{\mathrm{cycl}}$ to $(1+T_{\mathrm{cycl}})^p-1$, then it is easy to check that
	\begin{align}\label{cycltomic phi,Z_p* module are isomorphic by base change by phi}
		D_{\mathrm{cycl},\sigma_{0}}(\rho)\cong A_{\mathrm{cycl},E}\otimes_{\varphi^i,A_{\mathrm{cycl},E}} D_{\mathrm{cycl},\sigma_{i}}(\rho),\quad i=1,\dots,f-1.
	\end{align}
	
	\subsection{Comparison between $D_{\mathrm{LT},\sigma_{0}}(\rho)$ and $D_{\mathrm{cycl},\sigma_{0}}(\rho)$}
	
	In this section, we show how $D_{\mathrm{LT},\sigma_{0}}(\rho)$ and $D_{\mathrm{cycl},\sigma_{0}}(\rho)$ are related. Let $\mathbb{C}_p$ be the $p$-adic completion of the algebraic closure $\overline{K}$ of $K$, and for any perfectoid field $M$, let $M^{\flat}$ be the tilt of $M$ (\cite[Lemma 3.4]{scholze2012perfectoid}). Let $\widehat{K_{\infty}}$ (resp. $\widehat{K(\mu_{p^{\infty}})}$) be the $p$-adic completion of $K_{\infty}$ (resp. $K(\mu_{p^{\infty}})$), then $\widehat{K_{\infty}}$ and $\widehat{K(\mu_{p^{\infty}})}$ are perfectoid fields. By \cite[4.2.1.Proposition \& 4.3.4.Corollaire]{wintenberger1983corps} and \cite[Theorem 3.7.(ii)]{scholze2012perfectoid}, we have
	\begin{align}\label{tilt is completed perfection}
		\begin{cases}
			\mathbb{F}_q(\negthinspace(T_{\mathrm{LT}}^{p^{-\infty}})\negthinspace)\cong\widehat{K_{\infty}}^{\flat}\cong(\mathbb{C}_p^{\flat})^{\gal(\overline{K}/K_{\infty})}, \\
			\mathbb{F}_q(\negthinspace(T_{\mathrm{cycl}}^{p^{-\infty}})\negthinspace)\cong\widehat{K(\mu_{p^{\infty}})}^{\flat}\cong(\mathbb{C}_p^{\flat})^{\gal(\overline{K}/K(\mu_{p^{\infty}}))}.
		\end{cases}
	\end{align}
	These isomorphisms are compatible with the actions of $\gal(\overline{K}/K)$, where $\gal(\overline{K}/K)$ acts on $\mathbb{F}_q(\negthinspace(T_{\mathrm{LT}}^{p^{-\infty}})\negthinspace)$ (resp. $\mathbb{F}_q(\negthinspace(T_{\mathrm{cycl}}^{p^{-\infty}})\negthinspace)$) via $\gal(\overline{K}/K)\twoheadrightarrow \gal(K_{\infty}/K)\cong \mathcal{O}_K^{\times}$ (resp. $\gal(\overline{K}/K)\twoheadrightarrow \gal(K(\mu_{p^{\infty}})/K)\cong \mathbb{Z}_p^{\times}$). Thus these isomorphisms are compatible with the actions of $\mathcal{O}_K^{\times}$ and of $\mathbb{Z}_p^{\times}$ respectively. As $\gal(\overline{K}/K_{\infty})$ is a closed subgroup of $\gal(\overline{K}/K(\mu_{p^{\infty}}))$, there is an embedding which commutes with the $\mathcal{O}_K^{\times}$-action:
	\begin{align}\label{injection induced by tilting functor}
		\mathbb{F}_q(\negthinspace(T_{\mathrm{cycl}})\negthinspace)\hookrightarrow\mathbb{F}_q(\negthinspace(T_{\mathrm{cycl}}^{p^{-\infty}})\negthinspace)\xrightarrow{\sim}(\mathbb{C}_p^{\flat})^{\gal(\overline{K}/K(\mu_{p^{\infty}}))}\hookrightarrow(\mathbb{C}_p^{\flat})^{\gal(\overline{K}/K_{\infty})}\xrightarrow{\sim}\mathbb{F}_q(\negthinspace(T_{\mathrm{LT}}^{p^{-\infty}})\negthinspace).
	\end{align}
	
	%By \cite[$\S$6]{chiarellotto2014note}, there are two commutative diagrams of discrete valuation rings where every map commutes with the action of $\gal(\overline{K}/K)$:
	
	%\begin{minipage}{0.5\textwidth}
	%	\centering
	%	\begin{equation*}
	%		\begin{tikzcd}
	%			A_{\mathrm{LT},K} \arrow[r,hook] \arrow[d,hook] & W\left(\mathbb{F}_q(\negthinspace(T_{\mathrm{LT}}^{p^{-\infty}})\negthinspace)\right)\arrow[d,hook]\\
	%			\left(A_{\mathrm{LT},K}^{\mathrm{ur}}\right)^{\wedge}\arrow[r,hook] & W\left(\mathbb{C}_p^{\flat}\right)
	%		\end{tikzcd}
	%	\end{equation*}
	%\end{minipage}
	%\begin{minipage}{0.5\textwidth}
	%	\centering
	%	\begin{equation*}
	%		\begin{tikzcd}
	%			A_{\mathrm{cycl},K} \arrow[r,hook] \arrow[d,hook] & W\left(\mathbb{F}_q(\negthinspace(T_{\mathrm{cycl}}^{p^{-\infty}})\negthinspace)\right)\arrow[d,hook]\\
	%			\left(A_{\mathrm{cycl},K}^{\mathrm{ur}}\right)^{\wedge}\arrow[r,hook] & W\left(\mathbb{C}_p^{\flat}\right).
	%		\end{tikzcd}
	%	\end{equation*}
	%\end{minipage}

	We endow $\left(A_{\mathrm{LT},K}^{\mathrm{ur}}\right)^{\wedge}$ (resp. $\left(A_{\mathrm{cycl},K}^{\mathrm{ur}}\right)^{\wedge}$) with the $p$-adic topology, then the action of $\gal(\overline{K}/K_{\infty})$ (resp. $\gal(\overline{K}/K(\mu_{p^\infty}))$) on $\left(A_{\mathrm{LT},K}^{\mathrm{ur}}\right)^{\wedge}$ (resp. $\left(A_{\mathrm{cycl},K}^{\mathrm{ur}}\right)^{\wedge}$) is continuous.
	
	\begin{lem}\label{vanishing result, classical phi,Gamma modules}
		For $n\geq 1$, we have 
		\begin{enumerate}[label=(\alph*), ref=\ref{vanishing result, classical phi,Gamma modules}.(\alph*)]
			\item [(a)]
			$H^1_{\mathrm{cont}}\left(\gal(\overline{K}/K_{\infty}),\left(A_{\mathrm{LT},K}^{\mathrm{ur}}\right)^{\wedge}\right)=0$.\label{vanishing of continuous coh of witt vectors}
			\item [(b)]
			$H^1_{\mathrm{cont}}\left(\gal(\overline{K}/K_{\infty}),\mathrm{GL}_n\left(\left(A_{\mathrm{LT},K}^{\mathrm{ur}}\right)^{\wedge}\right)\right)=0$.\label{vanishing of continuous coh of GL_n of witt vectors}
			\item [(c)]
			$H^1_{\mathrm{cont}}\left(\gal(\overline{K}/K(\mu_{p^{\infty}})),\mathrm{GL}_n\left(\left(A_{\mathrm{cycl},K}^{\mathrm{ur}}\right)^{\wedge}\right)\right)=0$.\label{vanishing of continuous coh of GL_n of witt vectors cycl}
		\end{enumerate}
	\end{lem}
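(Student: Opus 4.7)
All three parts will be proved by the same two-step strategy: first establish the analogous vanishing statement for the residue ring (modulo $p$), then bootstrap to the full $p$-adic ring by successive approximation.

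\emph{Step 1: the base case.} The ring $\bigl(A_{\mathrm{LT},K}^{\mathrm{ur}}\bigr)^{\wedge}$ is a complete discrete valuation ring with residue field $\mathbb{F}_q(\negthinspace(T_{\mathrm{LT}})\negthinspace)^{\mathrm{sep}}$ (cited as \cite[Lemma 6.3]{chiarellotto2014note}), and the theory of fields of norms identifies $\gal(\overline{K}/K_{\infty})$ with $\gal(\mathbb{F}_q(\negthinspace(T_{\mathrm{LT}})\negthinspace)^{\mathrm{sep}}/\mathbb{F}_q(\negthinspace(T_{\mathrm{LT}})\negthinspace))$ acting tautologically on the residue field (compatibly with (\ref{tilt is completed perfection})). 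Because the quotient topology on the residue field induced by the $p$-adic topology is discrete, any continuous cocycle into the residue field factors through a finite Galois quotient, so the classical Hilbert~90 theorems (additive for (a), multiplicative for (b) and (c)) yield the vanishing of
\[H^1_{\mathrm{cont}}\bigl(\gal(\overline{K}/K_{\infty}),\,\mathbb{F}_q(\negthinspace(T_{\mathrm{LT}})\negthinspace)^{\mathrm{sep}}\bigr),\qquad H^1_{\mathrm{cont}}\bigl(\gal(\overline{K}/K_{\infty}),\,\mathrm{GL}_n(\mathbb{F}_q(\negthinspace(T_{\mathrm{LT}})\negthinspace)^{\mathrm{sep}})\bigr),\]
and analogously for $\gal(\overline{K}/K(\mu_{p^{\infty}}))$ acting on $\mathbb{F}_q(\negthinspace(T_{\mathrm{cycl}})\negthinspace)^{\mathrm{sep}}$.

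\emph{Step 2: bootstrap for (a).} Given a continuous $1$-cocycle $c:\gal(\overline{K}/K_{\infty})\to\bigl(A_{\mathrm{LT},K}^{\mathrm{ur}}\bigr)^{\wedge}$, reduce mod $p$ and apply the base case to find $\bar a_0$ with $\bar c=d\bar a_0$; lifting $\bar a_0$ to $a_0$, the difference $c-da_0$ takes values in $p\bigl(A_{\mathrm{LT},K}^{\mathrm{ur}}\bigr)^{\wedge}$ and hence equals $pc_1$ for a continuous cocycle $c_1$ (division by $p$ is a homeomorphism onto $pR$ since $R$ is $p$-torsion-free with the $p$-adic topology). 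Iterating produces $a_n$ and cocycles $c_n$ with $c-d\bigl(\textstyle\sum_{i=0}^{n-1}p^i a_i\bigr)=p^n c_n$, and the $p$-adic limit $a=\sum_{i\ge 0}p^i a_i$ satisfies $c=da$.

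\emph{Step 3: bootstrap for (b) and (c).} Given a continuous cocycle $M:G\to\mathrm{GL}_n(R)$, the base case furnishes $b_0\in\mathrm{GL}_n(R)$ with $M(g)\equiv b_0^{-1}g(b_0)\pmod p$; the twisted cocycle $M_1(g):=b_0 M(g)g(b_0)^{-1}$ then lies in $1+pM_n(R)$. Writing $M_n(g)=1+p^n X_n(g)$, the multiplicative cocycle identity forces $\bar X_n$ to be an additive $1$-cocycle mod $p$, which is killed by part (a): there is $a_n\in M_n(R)$ with $\bar X_n(g)=g(\bar a_n)-\bar a_n$, and replacing $M_n(g)$ by $(1+p^n a_n)M_n(g)g(1+p^n a_n)^{-1}$ improves the congruence to $1\bmod p^{n+1}$. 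The infinite product $b:=\cdots(1+p^2 a_2)(1+p a_1)b_0$ converges $p$-adically in $\mathrm{GL}_n(R)$ to an element with $M(g)=b^{-1}g(b)$; case (c) is identical with $G=\gal(\overline{K}/K(\mu_{p^{\infty}}))$ and $R=\bigl(A_{\mathrm{cycl},K}^{\mathrm{ur}}\bigr)^{\wedge}$. The technical substance lies entirely in Step 1: the Galois-equivariant identification of $\gal(\overline{K}/K_{\infty})$ and $\gal(\overline{K}/K(\mu_{p^{\infty}}))$ with the absolute Galois groups of $\mathbb{F}_q(\negthinspace(T_{\mathrm{LT}})\negthinspace)$ and $\mathbb{F}_q(\negthinspace(T_{\mathrm{cycl}})\negthinspace)$ respectively, via the theory of fields of norms already invoked for (\ref{tilt is completed perfection}); once this is in place, the bootstraps in Steps 2 and 3 are formal.
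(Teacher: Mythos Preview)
Your proof is correct and follows essentially the same approach as the paper. The paper's proof is extremely terse---it says only that (a) follows by the same strategy as Corollary~\ref{vanishing of continuous cohomology of W_E(A_{infty}')} (which is precisely your Step~2 successive approximation, fed by additive Hilbert~90 on the residue field), that (b) is a direct consequence of (a) together with multiplicative Hilbert~90 for $\mathrm{GL}_n(\mathbb{F}_q(\negthinspace(T_{\mathrm{LT}})\negthinspace)^{\mathrm{sep}})$ (your Step~3), and that (c) is identical to (b); you have simply spelled out what those references amount to.
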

	
	%\begin{lem}\label{vanishing of continuous coh of witt vectors}
	%	We have
	%	\[H^1_{\mathrm{cont}}\left(\gal(\overline{K}/K_{\infty}),\left(A_{\mathrm{LT},K}^{\mathrm{ur}}\right)^{\wedge}\right)=0.\]
	%\end{lem}
	\begin{proof}
		The same strategy of the proof of Corollary \ref{vanishing of continuous cohomology of W_E(A_{infty}')} applies to (a), and (b) is a direct consequence of (a) and the fact that $H^1_{\mathrm{cont}}\left(\gal(\overline{K}/K_{\infty}),\mathrm{GL}_n\left(\mathbb{F}_q(\negthinspace(T_{\mathrm{LT}})\negthinspace)^{\mathrm{sep}}\right)\right)=0$
		(Hilbert satz 90, see for example \cite[p.151, Proposition 3]{serre1979local}), and the proof of (c) is exactly the same as (b), hence we omit the details.
	\end{proof}

	We define $ W_E\left(\mathbb{F}_q(\negthinspace(T_{\mathrm{LT}}^{p^{-\infty}})\negthinspace)\right)\coloneq\mathcal{O}_E\otimes_{\sigma_{0},\mathcal{O}_K}W\left(\mathbb{F}_q(\negthinspace(T_{\mathrm{LT}}^{p^{-\infty}})\negthinspace)\right)$. The following result reveals the relation between Fontaine's cyclotomic $(\varphi_q,\mathbb{Z}_p^{\times})$-modules and Lubin-Tate $(\varphi_q,\mathcal{O}_K^{\times})$-modules.
	
	\begin{cor}\label{comparison of LT and cycl modules}
		Let $\rho$ be a finite type $\mathcal{O}_E$-representation of $\gal(\overline{K}/K)$. There is a canonical isomorphism of $W_E\left(\mathbb{F}_q(\negthinspace(T_{\mathrm{LT}}^{p^{-\infty}})\negthinspace)\right)$-modules which commutes with the $(\varphi_q,\mathcal{O}_K^{\times})$-actions:
		\begin{equation}\label{isomorphism reveals the relation}
			W\left(\mathbb{F}_q(\negthinspace(T_{\mathrm{LT}}^{p^{-\infty}})\negthinspace)\right)\otimes_{A_{\mathrm{LT},K}}D_{\mathrm{LT},\sigma_{0}}(\rho)\xrightarrow{\sim}W\left(\mathbb{F}_q(\negthinspace(T_{\mathrm{LT}}^{p^{-\infty}})\negthinspace)\right)\otimes_{A_{\mathrm{cycl},K}}D_{\mathrm{cycl},\sigma_{0}}(\rho),
		\end{equation}
		where $\mathcal{O}_K^{\times}$ acts on $D_{\mathrm{cycl},\sigma_{0}}(\rho)$ via the norm map $\mathrm{Norm}_{K/\qp}: \mathcal{O}_K^{\times}\to \mathbb{Z}_p^{\times}$.
	\end{cor}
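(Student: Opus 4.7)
The plan is to show that, after extending scalars along canonical $\gal(\overline{K}/K)$-equivariant embeddings $\iota_{\mathrm{LT}}:(A_{\mathrm{LT},K}^{\mathrm{ur}})^{\wedge}\hookrightarrow W(\mathbb{C}_p^{\flat})$ and $\iota_{\mathrm{cycl}}:(A_{\mathrm{cycl},K}^{\mathrm{ur}})^{\wedge}\hookrightarrow W(\mathbb{C}_p^{\flat})$, both sides of \eqref{isomorphism reveals the relation} become the common $W(\mathbb{C}_p^{\flat})$-module $W(\mathbb{C}_p^{\flat})\otimes_{\sigma_{0},\mathcal{O}_K}\rho$, and then to descend back by taking $\gal(\overline{K}/K_{\infty})$-invariants, using $W(\mathbb{C}_p^{\flat})^{\gal(\overline{K}/K_{\infty})}=W(\mathbb{F}_q(\negthinspace(T_{\mathrm{LT}}^{p^{-\infty}})\negthinspace))$ from \eqref{tilt is completed perfection}. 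The embeddings $\iota_{\mathrm{LT}}$ and $\iota_{\mathrm{cycl}}$ are produced by strict $p$-ring functoriality (\cite[Ch.~II, Prop.~10]{serre1979local}) applied to the $p$-adic completions of the perfections of the residue rings, which embed into $\mathbb{C}_p^{\flat}$ via \eqref{tilt is completed perfection} and \eqref{injection induced by tilting functor}. By construction, their restrictions to $A_{\mathrm{LT},K}$ and $A_{\mathrm{cycl},K}$ factor through $W(\mathbb{F}_q(\negthinspace(T_{\mathrm{LT}}^{p^{-\infty}})\negthinspace))$ and match the structural ring maps appearing on the two sides of \eqref{isomorphism reveals the relation}.

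Using the vanishing of $H^{1}_{\mathrm{cont}}$ provided by Lemma~\ref{vanishing result, classical phi,Gamma modules} together with the standard d\'evissage from torsion to finite free modules in the theory of classical $(\varphi,\Gamma)$-modules (\cite{Fontaine1990,schneider2017galois}), one obtains $\gal(\overline{K}/K)$-equivariant isomorphisms
\[
(A_{\mathrm{LT},K}^{\mathrm{ur}})^{\wedge}\otimes_{A_{\mathrm{LT},K}}D_{\mathrm{LT},\sigma_{0}}(\rho)\xrightarrow{\sim}(A_{\mathrm{LT},K}^{\mathrm{ur}})^{\wedge}\otimes_{\sigma_{0},\mathcal{O}_K}\rho,
\]
\[
(A_{\mathrm{cycl},K}^{\mathrm{ur}})^{\wedge}\otimes_{A_{\mathrm{cycl},K}}D_{\mathrm{cycl},\sigma_{0}}(\rho)\xrightarrow{\sim}(A_{\mathrm{cycl},K}^{\mathrm{ur}})^{\wedge}\otimes_{\sigma_{0},\mathcal{O}_K}\rho,
\]
the second obtained from the analogous identification for $\widetilde{D}_{\mathrm{cycl}}(\rho):=((A_{\mathrm{cycl},K}^{\mathrm{ur}})^{\wedge}\otimes_{\mathbb{Z}_p}\rho)^{\gal(\overline{K}/K(\mu_{p^{\infty}}))}$ after projecting through the $\sigma_{0}$-factor of $\mathcal{O}_E\otimes_{\mathbb{Z}_p}A_{\mathrm{cycl},K}\cong\prod_{i=0}^{f-1}A_{\mathrm{cycl},E,\sigma_{i}}$. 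Base-changing both isomorphisms along $\iota_{\mathrm{LT}}$ and $\iota_{\mathrm{cycl}}$ produces, on the two sides, the same $\gal(\overline{K}/K)$- and $\varphi_q$-equivariant module $W(\mathbb{C}_p^{\flat})\otimes_{\sigma_{0},\mathcal{O}_K}\rho$.

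Taking $\gal(\overline{K}/K_{\infty})$-invariants then identifies both sides of \eqref{isomorphism reveals the relation} with the common object $(W(\mathbb{C}_p^{\flat})\otimes_{\sigma_{0},\mathcal{O}_K}\rho)^{\gal(\overline{K}/K_{\infty})}$; the fact that the resulting $\mathcal{O}_K^{\times}$-action on the right-hand side is through $\mathrm{Norm}_{K/\qp}$ follows because the restriction $\gal(K_{\infty}/K)\cong\mathcal{O}_K^{\times}\to\gal(K(\mu_{p^{\infty}})/K)\cong\mathbb{Z}_p^{\times}$ is the norm map by local class field theory for the unramified $K/\qp$. The main obstacle is the commutation of $\gal(\overline{K}/K_{\infty})$-invariants with the base change $W(\mathbb{C}_p^{\flat})\otimes-$ for torsion coefficients: this is handled by a d\'evissage along $\varpi$, where one reduces to needing the vanishing of $H^{1}_{\mathrm{cont}}(\gal(\overline{K}/K_{\infty}),\mathbb{C}_p^{\flat})$, which holds by tilting from the classical Ax--Sen--Tate result, and then lifts to $H^{1}_{\mathrm{cont}}(\gal(\overline{K}/K_{\infty}),W(\mathbb{C}_p^{\flat}))=0$ by the $\varpi$-adic induction argument already used in the proof of Corollary~\ref{vanishing of continuous cohomology of W_E(A_{infty}')}.
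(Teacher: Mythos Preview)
Your approach is essentially the same as the paper's: pass through $W(\mathbb{C}_p^{\flat})$, establish the comparison there via the standard identifications \eqref{isomorphism of LT and cycl phi,Gamma modules} combined with a d\'evissage along $\varpi$, and then take $\gal(\overline{K}/K_{\infty})$-invariants using \eqref{tilt is completed perfection}. You are in fact more careful than the paper at one point: you flag that, for torsion $\rho$, identifying $\bigl(W(\mathbb{C}_p^{\flat})\otimes_{A_{\mathrm{LT},K}}D_{\mathrm{LT},\sigma_0}(\rho)\bigr)^{\gal(\overline{K}/K_{\infty})}$ with $W\bigl(\mathbb{F}_q(\negthinspace(T_{\mathrm{LT}}^{p^{-\infty}})\negthinspace)\bigr)\otimes_{A_{\mathrm{LT},K}}D_{\mathrm{LT},\sigma_0}(\rho)$ requires $H^1_{\mathrm{cont}}\bigl(\gal(\overline{K}/K_{\infty}),W(\mathbb{C}_p^{\flat})\bigr)=0$, which the paper leaves implicit in the sentence ``taking $\gal(\overline{K}/K_{\infty})$-invariants, the conclusion follows''.

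One small correction: your justification ``by tilting from the classical Ax--Sen--Tate result'' is not quite right, since continuous Galois cohomology does not tilt formally. The vanishing $H^1_{\mathrm{cont}}\bigl(\gal(\overline{K}/K_{\infty}),\mathbb{C}_p^{\flat}\bigr)=0$ is rather the characteristic-$p$ statement that $H^1_{\mathrm{cont}}\bigl(\gal(k^{\mathrm{sep}}/k),\widehat{k^{\mathrm{sep}}}\bigr)=0$ for $k=\mathbb{F}_q(\negthinspace(T_{\mathrm{LT}})\negthinspace)$, which follows from the additive Hilbert~90 (normal basis) plus a completion argument. Alternatively, and more simply, you can sidestep this issue entirely by running the d\'evissage directly on the target isomorphism \eqref{isomorphism reveals the relation}: since $W\bigl(\mathbb{F}_q(\negthinspace(T_{\mathrm{LT}}^{p^{-\infty}})\negthinspace)\bigr)$ is flat over the discrete valuation rings $A_{\mathrm{LT},K}$ and $A_{\mathrm{cycl},K}$, the five-lemma argument works at that level, and then taking $\gal(\overline{K}/K_{\infty})$-invariants is only needed in the free case, where the commutation with tensor is automatic.
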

	
	\begin{proof}
	First we show that for any finite continuous $\mathcal{O}_E$-representation $\rho$ of $\gal(\overline{K}/K)$,
	there are natural isomorphisms of $\mathcal{O}_E\otimes_{\sigma_{0},\mathcal{O}_K}W\left(\mathbb{C}_p^{\flat}\right)$-modules commuting with the actions of $\gal(\overline{K}/K)$:
	\begin{align}\label{isomorphism of different phi,Gamma modules}
		W\big(\mathbb{C}_p^{\flat}\big)\otimes_{A_{\mathrm{LT},K}}D_{\mathrm{LT},\sigma_{0}}(\rho)\xrightarrow{\sim}W\big(\mathbb{C}_p^{\flat}\big)\otimes_{\sigma_{0},\mathcal{O}_K}\rho\xleftarrow{\sim}W\big(\mathbb{C}_p^{\flat}\big)\otimes_{A_{\mathrm{cycl},K}}D_{\mathrm{cycl},\sigma_{0}}(\rho).
	\end{align}
	
	If $\rho$ is a finite free continuous $\mathcal{O}_E$-representation of $\gal(\overline{K}/K)$, we can directly apply Lemma \ref{vanishing result, classical phi,Gamma modules} and the construction of $D_{\mathrm{LT},\sigma_{0}}$ and $D_{\mathrm{cycl},\sigma_{0}}$ to deduce that the natural homomorphisms
	\begin{align}\label{isomorphism of LT and cycl phi,Gamma modules}
		\begin{cases}
			\left(A_{\mathrm{LT},K}^{\mathrm{ur}}\right)^{\wedge}\otimes_{A_{\mathrm{LT},K}}D_{\mathrm{LT},\sigma_{0}}(\rho)\longrightarrow \left(A_{\mathrm{LT},K}^{\mathrm{ur}}\right)^{\wedge}\otimes_{\sigma_{0},\mathcal{O}_K}\rho, &\text{as}\ \mathcal{O}_E\otimes_{\sigma_{0},\mathcal{O}_K}\left(A_{\mathrm{LT},K}^{\mathrm{ur}}\right)^{\wedge}\text{-modules},\\
			\left(A_{\mathrm{cycl},K}^{\mathrm{ur}}\right)^{\wedge}\otimes_{A_{\mathrm{cycl},K}}D_{\mathrm{cycl},\sigma_{0}}(\rho)\longrightarrow\left(A_{\mathrm{cycl},K}^{\mathrm{ur}}\right)^{\wedge}\otimes_{\sigma_{0},\mathcal{O}_K}\rho, &\text{as}\ \mathcal{O}_E\otimes_{\sigma_{0},\mathcal{O}_K}\left(A_{\mathrm{cycl},K}^{\mathrm{ur}}\right)^{\wedge}\text{-modules},
		\end{cases}
	\end{align}
	are isomorphisms.
	Then (\ref{isomorphism of different phi,Gamma modules}) follows.

	If $\rho=\bigcup_{n=1}^{\infty}\rho[\varpi^n]$ is a torsion representation, since $\rho$ is finitely generated, there exists $N\geq 1$ such that $\varpi^N\cdot\rho =0$. We will show that (\ref{isomorphism of different phi,Gamma modules}) holds by induction on $N\geq 1$. If $\rho=\rho[\varpi]$, i.e. $\rho$ is a finite dimensional continuous $\mathbb{F}$-representation of $\gal(\overline{K}/K)$, then (\ref{isomorphism of different phi,Gamma modules}) follows from the proof of \cite[Proposition 2.1.2]{breuil2023multivariable}. If $N\geq 2$, as $D_{\mathrm{LT},\sigma_{0}}(-)$ and $D_{\mathrm{cycl},\sigma_{0}}(-)$ are equivalences of categories, we have exact sequences
	\[0\to D_{\mathrm{LT},\sigma_{0}}(\rho[\varpi^{N-1}])\to D_{\mathrm{LT},\sigma_{0}}(\rho)\to D_{\mathrm{LT},\sigma_{0}}(\rho/\rho[\varpi^{N-1}])\to0\]
	and
	\[0\to D_{\mathrm{cycl},\sigma_{0}}(\rho[\varpi^{N-1}])\to D_{\mathrm{cycl},\sigma_{0}}(\rho)\to D_{\mathrm{cycl},\sigma_{0}}(\rho/\rho[\varpi^{N-1}])\to0.\]
	Since $A_{\mathrm{LT},K}\hookrightarrow W\left(\mathbb{C}_p^{\flat}\right)$, $\mathcal{O}_K\hookrightarrow W\left(\mathbb{C}_p^{\flat}\right)$ and $A_{\mathrm{cycl},K}\hookrightarrow W\left(\mathbb{C}_p^{\flat}\right)$ are flat, there is a commutative of diagram of $\mathcal{O}_E\otimes_{\sigma_{0},\mathcal{O}_K}W\left(\mathbb{C}_p^{\flat}\right)$-modules with exact rows
	\begin{equation*}
		\begin{tikzcd}[column sep=tiny]
			W\left(\mathbb{C}_p^{\flat}\right)\otimes_{A_{\mathrm{LT},K}}D_{\mathrm{LT},\sigma_{0}}(\rho[\varpi^{N-1}]) \arrow[r,hook]\arrow[d,"\wr"] & W\left(\mathbb{C}_p^{\flat}\right)\otimes_{A_{\mathrm{LT},K}}D_{\mathrm{LT},\sigma_{0}}(\rho)\arrow[r,two heads]\arrow[d] & W\left(\mathbb{C}_p^{\flat}\right)\otimes_{A_{\mathrm{LT},K}}D_{\mathrm{LT},\sigma_{0}}(\rho/\rho[\varpi^{N-1}])\arrow[d,"\wr"]\\
			W\left(\mathbb{C}_p^{\flat}\right)\otimes_{\sigma_{0},\mathcal{O}_K}\rho[\varpi^{N-1}]\arrow[r,hook]&W\left(\mathbb{C}_p^{\flat}\right)\otimes_{\sigma_{0},\mathcal{O}_K}\rho\arrow[r,two heads]&W\left(\mathbb{C}_p^{\flat}\right)\otimes_{\sigma_{0},\mathcal{O}_K}\left(\rho/\rho[\varpi^{N-1}]\right)\\
			W\left(\mathbb{C}_p^{\flat}\right)\otimes_{A_{\mathrm{cycl},K}}D_{\mathrm{cycl},\sigma_{0}}(\rho[\varpi^{N-1}]) \arrow[r,hook]\arrow[u,"\wr"'] & W\left(\mathbb{C}_p^{\flat}\right)\otimes_{A_{\mathrm{cycl},K}}D_{\mathrm{cycl},\sigma_{0}}(\rho)\arrow[r,two heads]\arrow[u] & W\left(\mathbb{C}_p^{\flat}\right)\otimes_{A_{\mathrm{cycl},K}}D_{\mathrm{cycl},\sigma_{0}}(\rho/\rho[\varpi^{N-1}])\arrow[u,"\wr"']
		\end{tikzcd}
	\end{equation*}
	where the vertical maps are induced by (\ref{isomorphism of LT and cycl phi,Gamma modules}). Then (\ref{isomorphism of different phi,Gamma modules}) is a direct consequence of the induction hypothesis and the five lemma.
	
	For a general finite $\mathcal{O}_E$-representation $\rho$, let $\rho_{\mathrm{tor}}$ be the torsion subrepresentation of $\rho$, then there is a short exact sequence
	\[0\to \rho_{\mathrm{tor}}\to \rho\to \rho/\rho_{\mathrm{tor}}\to0.\]
	The same arguments as for torsion representations applies here as well, which implies that (\ref{isomorphism of different phi,Gamma modules}) holds for any finite $\mathcal{O}_E$-representation $\rho$. Then taking $\gal(\overline{K}/K_{\infty})$-invariants, the conclusion follows.
	\end{proof}
	
	Since 
	\[W\left(\mathbb{F}_q(\negthinspace(T_{\mathrm{LT}}^{p^{-\infty}})\negthinspace)\right)\otimes_{A_{\mathrm{LT},K}}D_{\mathrm{LT},\sigma_{0}}(\rho)\cong W_E\left(\mathbb{F}_q(\negthinspace(T_{\mathrm{LT}}^{p^{-\infty}})\negthinspace)\right)\otimes_{A_{\mathrm{LT},E,\sigma_{0}}}D_{\mathrm{LT},\sigma_{0}}(\rho)\]
	and
	\[W\left(\mathbb{F}_q(\negthinspace(T_{\mathrm{LT}}^{p^{-\infty}})\negthinspace)\right)\otimes_{A_{\mathrm{cycl},K}}D_{\mathrm{cycl},\sigma_{0}}(\rho)\cong W_E\left(\mathbb{F}_q(\negthinspace(T_{\mathrm{LT}}^{p^{-\infty}})\negthinspace)\right)\otimes_{A_{\mathrm{cycl},E}}D_{\mathrm{cycl},\sigma_{0}}(\rho),\]
	the isomorphism (\ref{isomorphism reveals the relation}) can be rewritten as 
	\begin{equation}\label{isomorphism reveals the relation ver2}
		W_E\left(\mathbb{F}_q(\negthinspace(T_{\mathrm{LT}}^{p^{-\infty}})\negthinspace)\right)\otimes_{A_{\mathrm{LT},E,\sigma_{0}}}D_{\mathrm{LT},\sigma_{0}}(\rho)\cong W_E\left(\mathbb{F}_q(\negthinspace(T_{\mathrm{LT}}^{p^{-\infty}})\negthinspace)\right)\otimes_{A_{\mathrm{cycl},E}}D_{\mathrm{cycl},\sigma_{0}}(\rho).
	\end{equation}
	
	\section{Étale $(\varphi_q,\mathcal{O}_K^{\times})$-modules attached to Galois representations}\label{section 6}
	
	In this section, we associate étale $(\varphi_q,\mathcal{O}_K^{\times})$-modules to finite type continuous $\mathcal{O}_E$-representations and finite dimensional continuous $E$-representations of $\gal(\overline{K}/K)$, and prove some basic properties of these functors.
	
	\subsection{Finitely presented étale $(\varphi_q,\mathcal{O}_K^{\times})$-modules attached to finite type continuous $\mathcal{O}_E$-representations of $\gal(\overline{K}/K)$}
	
	In this section, we construct the functor $D_{A_{\mathrm{mv},E}}^{(i)}: \rep_{\mathcal{O}_E}(\gal(\overline{K}/K))\to \modetfp_{\varphi_q,\mathcal{O}_K^{\times}}\left(A_{\mathrm{mv},E}\right)$ and show the fully faithfulness of $D_{A_{\mathrm{mv},E}}^{(i)}$ for $0\leq i\leq f-1$.
	
	For $0\leq i\leq f-1$, consider the following diagram of topological rings:
	\begin{equation}
		\begin{tikzcd}
			A_{\mathrm{mv},E}\arrow[r,hook]& W_E(A_{\infty})\arrow[r,hook,"m"]& W_E(A_{\infty}')\\
			 & & A_{\mathrm{LT},E,\sigma_{0}}\arrow[u,hook,"\mathrm{pr}_{i}"],
		\end{tikzcd}
	\end{equation}
	where the map $\mathrm{pr}_i: A_{\mathrm{LT},E,\sigma_{0}}\hookrightarrow W_E(A_{\infty})$ is the composition of (\ref{equivairant injection Lubin-Tate case}) and $W_E\left(\mathbb{F}_q(\negthinspace(T_{\mathrm{LT}}^{1/p^{\infty}})\negthinspace)\right)\hookrightarrow W_E(A_{\infty}'),\ [T_{\mathrm{LT}}]\mapsto [T_{\mathrm{LT},i}]$.
	The map $\mathrm{pr}_i$ commutes with the continuous $(\varphi_q,\mathcal{O}_K^{\times})$-actions (where $\mathcal{O}_K^{\times}$ acts on $W_E(A_{\infty}')$ via the $i$-th inclusion $\mathcal{O}_K^{\times}\xhookrightarrow{j_i}(\mathcal{O}_K^{\times})^f$).
	
	For a finite type continuous $\mathcal{O}_E$-representation $\rho$ of $\gal(\overline{K}/K)$, $0\leq i\leq f-1$, we put
	\begin{align}
		D_{W_E(A_{\infty})}^{(i)}(\rho)\coloneq\left(W_E(A_{\infty}')\otimes_{\mathrm{pr}_i,A_{\mathrm{LT},E,\sigma_{0}}}D_{\mathrm{LT},\sigma_0}(\rho)\right)^{\Delta_1},
	\end{align}
	which is a finitely presented étale $(\varphi_q,\mathcal{O}_K^{\times})$-module over $W_E(A_{\infty})$ by Theorem \ref{descent of f.p. from W_E(A{infty}') to W_E(A{infty})}. Then we define 
	\begin{align*}
		D_{A_{\mathrm{mv},E}}^{(i)}\coloneq \mathrm{Dec}\circ D_{W_E(A_{\infty})}^{(i)}: \rep_{\mathcal{O}_E}(\gal(\overline{K}/K)) &\to \modetfp_{\varphi_q,\mathcal{O}_K^{\times}}(A_{\mathrm{mv},E})\\
		\rho& \mapsto \mathrm{Dec}\left(D_{W_E(A_{\infty})}^{(i)}(\rho)\right),
	\end{align*}
	where $\mathrm{Dec}$ is the decompletion and deperfection functor (Definition \ref{defn of decompletion}).
	If $\rho$ is a finite free $\mathcal{O}_E$-representation, then $D_{A_{\mathrm{mv},E}}^{(i)}(\rho)$ is a finite free $A_{\mathrm{mv},E}$-module of the same rank. Moreover, for different $i$, these functors $D_{A_{\mathrm{mv},E}}^{(i)}$ are related by the following lemma:
	
	\begin{lem}\label{ralation of different f.p. D_{A_E}^i}
		For $0\leq i\leq f-1$ and any finite type $\mathcal{O}_E$-representation $\rho$ of $\gal(\overline{K}/K)$, there is an isomorphism of finitely presented étale $(\varphi_q,\mathcal{O}_K^{\times})$-modules over $A_{\mathrm{mv},E}$ which is functorial in $\rho$:
		\begin{align}\label{tensoring with phi changes i}
			\phi_i: A_{\mathrm{mv},E}\otimes_{\varphi,A_{\mathrm{mv},E}}D_{A_{\mathrm{mv},E}}^{(i)}(\rho)\xrightarrow{\sim} D_{A_{\mathrm{mv},E}}^{(i+1)}(\rho).
		\end{align}
		Moreover, $\phi_{f-1}\circ \phi_{f-2}\circ\cdots\circ \phi_0: A_{\mathrm{mv},E}\otimes_{\varphi_q,A_{\mathrm{mv},E}}D_{A_{\mathrm{mv},E}}^{(0)}(\rho)\to D_{A_{\mathrm{mv},E}}^{(0)}(\rho)$ is the linearization map $\mathrm{id}\otimes{\varphi_q}$.
	\end{lem}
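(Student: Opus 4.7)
The plan is to construct $\phi_i$ by descending a suitable isomorphism from the level of $W_E(A_\infty')$ through $W_E(A_\infty)$ down to $A_{\mathrm{mv},E}$, exploiting the two descent equivalences of Theorem \ref{main_thm}. First, I would use Corollary \ref{descent of f.p. phi_q,O_K*-modules}: because the inclusion $A_{\mathrm{mv},E}\hookrightarrow W_E(A_\infty)$ intertwines the two $\varphi$'s (by the construction of Section \ref{section 2}), the Frobenius twist commutes with base change along it, and the construction of $\phi_i$ reduces to producing a natural $(\varphi_q,\mathcal{O}_K^\times)$-equivariant isomorphism
\[
\phi_i^{(\infty)}:W_E(A_\infty)\otimes_{\varphi,W_E(A_\infty)}D_{W_E(A_\infty)}^{(i)}(\rho)\xrightarrow{\sim}D_{W_E(A_\infty)}^{(i+1)}(\rho).
\]
In turn, by Theorem \ref{descent of f.p. from W_E(A{infty}') to W_E(A{infty})}, producing $\phi_i^{(\infty)}$ is equivalent to producing its base change along $m:W_E(A_\infty)\hookrightarrow W_E(A_\infty')$, namely a $(\varphi_q,(\mathcal{O}_K^\times)^f)$-equivariant isomorphism between two $W_E(A_\infty')$-modules, both of which are (by the defining formula $D_{W_E(A_\infty)}^{(j)}(\rho)=(W_E(A_\infty')\otimes_{\mathrm{pr}_j}D_{\mathrm{LT},\sigma_0}(\rho))^{\Delta_1}$) base changes of $D_{\mathrm{LT},\sigma_0}(\rho)$ along different embeddings of $A_{\mathrm{LT},E,\sigma_0}$ into $W_E(A_\infty')$.

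The key step is thus a comparison of ring maps $A_{\mathrm{LT},E,\sigma_0}\rightrightarrows W_E(A_\infty')$: the map $\mathrm{pr}_{i+1}$ on the one hand, and the composition $\varphi\circ m\circ\mathrm{pr}_i^{\flat}$ on the other, where $\mathrm{pr}_i^{\flat}$ denotes $\mathrm{pr}_i$ factored through an appropriate lift into $W_E(A_\infty)$ and $\varphi$ is the Frobenius endomorphism of $W_E(A_\infty)$. I would prove these two maps agree using Proposition \ref{B_E version: completion of colimit}: both are continuous $\varphi_q$-equivariant ring homomorphisms from $A_{\mathrm{LT},E,\sigma_0}$ into the strict $p$-ring $W_E(A_\infty')$, so by the uniqueness of such lifts it suffices to check agreement after reduction modulo $\varpi$. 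Modulo $\varpi$ the computation becomes explicit on the perfectoid algebras: the Frobenius of $A_\infty$ sends $T_{\sigma_j}\mapsto T_{\sigma_{j-1}}^p$, and under the perfectoid identification $A_\infty\cong A_\infty'$ used in the proof of Theorem \ref{descent of f.p. from W_E(A{infty}') to W_E(A{infty})} this index shift matches (up to the built-in powers of $p$) the shift $T_{\mathrm{LT},j}\mapsto T_{\mathrm{LT},j+1}$ that distinguishes $\mathrm{pr}_j$ from $\mathrm{pr}_{j+1}$. Tensoring the resulting ring-map identity with $D_{\mathrm{LT},\sigma_0}(\rho)$ and using functoriality yields the required isomorphism at the $W_E(A_\infty')$-level, and its $(\varphi_q,(\mathcal{O}_K^\times)^f)$-equivariance is automatic from the equivariance of the individual pieces.

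The main obstacle will be carefully tracking maps through the strict $p$-ring formalism of Section \ref{section 2}, particularly because the Fargues-Fontaine map $m$ is not given by an explicit formula but only characterized by its equivariance; this forces one to work through the uniqueness part of Proposition \ref{B_E version: completion of colimit} rather than by naive Teichmüller-level computations. Functoriality in $\rho$ is immediate from the construction. Finally, for the composition identity $\phi_{f-1}\circ\cdots\circ\phi_0=\mathrm{id}\otimes\varphi_q$, I would iterate the ring-map identity $f$ times: the $f$-fold composite of the ``index shift'' endomorphism on $W_E(A_\infty')$ coincides with $\varphi^f=\varphi_q$, and the cyclic return $\mathrm{pr}_0\to\mathrm{pr}_1\to\cdots\to\mathrm{pr}_{f-1}\to\mathrm{pr}_f$ (with the convention $\mathrm{pr}_f=\mathrm{pr}_0$) equals $\mathrm{pr}_0$ composed with $\varphi_q$ on $A_{\mathrm{LT},E,\sigma_0}$; descending this identity through the two equivalences yields exactly the linearization map $\mathrm{id}\otimes\varphi_q$ on $D_{A_{\mathrm{mv},E}}^{(0)}(\rho)$.
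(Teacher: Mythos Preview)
Your approach is essentially the same as the paper's: reduce via Corollary \ref{descent of f.p. phi_q,O_K*-modules} to constructing $\phi_i$ at the $W_E(A_\infty)$-level, and there exploit a ring-map identity relating $\mathrm{pr}_i$ and $\mathrm{pr}_{i+1}$ through a Frobenius shift. The paper's execution is slightly more direct than yours, however, and your description of the key ring-map identity is muddled.

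Specifically, the paper does not factor through any ``$\mathrm{pr}_i^\flat$'' landing in $W_E(A_\infty)$. Instead it introduces an $\mathcal{O}_E$-linear automorphism $\varphi$ of $W_E(A_\infty')$ itself (lifting the $\mathbb{F}$-linear map $T_{\mathrm{LT},j}\mapsto T_{\mathrm{LT},j+1}$ for $j<f-1$ and $T_{\mathrm{LT},f-1}\mapsto T_{\mathrm{LT},0}^q$), checks that $m\circ\varphi=\varphi\circ m$, and then the clean identity is simply
\[
\varphi\circ\mathrm{pr}_i=\begin{cases}\mathrm{pr}_{i+1}&i\neq f-1,\\ \mathrm{pr}_0\circ\varphi_q&i=f-1,\end{cases}
\]
as maps $A_{\mathrm{LT},E,\sigma_0}\to W_E(A_\infty')$. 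Your composite ``$\varphi\circ m\circ\mathrm{pr}_i^\flat$'' does not type-check as written, since $\mathrm{pr}_i$ already lands in $W_E(A_\infty')$ and does not naturally factor through $W_E(A_\infty)$. Once you place $\varphi$ on $W_E(A_\infty')$, your strict-$p$-ring uniqueness argument is unnecessary: the identity can be checked directly from the explicit definitions (indeed both sides are determined by where $T_{\mathrm{LT}}$ goes).

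One further point: you do not explicitly justify why each $\phi_i$ is an \emph{isomorphism}, only that it is a well-defined $(\varphi_q,\mathcal{O}_K^\times)$-equivariant map. The paper deduces bijectivity of each $\phi_i$ from the fact that their $f$-fold composite equals the linearization map $\mathrm{id}\otimes\varphi_q$, which is an isomorphism by étaleness. You should make this step explicit (or alternatively argue directly that $\varphi$ is an automorphism of $W_E(A_\infty')$, so base change along it is an equivalence).
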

	\begin{proof}
		Recall that there is an automorphism of $A_{\infty}=\mathbb{F}(\negthinspace(T_{\sigma_{0}}^{1/p^{\infty}})\negthinspace)\left\langle\left(\frac{T_{\sigma_{i}}}{T_{\sigma_{0}}}\right)^{\pm1/p^{\infty}}:1\leq i\leq f-1\right\rangle$
		\begin{align*}
			\varphi: A_{\infty} \to A_{\infty},\
			T_{\sigma_{i}}\mapsto T_{\sigma_{i-1}}^p,
		\end{align*}
		and an $\mathbb{F}$-linear automorphism of $A_{\infty}'=\mathbb{F}\left(\negthinspace\left(T_{\mathrm{LT},0}^{1/p^{\infty}}\right)\negthinspace\right)\left\langle\left(\frac{T_{\mathrm{LT},i}}{T_{\mathrm{LT},0}^{p^{i}}}\right)^{\pm 1/p^{\infty}}: 1\leq i\leq f-1\right\rangle$:
		\begin{align*}
			\varphi: A_{\infty}' \to A_{\infty}',\
			T_{\mathrm{LT},i}\mapsto \begin{cases}
				T_{\mathrm{LT},i+1},&  i\neq f-1\\
				T_{\mathrm{LT},0}^q,& i=f-1.
			\end{cases}
		\end{align*} 
		(See \cite[$\S$2.4 (43), $\S$2.5 (47)]{breuil2023multivariable}). Moreover, the homomorphism $m: A_{\infty}\hookrightarrow A_{\infty}'$ satisfies $m\circ\varphi=\varphi\circ m$. These automorphisms induce $\mathcal{O}_E$-linear automorphisms $\varphi: W_E(A_{\infty})\to W_E(A_{\infty})$ and $\varphi:W_E(A_{\infty}')\to W_E(A_{\infty}')$. One can easily check that 
		\begin{align*}
			\varphi\circ \mathrm{pr}_i=\begin{cases}
				\mathrm{pr}_{i+1},& i\neq f-1\\
				\mathrm{pr}_0\circ\varphi_q, &i=f-1.
			\end{cases}
		\end{align*}
		By Corollary \ref{descent of f.p. phi_q,O_K*-modules}, it suffices to construct a natural isomorphism of finitely presented étale $(\varphi_q,\mathcal{O}_K^{\times})$-modules
		\[\phi_i: W_E(A_{\infty})\otimes_{\varphi,W_E(A_{\infty})}D_{W_E(A_{\infty})}^{(i)}(\rho)\xrightarrow{\sim} D_{W_E(A_{\infty})}^{(i+1)}(\rho).\]
		Using the definition $D_{W_E(A_{\infty})}^{(i)}(\rho)=\left(W_E(A_{\infty}')\otimes_{\mathrm{pr}_i,A_{\mathrm{LT},E,\sigma_{0}}}D_{\mathrm{LT},\sigma_0}(\rho)\right)^{\Delta_1}$, we can construct $\phi_i$ as follows:
		\begin{align*}
			\phi_i: a\otimes(x\otimes v)\mapsto \begin{cases}
				a\cdot\varphi(x)\otimes v,& 0\leq i\leq f-2,\\
				a\cdot\varphi(x)\otimes \varphi_q(v), &i=f-1,
			\end{cases}
		\end{align*}
		where $a\in W_E(A_{\infty})$, $x\in W_E(A_{\infty}')$, $v\in D_{\mathrm{LT},\sigma_{0}}(\rho)$. A direct computation shows that $\phi_i$ is well-defined for every $i$ and commutes with $\varphi_q$ and the action of $\mathcal{O}_K^{\times}$. Since the composition $\phi_{i+f-1}\circ\phi_{i+f-2}\circ\cdots\circ \phi_{i+1}\circ\phi_i$ is the linearization map
		\[\mathrm{id}\otimes{\varphi_q}:W_E(A_{\infty})\otimes_{\varphi_q,W_E(A_{\infty})}D_{W_E(A_{\infty})}^{(i)}(\rho)\xrightarrow{\sim} D_{W_E(A_{\infty})}^{(i)}(\rho)\]
		which is an isomorphism, we see that $\phi_i$ is an isomorphism for every $i$.
	\end{proof}
	
	Since $W_E(A_{\infty}')$ is an integral domain, and $\mathrm{pr}_i: A_{\mathrm{LT},E,\sigma_{0}}\to W_E(A_{\infty}')$ is injective, we see that $W_E(A_{\infty}')$ is torsion-free as an $A_{\mathrm{LT},E,\sigma_{0}}$-module. Since $A_{\mathrm{LT},E,\sigma_{0}}$ is a discrete valuation ring, in particular, $A_{\mathrm{LT},E,\sigma_{0}}$ is a principal ideal domain and $W_E(A_{\infty}')$ is a flat $A_{\mathrm{LT},E,\sigma_{0}}$-algebra. Moreover, the morphism $\mathrm{Spec}(W_E(A_{\infty}'))\to \mathrm{Spec}(A_{\mathrm{LT},E,\sigma_{0}})$ is surjective, we conclude that the ring map $A_{\mathrm{LT},E,\sigma_{0}}\xrightarrow{\mathrm{pr}_i} W_E(A_{\infty}')$ is faithfully flat, hence $W_E(A_{\infty}')\otimes_{\mathrm{pr}_i,A_{\mathrm{LT},E,\sigma_{0}}}-$ is faithful and exact. As $(-)^{\Delta_1}$ and $\mathrm{Dec}$ are equivalences of categories, we see that the functor $D_{A_{\mathrm{mv},E}}^{(i)}=\mathrm{Dec}\circ\left(W_E(A_{\infty}')\otimes_{\mathrm{pr}_i,A_{\mathrm{LT},E,\sigma_{0}}}-\right)^{\Delta_1}$ is also faithful and exact. We will show that the functor $D_{A_{\mathrm{mv},E}}^{(i)}$ is full. As in \cite[$\S$2.8]{breuil2023multivariable}, the key is to find out how $D_{A_{\mathrm{mv},E}}^{(i)}$ and $D_{\mathrm{cycl},\sigma_{i}}$ are related.
	
	Since $K/\qp$ is unramified, the map $\mathcal{O}_K\xrightarrow{\mathrm{tr}}\mathbb{Z}_p$
	is a surjective map of $\mathbb{Z}_p$-modules. It induces a surjective ring map of Iwasawa algebras $\mathcal{O}_K[\negthinspace[\mathcal{O}_K]\negthinspace]\xrightarrow{\mathrm{tr}}\mathcal{O}_K[\negthinspace[\mathbb{Z}_p]\negthinspace]$ which commutes with the $(\varphi_q,\mathbb{Z}_p^{\times})$-actions. We choose variables as in (\ref{choice of variables}) for $\mathcal{O}_K[\negthinspace[\mathcal{O}_K]\negthinspace]$ so that
	\[\mathcal{O}_K[\negthinspace[\mathcal{O}_K]\negthinspace]=\mathcal{O}_K[\negthinspace[T_{0},\dots,T_{f-1}]\negthinspace],\]
	and let $T_{\mathrm{cycl}}\coloneq [1]-1\in \mathcal{O}_K[\negthinspace[\mathbb{Z}_p]\negthinspace]$, then $\mathcal{O}_K[\negthinspace[\mathbb{Z}_p]\negthinspace]=\mathcal{O}_K[\negthinspace[T_{\mathrm{cycl}}]\negthinspace]$. By the third paragraph of \cite[$\S$3.1.3, p.116]{breuil2023conjectures}, for $0\leq i\leq f-1$, the map $\mathrm{tr}$ satisfies
	\[\mathrm{tr}(T_i)\equiv -T_{\mathrm{cycl}} \mod (p,T_{\mathrm{cycl}}^2),\]
	hence it induces a continuous surjection $\mathrm{tr}: A_{\mathrm{mv},E}\twoheadrightarrow A_{\mathrm{cycl},E}$ which commutes with the $(\varphi_q,\mathbb{Z}_p^{\times})$-actions.
	
	Similarly, the trace map also induces a surjection $\mathrm{tr}: A_{\infty}\twoheadrightarrow \mathbb{F}(\negthinspace(T_{\mathrm{cycl}}^{1/p^{\infty}})\negthinspace)$ which commutes with the $(\varphi_q,\mathbb{Z}_p^{\times})$-actions (see \cite[$\S$3.1.3, p.116]{breuil2023conjectures}). It uniquely lifts to an $\mathcal{O}_E$-linear ring homomorphism, also denoted by $\mathrm{tr}: W_E(A_{\infty})\twoheadrightarrow W_E\left( \mathbb{F}(\negthinspace(T_{\mathrm{cycl}}^{1/p^{\infty}})\negthinspace)\right)$.
	\begin{lem}
		There is a commutative diagram \begin{equation}\label{comm diag 1}
			\begin{tikzcd}
				A_{\mathrm{mv},E} \arrow[r,two heads,"\mathrm{tr}"] \arrow[d,hook] & A_{\mathrm{cycl},E} \arrow[d,hook]\\
				W_E(A_{\infty}) \arrow[r,two heads,"\mathrm{tr}"] & W_E\left( \mathbb{F}(\negthinspace(T_{\mathrm{cycl}}^{1/p^{\infty}})\negthinspace)\right)
			\end{tikzcd}
		\end{equation}
		where the left vertical maps is (\ref{(varphi_q,mathcal{O}_K^{*})-equivariant injection from A_{mv,E} to W_E(A_{infty})})
		and the right vertical map is (\ref{(varphi_q,Z_p^*)-equivariant injection from A_{mv,E} to W_E(A_{infty})})
	\end{lem}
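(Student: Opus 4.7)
The plan is to promote the trace $\mathrm{tr}\colon A_{\mathrm{mv},E}\twoheadrightarrow A_{\mathrm{cycl},E}$ through the colimit-completion construction of \S\ref{section 2}, identify the resulting continuous homomorphism with the Witt vector trace, and deduce commutativity of (\ref{comm diag 1}) by restriction.

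First, since $\mathrm{tr}$ commutes with $\varphi_q$, functoriality of the Frobenius colimit gives a ring homomorphism
\[\mathrm{tr}\colon \varinjlim_{\varphi_q}A_{\mathrm{mv},E}\longrightarrow \varinjlim_{\varphi_q}A_{\mathrm{cycl},E}.\]
Specializing Proposition \ref{B_E version: completion of colimit} to $K=\mathbb{Q}_p$ (where $A_{\mathrm{mv},E}=A_{\mathrm{cycl},E}$ and $A_{\infty}=\mathbb{F}(\negthinspace(T_{\mathrm{cycl}}^{1/p^{\infty}})\negthinspace)$) identifies the completion of the right-hand colimit with $W_E(\mathbb{F}(\negthinspace(T_{\mathrm{cycl}}^{1/p^{\infty}})\negthinspace))$, and exhibits the right vertical arrow of (\ref{comm diag 1}) as the composition $A_{\mathrm{cycl},E}\hookrightarrow \varinjlim_{\varphi_q}A_{\mathrm{cycl},E}\hookrightarrow W_E(\mathbb{F}(\negthinspace(T_{\mathrm{cycl}}^{1/p^{\infty}})\negthinspace))$.

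Next I would verify that the colimit trace is continuous for the metric topologies of \S\ref{How to define a phi_q,O_K^{*} equivariant inclusion}. Using the expansion (\ref{expression of element of A_{mathrm{m.v.},E_0}}) together with the congruence $\mathrm{tr}(T_i)\equiv -T_{\mathrm{cycl}}\mod (p,T_{\mathrm{cycl}}^2)$ recalled just before the lemma, one derives an estimate of the form $v_n(\mathrm{tr}(P))\geq v_n(P)-C_n$ for a constant $C_n$ depending only on $n$; combined with (\ref{phi_q and the semi-valuation}) this yields a comparable control of $w_n$ on $\varinjlim_{\varphi_q}A_{\mathrm{mv},E}$. Consequently $\mathrm{tr}$ extends uniquely to a continuous $\mathcal{O}_E$-linear ring homomorphism
\[\widehat{\mathrm{tr}}\colon W_E(A_{\infty})\longrightarrow W_E(\mathbb{F}(\negthinspace(T_{\mathrm{cycl}}^{1/p^{\infty}})\negthinspace)).\]

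I would then identify $\widehat{\mathrm{tr}}$ with the Witt vector trace of the lemma. Both are $\mathcal{O}_E$-linear ring homomorphisms between strict $p$-rings, and each reduces modulo $\varpi$ to the perfectoid trace $A_{\infty}\twoheadrightarrow\mathbb{F}(\negthinspace(T_{\mathrm{cycl}}^{1/p^{\infty}})\negthinspace)$: for the Witt vector trace this is its defining property, while for $\widehat{\mathrm{tr}}$ this follows by using Lemma \ref{Description of the colimit modulo p} (and its cyclotomic analogue) to see that its reduction is the continuous extension of $\mathrm{tr}\colon A\twoheadrightarrow \mathbb{F}(\negthinspace(T_{\mathrm{cycl}})\negthinspace)$ to the completed perfections, which agrees with the perfectoid trace by density. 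The uniqueness of strict $p$-ring homomorphisms (\cite[Chapter II \S5 Proposition 10]{serre1979local}) then forces $\widehat{\mathrm{tr}}=\mathrm{tr}$, and commutativity of (\ref{comm diag 1}) results because $\widehat{\mathrm{tr}}$ restricted to $A_{\mathrm{mv},E}\subseteq \varinjlim_{\varphi_q}A_{\mathrm{mv},E}$ is by construction the composition $A_{\mathrm{mv},E}\xrightarrow{\mathrm{tr}}A_{\mathrm{cycl},E}\hookrightarrow W_E(\mathbb{F}(\negthinspace(T_{\mathrm{cycl}}^{1/p^{\infty}})\negthinspace))$.

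The main technical obstacle is the continuity estimate for the colimit trace: controlling $v_n(\mathrm{tr}(P))$ in terms of $v_n(P)$ requires a careful analysis of how $\mathrm{tr}$ transforms the expansion (\ref{expression of element of A_{mathrm{m.v.},E_0}}), in particular the behavior of $\mathrm{tr}(T_{\sigma_i}/T_{\sigma_0})$ relative to the $T_{\mathrm{cycl}}$-adic filtration on $A_{\mathrm{cycl},E_0}$; once this estimate is in hand, the remaining steps are formal consequences of the strict $p$-ring formalism of \S\ref{section 2}.
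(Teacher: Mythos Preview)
Your proposal is correct and follows essentially the same route as the paper: push $\mathrm{tr}$ through the $\varphi_q$-colimit, complete, and then invoke uniqueness of lifts to strict $p$-rings to match with the Witt vector trace. One small inaccuracy: $W_E(A_{\infty})$ is not a strict $p$-ring when $E/E_0$ is ramified, so Serre's uniqueness does not apply directly there; the paper handles this by first using $\mathcal{O}_E$-linearity to reduce to the map $W(A_{\infty})\to W(\mathbb{F}(\negthinspace(T_{\mathrm{cycl}}^{1/p^{\infty}})\negthinspace))$ between honest strict $p$-rings, and you should do the same.
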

	
	\begin{proof}
		Since $\mathrm{tr}: A_{\mathrm{mv},E}\twoheadrightarrow A_{\mathrm{cycl},E}$ commutes with $\varphi_q$,
		it induces a continuous surjection
		\[f: \varinjlim_{\varphi_q}A_{\mathrm{mv},E}\twoheadrightarrow \varinjlim_{\varphi_q}A_{\mathrm{cycl},E}.\]
		By taking topological completions, we get a ring homomorphism
		\[f: W_E(A_{\infty})\cong \left(\varinjlim_{\varphi_q}A_{\mathrm{mv},E}\right)^{\wedge}\to \left(\varinjlim_{\varphi_q}A_{\mathrm{cycl},E}\right)^{\wedge} \cong W_E\left( \mathbb{F}(\negthinspace(T_{\mathrm{cycl}}^{1/p^{\infty}})\negthinspace)\right)\]
		which makes the following diagram commute:
		\begin{equation*}
			\begin{tikzcd}
				A_{\mathrm{mv},E} \arrow[r,"\mathrm{tr}"] \arrow[d] & A_{\mathrm{cycl},E}\arrow[d]\\
				W_E(A_{\infty}) \arrow[r,"f"] & W_E\left( \mathbb{F}(\negthinspace(T_{\mathrm{cycl}}^{1/p^{\infty}})\negthinspace)\right).
			\end{tikzcd}
		\end{equation*}
		Thus it suffices to show that the two maps $f:W_E(A_{\infty})\to W_E\left( \mathbb{F}(\negthinspace(T_{\mathrm{cycl}}^{1/p^{\infty}})\negthinspace)\right)$ and $\mathrm{tr}: W_E(A_{\infty})\to W_E\left( \mathbb{F}(\negthinspace(T_{\mathrm{cycl}}^{1/p^{\infty}})\negthinspace)\right)$ are identical. Hence it suffices to verify that the two homomorphisms of strict $p$-rings $f:W(A_{\infty})\to W\left( \mathbb{F}(\negthinspace(T_{\mathrm{cycl}}^{1/p^{\infty}})\negthinspace)\right)$ and $\mathrm{tr}: W(A_{\infty})\to W\left( \mathbb{F}(\negthinspace(T_{\mathrm{cycl}}^{1/p^{\infty}})\negthinspace)\right)$ are equal.
		%Since we have the following commutative diagram:
		%\begin{equation*}
		%	\begin{tikzcd}
		%		W(A_{\infty})\arrow[r,"\mathrm{tr}"] \arrow[d,two heads,"\mod p"] & W\left( \mathbb{F}(\negthinspace(T_{\mathrm{cycl}}^{1/p^{\infty}})\negthinspace)\right)\arrow[d,two heads,"\mod p"]\\
		%		A_{\infty} \arrow[r,"\mathrm{tr}"]& \mathbb{F}(\negthinspace(T_{\mathrm{cycl}}^{1/p^{\infty}})\negthinspace)\\
		%		W(A_{\infty})\arrow[r,"f"] \arrow[u,two heads,"\mod p"'] & W\left( \mathbb{F}(\negthinspace(T_{\mathrm{cycl}}^{1/p^{\infty}})\negthinspace)\right)\arrow[u,two heads,"\mod p"']
		%	\end{tikzcd}
		%\end{equation*}
		By the theory of strict $p$-rings (\cite[Chapter II, $\S$5 Proposition 10]{serre1979local}), it suffices to verify that $f$ and $\mathrm{tr}$ are equal after reducing modulo $p$, which follows straightforwardly.
	\end{proof}
	
	\begin{prop}\label{relation to classical cycltomic (phi,Gamma)-modules}
		There exists an embedding $\sigma_d\in\{\sigma_{0},\dots,\sigma_{f-1}\}$ such that for any finite type $\mathcal{O}_E$-representation $\rho$ of $\gal(\overline{K}/K)$, there is a canonical isomorphism of finitely presented étale $(\varphi_q,\mathbb{Z}_p^{\times})$-modules over $A_{\mathrm{cycl},E}$:
		\[A_{\mathrm{cycl},E}\otimes_{\mathrm{tr},A_{\mathrm{mv},E}}D_{A_{\mathrm{mv},E}}^{(0)}(\rho)\cong D_{\mathrm{cycl},\sigma_d}(\rho).\]
	\end{prop}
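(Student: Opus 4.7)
The plan is to apply Lemma \ref{descent of f.p. etale modules from W_E(T_cycl) to A_cycl}, which exhibits $W_E\!\left(\mathbb{F}(\negthinspace(T_{\mathrm{cycl}}^{1/p^\infty})\negthinspace)\right)\otimes_{A_{\mathrm{cycl},E}}-$ as an equivalence of categories on finitely presented étale $(\varphi_q,\mathbb{Z}_p^\times)$-modules, reducing the statement to producing a canonical isomorphism after this base change. Using the commutative square (\ref{comm diag 1}) together with Theorem \ref{descent of fin pre etale phi_q-mod} (in the form $D_{W_E(A_\infty)}^{(0)}(\rho)\cong W_E(A_\infty)\otimes_{A_{\mathrm{mv},E}}D_{A_{\mathrm{mv},E}}^{(0)}(\rho)$), the base-changed left hand side rewrites as
\[W_E\!\left(\mathbb{F}(\negthinspace(T_{\mathrm{cycl}}^{1/p^\infty})\negthinspace)\right)\otimes_{\mathrm{tr},\,W_E(A_\infty)}D_{W_E(A_\infty)}^{(0)}(\rho).\]

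To compute this object I would unpack the defining formula $D_{W_E(A_\infty)}^{(0)}(\rho)\cong\bigl(W_E(A_\infty')\otimes_{\mathrm{pr}_0,A_{\mathrm{LT},E,\sigma_0}}D_{\mathrm{LT},\sigma_0}(\rho)\bigr)^{\Delta_1}$ and invoke Theorem \ref{descent of f.p. from W_E(A{infty}') to W_E(A{infty})}, which gives the identification $W_E(A_\infty')\otimes_{W_E(A_\infty)}D_{W_E(A_\infty)}^{(0)}(\rho)\cong W_E(A_\infty')\otimes_{\mathrm{pr}_0}D_{\mathrm{LT},\sigma_0}(\rho)$. After further base change to $W_E\!\left(\mathbb{F}(\negthinspace(T_{\mathrm{LT}}^{1/p^\infty})\negthinspace)\right)$, the isomorphism (\ref{isomorphism reveals the relation ver2}) from Corollary \ref{comparison of LT and cycl modules} converts the Lubin-Tate description into a cyclotomic one. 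The resulting isomorphism over $W_E\!\left(\mathbb{F}(\negthinspace(T_{\mathrm{LT}}^{1/p^\infty})\negthinspace)\right)$ must then be descended along the faithfully flat inclusion $W_E\!\left(\mathbb{F}(\negthinspace(T_{\mathrm{cycl}}^{1/p^\infty})\negthinspace)\right)\hookrightarrow W_E\!\left(\mathbb{F}(\negthinspace(T_{\mathrm{LT}}^{1/p^\infty})\negthinspace)\right)$ coming from (\ref{injection induced by tilting functor}) (faithful flatness holds since the reduction modulo $\varpi$ is a field extension and both rings are $\varpi$-adically complete and $\varpi$-torsion-free); this descent is guaranteed by the Galois equivariance inherited from the construction of (\ref{isomorphism reveals the relation ver2}) through $W(\mathbb{C}_p^\flat)\otimes\rho$.

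The main obstacle is identifying the specific embedding $\sigma_d$. The composition $A_{\mathrm{cycl},E}\hookrightarrow W_E\!\left(\mathbb{F}(\negthinspace(T_{\mathrm{cycl}}^{1/p^\infty})\negthinspace)\right)\hookrightarrow W_E\!\left(\mathbb{F}(\negthinspace(T_{\mathrm{LT}}^{1/p^\infty})\negthinspace)\right)$ arising from the tilt identifications (\ref{tilt is completed perfection}) and (\ref{injection induced by tilting functor}) differs from the embedding $A_{\mathrm{cycl},E}\hookrightarrow W_E\!\left(\mathbb{F}(\negthinspace(T_{\mathrm{LT}}^{1/p^\infty})\negthinspace)\right)$ implicitly used in (\ref{isomorphism reveals the relation ver2}) by a power $\varphi^d$ of Frobenius. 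The relation (\ref{cycltomic phi,Z_p* module are isomorphic by base change by phi}), $D_{\mathrm{cycl},\sigma_0}(\rho)\cong A_{\mathrm{cycl},E}\otimes_{\varphi^d,A_{\mathrm{cycl},E}}D_{\mathrm{cycl},\sigma_d}(\rho)$, then absorbs this twist into a shift $\sigma_0\leadsto\sigma_d$. Computing $d$ explicitly reduces to comparing, modulo $\varpi$, the image of $T_{\mathrm{cycl}}$ in $\mathbb{F}(\negthinspace(T_{\mathrm{LT}}^{1/p^\infty})\negthinspace)$ under the two embeddings, which is where the essential arithmetic input enters. Once $d$ is pinned down, the $(\varphi_q,\mathbb{Z}_p^\times)$-equivariance of the constructed isomorphism follows from the compatibility of $\mathrm{tr}$ with these actions built into (\ref{comm diag 1}) and the equivariance already present in Corollary \ref{comparison of LT and cycl modules}.
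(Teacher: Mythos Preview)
Your overall strategy matches the paper's: reduce via Lemma \ref{descent of f.p. etale modules from W_E(T_cycl) to A_cycl} and the square (\ref{comm diag 1}) to a statement over $W_E\bigl(\mathbb{F}(\negthinspace(T_{\mathrm{cycl}}^{1/p^\infty})\negthinspace)\bigr)$, pass up to $W_E\bigl(\mathbb{F}(\negthinspace(T_{\mathrm{LT}}^{1/p^\infty})\negthinspace)\bigr)$ using $m$ and Theorem \ref{descent of f.p. from W_E(A{infty}') to W_E(A{infty})}, invoke Corollary \ref{comparison of LT and cycl modules}, and descend by taking $\gal(K_\infty/K(\mu_{p^\infty}))$-invariants.

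There is, however, a genuine gap at the point where you assert that the two embeddings of $A_{\mathrm{cycl},E}$ into $W_E\bigl(\mathbb{F}(\negthinspace(T_{\mathrm{LT}}^{1/p^\infty})\negthinspace)\bigr)$ differ ``by a power $\varphi^d$ of Frobenius''. The map that actually arises from the square involving $m$ is the right vertical arrow $\iota$ in the lifted diagram (the paper's (\ref{comm diag 3})), and \emph{a priori} $\iota$ is just some continuous $\mathbb{Z}_p^\times$-equivariant injection; nothing in the construction forces it to agree with the tilting injection (\ref{injection induced by tilting functor}) up to a Frobenius twist. The paper closes this with an external input you do not mention: Berger's classification \cite[Theorem 3.1]{berger2022decompletion}, which says every continuous $\mathbb{Z}_p^\times$-equivariant endomorphism of $\mathbb{F}(\negthinspace(T_{\mathrm{cycl}}^{1/p^\infty})\negthinspace)$ is $T_{\mathrm{cycl}}\mapsto(1+T_{\mathrm{cycl}})^\gamma-1$ for some $\gamma\in\qp^\times$. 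Writing $\gamma=p^{mf+d}\cdot a$ with $a\in\mathbb{Z}_p^\times$ and $0\le d\le f-1$, one obtains $\iota=(\text{tilting injection})\circ\varphi^{mf+d}\circ a$; the factor $a$ is absorbed by the $\mathbb{Z}_p^\times$-equivariance of $D_{\mathrm{cycl}}$, the factor $\varphi^{mf}=\varphi_q^m$ by \'etaleness, and the residual $\varphi^d$ produces the shift to $\sigma_d$ via (\ref{cycltomic phi,Z_p* module are isomorphic by base change by phi}). Without Berger's theorem your ``comparing modulo $\varpi$ the image of $T_{\mathrm{cycl}}$'' step has no mechanism to succeed, and your descent step (which in the paper relies on knowing that $\iota$ restricted to the $\gal(K_\infty/K(\mu_{p^\infty}))$-invariants is an \emph{isomorphism} onto $W_E\bigl(\mathbb{F}(\negthinspace(T_{\mathrm{cycl}}^{1/p^\infty})\negthinspace)\bigr)$) is likewise incomplete.
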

	
	\begin{proof}
		By Lemma \ref{descent of f.p. etale modules from W_E(T_cycl) to A_cycl}, it suffices to show that there exists an embedding $\sigma_d:\mathcal{O}_K\hookrightarrow \mathcal{O}_E$ such that
		\[W_E\left(\mathbb{F}(\negthinspace(T_{\mathrm{cycl}}^{1/p^{\infty}})\negthinspace)\right)\otimes_{A_{\mathrm{mv},E}}D_{A_{\mathrm{mv},E}}^{(0)}(\rho)\cong W_E\left( \mathbb{F}(\negthinspace(T_{\mathrm{cycl}}^{1/p^{\infty}})\negthinspace)\right) \otimes_{A_{\mathrm{cycl},E}} D_{\mathrm{cycl},\sigma_d}(\rho).\]
		Using (\ref{comm diag 1}), the left hand side is
		\[W_E\left(\mathbb{F}(\negthinspace(T_{\mathrm{cycl}}^{1/p^{\infty}})\negthinspace)\right)\otimes_{A_{\mathrm{mv},E}}D_{A_{\mathrm{mv},E}}^{(0)}(\rho)\cong W_E\left(\mathbb{F}(\negthinspace(T_{\mathrm{cycl}}^{1/p^{\infty}})\negthinspace)\right)\otimes_{\mathrm{tr},W_E(A_{\infty})}D_{W_E(A_{\infty})}^{(0)}(\rho).\]
		Hence we need to show that there is an embedding $\sigma_{d}$ and a canonical isomorphism of finitely presented étale $(\varphi_q,\mathbb{Z}_p^{\times})$-modules over $W_E\left(\mathbb{F}(\negthinspace(T_{\mathrm{cycl}}^{1/p^{\infty}})\negthinspace)\right)$:
		\begin{align}\label{required isomorphism}
			W_E\left(\mathbb{F}(\negthinspace(T_{\mathrm{cycl}}^{1/p^{\infty}})\negthinspace)\right)\otimes_{\mathrm{tr},W_E(A_{\infty})}D_{W_E(A_{\infty})}^{(0)}(\rho)\cong W_E\left( \mathbb{F}(\negthinspace(T_{\mathrm{cycl}}^{1/p^{\infty}})\negthinspace)\right) \otimes_{A_{\mathrm{cycl},E}} D_{\mathrm{cycl},\sigma_d}(\rho).
		\end{align}
		Recall that there is a commutative diagram of perfectoid rings over $\mathbb{F}$ 	(\cite[$\S$2.8 (54)]{breuil2023multivariable}) which commutes with the $(\varphi_q,\mathcal{O}_K^{\times})$-action
		\begin{equation}\label{comm diag 2, mod p version}
			\begin{tikzcd}
				A_{\infty} \arrow[rrr,"\mathrm{tr}",two heads] \arrow[d,"m",hook] & & & \mathbb{F}(\negthinspace(T_{\mathrm{cycl}}^{1/p^{\infty}})\negthinspace) \arrow[d,"\iota",hook]\\
				A_{\infty}' \arrow[rrr,"T_{\mathrm{LT},i}^{p^{-n}}\mapsto T_{\mathrm{LT},\sigma_0}^{p^{i-n}}",two heads] & & & \mathbb{F}(\negthinspace(T_{\mathrm{LT}}^{1/p^{\infty}})\negthinspace)
			\end{tikzcd}
		\end{equation}
		where the right vertical map is induced by the left vertical map of \cite[$\S$2.8 (53)]{breuil2023multivariable}, and $\mathcal{O}_K^{\times}$ acts on $\mathbb{F}(\negthinspace(T_{\mathrm{cycl}}^{1/p^{\infty}})\negthinspace)$ via the norm map $\mathrm{Norm}_{K/\qp}: \mathcal{O}_K^{\times}\twoheadrightarrow \mathbb{Z}_p^{\times}$. Since $\gal(K_{\infty}/K(\mu_{p^{\infty}}))\cong\mathrm{ker}(\mathcal{O}_K^{\times}\xrightarrow{\mathrm{Norm}_{K/\qp}} \mathbb{Z}_p^{\times})$, the right vertical map $\iota$ decomposes as
		\[\iota: \mathbb{F}(\negthinspace(T_{\mathrm{cycl}}^{1/p^{\infty}})\negthinspace)\to\mathbb{F}(\negthinspace(T_{\mathrm{LT}}^{1/p^{\infty}})\negthinspace)^{\gal(K_{\infty}/K(\mu_{p^{\infty}}))}\hookrightarrow \mathbb{F}(\negthinspace(T_{\mathrm{LT}}^{1/p^{\infty}})\negthinspace).\]
		Recall that tilting functor gives an isomorphism (\ref{injection induced by tilting functor})
		\begin{align*}
			\mathbb{F}(\negthinspace(T_{\mathrm{cycl}}^{1/p^{\infty}})\negthinspace)\cong\mathbb{F}(\negthinspace(T_{\mathrm{LT}}^{1/p^{\infty}})\negthinspace)^{\gal(K_{\infty}/K(\mu_{p^{\infty}}))},
		\end{align*}
		thus the first part of $\iota$ is a continuous endomorphism of $\mathbb{F}(\negthinspace(T_{\mathrm{cycl}}^{1/p^{\infty}})\negthinspace)$ which commutes with the action of $\mathbb{Z}_p^{\times}$. Hence
		by \cite[Theorem 3.1]{berger2022decompletion}, there exists $\gamma\in\qp^{\times}$ such that $\iota$ is given by the composition
		\begin{align}\label{mod p case}
			\iota: \mathbb{F}(\negthinspace(T_{\mathrm{cycl}}^{1/p^{\infty}})\negthinspace)\xrightarrow[\sim]{T_{\mathrm{cycl}}\mapsto (1+T_{\mathrm{cycl}})^{\gamma}-1}\mathbb{F}(\negthinspace(T_{\mathrm{cycl}}^{1/p^{\infty}})\negthinspace)\hookrightarrow \mathbb{F}(\negthinspace(T_{\mathrm{LT}}^{1/p^{\infty}})\negthinspace)
		\end{align}
		where the last injection is (\ref{injection induced by tilting functor}).
		
		The commutative diagram (\ref{comm diag 2, mod p version}) lifts to a commutative diagram which commutes with the $(\varphi_q,\mathcal{O}_K^{\times})$-actions:
		\begin{equation}\label{comm diag 3}
			\begin{tikzcd}[column sep=huge]
				W_E\left(A_{\infty}\right) \arrow[rrr,"\mathrm{tr}",two heads] \arrow[d,"m",hook] & & & W_E\left(\mathbb{F}(\negthinspace(T_{\mathrm{cycl}}^{1/p^{\infty}})\negthinspace)\right) \arrow[d,hook,"\iota"]\\
				W_E\left(A_{\infty}'\right) \arrow[rrr,"{[T_{\mathrm{LT},i}]}^{p^{-n}}\mapsto{[T_{\mathrm{LT},\sigma_0}]} ^{p^{i-n}}",two heads] & & & W_E\left(\mathbb{F}(\negthinspace(T_{\mathrm{LT}}^{1/p^{\infty}})\negthinspace)\right)
			\end{tikzcd}
		\end{equation}
		As a consequence of (\ref{mod p case}), the right vertical map $\iota$ decomposes as
		\[\iota: W_E\left( \mathbb{F}(\negthinspace(T_{\mathrm{cycl}}^{1/p^{\infty}})\negthinspace)\right)\xrightarrow[\sim]{[T_{\mathrm{cycl}}]\mapsto \left[(1+T_{\mathrm{cycl}})^{\gamma}-1\right]}W_E\left(\mathbb{F}(\negthinspace(T_{\mathrm{cycl}}^{1/p^{\infty}})\negthinspace)\right)\hookrightarrow W_E\left( \mathbb{F}(\negthinspace(T_{\mathrm{LT}}^{1/p^{\infty}})\negthinspace)\right),\]
		where $[T_{\mathrm{cycl}}]$ and $\left[(1+T_{\mathrm{cycl}})^{\gamma}-1\right]$ are the Teichmüller lifts of $T_{\mathrm{cycl}}$ and $(1+T_{\mathrm{cycl}})^{\gamma}-1$ respectively. We write $\gamma=p^{mf+d}\cdot a$ for some $a\in \mathbb{Z}_p^{\times}$, $m\in\mathbb{Z}$ and $d\in\{0,\dots,f-1\}$, since the $\mathbb{F}$-linear endomorphism $T_{\mathrm{cycl}}\mapsto (1+T_{\mathrm{cycl}})^p-1$ of $\mathbb{F}(\negthinspace(T_{\mathrm{cycl}}^{1/p^{\infty}})\negthinspace)$ is just the endomorphism $\varphi$, the isomorphism $W_E\left( \mathbb{F}(\negthinspace(T_{\mathrm{cycl}}^{1/p^{\infty}})\negthinspace)\right)\xrightarrow[\sim]{[T_{\mathrm{cycl}}]\mapsto \left[(1+T_{\mathrm{cycl}})^{\gamma}-1\right]}W_E\left(\mathbb{F}(\negthinspace(T_{\mathrm{cycl}}^{1/p^{\infty}})\negthinspace)\right)$, which we denote by $\gamma$, decomposes as
		\begin{align}\label{decomposition of iota}
			\gamma: W_E\left( \mathbb{F}(\negthinspace(T_{\mathrm{cycl}}^{1/p^{\infty}})\negthinspace)\right)\xrightarrow{a}W_E\left(\mathbb{F}(\negthinspace(T_{\mathrm{cycl}}^{1/p^{\infty}})\negthinspace)\right)\xrightarrow{\varphi^{mf+d}}W_E\left( \mathbb{F}(\negthinspace(T_{\mathrm{cycl}}^{1/p^{\infty}})\negthinspace)\right).
		\end{align}
		where the first map is the action by $a\in \mathbb{Z}_p^{\times}$.
		
		Since $W_E\left(\mathbb{F}(\negthinspace(T_{\mathrm{LT}}^{1/p^{\infty}})\negthinspace)\right)\xrightarrow{[T_{\mathrm{LT}}]\mapsto [T_{\mathrm{LT},0}]}W_E\left(A_{\infty}'\right)$ is a section of the bottom horizontal surjection of (\ref{comm diag 3}), using Theorem \ref{descent of f.p. from W_E(A{infty}') to W_E(A{infty})} and (\ref{comm diag 3}), we have
		\begin{equation}\label{isomorphism on the one hand}
			\begin{aligned}
				W_E\left(\mathbb{F}(\negthinspace(T_{\mathrm{LT}}^{1/p^{\infty}})\negthinspace)\right)\otimes_{A_{\mathrm{LT},E,\sigma_{0}}}D_{\mathrm{LT},\sigma_{0}}(\rho)& \cong W_E\left(\mathbb{F}(\negthinspace(T_{\mathrm{LT}}^{1/p^{\infty}})\negthinspace)\right)\otimes_{W_E(A_{\infty}')}W_E(A_{\infty}')\otimes_{\mathrm{pr}_0,A_{\mathrm{LT},E,\sigma_{0}}}D_{\mathrm{LT},\sigma_{0}}(\rho)\\
				&\cong W_E\left(\mathbb{F}(\negthinspace(T_{\mathrm{LT}}^{1/p^{\infty}})\negthinspace)\right)\otimes_{W_E(A_{\infty}')}W_E(A_{\infty}')\otimes_{W_E(A_{\infty})}D_{W_E(A_{\infty})}^{(0)}(\rho)\\
				&\cong W_E\left(\mathbb{F}(\negthinspace(T_{\mathrm{LT}}^{1/p^{\infty}})\negthinspace)\right)\otimes_{\iota,W_E\left(\mathbb{F}(\negthinspace(T_{\mathrm{cycl}}^{1/p^{\infty}})\negthinspace)\right)}W_E\left(\mathbb{F}(\negthinspace(T_{\mathrm{cycl}}^{1/p^{\infty}})\negthinspace)\right)\\
				&\qquad \otimes_{\mathrm{tr},W_E(A_{\infty})}D_{W_E(A_{\infty})}^{(0)}(\rho).
			\end{aligned}
		\end{equation}
		On the other hand, the isomorphism (\ref{isomorphism reveals the relation ver2}) gives
		\begin{equation}\label{isomorphism on the other hand}
			\begin{aligned}
				W_E\left(\mathbb{F}(\negthinspace(T_{\mathrm{LT}}^{p^{-\infty}})\negthinspace)\right)\otimes_{A_{\mathrm{LT},E,\sigma_{0}}}D_{\mathrm{LT},\sigma_{0}}(\rho)&\cong W_E\left(\mathbb{F}(\negthinspace(T_{\mathrm{LT}}^{p^{-\infty}})\negthinspace)\right)\otimes_{A_{\mathrm{cycl},E}}D_{\mathrm{cycl},\sigma_{0}}(\rho)\\
				&\cong W_E\left(\mathbb{F}(\negthinspace(T_{\mathrm{LT}}^{p^{-\infty}})\negthinspace)\right)\otimes_{W_E\left(\mathbb{F}(\negthinspace(T_{\mathrm{cycl}}^{1/p^{\infty}})\negthinspace)\right)}W_E\left(\mathbb{F}(\negthinspace(T_{\mathrm{cycl}}^{1/p^{\infty}})\negthinspace)\right)\\
				&\qquad\qquad\qquad\qquad\qquad\otimes_{A_{\mathrm{cycl},E}}D_{\mathrm{cycl},\sigma_{0}}(\rho)\\
				&\cong W_E\left(\mathbb{F}(\negthinspace(T_{\mathrm{LT}}^{p^{-\infty}})\negthinspace)\right)\otimes_{\iota,W_E\left(\mathbb{F}(\negthinspace(T_{\mathrm{cycl}}^{1/p^{\infty}})\negthinspace)\right)}W_E\left(\mathbb{F}(\negthinspace(T_{\mathrm{cycl}}^{1/p^{\infty}})\negthinspace)\right)\\
				& \qquad\otimes_{\gamma^{-1},W_E\left(\mathbb{F}(\negthinspace(T_{\mathrm{cycl}}^{1/p^{\infty}})\negthinspace)\right)}W_E\left(\mathbb{F}(\negthinspace(T_{\mathrm{cycl}}^{1/p^{\infty}})\negthinspace)\right)\otimes_{A_{\mathrm{cycl},E}}D_{\mathrm{cycl},\sigma_{0}}(\rho),
			\end{aligned}
		\end{equation}
		where the second isomorphism is induced by the injection (\ref{injection induced by tilting functor}). To simplify the notation, we put
		\[D(\rho)\coloneq W_E\left(\mathbb{F}(\negthinspace(T_{\mathrm{cycl}}^{1/p^{\infty}})\negthinspace)\right) \otimes_{\gamma^{-1},W_E\left(\mathbb{F}(\negthinspace(T_{\mathrm{cycl}}^{1/p^{\infty}})\negthinspace)\right)}W_E\left(\mathbb{F}(\negthinspace(T_{\mathrm{cycl}}^{1/p^{\infty}})\negthinspace)\right)\otimes_{A_{\mathrm{cycl},E}}D_{\mathrm{cycl},\sigma_{0}}(\rho)\]
		which is a finitely presented étale $(\varphi_q,\mathbb{Z}_p^{\times})$-module over $W_E\left(\mathbb{F}(\negthinspace(T_{\mathrm{cycl}}^{1/p^{\infty}})\negthinspace)\right)$. Combining (\ref{isomorphism on the one hand}) and (\ref{isomorphism on the other hand}), we get an isomorphism 
		\begin{equation}\label{isormorphism 1}
			\begin{aligned}
				&W_E\left(\mathbb{F}(\negthinspace(T_{\mathrm{LT}}^{1/p^{\infty}})\negthinspace)\right)\otimes_{\iota,W_E\left(\mathbb{F}(\negthinspace(T_{\mathrm{cycl}}^{1/p^{\infty}})\negthinspace)\right)}\left(W_E\left(\mathbb{F}(\negthinspace(T_{\mathrm{cycl}}^{1/p^{\infty}})\negthinspace)\right) \otimes_{\mathrm{tr},W_E(A_{\infty})}D_{W_E(A_{\infty})}^{(0)}(\rho)\right)\\
				\cong & W_E\left(\mathbb{F}(\negthinspace(T_{\mathrm{LT}}^{p^{-\infty}})\negthinspace)\right)\otimes_{\iota,W_E\left(\mathbb{F}(\negthinspace(T_{\mathrm{cycl}}^{1/p^{\infty}})\negthinspace)\right)}D(\rho).
			\end{aligned}
		\end{equation}
		Since the actions of $\gal(K_{\infty}/K(\mu_{p^{\infty}}))$ on $W_E\left(\mathbb{F}(\negthinspace(T_{\mathrm{cycl}}^{1/p^{\infty}})\negthinspace)\right) \otimes_{\mathrm{tr},W_E(A_{\infty})}D_{W_E(A_{\infty})}^{(0)}(\rho)$ and $D(\rho)$ are trivial, by taking $\gal(K_{\infty}/K(\mu_{p^{\infty}}))$-invariants and using the fact that (\ref{decomposition of iota}) is an isomorphism, we deduce that (\ref{isormorphism 1}) descends to an isomorphism:
		\begin{equation}\label{isomorphism 3}
			\begin{aligned}
				W_E\left(\mathbb{F}(\negthinspace(T_{\mathrm{cycl}}^{1/p^{\infty}})\negthinspace)\right) \otimes_{\mathrm{tr},W_E(A_{\infty})}D_{W_E(A_{\infty})}^{(0)}(\rho)
				\cong D(\rho).
			\end{aligned}
		\end{equation}
		On the other hand, using (\ref{decomposition of iota}), we can compute $D(\rho)$ explicitly:
		\begin{align*}
			D(\rho)\cong& W_E\left(\mathbb{F}(\negthinspace(T_{\mathrm{cycl}}^{1/p^{\infty}})\negthinspace)\right) \otimes_{a^{-1},W_E\left(\mathbb{F}(\negthinspace(T_{\mathrm{cycl}}^{1/p^{\infty}})\negthinspace)\right)}W_E\left(\mathbb{F}(\negthinspace(T_{\mathrm{cycl}}^{1/p^{\infty}})\negthinspace)\right)\\
			&\quad \otimes_{\varphi^{-nf-d},W_E\left(\mathbb{F}(\negthinspace(T_{\mathrm{cycl}}^{1/p^{\infty}})\negthinspace)\right)}W_E\left(\mathbb{F}(\negthinspace(T_{\mathrm{cycl}}^{1/p^{\infty}})\negthinspace)\right)\otimes_{A_{\mathrm{cycl},E}}D_{\mathrm{cycl},\sigma_{0}}(\rho) \\
			\cong& W_E\left(\mathbb{F}(\negthinspace(T_{\mathrm{cycl}}^{1/p^{\infty}})\negthinspace)\right) \otimes_{a^{-1},A_{\mathrm{cycl},E}}A_{\mathrm{cycl},E}\otimes_{\varphi^{-nf-d},A_{\mathrm{cycl},E}}D_{\mathrm{cycl},\sigma_{0}}(\rho)\\
			\cong& W_E\left(\mathbb{F}(\negthinspace(T_{\mathrm{cycl}}^{1/p^{\infty}})\negthinspace)\right) \otimes_{a^{-1},A_{\mathrm{cycl},E}}D_{\mathrm{cycl},\sigma_{d}}(\rho)\\
			\cong& W_E\left(\mathbb{F}(\negthinspace(T_{\mathrm{cycl}}^{1/p^{\infty}})\negthinspace)\right) \otimes_{A_{\mathrm{cycl},E}}D_{\mathrm{cycl},\sigma_{d}}(\rho)
		\end{align*}
		where the third isomorphism uses (\ref{cycltomic phi,Z_p* module are isomorphic by base change by phi}) and the last isomorphism uses the fact that the endomorphism of $D_{\mathrm{cycl},\sigma_{d}}(\rho)$ given by $a\in\mathbb{Z}_p^{\times}$ is an automorphism. Therefore, (\ref{isomorphism 3}) induces the required isomorphism (\ref{required isomorphism}) which finishes the proof.
	\end{proof}

	\begin{thm}\label{f.p. D_{A_{mv,E}}^{(i)} is fully faithful}
		The functor $D_{A_{\mathrm{mv},E}}^{(i)}: \rep_{\mathcal{O}_E}\left(\gal(\overline{K}/K)\right)\to\modetfp_{\varphi_q,\mathcal{O}_K^{\times}}(A_{\mathrm{mv},E})$ is fully faithful and exact. If $\rho$ is a finite free $\mathcal{O}_E$-representation of $\gal(\overline{K}/K)$, then $D_{A_{\mathrm{mv},E}}^{(i)}(\rho)$ is a finite free étale $(\varphi_q,\mathcal{O}_K^{\times})$-module over $A_{\mathrm{mv},E}$, and
		\[\mathrm{rank}_{\mathcal{O}_E}\rho=\mathrm{rank}_{A_{\mathrm{mv},E}}D_{A_{\mathrm{mv},E}}^{(i)}(\rho).\]
	\end{thm}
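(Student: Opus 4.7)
The plan is to verify the three claims (exactness, rank preservation, fully faithfulness) in turn, relying throughout on the explicit factorisation
\[D_{A_{\mathrm{mv},E}}^{(0)} \;=\; \mathrm{Dec}\,\circ\,(-)^{\Delta_1}\,\circ\,\bigl(W_E(A_{\infty}')\otimes_{\mathrm{pr}_0,A_{\mathrm{LT},E,\sigma_0}}-\bigr)\,\circ\,D_{\mathrm{LT},\sigma_0}\]
and on the cyclotomic comparison from Proposition \ref{relation to classical cycltomic (phi,Gamma)-modules}. Exactness is immediate: $D_{\mathrm{LT},\sigma_0}$ is an equivalence (Lemma \ref{D cycl E sigma i} and its Lubin-Tate version), $\mathrm{pr}_0\colon A_{\mathrm{LT},E,\sigma_0}\hookrightarrow W_E(A_{\infty}')$ is faithfully flat (as noted just before the theorem), and both $(-)^{\Delta_1}$ and $\mathrm{Dec}$ are equivalences by Theorems \ref{descent of f.p. from W_E(A{infty}') to W_E(A{infty})} and \ref{descent of f.p. phi_q,O_K*-modules}. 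The rank statement for a finite free $\rho$ of rank $r$ falls out by tracking ranks across each arrow: $D_{\mathrm{LT},\sigma_0}(\rho)$ is finite free of rank $r$ over $A_{\mathrm{LT},E,\sigma_0}$; its tensor with $W_E(A_{\infty}')$ is finite free of rank $r$; its $\Delta_1$-invariants are finite free of rank $r$ over $W_E(A_{\infty})$ by Lemma \ref{A{infty} to A{infty}'} combined with Theorem \ref{descent from W_E(A{infty}') to W_E(A{infty})}; and $\mathrm{Dec}$ outputs a finite projective (hence, by Lemma \ref{description of finte projective modules}, finite free) $A_{\mathrm{mv},E}$-module of the same rank, using the faithful flatness of $A_{\mathrm{mv},E}\hookrightarrow W_E(A_{\infty})$ from Proposition \ref{A_{mv,E} to W_E(A_{infty}) is faithfully flat}.

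For fully faithfulness I first reduce to $i=0$. Lemma \ref{ralation of different f.p. D_{A_E}^i} supplies a natural isomorphism $A_{\mathrm{mv},E}\otimes_{\varphi,A_{\mathrm{mv},E}}D_{A_{\mathrm{mv},E}}^{(i)}(\rho)\cong D_{A_{\mathrm{mv},E}}^{(i+1)}(\rho)$, and the $f$-fold composite of the functor $A_{\mathrm{mv},E}\otimes_{\varphi,-}$ equals $A_{\mathrm{mv},E}\otimes_{\varphi_q,-}$, which is naturally isomorphic to the identity on $\modetfp_{\varphi_q,\mathcal{O}_K^{\times}}(A_{\mathrm{mv},E})$ via the linearisation map; thus $A_{\mathrm{mv},E}\otimes_{\varphi,-}$ is an autoequivalence, and fully faithfulness for one value of $i$ implies it for every $i$. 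Fixing $i=0$, Proposition \ref{relation to classical cycltomic (phi,Gamma)-modules} identifies $\mathrm{tr}_*\circ D_{A_{\mathrm{mv},E}}^{(0)}$ with $D_{\mathrm{cycl},\sigma_d}$, which is an equivalence by Lemma \ref{D cycl E sigma i}; in particular $D_{A_{\mathrm{mv},E}}^{(0)}$ is already faithful. For fullness, given $\psi\colon D_{A_{\mathrm{mv},E}}^{(0)}(\rho_1)\to D_{A_{\mathrm{mv},E}}^{(0)}(\rho_2)$, I take the unique Galois map $f\colon\rho_1\to\rho_2$ with $D_{\mathrm{cycl},\sigma_d}(f)=\mathrm{tr}_*\psi$ and reduce the problem to showing that $\eta:=\psi-D_{A_{\mathrm{mv},E}}^{(0)}(f)$, which satisfies $\mathrm{tr}_*\eta=0$, is itself zero.

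This last vanishing is the main obstacle: the kernel of $\mathrm{tr}\colon A_{\mathrm{mv},E}\twoheadrightarrow A_{\mathrm{cycl},E}$ is substantial, so $\mathrm{tr}_*$ is a priori not faithful. My plan is to move the problem to the perfectoid level. By combining Theorems \ref{descent of f.p. phi_q,O_K*-modules} and \ref{descent of f.p. from W_E(A{infty}') to W_E(A{infty})}, base change along the faithfully flat inclusion $A_{\mathrm{mv},E}\hookrightarrow W_E(A_{\infty}')$ is bijective on Hom sets between finitely presented étale $(\varphi_q,\mathcal{O}_K^{\times})$-modules over $A_{\mathrm{mv},E}$ and their $(\varphi_q,(\mathcal{O}_K^{\times})^f)$-equivariant counterparts over $W_E(A_{\infty}')$, so it suffices to show that $\tilde\eta:=\mathrm{id}_{W_E(A_{\infty}')}\otimes\eta$ vanishes. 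The commutative diagram (\ref{comm diag 3}) in the proof of Proposition \ref{relation to classical cycltomic (phi,Gamma)-modules} converts the hypothesis $\mathrm{tr}_*\eta=0$ into the vanishing of $\tilde\eta$ after the further surjection $W_E(A_{\infty}')\twoheadrightarrow W_E(\mathbb{F}(\negthinspace(T_{\mathrm{LT}}^{1/p^\infty})\negthinspace))$ sending $[T_{\mathrm{LT},i}]\mapsto [T_{\mathrm{LT}}^{p^i}]$. Assuming first that $\rho_1,\rho_2$ are finite free, I choose bases of each $D_{\mathrm{LT},\sigma_0}(\rho_l)$ over $A_{\mathrm{LT},E,\sigma_0}$ and write $\tilde\eta$ as a matrix $(a_{kj})$ with entries in $W_E(A_{\infty}')$; equivariance under the subgroup $\{1\}\times(\mathcal{O}_K^{\times})^{f-1}$, which acts trivially on each $1\otimes e_l$ (its elements project to $1$ under the $0$-th projection $(\mathcal{O}_K^{\times})^f\to\mathcal{O}_K^{\times}$), forces each $a_{kj}$ to lie in the invariant subring $W_E(A_{\infty}')^{\{1\}\times(\mathcal{O}_K^{\times})^{f-1}}=W_E(\mathbb{F}(\negthinspace(T_{\mathrm{LT},0}^{1/p^\infty})\negthinspace))$, on which the surjection above restricts to an \emph{isomorphism}. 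Hence each $a_{kj}$ sits in the intersection of this subring with the kernel of the surjection, which is zero, and $\tilde\eta=0$. The torsion and general finite-type cases are then handled by dévissage along $0\to\rho_{\mathrm{tor}}\to\rho\to\rho/\rho_{\mathrm{tor}}\to 0$, induction on the exponent of $\rho_{\mathrm{tor}}$, and the mod-$\varpi$ case recovered from the analogous result in \cite{breuil2023multivariable}.
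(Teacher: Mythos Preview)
Your treatment of exactness, faithfulness, the rank statement, and the reduction to $i=0$ via Lemma \ref{ralation of different f.p. D_{A_E}^i} is correct and matches the paper. The reduction of fullness to the injectivity of $\mathrm{tr}_*$ on Hom sets (equivalently, $\eta=0$ whenever $\mathrm{tr}_*\eta=0$) is also the paper's strategy.

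The divergence is in how this injectivity is proved. The paper never leaves $A_{\mathrm{mv},E}$: from $\mathrm{tr}_*\eta=0$ one gets $\eta(D^{(0)}(\rho_1))\subseteq\mathfrak{p}\cdot D^{(0)}(\rho_2)$ with $\mathfrak{p}=\ker(\mathrm{tr})$; iterating the $\varphi_q$-equivariance of $\eta$ together with étaleness of $D^{(0)}(\rho_1)$ and the congruence $\varphi_q^{nf'}\equiv(\cdot)^{q'^n}\pmod\varpi$ yields $\eta(D^{(0)}(\rho_1))\subseteq(\mathfrak{p}^{q'^n}+\varpi A_{\mathrm{mv},E})\cdot D^{(0)}(\rho_2)$ for all $n$; Krull's intersection theorem on the finite free $A$-module $D^{(0)}(\rho_2)/\varpi$ (free by Proposition \ref{description of finitely presented etale phi_q-module over A_{mv,E}}) then forces $\eta(D^{(0)}(\rho_1))\subseteq\varpi D^{(0)}(\rho_2)$, and an induction on powers of $\varpi$ finishes. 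This argument is uniform in $\rho_1,\rho_2$ and needs no freeness hypothesis or d\'evissage.

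Your route instead passes to $W_E(A_\infty')$ and rests entirely on the equality
\[
W_E(A_\infty')^{\{1\}\times(\mathcal{O}_K^\times)^{f-1}}=W_E\bigl(\mathbb{F}(\negthinspace(T_{\mathrm{LT},0}^{1/p^\infty})\negthinspace)\bigr),
\]
which you assert without justification. This is the crux of your argument and is not a consequence of anything proved in the paper: the paper (via \cite[Proposition 2.4.4]{breuil2023multivariable} and Theorem \ref{vanishing of continuous cohomology of A_{infty}'}) controls the invariants of $A_\infty'$ under $\Delta_1$, but $\{1\}\times(\mathcal{O}_K^\times)^{f-1}$ is a \emph{different} subgroup of $(\mathcal{O}_K^\times)^f$, and computing its invariants amounts to proving a separate pro-\'etale torsor statement for the projection $\mathrm{Spa}(A_\infty')\to\mathrm{Spa}\bigl(\mathbb{F}(\negthinspace(T_{\mathrm{LT},0}^{1/p^\infty})\negthinspace)\bigr)$. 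That may well be extractable from \cite{breuil2023multivariable}, but as written it is a genuine gap. Your d\'evissage to the non-free case is also only sketched; by contrast the paper's Krull-intersection argument handles all finite type $\rho_1,\rho_2$ at once.
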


	\begin{proof}
		The claim that $D_{A_{\mathrm{mv},E}}^{(i)}$ sends finite free $\mathcal{O}_E$-representations to finite free étale $(\varphi_q,\mathcal{O}_K^{\times})$-modules over $A_{\mathrm{mv},E}$ and preserves the rank follows directly from the construction. The exactness and faithfulness has been explained in the discussion after Lemma \ref{ralation of different f.p. D_{A_E}^i}, hence it remains to show the fullness of $D_{A_{\mathrm{mv},E}}^{(i)}$. 
		By Lemma \ref{ralation of different f.p. D_{A_E}^i}, it suffices to show the fullness of $D_{A_{\mathrm{mv},E}}^{(0)}$. Let $\rho_1,\rho_2$ be finite type $\mathcal{O}_E$-representations of $\gal(\overline{K}/K)$, there is a commutative diagram of abelian groups (Proposition \ref{relation to classical cycltomic (phi,Gamma)-modules}):
		\begin{equation*}
			\begin{tikzcd}[column sep=huge,row sep=huge]
				\mathrm{Hom}_{\gal(\overline{K}/K)}(\rho_1,\rho_2) \arrow[rd,"\sim","D_{\mathrm{cycl},\sigma_0}"'] \arrow[r,"D_{A_{\mathrm{mv},E}}^{(0)}"] & \mathrm{Hom}_{\modet_{\varphi_q,\mathcal{O}_K^{\times}}(A_{\mathrm{mv},E})}(D_{A_{\mathrm{mv},E}}^{(0)}(\rho_1),D_{A_{\mathrm{mv},E}}^{(0)}(\rho_2))\arrow[d,"A_{\mathrm{cycl},E}\otimes_{\mathrm{tr},A_{\mathrm{mv},E}}-"]\\
				 &\mathrm{Hom}_{\modet_{\varphi_q,\mathbb{Z}_p^{\times}}(A_{\mathrm{cycl},E})}(D_{\mathrm{cycl},\sigma_{d}}(\rho_1),D_{\mathrm{cycl},\sigma_{d}}(\rho_2)).
			\end{tikzcd}
		\end{equation*}
		Thus if we can show the injectivity of the right vertical map, then the fully faithfulness of $D_{A_{\mathrm{mv},E}}^{(0)}$ follows. Let $h:D_{A_{\mathrm{mv},E}}^{(0)}(\rho_1)\to D_{A_{\mathrm{mv},E}}^{(0)}(\rho_2)$ be a homomorphism of finitely presented étale $(\varphi_q,\mathcal{O}_K^{\times})$-modules such that $\mathrm{id}\otimes h:D_{\mathrm{cycl},\sigma_{d}}(\rho_1)\to D_{\mathrm{cycl},\sigma_{d}}(\rho_2)$ is the zero map.
		Let $\mathfrak{p}\subset A_{\mathrm{mv},E}$ be the kernel of $\mathrm{tr}: A_{\mathrm{mv},E}\twoheadrightarrow A_{\mathrm{cycl},E}$. Since $A_{\mathrm{cycl},E}$ is an integral domain, $\mathfrak{p}$ is a prime ideal. As $\mathrm{id}\otimes h=0$, we have
		\[h(D_{A_{\mathrm{mv},E}}^{(0)}(\rho_1))\subseteq \mathfrak{p}\cdot D_{A_{\mathrm{mv},E}}^{(0)}(\rho_2).\]
		Since $h$ commutes with $\varphi_q$ and $D_{A_{\mathrm{mv},E}}^{(0)}(\rho_1)$ is étale, we have
		\[h(D_{A_{\mathrm{mv},E}}^{(0)}(\rho_1))\subseteq \varphi_q^n(\mathfrak{p})\cdot D_{A_{\mathrm{mv},E}}^{(0)}(\rho_2),\quad n\geq 1.\]
		For any $n\geq 1$, the map $\varphi_q^{nf'}: A_{\mathrm{mv},E}/\varpi\to A_{\mathrm{mv},E}/\varpi$ is the $q^{nf'}$-th power map where $f'=[\mathbb{F}:\mathbb{F}_q]$, hence $\varphi_q^{nf'}(\mathfrak{p})\subseteq \mathfrak{p}^{q^{nf'}}+\varpi A_{\mathrm{mv},E}$. Thus
		\[h(D_{A_{\mathrm{mv},E}}^{(0)}(\rho_1))\subseteq \left(\mathfrak{p}^{q^{nf'}}+\varpi A_{\mathrm{mv},E}\right)\cdot D_{A_{\mathrm{mv},E}}^{(0)}(\rho_2),\quad n\geq 1.\]
		Let $M\coloneq \bigcap_{n=1}^{\infty}\left(\mathfrak{p}^{q^{nf'}}+\varpi A_{\mathrm{mv},E}\right)\cdot D_{A_{\mathrm{mv},E}}^{(0)}(\rho_2)$, we have $h(D_{A_{\mathrm{mv},E}}^{(0)}(\rho_1))\subseteq M$ and $\varpi D_{A_{\mathrm{mv},E}}^{(0)}(\rho_2)\subseteq M$. Moreover, since $\varpi D_{A_{\mathrm{mv},E}}^{(0)}(\rho_2)$ is contained in $\left(\mathfrak{p}^{q^{nf'}}+\varpi A_{\mathrm{mv},E}\right)\cdot D_{A_{\mathrm{mv},E}}^{(0)}(\rho_2)$ for every $n\geq 1$, we have
		\begin{align*}
			M\big/\varpi D_{A_{\mathrm{mv},E}}^{(0)}(\rho_2)&=\bigcap_{n=1}^{\infty}\left(\left.\left(\mathfrak{p}^{q^{nf'}}+\varpi A_{\mathrm{mv},E}\right)\cdot D_{A_{\mathrm{mv},E}}^{(0)}(\rho_2)\middle/\varpi D_{A_{\mathrm{mv},E}}^{(0)}(\rho_2)\right.\right)\\
			&=\bigcap_{n=1}^{\infty}\left(\overline{\mathfrak{p}}^{q^{nf'}}\cdot \left( D_{A_{\mathrm{mv},E}}^{(0)}(\rho_2)/\varpi D_{A_{\mathrm{mv},E}}^{(0)}(\rho_2)\right)\right),
		\end{align*}
		where $\overline{\mathfrak{p}}$ is the image of $\mathfrak{p}$ under the surjection $A_{\mathrm{mv},E}\twoheadrightarrow A$ which is a proper ideal of $A$.	
		Although $D_{A_{\mathrm{mv},E}}^{(0)}(\rho_2)$ may not be a free $A_{\mathrm{mv},E}$-module, by Proposition \ref{description of finitely presented etale phi_q-module over A_{mv,E}}, $D_{A_{\mathrm{mv},E}}^{(0)}(\rho_2)/\varpi D_{A_{\mathrm{mv},E}}^{(0)}(\rho_2)$ is always a finite free $A$-module. Therefore,
		\begin{align*}
			M/\varpi D_{A_{\mathrm{mv},E}}^{(0)}(\rho_2)&=\bigcap_{n=1}^{\infty}\left(\overline{\mathfrak{p}}^{q^{nf'}}\cdot \left( D_{A_{\mathrm{mv},E}}^{(0)}(\rho_2)/\varpi D_{A_{\mathrm{mv},E}}^{(0)}(\rho_2)\right)\right)\\
			&=\left(\bigcap_{n=1}^{\infty}\overline{\mathfrak{p}}^{q^{nf'}}\right)\cdot \left( D_{A_{\mathrm{mv},E}}^{(0)}(\rho_2)/\varpi D_{A_{\mathrm{mv},E}}^{(0)}(\rho_2)\right).
		\end{align*}
		As $\overline{\mathfrak{p}}\subsetneq A$ is a proper ideal and $A$ is a Noetherian domain, by Krull's intersection theorem, $\bigcap_{n=1}^{\infty}\overline{\mathfrak{p}}^{q^{nf'}}=0$, thus
		\[M=\varpi D_{A_{\mathrm{mv},E}}^{(0)}(\rho_2),\]
		which implies that
		\begin{align}\label{starting step of induction, general case}
			h(D_{A_{\mathrm{mv},E}}^{(0)}(\rho_1))\subseteq \varpi D_{A_{\mathrm{mv},E}}^{(0)}(\rho_2)=D_{A_{\mathrm{mv},E}}^{(0)}(\varpi \rho_2).
		\end{align}
		By induction on $m\geq 1$, one can show that
		\[h (D_{A_{\mathrm{mv},E}}^{(0)}(\rho_1))\subseteq D_{A_{\mathrm{mv},E}}^{(0)}(\varpi^m\rho_2)=\varpi^m D_{A_{\mathrm{mv},E}}^{(0)}(\rho_2),\quad m\geq1\]
		hence
		\[h (D_{A_{\mathrm{mv},E}}^{(0)}(\rho_1))\subseteq \bigcap_{m=1}^{\infty}\varpi^m\cdot D_{A_{\mathrm{mv},E}}^{(0)}(\rho_2).\]
		By \cite[\href{https://stacks.math.columbia.edu/tag/00IQ}{Tag 00IQ}]{stacks-project}, as $\varpi$ is contained in the Jacobson radical of $A_{\mathrm{mv},E}$ and $D_{A_{\mathrm{mv},E}}^{(0)}(\rho_2)$ is a finite $A_{\mathrm{mv},E}$-module, we have
		\[\bigcap_{m=1}^{\infty}\varpi^m\cdot D_{A_{\mathrm{mv},E}}^{(0)}(\rho_2)=0.\]
		Therefore, $h$ is the zero map, which finishes the proof.
	\end{proof}

	\begin{remark}
		If $K=\qp$, then $A_{\mathrm{mv},E}$ is nothing but $A_{\mathrm{cycl},E}=A_{\mathrm{LT},E,\sigma_{0}}$, and the functor $D_{A_{\mathrm{mv},E}}^{(i)}$ is $D_{\mathrm{cycl},\sigma_{0}}=D_{\mathrm{LT},\sigma_{0}}=D_{\mathrm{LT}}$, hence is an equivalence of categories. If $K\neq \qp$, as the following proposition shows, the functor $D_{A_{\mathrm{mv},E}}^{(i)}$ is not essentially surjective. 
	\end{remark}
	
	\begin{prop}\label{not essentially surjective for integrel f>1}
		For $0\leq i\leq f-1$, the functor $D_{A_{\mathrm{mv},E}}^{(i)}: \rep_{\mathcal{O}_E}\left(\gal(\overline{K}/K)\right)\to\modetfp_{\varphi_q,\mathcal{O}_K^{\times}}(A_{\mathrm{mv},E})$ is not essentially surjective if $f\geq 2$.
	\end{prop}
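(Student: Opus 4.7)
The plan is to exhibit a rank-one finite free étale $(\varphi_q,\mathcal{O}_K^{\times})$-module over $A_{\mathrm{mv},E}$ that does not lie in the essential image of $D_{A_{\mathrm{mv},E}}^{(i)}$. Since $f\ge 2$, the finite cyclic group $\mathbb{F}_q^{\times}/\mathbb{F}_p^{\times}$ is nontrivial, and $\mathcal{O}_K^{\times}$ embeds in $\mathcal{O}_E^{\times}$ (so $\mathcal{O}_E$ contains $\mu_{q-1}$). Hence there exists a nontrivial continuous character
\[\eta\colon \mathcal{O}_K^{\times}\twoheadrightarrow \mathbb{F}_q^{\times}\twoheadrightarrow \mathbb{F}_q^{\times}/\mathbb{F}_p^{\times}\hookrightarrow \mathcal{O}_E^{\times},\]
which in particular is trivial on $\mathbb{Z}_p^{\times}$. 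I define $M_{\eta}\coloneq A_{\mathrm{mv},E}\cdot e$ to be the free rank-one $A_{\mathrm{mv},E}$-module with $\varphi_q(e)=e$ and $a(e)=\eta(a)\,e$ for $a\in\mathcal{O}_K^{\times}$; this is easily checked to be a finite free étale $(\varphi_q,\mathcal{O}_K^{\times})$-module over $A_{\mathrm{mv},E}$.

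Suppose, toward a contradiction, that $M_{\eta}\cong D_{A_{\mathrm{mv},E}}^{(i)}(\rho)$ for some finite type $\mathcal{O}_E$-representation $\rho$ of $\gal(\overline{K}/K)$. Since $\mathrm{tr}\colon A_{\mathrm{mv},E}\twoheadrightarrow A_{\mathrm{cycl},E}$ commutes with $\varphi$ (being induced by the map of Iwasawa algebras coming from $\mathrm{tr}\colon\mathcal{O}_K\to\mathbb{Z}_p$, which commutes with multiplication by $p$), combining Lemma \ref{ralation of different f.p. D_{A_E}^i}, Proposition \ref{relation to classical cycltomic (phi,Gamma)-modules}, and (\ref{cycltomic phi,Z_p* module are isomorphic by base change by phi}) yields a chain of canonical isomorphisms
\[A_{\mathrm{cycl},E}\otimes_{\mathrm{tr},A_{\mathrm{mv},E}}D_{A_{\mathrm{mv},E}}^{(i)}(\rho)\;\cong\; A_{\mathrm{cycl},E}\otimes_{\varphi^i,A_{\mathrm{cycl},E}}D_{\mathrm{cycl},\sigma_d}(\rho)\;\cong\; D_{\mathrm{cycl},\sigma_{d-i}}(\rho),\]
with indices read modulo $f$. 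On the other hand, since $\eta|_{\mathbb{Z}_p^{\times}}=1$, the left hand side is also canonically isomorphic to $A_{\mathrm{cycl},E}$ with its tautological $(\varphi_q,\mathbb{Z}_p^{\times})$-ring action, which is $D_{\mathrm{cycl},\sigma_{d-i}}(\mathcal{O}_E)$. The cyclotomic equivalence (Lemma \ref{D cycl E sigma i}) therefore forces $\rho\cong\mathcal{O}_E$, and by the full faithfulness of $D_{A_{\mathrm{mv},E}}^{(i)}$ (Theorem \ref{f.p. D_{A_{mv,E}}^{(i)} is fully faithful}) I conclude $M_{\eta}\cong D_{A_{\mathrm{mv},E}}^{(i)}(\mathcal{O}_E)$, which is nothing but $A_{\mathrm{mv},E}$ endowed with its tautological $(\varphi_q,\mathcal{O}_K^{\times})$-ring action.

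To reach a contradiction, any $A_{\mathrm{mv},E}$-linear isomorphism $A_{\mathrm{mv},E}\xrightarrow{\sim}M_{\eta}$ is given by multiplication by a unit $f\in A_{\mathrm{mv},E}^{\times}$, and $(\varphi_q,\mathcal{O}_K^{\times})$-equivariance translates to $\varphi_q(f)=f$ and $a(f)=\eta(a)^{-1}f$ for all $a\in\mathcal{O}_K^{\times}$. The crucial identification is $A_{\mathrm{mv},E}^{\varphi_q=1}=\mathcal{O}_E$: reducing modulo $\varpi$, (\ref{phi_q and the semi-valuation v_1}) gives $\bar f^{q'}=\varphi_q^{f'}(\bar f)=\bar f$ in the domain $A=A_{\mathrm{mv},E}/\varpi$, hence $\bar f\in\mathbb{F}_{q'}=\mathbb{F}$; a $\varpi$-adic induction (using that $\varphi_q$ is $\mathcal{O}_E$-linear, $\varpi$ is a non-zerodivisor, and $A_{\mathrm{mv},E}$ is $\varpi$-adically complete) lifts $\bar f$ to an element of $\mathcal{O}_E$, whence $f\in\mathcal{O}_E^{\times}$. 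Since the $\mathcal{O}_K^{\times}$-action is $\mathcal{O}_E$-linear it fixes $\mathcal{O}_E^{\times}$ pointwise, so $a(f)=f$ and the equivariance condition collapses to $\eta\equiv 1$, contradicting the choice of $\eta$. The main obstacle is precisely this identification of $\varphi_q$-invariants, which is what makes the $\mathcal{O}_K^{\times}$-twist encoded in $M_{\eta}$ rigid and distinguishable from the trivial one.
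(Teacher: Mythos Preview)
Your proof is correct and takes a genuinely different route from the paper's. The paper argues at the level of extension groups: it identifies $\mathrm{Ext}^1(W_E(A_{\infty}),W_E(A_{\infty}))$ with an explicit cohomology set $H^1(W_E(A_{\infty}))$, shows this $\mathcal{O}_E$-module is torsion-free, and then exhibits an infinite $\mathcal{O}_E$-linearly independent family of classes $[(x_k,c_k)]$ built from $x_k=[T_{\sigma_0}^{-(q-1)k}][T_{\sigma_1}^{(q-1)k}]$ for $k$ coprime to $p$. Since the injection $\mathrm{Ext}^1(\mathcal{O}_E,\mathcal{O}_E)\hookrightarrow \mathrm{Ext}^1(W_E(A_{\infty}),W_E(A_{\infty}))$ has source isomorphic to $\mathcal{O}_E^{f+1}$, it cannot be surjective, and hence the functor is not essentially surjective.

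Your approach is more direct and elementary: you produce a single rank-one object $M_\eta$ using a character $\eta$ of $\mathcal{O}_K^{\times}$ that dies on $\mathbb{Z}_p^{\times}$ (available precisely when $f\ge 2$), and leverage Proposition~\ref{relation to classical cycltomic (phi,Gamma)-modules} to force $\rho\cong\mathcal{O}_E$, whence the identification $A_{\mathrm{mv},E}^{\varphi_q=1}=\mathcal{O}_E$ yields the contradiction. The advantage of your argument is brevity and transparency: it isolates exactly the obstruction (the kernel of $\mathrm{Norm}_{K/\mathbb{Q}_p}$ on tame characters) and avoids any delicate convergence estimates in $W_E(A_{\infty})$. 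The advantage of the paper's argument is that it proves something strictly stronger (the target $\mathrm{Ext}^1$ is not finitely generated over $\mathcal{O}_E$), and this quantitative statement is reused verbatim in the subsequent proof that $D_{B_{\mathrm{mv},E}}^{(i)}$ is not essentially surjective. One small remark: your invocation of full faithfulness to pass from $\rho\cong\mathcal{O}_E$ to $M_\eta\cong D_{A_{\mathrm{mv},E}}^{(i)}(\mathcal{O}_E)$ is unnecessary (this follows immediately from the hypothesis $M_\eta\cong D_{A_{\mathrm{mv},E}}^{(i)}(\rho)$), but it does no harm.
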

	
	\begin{proof}
		By Corollary \ref{descent of f.p. phi_q,O_K*-modules}, it is equivalent to show that the functor $D_{W_E(A_{\infty})}^{(i)}$ is not essentially surjective.
		
		For finitely presented étale $(\varphi_q,\mathcal{O}_K^{\times})$-modules $M_1,M_2$ over $W_E(A_{\infty})$, we set
		\[\mathrm{Ext}^1(M_1,M_2)\coloneq\mathrm{Ext}^1_{\modetfp_{\varphi_q,\mathcal{O}_K^{\times}}(W_E(A_{\infty}))}(M_1,M_2),\]
		which is an $\mathcal{O}_E$-module. %Note that elements of $\mathrm{Ext}^1(M_1,M_2)$ can be interpreted as \textit{extensions} of $M_1$ by $M_2$: recall that an \textit{extension} of $M_1$ by $M_2$ is an exact sequence $0\to M_2\to M\to M_1\to 0$ of finitely presented étale $(\varphi_q,\mathcal{O}_K^{\times})$-modules over $W_E(A_{\infty})$. %The group $\mathrm{Ext}^1(M_1,M_2)$ is the set of isomorphism classes of extension of $M_1$ by $M_2$, and the addition is given by the \textit{Baer sum}.
		Similarly, for finite type continuous $\mathcal{O}_E$-representations $\rho_1,\rho_2$ of $\gal(\overline{K}/K)$, we set $\mathrm{Ext}^1(\rho_1,\rho_2)\coloneq\mathrm{Ext}^1_{\rep_{\mathcal{O}_E}(\gal(\overline{K}/K))}(\rho_1,\rho_2)$ which is also an $\mathcal{O}_E$-module. %Theorem \ref{f.p. D_{A_{mv,E}}^{(i)} is fully faithful} and Corollary \ref{descent of f.p. phi_q,O_K*-modules} implies that the functor $D_{W_E(A_{\infty})}^{(i)}:\rep_{\mathcal{O}_E}(\gal(\overline{K}/K))\to \modetfp_{\varphi_q,\mathcal{O}_K^{\times}}(W_E(A_{\infty}))$ is fully faithful and exact, hence induces an injective homomorphism of $\mathcal{O}_E$-modules
		%\[\mathrm{Ext}^1(\rho_1,\rho_2)\hookrightarrow \mathrm{Ext}^1\left(D_{W_E(A_{\infty})}^{(i)}(\rho_1),D_{W_E(A_{\infty})}^{(i)}(\rho_2)\right).\]
		%In particular, 
		Viewing $\mathcal{O}_E$ as a trivial representation of $\gal(\overline{K}/K)$, we have $D_{W_E(A_{\infty})}^{(i)}(\mathcal{O}_E)=W_E(A_{\infty})$ where $W_E(A_{\infty})$ is regarded as a trivial étale $(\varphi_q,\mathcal{O}_K^{\times})$-module. Theorem \ref{f.p. D_{A_{mv,E}}^{(i)} is fully faithful} and Corollary \ref{descent of f.p. phi_q,O_K*-modules} imply that the functor $D_{W_E(A_{\infty})}^{(i)}$ is fully faithful and exact, hence induces an injection of $\mathcal{O}_E$-modules
		\begin{align}\label{canonical injection}
			\mathrm{Ext}^1(\mathcal{O}_E,\mathcal{O}_E)\hookrightarrow \mathrm{Ext}^1\left(W_E(A_{\infty}),W_E(A_{\infty})\right).
		\end{align}
		We will show that (\ref{canonical injection}) is not surjective, hence $D_{W_E(A_{\infty})}^{(i)}$ cannot be essentially surjective.
		
		Suppose that $0\to W_E(A_{\infty})\xrightarrow{f} V\xrightarrow{g}W_E(A_{\infty})\to 0$ is an extension of $W_E(A_{\infty})$ by itself in ${\modetfp_{\varphi_q,\mathcal{O}_K^{\times}}(W_E(A_{\infty}))}$. Let $e_0\coloneq f(1)$, and take $e_1\in g^{-1}(1)$, we have $V=W_E(A_{\infty})e_0\oplus W_E(A_{\infty})e_1$, and there exists $x\in W_E(A_{\infty})$ such that $\varphi_q(e_1)=xe_0+e_1$. Moreover, for any $a\in\mathcal{O}_K^{\times}$, there exists $c_a\in W_E(A_{\infty})$ such that $a(e_1)=c_ae_0+e_1$.
		Since $\varphi_q$ commutes with the action of $\mathcal{O}_K^{\times}$, we get
		\begin{align}\label{equivalent description of extensions of W_E(A_{infty}) by itself}
			\begin{cases}
				a(x)-x=\varphi_q(c_a)-c_a, &a\in\mathcal{O}_K^{\times}\\
				c_{a_1a_2}=a_1(c_{a_2})+c_{a_2}, &a_1,a_2\in\mathcal{O}_K^{\times}.
			\end{cases}
		\end{align}
		%Since the action of $\mathcal{O}_K^{\times}$ on $V$ is continuous, the cocycle $\mathcal{O}_K^{\times}\to W_E(A_{\infty}), a\mapsto c_a$ is continuous. %Moreover, if we take another $e_1'\in g^{-1}(1)$, then $e_1'=e_1+\alpha e_0$ for some $\alpha\in W_E(A_{\infty})$. Thus 
		%\begin{align*}
		%	\begin{cases}
		%		\varphi_q(e_1')=ce_0+\varphi_q(\alpha)e_0+e_1=(x+\varphi_q(\alpha)-\alpha)e_0+e_1'\\
		%		a(e_1')=(c_a+a(\alpha)-\alpha)e_0+e_1'.
		%	\end{cases}
		%\end{align*}
		Let $Z^1(W_E(A_{\infty}))$ be the $\mathcal{O}_E$-module $\left\{(x,c)\in W_E(A_{\infty})\oplus Z^1_{\mathrm{cont}}(\mathcal{O}_K^{\times},W_E(A_{\infty})): (x,c)\ \text{satisfies (\ref{equivalent description of extensions of W_E(A_{infty}) by itself})}\right\}$ where $Z^1_{\mathrm{cont}}(\mathcal{O}_K^{\times},W_E(A_{\infty}))$ is the abelian group of continuous $1$-cocycles. Let $d$ be the homomorphism of $\mathcal{O}_E$-modules $W_E(A_{\infty})\to Z^1(W_E(A_{\infty})),\ \alpha\mapsto \big(\varphi_q(\alpha)-\alpha,a\mapsto(a(\alpha)-\alpha)\big)$,
		and let $H^1(W_E(A_{\infty}))$ be the quotient $Z^1(W_E(A_{\infty}))/\mathrm{Im}d$. It is straightforward to verify that the map
		\begin{align}\label{description of Ext^1}
			h: \mathrm{Ext}^1(W_E(A_{\infty}),W_E(A_{\infty}))\to H^1(W_E(A_{\infty})),\ V\mapsto [(x,c)]
		\end{align}
		is well-defined and is an isomorphism of $\mathcal{O}_E$-modules. Hence we identify $\mathrm{Ext}^1(W_E(A_{\infty}),W_E(A_{\infty}))$ with $H^1(W_E(A_{\infty}))$. Similarly, there is an isomorphism of $\mathcal{O}_E$-modules
		\[\mathrm{Ext}^1(\mathcal{O}_E,\mathcal{O}_E)\cong H^1_{\mathrm{cont}}(\gal(\overline{K}/K),\mathcal{O}_E)\cong \mathcal{O}_E^{f+1}.\]
		where the last isomorphism uses the fact that $\gal(\overline{K}/K)$ acts trivially on $\mathcal{O}_E$.
		
		Now we prove that $H^1(W_E(A_{\infty}))$ is not a finite type $\mathcal{O}_E$-module. First, we claim that $H^1(W_E(A_{\infty}))$ is torsion-free as an $\mathcal{O}_E$-module. For $[(x,c)]\in H^1(W_E(A_{\infty}))$ satisfies $\varpi\cdot[(x,c)]=0$, we fix a representative $(x,c)\in Z^1(W_E(A_{\infty}))$, then there exists $\alpha\in W_E(A_{\infty})$ such that $\varpi\cdot(x,c)=d(\alpha)$. In particular, $\varphi_q(\alpha)-\alpha=\varpi x\in \varpi\cdot W_E(A_{\infty})$. Write $\alpha=\sum_{i=0}^{+\infty}[\alpha_i]\varpi^i$, $\alpha_i\in A_{\infty}$, then $\varphi_q(\alpha_0)=\alpha_0$,
		which easily implies that $\alpha_0\in\mathbb{F}$. Write $\alpha-[\alpha_0]=\varpi \beta$ for some $\beta\in W_E(A_{\infty})$, we have $(x,c)=d(\beta)$, hence $[(x,c)]=0$, which proves the claim. 
		
		For $k\geq 1$, let $x_k\coloneq [T_{\sigma_{0}}^{-(q-1)k}]\cdot[T_{\sigma_{1}}^{(q-1)k}]\in W_E(A_{\infty})$ be the Teichmüller lift of $T_{\sigma_{0}}^{-(q-1)k}\cdot T_{\sigma_{1}}^{(q-1)k}\in A_{\infty}$. For $a\in\mathcal{O}_K^{\times}$, applying (\ref{action of phi_q on A}), we have 
		\begin{align*}
			\left|a\left(\dfrac{T_{\sigma_{1}}}{T_{\sigma_{0}}}\right)-\dfrac{\sigma_{1}(\overline{a})T_{\sigma_{1}}}{\sigma_{0}(\overline{a})T_{\sigma_{0}}}\right|&=\left|\dfrac{a(T_{\sigma_{1}})\Big(\sigma_{0}(\overline{a})T_{\sigma_{0}}-a(T_{\sigma_{0}})\Big)+a(T_{\sigma_{0}})\Big(a(T_{\sigma_{1}})-\sigma_{1}(\overline{a})T_{\sigma_{1}}\Big)}{a(T_{\sigma_{0}})\sigma_{0}(\overline{a})T_{\sigma_{0}}}\right|\\
			&\leq\dfrac{\max\left(\left|a(T_{\sigma_{1}})\Big(\sigma_{0}(\overline{a})T_{\sigma_{0}}-a(T_{\sigma_{0}})\Big)\right|,\left|a(T_{\sigma_{0}})\Big(a(T_{\sigma_{1}})-\sigma_{1}(\overline{a})T_{\sigma_{1}}\Big)\right|\right)}{\left|T_{\sigma_{0}}\right|^2}\\
			&\leq |T_{\sigma_{0}}|^{p-1},
		\end{align*}
		where $|\cdot|: A_{\infty}\to \mathbb{R}_{\geq 0}$ is the multiplicative norm. This implies that
		\begin{align}\label{a(T_1/T_0)^{q-1}-(T_1/T_0) is small}
			\left|a\left(\dfrac{T_{\sigma_{1}}}{T_{\sigma_{0}}}\right)^{(q-1)k}-\left(\dfrac{T_{\sigma_{1}}}{T_{\sigma_{0}}}\right)^{(q-1)k}\right|\leq |T_{\sigma_{0}}|^{p-1},\quad k\geq 1.
		\end{align}
		The construction of ramified Witt vectors implies that there exists $P_n(X,Y)\in \mathbb{F}[X,Y]$, $n\geq0$, such that %for any $x,y\in A_{\infty}$,
		%\[[x]-[y]=\sum_{n=0}^{\infty}\varpi^n[x-y]^{q'^{-n}}\cdot [P_n(x,y)]^{q'^{-n}} \in W_E(A_{\infty}),\]
		%where $q'=\#\mathbb{F}$. Thus
		\begin{align*}
			a(x_k)-x_k=\sum_{n=0}^{\infty}\varpi^n\left[a\left(\dfrac{T_{\sigma_{1}}}{T_{\sigma_{0}}}\right)^{(q-1)k}-\left(\dfrac{T_{\sigma_{1}}}{T_{\sigma_{0}}}\right)^{(q-1)k}\right]^{{q'}^{-n}}\cdot\left[P_n\left(a\left(\dfrac{T_{\sigma_{1}}}{T_{\sigma_{0}}}\right)^{(q-1)k},\left(\dfrac{T_{\sigma_{1}}}{T_{\sigma_{0}}}\right)^{(q-1)k}\right)\right]^{{q'}^{-n}}.
		\end{align*}
		For $m\geq 0$, using (\ref{a(T_1/T_0)^{q-1}-(T_1/T_0) is small}), we have
		\begin{align*}
			N_m\left(a(x_k)-x_k\right)&=\sup_{0\leq n\leq m}\left|a\left(\dfrac{T_{\sigma_{1}}}{T_{\sigma_{0}}}\right)^{(q-1)k}-\left(\dfrac{T_{\sigma_{1}}}{T_{\sigma_{0}}}\right)^{(q-1)k}\right|^{{q'}^{-n}}\cdot\left|P_n\left(a\left(\dfrac{T_{\sigma_{1}}}{T_{\sigma_{0}}}\right)^{(q-1)k},\left(\dfrac{T_{\sigma_{1}}}{T_{\sigma_{0}}}\right)^{(q-1)k}\right)\right|^{{q'}^{-n}}\\
			&\leq |T_{\sigma_{0}}|^{{q'}^{-m} (p-1)},
		\end{align*}
		where $N_m$ is defined in Lemma \ref{equivalent description of weak topology}.
		Therefore, for $n\geq 1$, $a\in\mathcal{O}_K^{\times}$, 
		\begin{align*}
			N_m\left(\varphi_q^n(a(x_k)-x_k)\right)=N_m\left(a(x_k)-x_k\right)^{q^n}\leq |T_{\sigma_{0}}|^{q^n{q'}^{-m}(p-1)}.
		\end{align*}
		This implies that $\lim\limits_{n\to+\infty}\varphi_q^n(a(x_k)-x_k)=0$, hence $c_{k,a}\coloneq \sum_{n=0}^{+\infty}\varphi_q^n(a(x_k)-x_k)\in W_E(A_{\infty})$ exists, and the map $c_k: \mathcal{O}_K^{\times}\to W_E(A_{\infty})$, $a\mapsto c_{k,a}$ is continuous for $k\geq 1$. Moreover, it is easy to check that $(x_k,c_k)$ satisfies (\ref{equivalent description of extensions of W_E(A_{infty}) by itself}), hence represents an element $[(x_k,c_k)]\in H^1(W_E(A_{\infty}))$ for $k\geq 1$. 
		
		Let $S\coloneq\left\{k\in\mathbb{Z}_{\geq 1}: p\nmid k\right\}$ be the set of positive integers which are not divisible by $p$. 
		Let $S_1$ be a finite subset of $S$. Suppose that $\left\{[(x_k,c_k)], k\in S_1\right\}$ are $\mathcal{O}_E$-linearly dependent in $H^1(W_E(A_{\infty}))$. Since $H^1(W_E(A_{\infty}))$ is torsion-free, we may assume that there exists $a_k\in\mathcal{O}_E, k\in S_1$, $a_{k_0}\notin \varpi \mathcal{O}_E$ for some $k_0\in S_1$ and $\alpha\in W_E(A_{\infty})$ such that
		\[\sum_{i\in S_1}a_k\cdot(x_k,c_k)=d(\alpha)\in Z^1(W_E(A_{\infty})).\]
		In particular, by reducing modulo $\varpi$, we have
		\[\sum_{k\in S_1}\overline{a}_k\overline{x}_k=\sum_{k\in S_1}\overline{a}_k \left(\dfrac{T_{\sigma_{k}}}{T_{\sigma_{0}}}\right)^{(q-1)k}=\varphi_q(\overline{\alpha})-\overline{\alpha}\in A_{\infty}\]
		with $\overline{a}_k\in\mathbb{F}$ and $\overline{a}_{k_0}\neq 0$ for some $k_0\in S_1$. Write
		\[\overline{\alpha}=\sum_{\underline{n}\in \left(\mathbb{Z}[1/p]\right)^{f-1}}f_{\underline{n}}(T_{\sigma_{0}})\prod_{j=1}^{f-1}\left(\dfrac{T_{\sigma_{j}}}{T_{\sigma_{0}}}\right)^{n_j}\in A_{\infty}\]
		with $f_{\underline{n}}(T_{\sigma_{0}})\in \mathbb{F}(\negthinspace(T_{\sigma_{0}}^{1/p^{\infty}})\negthinspace)$ satisfying that for any $l\geq 1$, there is only finitely many $\underline{n}\in \left(\mathbb{Z}[1/p]\right)^{f-1}$ such that $f_{\underline{n}}(T_{\sigma_{0}})\notin T_{\sigma_{0}}^{l}\mathbb{F}[\negthinspace[T_{\sigma_{0}}^{1/p^{\infty}}]\negthinspace]$. In particular, let $n_0\coloneq((q-1)k_0,0,\dots,0)\in\mathbb{Z}^{f-1}$, we have
		\[\lim_{m\to+\infty}|f_{\frac{1}{q^m}\underline{n_0}}(T_{\sigma_{0}})|=\lim_{m\to+\infty}|f_{{q^m}\underline{n_0}}(T_{\sigma_{0}})|=0.\]
		
		Note that
		\[\varphi_q(\overline{\alpha})-\overline{\alpha}=\sum_{\underline{n}\in \left(\mathbb{Z}[1/p]\right)^{f-1}}\left(\varphi_q(f_{\frac{1}{q}\underline{n}}(T_{\sigma_{0}}))-f_{\underline{n}}(T_{\sigma_{0}})\right)\prod_{j=1}^{f-1}\left(\dfrac{T_{\sigma_{j}}}{T_{\sigma_{0}}}\right)^{n_j}.\]
		By our choice for $S$, we have
		\begin{align}\label{phi_q changes index}
			\begin{cases}
				\varphi_q(f_{\frac{1}{q}\underline{n}_0}(T_{\sigma_{0}}))-f_{\underline{n}_0}(T_{\sigma_{0}})=\overline{a}_{k_0}\neq 0, & \\
				\varphi_q(f_{\frac{1}{q^m}\underline{n}_0}(T_{\sigma_{0}}))-f_{\frac{1}{q^{m-1}}\underline{n}_0}(T_{\sigma_{0}})=0, & m\in\mathbb{Z}, m\neq 1.
			\end{cases}
		\end{align}
		If $-f_{\underline{n}_0}(T_{\sigma_{0}})=\overline{a}_{k_0}\in\mathbb{F}^{\times}$, then (\ref{phi_q changes index}) implies
		\[f_{q^m\underline{n}_0}(T_{\sigma_{0}})=\varphi_q(f_{q^{m-1}\underline{n}_0}(T_{\sigma_{0}}))=\cdots=\varphi_q^{m}(f_{\underline{n_0}}(T_{\sigma_{0}}))=\overline{a}_{k_0}\neq 0, \quad m\geq 1,\]
		hence $\lim\limits_{m\to+\infty}|f_{{q^m}\underline{n_0}}(T_{\sigma_{0}})|=1$ which is impossible. Hence $-f_{\underline{n}_0}(T_{\sigma_{0}})\neq \overline{a}_{k_0}$, i.e. $|f_{\underline{n}_0}(T_{\sigma_{0}})+ \overline{a}_{k_0}|\neq 0$, and for $m\geq 1$, using (\ref{phi_q changes index}) and the fact that $|\varphi_q(x)|=|x|^q$ for any $x\in A_{\infty}$, we have
		\begin{align*}
			|f_{\frac{1}{q^m}\underline{n}_0}(T_{\sigma_{0}})|=|\varphi_q(f_{\frac{1}{q^m}\underline{n}_0})(T_{\sigma_{0}})|^{q^{-1}}&=|f_{\frac{1}{q^{m-1}}\underline{n}_0}(T_{\sigma_{0}})|^{q^{-1}}=\cdots\\
			&=\left|f_{\frac{1}{q}\underline{n}_0}(T_{\sigma_{0}})\right|^{q^{-(m-1)}}=\left|\varphi_q(f_{\frac{1}{q}\underline{n}_0}(T_{\sigma_{0}}))\right|^{q^{-m}}\\
			&=|\overline{a}_{k_0}+f_{\underline{n}_0}(T_{\sigma_{0}})|^{q^{-m}},
		\end{align*}
		thus
		\[\lim_{m\to+\infty}|f_{\frac{1}{q^m}\underline{n_0}}(T_{\sigma_{0}})|=\lim_{m\to+\infty}|\overline{a}_{k_0}+f_{\underline{n}_0}(T_{\sigma_{0}})|^{q^{-m}}=1\]
		which is also impossible. Therefore, for any finite subset $S_1\subset S$, $\left\{[(x_k,c_k)]: k\in S_1\right\}$ are linearly independent, thus the $\mathcal{O}_E$-submodule of $H^1(W_E(A_{\infty}))$ generated by $\left\{[(x_k,c_k)]: k\in S\right\}$ is not finitely generated, hence $H^1(W_E(A_{\infty}))$ is not finitely generated, thus the inclusion (\ref{canonical injection}) is not surjective. Therefore, $D_{W_E(A_{\infty})}^{(i)}$ is not essentially surjective.
	\end{proof}

	\subsection{Finitely presented étale $(\varphi,\mathcal{O}_K^{\times})$-modules attached to finite type continuous $\mathcal{O}_E$-representations of $\gal(\overline{K}/K)$}
	As in \cite[$\S$2.7]{breuil2023multivariable}, one can also define the functor $D_{A_{\mathrm{mv},E}}^{\otimes}$ from the category of finite type $\mathcal{O}_E$-representations to the category of finitely presented étale $(\varphi,\mathcal{O}_K^{\times})$-modules over $A_{\mathrm{mv},E}$.
	
	Recall that the multiplication by $p$ on $\mathcal{O}_K\cong N_0$ induces a continuous endomorphism of $\mathcal{O}_K[\negthinspace[N_0]\negthinspace]$ which extends continuously and uniquely to an endomorphism $\varphi$ of $A_{\mathrm{mv},K}$. We put $\varphi\coloneq \mathrm{id}\otimes \varphi: A_{\mathrm{mv},E}= \mathcal{O}_E\otimes_{\sigma_{0},\mathcal{O}_K}A_{\mathrm{mv},K}\to A_{\mathrm{mv},E}$, then $\varphi^{f}=\varphi_q$, and $\varphi$ commutes with the action of $\mathcal{O}_K^{\times}$. 
	Using the same arguments as in the proof of Lemma \ref{phi_q is flat}, we can show that the ring map $\varphi: A_{\mathrm{mv},E}\to A_{\mathrm{mv},E}$ is also finitely presented and faithfully flat.
	
	For a finite type $\mathcal{O}_E$-representation $\rho$ of $\gal(\overline{K}/K)$, we put
	\begin{align}\label{definition of D_A^temsor}
		D_{A_{\mathrm{mv},E}}^{\otimes}(\rho)\coloneq \bigotimes_{i=0}^{f-1}D_{A_{\mathrm{mv},E}}^{(i)}(\rho).
	\end{align}
	Let $\mathcal{O}_K^{\times}$ acts on $D_{A_{\mathrm{mv},E}}^{\otimes}(\rho)$ diagonally and we define the endomorphism $\varphi$ of $D_{A_{\mathrm{mv},E}}^{\otimes}(\rho)$:
	\[\varphi(x_0\otimes\cdots\otimes x_{f-1})\coloneq h_{f-1}(x_{f-1})\otimes h_{0}(x_0)\otimes\cdots\otimes h_{f-2}(x_{f-2}),\]
	where $h_i$ is the composition for $0\leq i\leq f-1$
	\[h_i: D_{A_{\mathrm{mv},E}}^{(i)}(\rho)\xhookrightarrow{x\mapsto 1\otimes x}A_{\mathrm{mv},E}\otimes_{\varphi,A_{\mathrm{mv},E}}D_{A_{\mathrm{mv},E}}^{(i)}(\rho)\xrightarrow[\sim]{\phi_i} D_{A_{\mathrm{mv},E}}^{(i+1)}(\rho).\]
	For the definition of $\phi_i$, see Lemma \ref{ralation of different f.p. D_{A_E}^i}.
	
	\begin{prop}\label{etaleness of phi,Gamma}
		For any finite type $\mathcal{O}_E$-representation $\rho$ of $\gal(\overline{K}/K)$, $D_{A_{\mathrm{mv},E}}^{\otimes}(\rho)$ is a finitely presented étale $(\varphi,\mathcal{O}_K^{\times})$-module over $A_{\mathrm{mv},E}$.
	\end{prop}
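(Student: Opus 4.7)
The plan is to verify the three conditions (finite presentation, semi-linearity/equivariance, and étaleness) one at a time, the first two being essentially bookkeeping and the third resting on a tensor-product identity combined with Lemma \ref{ralation of different f.p. D_{A_E}^i}. First I would note that, by Theorem \ref{f.p. D_{A_{mv,E}}^{(i)} is fully faithful}, each $D_{A_{\mathrm{mv},E}}^{(i)}(\rho)$ is a finitely presented $A_{\mathrm{mv},E}$-module; since $A_{\mathrm{mv},E}$ is Noetherian (Lemma \ref{A_{mathrm{m.v.},K} is noetherian} together with the fact that $A_{\mathrm{mv},E}$ is finite free over $A_{\mathrm{mv},K}$), iterated tensor products of finitely presented modules are finitely presented, so $D_{A_{\mathrm{mv},E}}^{\otimes}(\rho)$ is a finitely presented $A_{\mathrm{mv},E}$-module.

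Next I would check that $\varphi$ as defined is a well-defined $\varphi$-semi-linear endomorphism commuting with the diagonal $\mathcal{O}_K^{\times}$-action. Semi-linearity: given $c\in A_{\mathrm{mv},E}$, any scalar can be slid to the $0$-th slot, so $c\cdot(x_0\otimes\cdots\otimes x_{f-1})=(cx_0)\otimes x_1\otimes\cdots\otimes x_{f-1}$; applying the definition of $\varphi$ and the $\varphi$-semi-linearity of $h_0=\phi_0\circ(1\otimes\mathrm{id})$ gives $\varphi(c\cdot y)=\varphi(c)\varphi(y)$. Equivariance under $a\in\mathcal{O}_K^{\times}$: each $h_i$ commutes with $a$ because $\phi_i$ is a morphism of $(\varphi_q,\mathcal{O}_K^{\times})$-modules, hence cyclically shifting and applying the $h_i$'s commutes with the diagonal action.

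For étaleness, which is the main point, I would use the standard base-change identity
\[
A_{\mathrm{mv},E}\otimes_{\varphi,A_{\mathrm{mv},E}}\Big(\bigotimes_{i=0}^{f-1}D_{A_{\mathrm{mv},E}}^{(i)}(\rho)\Big)\;\xrightarrow{\sim}\;\bigotimes_{i=0}^{f-1}\Big(A_{\mathrm{mv},E}\otimes_{\varphi,A_{\mathrm{mv},E}}D_{A_{\mathrm{mv},E}}^{(i)}(\rho)\Big).
\]
Under this identification, the linearization map $\mathrm{id}\otimes\varphi$ factors as the composition of (i) the tensor product of the isomorphisms $\phi_i\colon A_{\mathrm{mv},E}\otimes_{\varphi,A_{\mathrm{mv},E}}D_{A_{\mathrm{mv},E}}^{(i)}(\rho)\xrightarrow{\sim}D_{A_{\mathrm{mv},E}}^{(i+1)}(\rho)$ from Lemma \ref{ralation of different f.p. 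D_{A_E}^i}, followed by (ii) the cyclic relabelling isomorphism $\bigotimes_{i=0}^{f-1}D_{A_{\mathrm{mv},E}}^{(i+1)}(\rho)\xrightarrow{\sim}\bigotimes_{i=0}^{f-1}D_{A_{\mathrm{mv},E}}^{(i)}(\rho)$ given by $y_0\otimes\cdots\otimes y_{f-1}\mapsto y_{f-1}\otimes y_0\otimes\cdots\otimes y_{f-2}$ (read modulo $f$). Both factors are isomorphisms of $A_{\mathrm{mv},E}$-modules, hence so is the linearization map, which proves étaleness.

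The main obstacle, if any, is purely notational: carefully matching the cyclic index shift in the definition of $\varphi$ on $D_{A_{\mathrm{mv},E}}^{\otimes}(\rho)$ with the decomposition of $A_{\mathrm{mv},E}\otimes_{\varphi,A_{\mathrm{mv},E}}(-)$ across the tensor factors. Once the indices line up, the proof reduces to the fact that each $\phi_i$ is an isomorphism, which is precisely Lemma \ref{ralation of different f.p. D_{A_E}^i}. As a sanity check, applying $\varphi$ a total of $f$ times recovers $\varphi_q$ on each slot, in agreement with the final clause of Lemma \ref{ralation of different f.p. D_{A_E}^i} that $\phi_{f-1}\circ\cdots\circ\phi_0$ is the linearization of $\varphi_q$ on $D_{A_{\mathrm{mv},E}}^{(0)}(\rho)$, so no compatibility issue with $\varphi^f=\varphi_q$ arises.
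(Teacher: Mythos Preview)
Your proposal is correct and follows essentially the same approach as the paper: the paper also reduces étaleness to the base-change identity $A_{\mathrm{mv},E}\otimes_{\varphi}\bigotimes_i D_{A_{\mathrm{mv},E}}^{(i)}(\rho)\cong \bigotimes_i(A_{\mathrm{mv},E}\otimes_{\varphi}D_{A_{\mathrm{mv},E}}^{(i)}(\rho))$, then applies the isomorphisms $\phi_i$ of Lemma~\ref{ralation of different f.p. D_{A_E}^i} and the cyclic relabelling. The paper simply asserts the finite-presentation and semi-linearity checks as ``clear'', whereas you spell them out, but the substance is the same.
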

	\begin{proof}
		Clearly, $D_{A_{\mathrm{mv},E}}^{\otimes}(\rho)$ is a finite $A_{\mathrm{mv},E}$-module with a semi-linear $(\varphi,\mathcal{O}_K^{\times})$-action. Note that the linearization map
		\[A_{\mathrm{mv},E}\otimes_{\varphi}D_{A_{\mathrm{mv},E}}^{\otimes}(\rho)\xrightarrow{\mathrm{id}\otimes \varphi}D_{A_{\mathrm{mv},E}}^{\otimes}(\rho)\]
		decomposes as
		\begin{align*}
			A_{\mathrm{mv},E}\otimes_{\varphi}D_{A_{\mathrm{mv},E}}^{\otimes}(\rho)&=A_{\mathrm{mv},E}\otimes_{\varphi}\bigotimes_{i=0}^{f-1}D_{A_{\mathrm{mv},E}}^{(i)}(\rho)\\
			&\cong \bigotimes_{i=0}^{f-1}\left(A_{\mathrm{mv},E}\otimes_{\varphi}D_{A_{\mathrm{mv},E}}^{(i)}(\rho)\right)\\
			&\cong \bigotimes_{i=0}^{f-1}D_{A_{\mathrm{mv},E}}^{(i+1)}(\rho)\\
			&\cong \bigotimes_{i=0}^{f-1}D_{A_{\mathrm{mv},E}}^{(i)}(\rho)=D_{A_{\mathrm{mv},E}}^{\otimes}(\rho),
		\end{align*}
		where the third isomorphism uses (\ref{tensoring with phi changes i}), thus $D_{A_{\mathrm{mv},E}}^{\otimes}(\rho)$ is étale.
	\end{proof}
	Therefore, we obtain a functor $D_{A_{\mathrm{mv},E}}^{\otimes}: \rep_{\mathcal{O}_E}(\gal(\overline{K}/K))\to\modetfp_{\varphi,\mathcal{O}_K^{\times}}(A_{\mathrm{mv},E})$. The functor $D_{A_{\mathrm{mv},E}}^{\otimes}$ is neither full nor additive if $f\geq 2$, but might be related to $p$-adic representations of $\mathrm{GL}_2(K)$ (see \cite[Corollary 3.1.4]{breuil2023conjectures} and \cite[Theorem 1.1]{wang2024lubintatemultivariablevarphimathcaloktimesmodulesdimension} for mod $p$ representations).
	
	\subsection{Finite free étale $(\varphi_q,\mathcal{O}_K^{\times})$-modules attached to finite dimensional continuous $E$-representations of $\gal(\overline{K}/K)$}
	
	In this section, for $0\leq i\leq f-1$, we associate to every finite dimensional $E$-representation of $\gal(\overline{K}/K)$ a finite free étale $(\varphi_q,\mathcal{O}_K^{\times})$-module over $A_{\mathrm{mv},E}\left[\dfrac{1}{p}\right]$, and show that this construction gives a fully faithful exact functor.
	
	Let $B_{\mathrm{mv},E}\coloneq A_{\mathrm{mv},E}\left[\dfrac{1}{p}\right]$. We endow $B_{\mathrm{mv},E}=\bigcup_{n=1}p^{-n}A_{\mathrm{mv},E}$ with the colimit topology, then the continuous $(\varphi_q,\mathcal{O}_K^{\times})$-action on $A_{\mathrm{mv},E}$ extends to a continuous $(\varphi_q,\mathcal{O}_K^{\times})$-action on $B_{\mathrm{mv},E}$.
	For a finite $B_{\mathrm{mv},E}$-module $M$, we can choose a surjection $B_{\mathrm{mv},E}^{n}\twoheadrightarrow M$ of $B_{\mathrm{mv},E}$-modules, and then we endow $M$ with the quotient topology, where $B_{\mathrm{mv},E}^{n}$ is equipped with the product topology. It is easy to check that the topology on $M$ does not depend on the choice of the surjection $B_{\mathrm{mv},E}^{n}\twoheadrightarrow M$.

	\begin{defn}
		Let $M$ be a finite free module over $B_{\mathrm{mv},E}$ with a continuous semi-linear endomorphism $\varphi_q$ and a continuous semi-linear action of $\mathcal{O}_K^{\times}$, we say that $M$ is a \textit{finite free étale} $(\varphi_q,\mathcal{O}_K^{\times})$-\textit{module} over $B_{\mathrm{mv},E}$, if there is a $B_{\mathrm{mv},E}$-basis $e_1,\dots,e_r$ of $M$ such that $\oplus_{i=1}^rA_{\mathrm{mv},E}e_i$ is an étale $(\varphi_q,\mathcal{O}_K^{\times})$-module over $A_{\mathrm{mv},E}$. We denote by $\modet_{\varphi_q,\mathcal{O}_K^{\times}}(B_{\mathrm{mv},E})$ the category of finite free étale $(\varphi_q,\mathcal{O}_K^{\times})$-modules over $B_{\mathrm{mv},E}$.
	\end{defn}
	
	From Proposition \ref{description of finitely presented etale phi_q-module over A_{mv,E}}, Lemma \ref{f.p. etale (phi_q,O_K*)-modules over A is an abelian category}, and the definition, we deduce:
	
	\begin{prop}
		The category $\modet_{\varphi_q,\mathcal{O}_K^{\times}}(B_{\mathrm{mv},E})$ is an abelian category.
	\end{prop}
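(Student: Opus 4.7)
My plan is to deduce the abelian-ness of $\modet_{\varphi_q,\mathcal{O}_K^{\times}}(B_{\mathrm{mv},E})$ from that of $\modetfp_{\varphi_q,\mathcal{O}_K^{\times}}(A_{\mathrm{mv},E})$ (Lemma \ref{f.p. etale (phi_q,O_K*)-modules over A is an abelian category}) via a lattice argument, combined with the structure theorem Proposition \ref{description of finitely presented etale phi_q-module over A_{mv,E}} to invert $p$ cleanly.

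First I would reduce any morphism $f \colon M \to N$ in $\modet_{\varphi_q,\mathcal{O}_K^{\times}}(B_{\mathrm{mv},E})$ to an integral one. By definition I can choose bases of $M$ and $N$ whose $A_{\mathrm{mv},E}$-spans $M_0, N_0$ are finite free étale $(\varphi_q,\mathcal{O}_K^{\times})$-modules over $A_{\mathrm{mv},E}$. Since $M_0$ is finitely generated and $B_{\mathrm{mv},E} = \bigcup_n p^{-n} A_{\mathrm{mv},E}$, there exists $n \geq 0$ with $f(M_0) \subseteq p^{-n} N_0$. Replacing $N_0$ by $p^{-n} N_0$ (still finite free étale of the same rank), $f$ restricts to a morphism $f_0 \colon M_0 \to N_0$ in $\modetfp_{\varphi_q,\mathcal{O}_K^{\times}}(A_{\mathrm{mv},E})$, which is abelian by Lemma \ref{f.p. etale (phi_q,O_K*)-modules over A is an abelian category}. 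Hence $\ker f_0$, $\operatorname{coker} f_0$, $\operatorname{im} f_0$ and $\operatorname{coim} f_0$ exist as objects in this integral category, with the canonical isomorphism $\operatorname{coim} f_0 \xrightarrow{\sim} \operatorname{im} f_0$.

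Next I would invert $p$. By Proposition \ref{description of finitely presented etale phi_q-module over A_{mv,E}}, each of the four integral objects decomposes as $\bigoplus_i (A_{\mathrm{mv},E}/\varpi^{n_i})^{\oplus m_i}$ with $1 \le n_i \le \infty$; after inverting $p$, only the $n_i=\infty$ summands survive, giving finite free $B_{\mathrm{mv},E}$-modules. The torsion-free direct summand of the integral object is itself a finite free étale $(\varphi_q,\mathcal{O}_K^{\times})$-module over $A_{\mathrm{mv},E}$ (a direct summand of an étale module in an abelian category remains étale), so each of the $p$-inverted objects lies in $\modet_{\varphi_q,\mathcal{O}_K^{\times}}(B_{\mathrm{mv},E})$ by definition. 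I then check that $(\ker f_0)[1/p]$ is a kernel of $f$ in the $B_{\mathrm{mv},E}$-category: the inclusion is clear, and conversely any $x \in M$ with $f(x)=0$ satisfies $p^N x \in M_0$ for some $N$, hence $p^N x \in \ker f_0$; the analogous assertions for cokernel, image and coimage follow from flatness of $B_{\mathrm{mv},E}$ over $A_{\mathrm{mv},E}$. The iso $\operatorname{coim} f \xrightarrow{\sim} \operatorname{im} f$ is obtained by inverting $p$ in the corresponding integral iso.

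The main obstacle is really just the final verification that inverting $p$ commutes with the formation of kernel, cokernel and image in the appropriate sense, and that continuity of $\varphi_q$ and of the $\mathcal{O}_K^{\times}$-action is inherited on these subquotients. Both points are routine once the integral picture is in hand, with flatness of $A_{\mathrm{mv},E} \hookrightarrow B_{\mathrm{mv},E}$ doing the bookkeeping; the essential input is the structure theorem Proposition \ref{description of finitely presented etale phi_q-module over A_{mv,E}}, which is precisely what guarantees that $(\ker f_0)[1/p]$ (and its three siblings) admit the required étale integral lattice.
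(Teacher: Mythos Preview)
Your approach is correct and coincides with what the paper indicates: the paper's own proof consists only of the sentence ``From Proposition~\ref{description of finitely presented etale phi_q-module over A_{mv,E}}, Lemma~\ref{f.p. etale (phi_q,O_K*)-modules over A is an abelian category}, and the definition, we deduce'', and you have supplied exactly the lattice argument these three ingredients point to.

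One small imprecision worth flagging: the decomposition in Proposition~\ref{description of finitely presented etale phi_q-module over A_{mv,E}} is only an isomorphism of $A_{\mathrm{mv},E}$-modules (indeed only of $\varphi_q$-modules), so the ``torsion-free direct summand'' of, say, $\operatorname{coker} f_0$ need not be $(\varphi_q,\mathcal{O}_K^{\times})$-stable. The clean fix is to use the torsion-free \emph{quotient} instead: the torsion submodule is canonical, hence $(\varphi_q,\mathcal{O}_K^{\times})$-stable, so the quotient is a finitely presented étale $(\varphi_q,\mathcal{O}_K^{\times})$-module over $A_{\mathrm{mv},E}$, and being torsion-free it is free by the structure theorem; moreover it has the same $p$-localization. (For $\ker f_0$ and $\operatorname{im} f_0$ there is no issue at all, since they sit inside the free modules $M_0$ and $N_0$ and are therefore already torsion-free, hence free.)
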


	Let $\rep_{E}(\gal(\overline{K}/K))$ be the category of finite dimensional continuous $E$-representations of $\gal(\overline{K}/K)$. For any object $\rho$ in $ \rep_{E}(\gal(\overline{K}/K))$, recall that there exists an open finite $\mathcal{O}_E$-submodule $\rho_0$ of $\rho$ which is stable under the action of $\gal(\overline{K}/K)$. Such $\rho_0$ is called an $\mathcal{O}_E$-\textit{lattice} of $\rho$.
	
	For any finite dimensional $E$-representation $\rho$ of $\gal(\overline{K}/K)$, $0\leq i\leq f-1$, we choose an $\mathcal{O}_E$-lattice $\rho_0$ of $\rho$ and put
	\[D_{B_{\mathrm{mv},E}}^{(i)}(\rho)\coloneq D_{A_{\mathrm{mv},E}}^{(i)}(\rho_0)\left[\frac{1}{p}\right].\]
	It is easy to check that $D_{B_{\mathrm{mv},E}}^{(i)}(\rho)$ does not depend on the choice of the $\mathcal{O}_E$-lattice $\rho_0$. Furthermore, for any homomorphism $f:\rho\to\rho'$ of finite dimensional continuous $E$-representations of $\gal(\overline{K}/K)$, we can take $\mathcal{O}_E$-lattices $\rho_0$ and $\rho_0'$ of $\rho$ and $\rho'$ respectively such that $f(\rho_0)\subseteq \rho_0'$, hence there is a homomorphism $D_{A_{\mathrm{mv},E}}^{(i)}(f|_{\rho_0}): D_{A_{\mathrm{mv},E}}^{(i)}(\rho_0)\to D_{A_{\mathrm{mv},E}}^{(i)}(\rho_0')$ of finite free étale $(\varphi_q,\mathcal{O}_K^{\times})$-modules over $A_{\mathrm{mv},E}$, and inverting $p$ induces a homomorphism of finite free étale $(\varphi_q,\mathcal{O}_K^{\times})$-modules over $B_{\mathrm{mv},E}$:
	\[D_{B_{\mathrm{mv},E}}^{(i)}(f): D_{B_{\mathrm{mv},E}}^{(i)}(\rho)\to D_{B_{\mathrm{mv},E}}^{(i)}(\rho').\]
	We can easily check that $D_{B_{\mathrm{mv},E}}^{(i)}(f)$ is also well-defined. Thus we obtain a functor
	\begin{equation}
		\begin{aligned}
			D_{B_{\mathrm{mv},E}}^{(i)}: \rep_{E}(\gal(\overline{K}/K)) &\to \modet_{\varphi_q,\mathcal{O}_K^{\times}}(B_{\mathrm{mv},E}),\\
			\rho&\mapsto D_{B_{\mathrm{mv},E}}^{(i)}(\rho).
		\end{aligned}
	\end{equation}
	
	\begin{thm}
		Let $0\leq i\leq f-1$. The functor $D_{B_{\mathrm{mv},E}}^{(i)}$ is exact, fully faithful and preserves the rank, i.e. for $\rho\in \rep_{E}(\gal(\overline{K}/K)) $, we have
		\[\dim_{E}\rho =\mathrm{rank}_{B_{\mathrm{mv},E}}D_{B_{\mathrm{mv},E}}^{(i)}(\rho).\]
	\end{thm}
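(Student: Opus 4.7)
The plan is to reduce every statement about $D_{B_{\mathrm{mv},E}}^{(i)}$ to the corresponding statement for $D_{A_{\mathrm{mv},E}}^{(i)}$ on finite free $\mathcal{O}_E$-representations, which is already established in Theorem \ref{f.p. D_{A_{mv,E}}^{(i)} is fully faithful}, via a standard lattice chasing argument combined with the exactness of localization at $p$.

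\medskip

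\textbf{Rank.} If $\rho$ is a finite dimensional $E$-representation of dimension $d$, any $\mathcal{O}_E$-lattice $\rho_0$ is a finite free $\mathcal{O}_E$-module of rank $d$. By Theorem \ref{f.p. D_{A_{mv,E}}^{(i)} is fully faithful}, $D_{A_{\mathrm{mv},E}}^{(i)}(\rho_0)$ is a finite free $A_{\mathrm{mv},E}$-module of rank $d$, so inverting $p$ gives a finite free $B_{\mathrm{mv},E}$-module of rank $d$.

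\medskip

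\textbf{Exactness.} Given a short exact sequence $0\to \rho'\to \rho\to \rho''\to 0$ in $\rep_{E}(\gal(\overline{K}/K))$, I would choose an $\mathcal{O}_E$-lattice $\rho_0\subset\rho$ stable under $\gal(\overline{K}/K)$, and set $\rho_0'\coloneq \rho_0\cap \rho'$ and $\rho_0''\coloneq \rho_0/\rho_0'$. These are lattices in $\rho'$ and $\rho''$ respectively, yielding a short exact sequence
\[0\to \rho_0'\to \rho_0\to \rho_0''\to 0\]
in $\rep_{\mathcal{O}_E}(\gal(\overline{K}/K))$. Applying the exact functor $D_{A_{\mathrm{mv},E}}^{(i)}$ (Theorem \ref{f.p. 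D_{A_{mv,E}}^{(i)} is fully faithful}) and then the exact functor $(-)[1/p]$ yields the desired short exact sequence on the level of $(\varphi_q,\mathcal{O}_K^{\times})$-modules over $B_{\mathrm{mv},E}$.

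\medskip

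\textbf{Full faithfulness.} For faithfulness, let $g:\rho\to \rho'$ be a morphism of $E$-representations with $D_{B_{\mathrm{mv},E}}^{(i)}(g)=0$. After scaling $g$ by a power of $p$, I may choose lattices $\rho_0\subset\rho$ and $\rho_0'\subset\rho'$ such that $g(\rho_0)\subset\rho_0'$. Since $A_{\mathrm{mv},E}$ is $p$-torsion free, the natural map $D_{A_{\mathrm{mv},E}}^{(i)}(\rho_0')\hookrightarrow D_{B_{\mathrm{mv},E}}^{(i)}(\rho')$ is injective, so $D_{A_{\mathrm{mv},E}}^{(i)}(g|_{\rho_0})=0$, and faithfulness of $D_{A_{\mathrm{mv},E}}^{(i)}$ gives $g|_{\rho_0}=0$, hence $g=0$.

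For fullness, let $f:D_{B_{\mathrm{mv},E}}^{(i)}(\rho)\to D_{B_{\mathrm{mv},E}}^{(i)}(\rho')$ be a morphism of finite free étale $(\varphi_q,\mathcal{O}_K^{\times})$-modules over $B_{\mathrm{mv},E}$. Pick lattices $\rho_0\subset \rho$, $\rho_0'\subset \rho'$, so that $D_{A_{\mathrm{mv},E}}^{(i)}(\rho_0)$ and $D_{A_{\mathrm{mv},E}}^{(i)}(\rho_0')$ are $A_{\mathrm{mv},E}$-lattices in $D_{B_{\mathrm{mv},E}}^{(i)}(\rho)$ and $D_{B_{\mathrm{mv},E}}^{(i)}(\rho')$ respectively. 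Since $D_{A_{\mathrm{mv},E}}^{(i)}(\rho_0)$ is finitely generated over $A_{\mathrm{mv},E}$, the image of a finite generating set under $f$ lies in $p^{-n}D_{A_{\mathrm{mv},E}}^{(i)}(\rho_0')$ for some $n\geq 0$, so $p^n f$ restricts to a morphism
\[p^n f|_{D_{A_{\mathrm{mv},E}}^{(i)}(\rho_0)}:D_{A_{\mathrm{mv},E}}^{(i)}(\rho_0)\to D_{A_{\mathrm{mv},E}}^{(i)}(\rho_0')\]
of finite free étale $(\varphi_q,\mathcal{O}_K^{\times})$-modules over $A_{\mathrm{mv},E}$. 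By fullness of $D_{A_{\mathrm{mv},E}}^{(i)}$, this equals $D_{A_{\mathrm{mv},E}}^{(i)}(g_0)$ for some $\mathcal{O}_E$-linear $\gal(\overline{K}/K)$-equivariant $g_0:\rho_0\to\rho_0'$. Setting $g\coloneq p^{-n}(g_0\otimes_{\mathcal{O}_E}\mathrm{id}_E):\rho\to\rho'$ gives a morphism of $E$-representations with $D_{B_{\mathrm{mv},E}}^{(i)}(g)=f$.

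\medskip

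There is no genuine obstacle beyond routine bookkeeping: the only subtle point is verifying that lattices exist and behave well (e.g., that $\rho_0\cap \rho'$ is a lattice in $\rho'$, and that enough of a power of $p$ brings $f$ down to the lattice level), but these are standard facts for continuous $E$-representations of a profinite group on finite-dimensional vector spaces. The main input doing the real work is Theorem \ref{f.p. D_{A_{mv,E}}^{(i)} is fully faithful}.
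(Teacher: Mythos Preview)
Your proposal is correct and follows essentially the same approach as the paper: reduce to Theorem \ref{f.p. D_{A_{mv,E}}^{(i)} is fully faithful} by choosing compatible $\mathcal{O}_E$-lattices and then invert $p$. The only cosmetic difference is that for fullness the paper replaces $\rho_0$ by $\varpi^n\rho_0$ rather than multiplying the map by $p^n$, which amounts to the same maneuver.
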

	\begin{proof}
		Let $0\to\rho'\xrightarrow{f} \rho\xrightarrow{g} \rho''\to0$ be a short exact sequence in $\rep_{E}(\gal(\overline{K}/K))$, and $\rho_0$ be an $\mathcal{O}_E$-lattice of $\rho$, then $f^{-1}(\rho_0)$ and $g(\rho_0)$ are $\mathcal{O}_E$-lattices of $\rho'$ and $\rho''$ respectively. By Theorem \ref{f.p. D_{A_{mv,E}}^{(i)} is fully faithful}, the sequence
		\[0\to D_{A_{\mathrm{mv},E}}^{(i)}(f^{-1}(\rho_0))\to D_{A_{\mathrm{mv},E}}^{(i)}(\rho_0)\to D_{A_{\mathrm{mv},E}}^{(i)}(g(\rho_0))\to 0\]
		is exact. Inverting $p$, we deduce that $D_{B_{\mathrm{mv},E}}^{(i)}$ is an exact functor.
		
		Let $\rho$, $\rho'$ be finite dimensional continuous $E$-representations of $\gal(\overline{K}/K)$, and let $\rho_0$, $\rho_0'$ be  $\mathcal{O}_E$-lattices of $\rho$, $\rho'$ respectively. If $f:\rho\to\rho'$ is a homomorphism of representations such that $D_{B_{\mathrm{mv},E}}^{(i)}(f)=0$, we may assume that $f(\rho_0)\subseteq\rho_0'$, then $D_{A_{\mathrm{mv},E}}^{(i)}(f|_{\rho_0})=0$, and by Theorem \ref{f.p. D_{A_{mv,E}}^{(i)} is fully faithful}, $f|_{\rho_0}:\rho_0\to\rho_0'$ is the zero map, hence $f=0$. Conversely, let $g:D_{B_{\mathrm{mv},E}}^{(i)}(\rho)\to D_{B_{\mathrm{mv},E}}^{(i)}(\rho')$ be a homomorphism of finite free étale $(\varphi_q,\mathcal{O}_K^{\times})$-modules, replacing $\rho_0$ by $\varpi^n\rho_0$ for some $n\geq 1$, we may assume that $g\left(D_{A_{\mathrm{mv},E}}^{(i)}(\rho_0)\right)\subseteq D_{A_{\mathrm{mv},E}}^{(i)}(\rho_0')$, hence $g|_{D_{A_{\mathrm{mv},E}}^{(i)}(\rho_0)}=D_{A_{\mathrm{mv},E}}^{(i)}(h_0)$ for the unique $h_0: \rho_0\to \rho_0'$ by Theorem \ref{f.p. D_{A_{mv,E}}^{(i)} is fully faithful}. Then let $h:\rho\to\rho'$ be the extension of $h_0$, we have $D_{B_{\mathrm{mv},E}}^{(i)}(h)=g$. Therefore, $D_{B_{\mathrm{mv},E}}^{(i)}$ is fully faithful. The last assertion comes from the following equality:
		\[\dim_{E}\rho =\mathrm{rank}_{\mathcal{O}_E}\rho_0=\mathrm{rank}_{A_{\mathrm{mv},E}}(D_{A_{\mathrm{mv},E}}^{(i)}(\rho_0))=\mathrm{rank}_{_{\mathrm{mv},E}}(D_{B_{\mathrm{mv},E}}^{(i)}(\rho)).\]
	\end{proof}
	
	\begin{prop}
		Let $f\geq 2$. The functor $D_{B_{\mathrm{mv},E}}^{(i)}$ is not essentially surjective.
	\end{prop}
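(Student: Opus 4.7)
Since the functor $D_{B_{\mathrm{mv},E}}^{(i)}$ is fully faithful and exact by the previous theorem, it induces an $E$-linear injection on Yoneda $\mathrm{Ext}^1$ groups. Hence essential surjectivity would force this injection to be an isomorphism for every pair of objects. I will test this on the trivial pair and show the source is a finite-dimensional $E$-vector space while the target is not. As in the proof of Proposition~\ref{not essentially surjective for integrel f>1}, the plan is to transport the construction of the cocycles $(x_k,c_k)$ from the integral setting to the rational setting and to check that $E$-linear independence of their classes survives inverting $p$.

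On the Galois side, extensions of the trivial representation $E$ by itself are classified by $H^1_{\mathrm{cont}}(\gal(\overline{K}/K),E)$, which is a finite-dimensional $E$-vector space; indeed one may lift a continuous cocycle with values in $E$ to one with values in an $\mathcal{O}_E$-lattice, whence $H^1_{\mathrm{cont}}(\gal(\overline{K}/K),E)\cong H^1_{\mathrm{cont}}(\gal(\overline{K}/K),\mathcal{O}_E)[1/p]$ which is of dimension $f+1$ by local duality. Meanwhile, extensions of $B_{\mathrm{mv},E}$ by itself in $\modet_{\varphi_q,\mathcal{O}_K^{\times}}(B_{\mathrm{mv},E})$ can, by Corollary~\ref{descent of f.p. phi_q,O_K*-modules} and the definition of finite free étale modules over $B_{\mathrm{mv},E}$, be described after base change along $A_{\mathrm{mv},E}\hookrightarrow W_E(A_{\infty})$ and inversion of $p$: the same cocycle calculation as in~(\ref{description of Ext^1}) shows that the Yoneda $\mathrm{Ext}^1$ group on the $B_{\mathrm{mv},E}$-side is computed by the $E$-vector space $H^1(W_E(A_{\infty}))[1/p]$, where $H^1(W_E(A_{\infty}))$ is the $\mathcal{O}_E$-module introduced in~(\ref{description of Ext^1}).

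The key input is then the analysis of $H^1(W_E(A_{\infty}))$ in the proof of Proposition~\ref{not essentially surjective for integrel f>1}. It was proved there that this $\mathcal{O}_E$-module is \emph{torsion-free}, and that the classes
\[
[(x_k,c_k)]\in H^1(W_E(A_{\infty})),\qquad k\in S=\{k\in\mathbb{Z}_{\geq 1}:p\nmid k\},
\]
associated with $x_k=[T_{\sigma_{0}}^{-(q-1)k}]\cdot[T_{\sigma_{1}}^{(q-1)k}]$ are $\mathcal{O}_E$-linearly independent (using $f\geq 2$, which is needed to even write down $T_{\sigma_{1}}$). Torsion-freeness implies that the canonical map
\[
H^1(W_E(A_{\infty}))\longrightarrow H^1(W_E(A_{\infty}))\otimes_{\mathcal{O}_E}E=H^1(W_E(A_{\infty}))[1/p]
\]
is injective, so the classes $[(x_k,c_k)]$ remain $E$-linearly independent after inverting $p$. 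Hence the Yoneda $\mathrm{Ext}^1$ group on the $B_{\mathrm{mv},E}$-side is an infinite-dimensional $E$-vector space, while the Galois side is finite-dimensional. The induced injection on $\mathrm{Ext}^1$ cannot be surjective, so $D_{B_{\mathrm{mv},E}}^{(i)}$ is not essentially surjective.

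The main obstacle is the bookkeeping in the second paragraph: one has to verify carefully that the Yoneda $\mathrm{Ext}^1$ computed in the abelian category $\modet_{\varphi_q,\mathcal{O}_K^{\times}}(B_{\mathrm{mv},E})$ of \emph{finite free} étale modules over $B_{\mathrm{mv},E}$ coincides, for the pair $(B_{\mathrm{mv},E},B_{\mathrm{mv},E})$, with the localization at $p$ of the corresponding integral $\mathrm{Ext}^1$ group $H^1(W_E(A_{\infty}))$. This requires (i) using that any short exact sequence of finite free modules over $B_{\mathrm{mv},E}$ arises from one over $A_{\mathrm{mv},E}$ after clearing denominators of a chosen lift of a basis, and (ii) invoking the equivalence of Corollary~\ref{descent of f.p. phi_q,O_K*-modules} to identify $\mathrm{Ext}^1$ over $A_{\mathrm{mv},E}$ with $\mathrm{Ext}^1$ over $W_E(A_{\infty})$. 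Once these identifications are in place, the infinite-dimensionality is immediate from the integral result already established.
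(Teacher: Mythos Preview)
Your approach is correct and essentially the same as the paper's: compare $\mathrm{Ext}^1$ of the trivial object on both sides, use the proof of Proposition~\ref{not essentially surjective for integrel f>1} (torsion-freeness of $H^1(W_E(A_\infty))$ and $\mathcal{O}_E$-linear independence of the $[(x_k,c_k)]$) to get infinite-dimensionality on the module side, and contrast with $\dim_E H^1_{\mathrm{cont}}(\gal(\overline{K}/K),E)=f+1$.

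The one place you work harder than necessary is the ``main obstacle'' in your last paragraph: you aim for an \emph{isomorphism} $\mathrm{Ext}^1_{\modet_{\varphi_q,\mathcal{O}_K^{\times}}(B_{\mathrm{mv},E})}(B_{\mathrm{mv},E},B_{\mathrm{mv},E})\cong H^1(W_E(A_\infty))[1/p]$, but only an \emph{injection} from the integral side is needed to conclude infinite-dimensionality. The paper handles this more economically: it observes that if an integral extension $0\to p^{-n}A_{\mathrm{mv},E}\to M_0\to A_{\mathrm{mv},E}\to 0$ splits after inverting $p$, then a $B_{\mathrm{mv},E}$-section automatically restricts to an $A_{\mathrm{mv},E}$-section, so $\mathrm{Ext}^1_{A_{\mathrm{mv},E}}(A_{\mathrm{mv},E},p^{-n}A_{\mathrm{mv},E})\hookrightarrow \mathrm{Ext}^1_{B_{\mathrm{mv},E}}(B_{\mathrm{mv},E},B_{\mathrm{mv},E})$ for all $n$. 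Combined with torsion-freeness (which you already use), this gives the injection $H^1(W_E(A_\infty))\hookrightarrow \mathrm{Ext}^1_{B_{\mathrm{mv},E}}(B_{\mathrm{mv},E},B_{\mathrm{mv},E})$ directly, bypassing the need to check that every rational extension arises integrally.
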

	\begin{proof}
		Let $\gal(\overline{K}/K)$ acts trivially on $E$, then $D_{B_{\mathrm{mv},E}}^{(i)}(E)=B_{\mathrm{mv},E}$.
		Using an analogue of (\ref{description of Ext^1}), we have
		\[\mathrm{Ext}_{\modetfp_{\varphi_q,\mathcal{O}_K^{\times}}(A_{\mathrm{mv},E})}^1(A_{\mathrm{mv},E},A_{\mathrm{mv},E})\left[\frac{1}{p}\right]\cong \bigcup_{n=0}^{+\infty}\mathrm{Ext}_{\modetfp_{\varphi_q,\mathcal{O}_K^{\times}}(A_{\mathrm{mv},E})}^1(A_{\mathrm{mv},E},p^{-n}A_{\mathrm{mv},E}).\]
		%For any extension $0\to B_{\mathrm{mv},E}\xrightarrow{f} M\xrightarrow{g} B_{\mathrm{mv},E}\to 0$, put $e_0\coloneq f(1)$ and choose $e_1\in g^{-1}(1)$, there exists $x,c_a\in B_{\mathrm{mv},E}$ such that for any $a\in\mathcal{O}_K^{\times}$
		%\begin{align*}
		%	\begin{cases}
		%		\varphi_q(e_1)=e_1+xe_0,\\
		%		a(e_1)=e_1+c_ae_0.
		%	\end{cases}
		%\end{align*}
		%Since $B_{\mathrm{mv},E}=\bigcup_{n=0}p^{-n}A_{\mathrm{mv},E}$ and $\mathcal{O}_K^{\times}$ is compact, there exists $n\geq 0$ such that $x,c_a\in p^{-n}A_{\mathrm{mv},E}$ for any $a\in\mathcal{O}_K^{\times}$, which implies that there exists an extension $0\to p^{-n}A_{\mathrm{mv},E}\xrightarrow{f_0} M_0\xrightarrow{g_0}A_{\mathrm{mv},E}\to 0$ whose localization is $0\to B_{\mathrm{mv},E}\xrightarrow{f} M\xrightarrow{g} B_{\mathrm{mv},E}\to 0$. Hence the natural map
		%\[\bigcup_{n=0}^{+\infty}\mathrm{Ext}_{\modetfp_{\varphi_q,\mathcal{O}_K^{\times}}(A_{\mathrm{mv},E})}^1(A_{\mathrm{mv},E},p^{-n}A_{\mathrm{mv},E})\longrightarrow\mathrm{Ext}^1_{\modet_{\varphi_q,\mathcal{O}_K^{\times}}(B_{\mathrm{mv},E})}(B_{\mathrm{mv},E},B_{\mathrm{mv},E})\]
		%is surjective. Conversely, 
		Let $0\to p^{-n}A_{\mathrm{mv},E}\xrightarrow{f_0} M_0\xrightarrow{g_0}A_{\mathrm{mv},E}\to 0$ be an extension in $\modetfp_{\varphi_q,\mathcal{O}_K^{\times}}(A_{\mathrm{mv},E})$. If there exists a section $s: B_{\mathrm{mv},E}\to M_0[1/p]$ of $g_0: M_0[1/p]\to B_{\mathrm{mv},E}$, then $s$ restricts to a section of $g_0: M_0\to A_{\mathrm{mv},E}$, hence the extension $0\to p^{-n}A_{\mathrm{mv},E}\xrightarrow{f_0} M_0\xrightarrow{g_0}A_{\mathrm{mv},E}\to 0$ splits. Thus the natural map $\mathrm{Ext}_{\modetfp_{\varphi_q,\mathcal{O}_K^{\times}}(A_{\mathrm{mv},E})}^1(A_{\mathrm{mv},E},p^{-n}A_{\mathrm{mv},E})\to \mathrm{Ext}^1_{\modet_{\varphi_q,\mathcal{O}_K^{\times}}(B_{\mathrm{mv},E})}(B_{\mathrm{mv},E},B_{\mathrm{mv},E})$ is injective for every $n\geq 0$. Therefore, the natural map
		\[\mathrm{Ext}_{\modetfp_{\varphi_q,\mathcal{O}_K^{\times}}(A_{\mathrm{mv},E})}^1(A_{\mathrm{mv},E},A_{\mathrm{mv},E})\left[\frac{1}{p}\right]\longrightarrow\mathrm{Ext}^1_{\modet_{\varphi_q,\mathcal{O}_K^{\times}}(B_{\mathrm{mv},E})}(B_{\mathrm{mv},E},B_{\mathrm{mv},E})\]
		is injective. By the proof of Proposition \ref{not essentially surjective for integrel f>1}, $\mathrm{Ext}^1_{\modet_{\varphi_q,\mathcal{O}_K^{\times}}(B_{\mathrm{mv},E})}(B_{\mathrm{mv},E},B_{\mathrm{mv},E})$ is an infinite dimensional vector space over $E$. On the other hand, there is a natural isomorphism
		\[ \mathrm{Ext}^1_{\rep_{E}(\gal(\overline{K},K))}(E,E)\cong H^1_\mathrm{cont}(\gal(\overline{K}/K),E)\cong E^{f+1},\]
		thus the natural injection 
		\[\mathrm{Ext}^1_{\rep_{E}(\gal(\overline{K},K))}(E,E)\hookrightarrow \mathrm{Ext}^1_{\modet_{\varphi_q,\mathcal{O}_K^{\times}}(B_{\mathrm{mv},E})}(B_{\mathrm{mv},E},B_{\mathrm{mv},E})\]
		induced by the exact fully faithful functor $D_{B_{\mathrm{mv},E}}^{(i)}$ is not surjective. Therefore, $D_{B_{\mathrm{mv},E}}^{(i)}$ is not essentially surjective.
	\end{proof}
	
	For any finite dimensional $E$-representation $\rho$ of $\gal(\overline{K}/K)$, similar to (\ref{definition of D_A^temsor}), we define $D_{B_{\mathrm{mv},E}}^{\otimes}(\rho)\coloneq \bigotimes_{i=0}^{f-1}D_{B_{\mathrm{mv},E}}^{i}(\rho)$. By an argument similar to the proof of Proposition \ref{etaleness of phi,Gamma}, $D_{B_{\mathrm{mv},E}}^{\otimes}(\rho)$ is a finite type étale $(\varphi,\mathcal{O}_K^{\times})$-module over $B_{\mathrm{mv},E}$. When $\rho$ is the restriction of an automorphic Galois representation, we expect $D_{B_{\mathrm{mv},E}}^{\otimes}(\rho)$ to be related to $p$-adic unitary Banach representations of $\mathrm{GL}_2(K)$ over $E$ appearing in the completed cohomology of Shimura curves (see \cite[Corollary 3.1.4]{breuil2023conjectures} and \cite[Theorem 1.1]{wang2024lubintatemultivariablevarphimathcaloktimesmodulesdimension} for mod $p$ representations).
	
	\printbibliography
\end{document}